\newtheorem{thm}{Theorem}[section]
\newtheorem{prop}[thm]{Proposition}
\newtheorem{lem}[thm]{Lemma}
\newtheorem{cor}[thm]{Corollary}
\theoremstyle{definition}
\newtheorem{definition}[thm]{Definition}
\theoremstyle{remark}
\newtheorem{notation}[thm]{Notation}
\newtheorem{remark}[thm]{Remark}
\numberwithin{equation}{section}
\newcommand{\Span}{\mathrm{Span}}
\newcommand{\im}{\mathrm{im}}
\newcommand{\id}{\mathrm{id}}
\newcommand{\Gr}{\mathrm{Gr}}
\newcommand{\Amc}{\mathcal{A}}
\newcommand{\Cmc}{\mathcal{C}}
\newcommand{\Dmc}{\mathcal{D}}
\newcommand{\Emc}{\mathcal{E}}
\newcommand{\Fmc}{\mathcal{F}}
\newcommand{\Gmc}{\mathcal{G}}
\newcommand{\Mmc}{\mathcal{M}}
\newcommand{\Pmc}{\mathcal{P}}
\newcommand{\Qmc}{\mathcal{Q}}
\newcommand{\Smc}{\mathcal{S}}
\newcommand{\Tmc}{\mathcal{T}}
\newcommand{\Umc}{\mathcal{U}}
\newcommand{\Vmc}{\mathcal{V}}
\newcommand{\Xmc}{\mathcal{X}}
\newcommand{\Cbbb}{\mathbb{C}}
\newcommand{\Dbbb}{\mathbb{D}}
\newcommand{\Pbbb}{\mathbb{P}}
\newcommand{\Rbbb}{\mathbb{R}}
\newcommand{\Sbbb}{\mathbb{S}}
\newcommand{\Zbbb}{\mathbb{Z}}
\newcommand{\rp}{\mathbb{RP}}
\newcommand{\re}{\mathbb{R}}
\newcommand{\PGL}{\mathrm{PGL}}
\newcommand{\SL}{\mathrm{SL}}
\newcommand{\MCG}{\mathrm{MCG}}
\begin{document}

\title{Coordinates on the Augmented Moduli Space of Convex $\rp^2$ Structures}

\thanks{T.Z. was partially supported by the National Science Foundation under grant DMS-1566585, and by the NUS-MOE grant R-146-000-270-133. The authors also acknowledge support from the GEAR Network, funded by the National Science Foundation under grant numbers
DMS 1107452, 1107263, and 1107367 (``RNMS: GEometric structures And Representation
varieties''.)}

\author{John Loftin}
\address{Department of Mathematics and Computer Science, Rutgers University Newark, 101 Warren St., Newark, NJ 07102 USA}
\email{loftin@rutgers.edu}

\author{Tengren Zhang}
\address{Department of Mathematics, National University of Singapore, 21 Lower Kent Ridge Road, Singapore 119077}
\email{matzt@nus.edu.sg}

\maketitle

\begin{abstract}
Let $S$ be an orientable, finite type surface with negative Euler characteristic. The augmented moduli space of convex real projective structures on $S$ was first defined and topologized by the first author. In this article, we give an explicit description of this topology using explicit coordinates. More precisely, given every point in this augmented moduli space, we find explicit continuous coordinates on the quotient of a suitable open neighborhood about this point by a suitable subgroup of the mapping class group of $S$. Using this, we give a simpler proof of the fact that the augmented moduli space of convex real projective structures on $S$ is homeomorphic to the orbifold vector bundle of regular cubic differentials over the Deligne-Mumford compactification of the moduli space of Riemann surfaces homeomorphic to $S$.
\end{abstract}

\tableofcontents

\section{Introduction}
Let $S$ denote a smooth, connected, oriented, finite-type surface with negative Euler characteristic. A convex $\rp^2$ structure $\mu$ on $S$ is determined by a pair $(\phi,\rho)$, where $\rho:\pi_1(S)\to\PGL(3,\Rbbb)$ is a representation and $\phi:\widetilde{S}\to\Omega$ is a $\rho$-equivariant diffeomorphism onto a properly convex domain $\Omega\subset\Rbbb\Pbbb^2$. The pair $(\phi,\rho)$ is usually known as a \emph{developing pair} for $\mu$, while $\rho$ and $\phi$ are called a \emph{holonomy representation} and a \emph{developing map} of $\mu$ respectively. We will denote the deformation space of convex $\Rbbb\Pbbb^2$ structures on $S$ by $\Cmc(S)$. (See Section \ref{sec:convex} for more precise definitions.) Hyperbolic structures are then examples of convex $\rp^2$ structures via the Klein model of hyperbolic space, where $\Omega$ is a round disk.  In this article, we give local coordinates that are adapted to describe degenerations of convex $\rp^2$ structures on $S$ that converge on the complement of a multi-curve $\Dmc$ (see Remark \ref{rem: top}(1)).  Our coordinates generalize natural orbifold coordinates, based on Fenchel-Nielsen coordinates, near the boundary of the Deligne-Mumford compactification $\overline{\mathcal M(S)}$ of the moduli space of finite-area hyperbolic structures on $S$. 

In order to motivate this choice of limiting structure to study, we recall the case of hyperbolic structures on $S$ when $S$ is a closed oriented surface of genus at least $2$. The deformation space of all (marked) hyperbolic structures is the Teichm\"uller space $\mathcal T(S)$, which is homeomorphic to $\re^{6g-6}$. There are two major and essentially different ways to analyze degenerating families of hyperbolic structures on $S$. First, Thurston gives a natural compactification $\overline{\Tmc(S)}$ of $\mathcal T(S)$ which may be seen as the set of limits in the projective space of hyperbolic lengths of all closed geodesics on $S$. Second, for (unmarked) hyperbolic structures on $S$, the moduli space $\mathcal M(S)$, which is the quotient of $\mathcal T(S)$ by the mapping class group $\mathrm{MCG}(S)$, has the structure of a quasi-projective algebraic variety with orbifold singularities. 
Its most natural compactification, the Deligne-Mumford compactification $\overline{\mathcal M(S)}$, is then formed by considering all complete, finite-area, hyperbolic structures on $S\setminus \mathcal D$ for all free homotopy classes of multi-curves $\mathcal D$ in $S$. One can also define an augmentation $\Tmc(S)^{\mathrm{aug}}$ of $\Tmc(S)$ to be $\Tmc(S)$ together with all the possible limits of degenerating families of hyperbolic structures on $S$ so that the family converges on the complement of a multicurve. It is then well-known \cite{Abi,HubKoch14} that $\overline{\mathcal M(S)}=\Tmc(S)^{\mathrm{aug}}/\mathrm{MCG}(S)$.

These two spaces $\overline{\Tmc(S)}$ and $\Tmc(S)^{\mathrm{aug}}$ are fundamentally different; most rays in Teichm\"uller space that converge to points in $\overline{\Tmc(S)}$ do not project under the quotient map $\Tmc(S)\to\Mmc(S)$ to convergent rays in $\overline{\mathcal M(S)}$. The reason for this is the following: if a family of hyperbolic structures on $S$ is pinched along a multi-curve $\mathcal D$, its accumulation set in $\overline{\mathcal M(S)}$ depends on the limiting hyperbolic structures on the complement of $\Dmc$. However, if we choose a lift of this sequence to Teichm\"uller space, then its limit in $\overline{\mathcal T(S)}$ only records the relative hyperbolic lengths of closed geodesics whose lengths are growing the fastest along this sequence. In particular, the hyperbolic structure on $S\setminus\Dmc$ is forgotten in $\overline{\mathcal T(S)}$.

The present work addresses, for the case of convex $\rp^2$ structures, analogs of the geometry of $\overline{\mathcal M(S)}$. In \cite{Loftin}, the first author introduced \emph{regular convex $\rp^2$ structures} which serve to augment the deformation space $\mathcal C(S)$. These are convex $\Rbbb\Pbbb^2$ structures on $S$, together with all the possible limits of degenerating families of convex $\Rbbb\Pbbb^2$ structures, with the property that the family converges on the complement of a multi-curve. The \emph{augmented deformation space $\mathcal C(S)^{\rm aug}$} is then the set of all regular convex $\rp^2$ structures on $S$. One should think of $\Cmc(S)^{\mathrm{aug}}$ as a generalization of $\Tmc(S)^{\mathrm{aug}}$ to the setting of convex $\Rbbb\Pbbb^2$ structures on $S$. The first author also defined a natural topology on $\mathcal C(S)^{\rm aug}$. With this topology, $\mathcal C(S)^{\rm aug}$ has a stratification, where each stratum $\mathcal C(S,\Dmc)^{\rm adm}\subset\Cmc(S)^{\mathrm{aug}}$ is determined by a multi-curve $\Dmc$ on $S$. (See Section \ref{sec: aug top} for more details.)

Despite its naturality, the topology on $\mathcal C(S)^{\rm aug}$ is rather abstract in terms of the geometry of the limiting surfaces. The purpose of this paper is to elucidate the geometric properties of families of regular convex $\rp^2$ structures by using (global) Fenchel-Nielsen type coordinates on the space of holonomies of the convex $\Rbbb\Pbbb^2$ structures on $S$ to construct (local) coordinates on appropriate quotients of $\mathcal C(S)^{\rm aug}$ by certain subgroups of the mapping class group. We show that these coordinates induce the topology on the \emph{augmented moduli space} $\mathcal C(S)^{\rm aug}  / \mathrm{MCG}(S)$. More precisely, we have the following main theorem (also see Theorem \ref{thm: main}).

\begin{thm}\label{thm: main intro}
Let $\mu\in\Cmc(S)^{\mathrm{aug}}$, and let $\Dmc$ be the multi-curve on $S$ so that $\mu\in\Cmc(S,\Dmc)^{\mathrm{adm}}$. Also, let $G_\Dmc\subset\mathrm{MCG}(S)$ be the subgroup generated by Dehn twists about the simple closed curves in $\Dmc$. Then
\[\Cmc(S)^{\mathrm{aug},\Dmc}:=\bigcup_{\Dmc'\subset\Dmc}\Cmc(S,\Dmc')^{\rm adm}\]
is an open set of $\Cmc(S)^{\mathrm{aug}}$ containing $\mu$ that is invariant under $G_\Dmc$, and there is a homeomorphism
\[\Psi_\Dmc:\Cmc(S)^{\mathrm{aug},\Dmc}/G_\Dmc\to\Rbbb^{10g-10+6n+2m}\times(\Rbbb_+)^{6g-6+2n-2m},\]
where $m$ is the number of curves in $\Dmc$, $n$ is the number of punctures of $S$, and $g$ is the genus of the compactification of $S$ in which each puncture is filled in. In particular, $\Cmc(S)^{\mathrm{aug},\Dmc}/G_\Dmc$ is a cell of dimension $16g-16+8n$.
\end{thm}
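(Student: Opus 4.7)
The plan is to build $\Psi_\Dmc$ from Goldman-type global Fenchel--Nielsen coordinates on $\Cmc(S)$ adapted to a pants decomposition extending $\Dmc$, and then apply a plumbing-type reparameterization along the curves in $\Dmc$ that simultaneously absorbs the $G_\Dmc$-quotient and extends continuously across the pinching strata.

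First, extend $\Dmc$ to a pants decomposition $\Pmc$ of $S$ with $2g-2+n$ pairs of pants and $3g-3+n$ interior curves. Goldman's parameterization (as adapted to the punctured setting in the earlier sections of this paper) then supplies global coordinates on the top stratum $\Cmc(S)$: to each interior curve $\gamma\in\Pmc$ one attaches two positive eigenvalue-ratio parameters $(\ell_\gamma^+,\ell_\gamma^-)$ for $\rho(\gamma)$ and two real twist parameters $(u_\gamma,v_\gamma)$; to each pair of pants, two real internal parameters; and to each cusp, two real parameters. A dimension count yields the $m=0$ case $\Cmc(S)\cong\Rbbb^{10g-10+6n}\times\Rbbb_+^{6g-6+2n}$ of the statement.

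Second, the Dehn twist $T_\gamma$ for $\gamma\in\Dmc$ fixes every coordinate attached to a different curve or pair of pants and acts on $(u_\gamma,v_\gamma)$ by a nonzero translation in $\Rbbb^2$ whose components are explicit linear functions of $(\ell_\gamma^+,\ell_\gamma^-)$. For each $\gamma\in\Dmc$ I would replace the quadruple $(\ell_\gamma^+,\ell_\gamma^-,u_\gamma,v_\gamma)$ by a plumbing-type coordinate in $\Rbbb^4$ whose ``modulus'' encodes the two eigenvalue parameters and whose ``angular'' components encode the twists modulo the translation action. Such a coordinate is $T_\gamma$-invariant, restricts to a diffeomorphism on the unpinched locus $\ell_\gamma^\pm>0$, and extends continuously to the pinched stratum $\ell_\gamma^\pm=0$ by sending it to $0\in\Rbbb^4$; each curve in $\Dmc$ thereby contributes $4$ real coordinates in place of $2$ positive and $2$ real ones, which accounts precisely for the shift in dimensions as $m$ grows. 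Assembling: on each stratum $\Cmc(S,\Dmc')^{\rm adm}$ with $\Dmc'\subset\Dmc$, use the plumbing coordinates for curves in $\Dmc$ (with coordinates for $\gamma\in\Dmc'$ set to $0$), the usual Goldman coordinates for curves in $\Pmc\setminus\Dmc$, and the natural internal and cusp coordinates for the convex $\Rbbb\Pbbb^2$ structure on $S\setminus\Dmc'$ induced by the regular limit. Openness of $\Cmc(S)^{\mathrm{aug},\Dmc}$ in $\Cmc(S)^{\mathrm{aug}}$ follows because further degeneration along a curve not in $\Dmc$ is a closed condition in the regular-limit topology, and $G_\Dmc$-invariance of $\Cmc(S)^{\mathrm{aug},\Dmc}$ is immediate.

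The main obstacle is verifying that $\Psi_\Dmc$ is a homeomorphism with respect to the abstract topology on $\Cmc(S)^{\mathrm{aug}}$ described in Section~\ref{sec: aug top}. Concretely, one must show that a sequence in $\Cmc(S)/G_\Dmc$ with $\ell_\gamma^\pm\to 0$ for $\gamma\in\Dmc'$ converges in the augmented topology to a limit in $\Cmc(S,\Dmc')^{\rm adm}$ if and only if the plumbing coordinates along $\Dmc'$ converge to $0$ and the remaining coordinates converge in the usual sense. This requires a convex projective analog of the classical plumbing-coordinate analysis in Teichm\"uller theory: near a pinching curve the convex $\Rbbb\Pbbb^2$ structure must admit an annular decomposition whose gluing data are precisely recorded by the plumbing coordinates in a $G_\Dmc$-invariant way, and the structures on the thick part must converge to the induced structure on the complement of $\Dmc'$. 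Establishing this compatibility between the abstract regular-limit topology and the concrete plumbing coordinates is where the bulk of the technical work will lie.
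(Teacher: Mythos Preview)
Your overall architecture---extend $\Dmc$ to a pants decomposition, use Goldman-type length/twist/bulge coordinates, reparameterize along curves in $\Dmc$ to absorb the Dehn twists, and verify continuity across strata---is exactly the paper's strategy. But there is a genuine gap in your description of the degenerate strata which, if carried through as written, would make $\Psi_\Dmc$ fail to be a bijection.

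You write that the plumbing coordinate ``extends continuously to the pinched stratum $\ell_\gamma^\pm=0$ by sending it to $0\in\Rbbb^4$.'' This is the Teichm\"uller picture, and it is wrong here. In the convex $\Rbbb\Pbbb^2$ setting the admissible limit on $S\setminus\Dmc'$ can have \emph{hyperbolic} or quasi-hyperbolic holonomy around a pinched curve $\gamma\in\Dmc'$, not just parabolic; so $\ell_\gamma^\pm$ need not go to $0$ at all. Worse, when the limiting holonomy around $\gamma$ is hyperbolic, the holonomy no longer determines the admissible structure: there are two possible end types (bulge $+\infty$ and bulge $-\infty$) compatible with the same holonomy (this is the content of Theorem~\ref{thm: hol} and Corollary~\ref{cor: hol}). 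The same ambiguity occurs at the original punctures of $S$. Thus the image of the $\Dmc'$-stratum under the correct coordinates is not $\{0\}\subset\Rbbb^4$ but rather a copy of $\{(0,0)\}\times\Rbbb^2$ for each $\gamma\in\Dmc'$, with the two remaining real coordinates recording $\ell_{1,\gamma}\ell_{2,\gamma}$ (signed by the bulge type) and $\ell_{1,\gamma}^2-\ell_{2,\gamma}^2$.

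Consequently the mechanism of degeneration is not ``lengths go to zero'' but ``bulge goes to $\pm\infty$.'' The paper reparameterizes the bulge $b_\gamma$ to $r_\gamma$ so that Dehn twists fix it (Proposition~\ref{prop:Dehntwistcoords}), and then Lemma~\ref{lem:bulge} shows that a sequence in $\Cmc(S)$ converges to a bulge $\pm\infty$ end precisely when $r_\gamma\to\pm\infty$. This is what forces the specific choice of the four functions $g_{1,\gamma},\dots,g_{4,\gamma}$, in which $r_\gamma$ enters through $\sin(\frac{\pi}{2}f(r_\gamma))$ so that its sign survives in the limit and distinguishes the two bulge types. Your ``plumbing-type'' coordinate, as described, records neither the surviving length data nor the bulge sign, so it collapses a $2$-parameter family of distinct limits to a single point, and your $\Psi_\Dmc$ would not be injective. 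The continuity argument you outline in your last paragraph is indeed where the work lies, but it cannot succeed until the coordinate itself is corrected to carry this extra information; the paper's proof of Theorem~\ref{thm: special case} uses both Theorem~\ref{thm: hol} and Lemma~\ref{lem:bulge} in an essential way for exactly this reason.
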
 

Furthermore, the coordinate functions of $\Psi_\Dmc$ are explicitly described. See Section \ref{sec: coord} for the description of $\Psi_\Dmc$.

It turns out that if $\Dmc$ is non-empty, then any open set of $\Cmc(S)^{\mathrm{aug}}$ containing $\mu$ in the above theorem does not have compact closure. On the other hand, any open set of the augmented moduli space $\Cmc(S)^{\mathrm{aug}}/\mathrm{MCG}(S)$ containing $[\mu]$ might have a complicated singular locus. (See Section \ref{sec: aug top}.) Thus, it is necessary to quotient $\Cmc(S)^{\mathrm{aug},\Dmc}$ by the appropriate subgroup $G_{\Dmc}$ of $\mathrm{MCG}(S)$ for it to have a nice set of coordinates.

Theorem \ref{thm: main intro} is a generalization of a standard result in Teichm\"uller theory describing the behavior of Fenchel-Nielsen coordinates at the boundary of $\overline{\mathcal M(S)}$, the Deligne-Mumford compactification of the the moduli space of finite-area hyperbolic structures on $S$.  The following theorem is well-known (see e.g.\ \cite{HubKoch14})
\begin{thm}
Let $\Dmc$ be a multi-curve on $S$, and let $\mathcal T(S)^{{\rm aug},\mathcal D}$ be Teichm\"uller space augmented along curves in $\Dmc$ only. Choose a pants decomposition $\Pmc\supset\Dmc$ on $S$, and let $\ell_i,\theta_i$ denote the length and twist coordinates on curves in $\Pmc$, where the $\theta_i$ are normalized so that Dehn twists are represented by $\theta_i\mapsto\theta_i+2\pi$. Then $\Tmc(S)^{{\rm aug},\Dmc}/G_\Dmc$ can be described by coordinates 
$$\left( \prod_{j=1}^m (\ell_j\cos(\theta_j),\ell_j\sin(\theta_j)) \right) \times \left(\prod_{j=m+1}^{3g-3+n}(\ell_j,\theta_j)  \right) \in\re^{2m}\times (\re_+ \times \re)^{3g-3+n-m},$$ 
and is homeomorphic to a cell of dimension $6g-6+2n$. 
\end{thm}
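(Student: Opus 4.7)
The plan is to promote standard Fenchel--Nielsen coordinates $\mathrm{FN}_\Pmc:\Tmc(S)\to(\Rbbb_+\times\Rbbb)^{3g-3+n}$ attached to the pants decomposition $\Pmc=\{c_1,\ldots,c_{3g-3+n}\}$ (with $\Dmc=\{c_1,\ldots,c_m\}$) first to the augmentation and then to the quotient by $G_\Dmc$. I would begin by writing
\[\Tmc(S)^{\mathrm{aug},\Dmc}=\bigsqcup_{\Dmc'\subseteq\Dmc}\Tmc(S,\Dmc'),\]
where $\Tmc(S,\Dmc')$ is the Teichm\"uller space of complete finite-area hyperbolic structures on $S\setminus\Dmc'$. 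On each stratum, the curves in $\Pmc\setminus\Dmc'$ descend to a pants decomposition of $S\setminus\Dmc'$, so the surviving FN coordinates $(\ell_j,\theta_j)_{c_j\notin\Dmc'}$ give a homeomorphism of $\Tmc(S,\Dmc')$ with $(\Rbbb_+\times\Rbbb)^{3g-3+n-|\Dmc'|}$.

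Next I would define the candidate map stratum by stratum: on $\Tmc(S,\Dmc')$ send
\[\mu\mapsto\prod_{j=1}^{m}(x_j,y_j)\times\prod_{j=m+1}^{3g-3+n}(\ell_j,\theta_j),\]
where $(x_j,y_j)=(\ell_j\cos\theta_j,\ell_j\sin\theta_j)$ if $c_j\notin\Dmc'$ and $(x_j,y_j)=(0,0)$ if $c_j\in\Dmc'$. The group $G_\Dmc\cong\Zbbb^m$ acts by $\theta_j\mapsto\theta_j+2\pi$ for $c_j\in\Dmc$, which preserves each pair $(\ell_j\cos\theta_j,\ell_j\sin\theta_j)$ on the open stratum and acts trivially on any stratum where $c_j$ has already been pinched. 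Hence the map descends to
\[\overline\Phi:\Tmc(S)^{\mathrm{aug},\Dmc}/G_\Dmc\longrightarrow\Rbbb^{2m}\times(\Rbbb_+\times\Rbbb)^{3g-3+n-m}.\]
Stratum-by-stratum bijectivity is routine: for each $c_j\in\Dmc$ the polar coordinate map realizes the homeomorphism
\[\bigl([0,\infty)\times(\Rbbb/2\pi\Zbbb)\bigr)\big/\bigl(\{0\}\times(\Rbbb/2\pi\Zbbb)\bigr)\;\cong\;\Rbbb^2,\]
matching exactly the way each boundary stratum glues in at $\ell_j=0$.

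The main obstacle is promoting this bijection to a homeomorphism. In the interior $\Tmc(S)$ this is the classical FN theorem. At a boundary point $[\mu]\in\Tmc(S,\Dmc')/G_\Dmc$ one must verify that $[\mu_n]\to[\mu]$ in the augmented topology iff $\overline\Phi([\mu_n])\to\overline\Phi([\mu])$ in the target. This reduces to the standard Masur--Wolpert characterization: $\mu_n\to\mu$ precisely when $\ell_j(\mu_n)\to 0$ for each $c_j\in\Dmc'$ and the FN coordinates transverse to $\Dmc'$ converge to those of $\mu$. Under the $G_\Dmc$-quotient, the twists $\theta_j$ for $c_j\in\Dmc'$ become irrelevant in the limit because $\ell_j\to 0$ collapses the circle $\Rbbb/2\pi\Zbbb$ to a single point, which is exactly the behaviour of polar coordinates at the origin. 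Applying this characterization in both directions yields continuity of $\overline\Phi$ and of its inverse, completing the proof.
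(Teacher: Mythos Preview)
The paper does not actually prove this theorem: it is stated as a well-known result with a reference to \cite{HubKoch14} and no further argument. Your proposal is correct and follows the standard route; moreover, it exactly parallels the structure the paper uses in Section~\ref{sec: coord} to prove the more general Theorem~\ref{thm: main} for convex $\rp^2$ structures (define the map stratum by stratum, verify $G_\Dmc$-invariance, check bijectivity via the polar-coordinate decomposition, and then upgrade to a homeomorphism by characterizing convergent sequences at boundary strata). So there is nothing to critique: your argument is both sound and in the same spirit as the paper's own method for the generalization.
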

Note in this theorem that about each curve $c$ in $\Dmc$, the length parameter is allowed to become 0 and the twist parameter is considered modulo $2\pi$.  In our Theorem \ref{thm: main intro}, similar but more complicated constructions on generalized length and twist parameters about $c$ are needed to define $\Psi_\Dmc$.

Theorem \ref{thm: main intro} (together with Theorem \ref{thm: hol}) gives us a complete description of the behavior of the boundary of the image of the developing map in families of convex $\rp^2$ structures degenerating to a regular convex $\rp^2$ structure, including new behavior which does not occur in the study of hyperbolic structures in $\overline{\mathcal M}_g$. The new phenomenon is that the limit set of the holonomy representation (restricted to a component of $S\setminus\Dmc$) in the boundary of the image of the developing map might be a proper subset of the boundary. However, we can still describe the behavior of boundary in this case. 

One can generalize the holonomy  $\rho\!:\pi_1(S) \to \PGL(3,\re)$ of a convex $\Rbbb\Pbbb^2$ structure to representations from $\pi_1(S)$ to other split real Lie groups (in particular for Hitchin representations into $\PGL(n+1,\re)$, \cite{Hitchin92,Labourie06,Guichard05}).  We hope the detailed model of the degeneration of the convex boundary curves given in Theorem \ref{thm: main intro} will be of help to analyze families of Hitchin representations which degenerate along a multi-curve.  

Via the uniformization theorem, one may also view $\Mmc(S)$ as the moduli space of Riemann surfaces homeomorphic to $S$, which is itself naturally a complex orbifold. When $S$ is closed, there is a natural holomorphic vector bundle $\mathcal K(S)$ over $\Mmc(S)$ whose fiber above every point $X\in\Mmc(S)$ is the vector space of holomorphic cubic differentials on $X$. Labourie \cite{Labourie07} and the first author \cite{Loftin2001} independently constructed a natural homeomorphism
\begin{equation}\label{eqn}\Cmc(S)/\mathrm{MCG}(S)\simeq\mathcal{K}(S).\end{equation}

Later, the first author \cite{Loftin} defined the notion of a regular convex $\rp^2$ structure on a (not necessarily compact) Riemann surface $S$. Also, in part by using \cite{Loftin2004,BenHul13,Nie15}, he extended (\ref{eqn}) by proving that there is a natural homeomorphism $\mathcal C(S)^{\rm aug}  / \mathrm{MCG}(S)\simeq\mathcal K(S)^{\mathrm{reg}}$, where $\mathcal K(S)^{\mathrm{reg}}$ is the orbifold vector bundle of regular cubic differentials over the Deligne-Mumford compactification of the moduli space of Riemann surfaces $\overline{\Mmc(S)}$. (See Section \ref{sec: conv cubic} below.) In particular, the main theorem of \cite{Loftin} shows that there are (local) holomorphic coordinates (up to local finite group actions) on the augmented moduli space $\mathcal C(S)^{\rm aug}/\mathrm{MCG}(S)$.

As a consequence of Theorem \ref{thm: main intro}, we significantly simplify the proof of half of the main theorem in \cite{Loftin} by applying Brouwer's Invariance of Domain Theorem to the real coordinates we construct here and the holomorphic coordinates induced by the regular cubic differentials. More precisely, we arrive at the following corollary.

\begin{cor}[Loftin]\label{cor:Loftin}
There is a natural homeomorphism $\Cmc(S)^{\mathrm{aug}}/\mathrm{MCG}(S)\simeq\mathcal{K}(S)^{\mathrm{reg}}$. When $S$ is closed, this extends the homeomorphism (\ref{eqn}).
\end{cor}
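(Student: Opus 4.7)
The plan is to apply Brouwer's Invariance of Domain to the continuous bijection $\mathcal{F}\!:\Cmc(S)^{\mathrm{aug}}/\mathrm{MCG}(S)\to\mathcal{K}(S)^{\mathrm{reg}}$ already constructed in \cite{Loftin}, using the real coordinates $\Psi_\Dmc$ from Theorem \ref{thm: main intro} on the source and the holomorphic coordinates coming from plumbing on $\overline{\Mmc(S)}$ together with regular cubic differentials on the target. The map $\mathcal{F}$ restricts to (\ref{eqn}) on the open stratum, and its continuity and bijectivity are established in \cite{Loftin} via \cite{Loftin2004,BenHul13,Nie15}. The remaining content is that $\mathcal{F}$ is open, which is exactly what Invariance of Domain delivers once both sides are locally identified as topological manifolds of equal dimension.

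I would work locally: fix $\mu\in\Cmc(S,\Dmc)^{\mathrm{adm}}$, and let $U=\Cmc(S)^{\mathrm{aug},\Dmc}/G_\Dmc$. By Theorem \ref{thm: main intro}, $\Psi_\Dmc$ identifies $U$ with an open subset of $\Rbbb^{16g-16+8n}$. On the target side, let $V\subset\mathcal{K}(S)^{\mathrm{reg}}$ be the intersection with a Deligne-Mumford plumbing neighborhood of $\mathcal{F}([\mu])$; then $V$ is a complex manifold of real dimension $(6g-6+2n)+(10g-10+6n)=16g-16+8n$, where the first summand is the real dimension of plumbing parameters on the relevant boundary divisor of $\overline{\Mmc(S)}$ and the second is the real dimension of the fiber of $\mathcal{K}(S)^{\mathrm{reg}}$ at the underlying nodal Riemann surface. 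The natural quotient map $q\!:U\to\Cmc(S)^{\mathrm{aug}}/\mathrm{MCG}(S)$ is locally injective with image a neighborhood of $[\mu]$, since the $\mathrm{MCG}$-stabilizer of $[\mu]$ coincides with $G_\Dmc$ up to a finite group acting compatibly on $V$. Hence $\mathcal{F}\circ q\!:U\to V$ is a continuous injection between topological manifolds of equal dimension, and Brouwer's Invariance of Domain implies it is an open embedding. Consequently $\mathcal{F}$ is open near $[\mu]$; since $[\mu]$ was arbitrary, $\mathcal{F}$ is a continuous open bijection, hence a homeomorphism. When $S$ is closed, agreement with (\ref{eqn}) on $\Cmc(S)/\mathrm{MCG}(S)$ is immediate from the construction of $\mathcal{F}$.

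The main obstacle is verifying that $V$ has the stated manifold structure and that the $G_\Dmc$-action on $U$ is compatible with the plumbing description of the image under $\mathcal{F}$. Concretely, for each $c\in\Dmc$ one must match the generalized twist parameter of $\Psi_\Dmc$ about $c$ with the argument of the plumbing parameter at the corresponding node, and verify that the Dehn twist along $c$ --- which shifts the twist parameter by $2\pi$ --- induces a full rotation, hence the identity, on the plumbing parameter. This equivariance is implicit in the construction of $\mathcal{F}$ in \cite{Loftin} and is the only analytic input needed; with it in hand, Invariance of Domain subsumes the finer boundary analysis previously carried out in that work.
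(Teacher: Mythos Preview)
Your overall strategy---invoke Brouwer's Invariance of Domain once both sides are identified as manifolds of the same real dimension---is exactly the paper's approach. However, there are two points where your version diverges from the paper in ways that matter.

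First, and most importantly, you have the direction of the known continuous bijection reversed. The input from \cite{Loftin} that the paper cites (Theorem~\ref{thm: Loftin}) is a continuous bijection $\Xi:\mathcal K^{\rm reg}_\Dmc \to \Cmc(S)^{\mathrm{aug},\Dmc}/G_\Dmc$, i.e.\ from cubic differentials to convex structures. The continuity of the inverse map---your $\mathcal F$, from convex structures to cubic differentials---is precisely the ``hard half'' of \cite{Loftin} whose proof this paper is simplifying. So when you write that the continuity of $\mathcal F$ ``is established in \cite{Loftin},'' you are invoking the very thing the corollary is meant to re-derive by softer means; the argument becomes circular. The fix is simply to run Invariance of Domain on $\Xi$ rather than on $\mathcal F$: since $\Xi$ is a continuous bijection between manifolds of equal dimension, it is open, hence a homeomorphism.

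Second, the paper applies Invariance of Domain directly at the $G_\Dmc$ level, where both $\mathcal K^{\rm reg}_\Dmc$ and $U_\Dmc=\Cmc(S)^{\mathrm{aug},\Dmc}/G_\Dmc$ are honest manifolds (Theorems~\ref{thm: cx-manifold} and~\ref{thm: main}). You instead pass through the full $\mathrm{MCG}(S)$-quotient and then argue that $q:U_\Dmc\to\Cmc(S)^{\mathrm{aug}}/\mathrm{MCG}(S)$ is locally injective ``up to a finite group.'' This is unnecessary and introduces the orbifold singularities you would otherwise have to control; staying at the $G_\Dmc$ level (as the paper does) avoids this entirely, and the descent to the full quotient is immediate afterward.
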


The main tool used to prove Theorem \ref{thm: main intro} are Fenchel-Nielsen type coordinates on the set of holonomies of convex $\Rbbb\Pbbb^2$ structures on $S$. In the case when $S$ is closed, these kinds of coordinates were first constructed by Goldman \cite{Go90}. Choi-Goldman \cite{ChoiGo1} also showed that these holonomies form the Hitchin component of the space of representations from $\pi_1(S)$ into $\PGL(3,\re)$. In the more general setting of Hitchin representations into $\PGL(n,\Rbbb)$, Fock-Goncharov, Bonahon-Dreyer, and the second author further developed analogs of Goldman's coordinates \cite{FocGon,BonDre,Zhang16} which are more amenable to our construction than Goldman's original coordinates. Marquis \cite{Mar10} extended Goldman's coordinates to the case of finite area convex $\Rbbb\Pbbb^2$ structures on a (possibly) punctured surface $S$. In this paper, we study convex $\rp^2$ structures on punctured surfaces by extending the  coordinates of \cite{FocGon,BonDre,Zhang16} instead.  Other works studying noncompact convex $\rp^2$ surfaces  from various points of view include \cite{BenHul13,BenHul14,Choi94II,DumWolf15,Loftin2004,Mar12,Nie15}.

Here is a brief description of the structure of this paper.
In Section \ref{admissible-section}, we follow \cite{Loftin} to define $\Cmc(S)^{\mathrm{aug}}$ and its topology. Then in Section \ref{sec: hol}, we describe the (global) coordinates on the image ${\rm hol}(\mathcal C(S))$ of the holonomy map from $\Cmc(S)$ to $\Xmc(\pi_1(S),\PGL(3,\Rbbb))/\PGL(3,\Rbbb)$, and use them to construct local coordinates on appropriate quotients of $\Cmc(S)^{\mathrm{aug}}$. For this part, a key point is that unlike the case of compact $S$, the holonomy of a regular convex $\Rbbb\Pbbb^2$ structure on a non-compact surface $S$ of negative Euler characteristic does not always determine the projective structure at the ends of $S$. We then proceed to prove Theorem \ref{thm: main intro} in Section \ref{sec: coord} by showing that the coordinates constructed in Section \ref{sec: hol} describe the topology on $\Cmc(S)^{\mathrm{aug}}/\mathrm{MCG}(S)$ described in Section \ref{admissible-section}. Section \ref{sec: conv cubic} relates our description of the topology on $\Cmc(S)^{\mathrm{aug}}/\mathrm{MCG}(S)$ to the regular cubic differentials studied in \cite{Loftin}, which allows us to recover Corollary \ref{cor:Loftin}. Finally, in the Appendix, we present the proof of Theorem \ref{thm: hol}, which describes how the image of the developing map of two regular convex $\Rbbb\Pbbb^2$ structures on $S$ with the same holonomy can differ. In a result which may be of independent interest, we also give in the Appendix a description of the limit set of any convex $\rp^2$ structure on $S$. 

{\bf Acknowledgements:} The first author would like to thank Bill Goldman for many inspiring discussions about $\rp^2$ structures. 
\section{Admissible convex real projective structures} \label{admissible-section}

In this section, we define admissible convex $\Rbbb\Pbbb^2$ structures on finite type surfaces, as well as some terminology describing the holonomy of these structures about the ends of the surface.

\subsection{Convex real projective structures}\label{sec:convex}
We begin by recalling some standard definitions and properties of $\Rbbb\Pbbb^2$ structures on surfaces.

\begin{definition}\
\begin{enumerate}
\item An \emph{$\Rbbb\Pbbb^2$ surface} $\Sigma$ is a smooth, connected, closed surface with finitely many punctures, that is equipped with a maximal collection of smooth maps $\{\psi_\alpha:U_\alpha\to\Rbbb\Pbbb^2\}_\alpha$ so that the following holds.
\begin{itemize}
\item Each $U_\alpha\subset\Sigma$ is a connected and simply connected open subset,
\item For any pair of smooth maps $\psi_\alpha$ and $\psi_\beta$, $\psi_\alpha\circ\psi_\beta^{-1}:\psi_\beta(U_\alpha\cap U_\beta)\to\psi_\alpha(U_\alpha\cap U_\beta)$ is a restriction of a projective transformation on $\Rbbb\Pbbb^2$ to each connected component of $\psi_\beta(U_\alpha\cap U_\beta)$.
\end{itemize}
The smooth maps $\psi_\alpha$ are called \emph{charts} of $\Sigma$.
\item Let $\Sigma$ and $\Sigma'$ be two $\Rbbb\Pbbb^2$ surfaces with with maximal atlases $\{\psi_\alpha\}_\alpha$ and $\{\psi_\alpha'\}_\alpha$ respectively. A diffeomorphism $f:\Sigma\to\Sigma'$ is a \emph{projective isomorphism} if for any charts $\psi_\alpha:U_\alpha\to\Rbbb\Pbbb^2$ of $\Sigma$ and $\psi'_\beta:U_\beta'\to\Rbbb\Pbbb^2$  of $\Sigma'$ so that $f(U_\alpha)\cap U_\beta'$ is non-empty, the composition
\[\psi'_\beta\circ f\circ\psi_\alpha^{-1}:\psi_\alpha(U_\alpha\cap f^{-1}(U_\beta'))\to\psi'_\beta(f(U_\alpha)\cap U_\beta')\]
is the restriction of a projective transformation on $\Rbbb\Pbbb^2$ to each connected component of $\psi_\alpha(U_\alpha\cap f^{-1}(U_\beta'))$.
\end{enumerate}
\end{definition}

Let $\widetilde{\Sigma}$ be the universal cover of $\Sigma$. Then $\widetilde{\Sigma}$ is naturally an $\Rbbb\Pbbb^2$ surface. For any choice of chart $\widetilde{\psi}_\alpha:\widetilde{U}_\alpha\to\Rbbb\Pbbb^2$ of $\widetilde{\Sigma}$, one can construct via analytic continuation, a unique local diffeomorphism $\phi_\alpha:\widetilde{\Sigma}\to\Rbbb\Pbbb^2$ so that
\begin{itemize}
\item $\phi_\alpha|_{\widetilde{U}_\alpha}=\widetilde{\psi}_\alpha$,
\item for any point $p\in\widetilde{\Sigma}$, there is an open set $\widetilde{U}\subset\widetilde{\Sigma}$ so that $p\in \widetilde{U}$ and $\phi_\alpha|_{\widetilde{U}}:\widetilde{U}\to\Rbbb\Pbbb^2$ is a chart for $\widetilde{\Sigma}$.
\end{itemize}
The local diffeomorphism $\phi_\alpha$ is usually known as a \emph{developing map} for $\Sigma$. It induces a group homomorphism $\rho_\alpha:\pi_1(\Sigma)\to\PGL(3,\Rbbb)$ with the defining property that $\phi_\alpha$ is $\rho_\alpha$-equivariant. This homomorphism is usually called a \emph{holonomy representation} of $\Sigma$, and the pair $(\phi_\alpha,\rho_\alpha)$ is a \emph{developing pair} for $\Sigma$.

Observe that for any pair of charts $\widetilde{\psi}_\alpha:\widetilde{U}_\alpha\to\Rbbb\Pbbb^2$ and $\widetilde{\psi}_\beta:\widetilde{U}_\beta\to\Rbbb\Pbbb^2$ of $\widetilde{\Sigma}$, there is some $g\in\PGL(3,\Rbbb)$ so that
\[(\phi_\alpha,\rho_\alpha)=(g\circ\phi_\beta,c_g\circ\rho_\beta),\]
where $c_g:\PGL(3,\Rbbb)\to \PGL(3,\Rbbb)$ is conjugation by $g$. In particular, the holonomy representation of $\Sigma$ is unique up to conjugation.

\begin{definition} \
\begin{enumerate}
\item A domain $\Omega\subset\Rbbb\Pbbb^2$ is \emph{properly convex} if its closure in $\Rbbb\Pbbb^2$ does not contain any projective lines, and for any pair of distinct points $p,q\in\Omega$, there is a projective line segment with endpoints $p,q$ that lies entirely in $\Omega$.
\item A connected $\Rbbb\Pbbb^2$ surface $\Sigma$ is \emph{convex} if any (equivalently, some) developing map of $\Sigma$ is a diffeomorphism onto a properly convex domain in $\Rbbb\Pbbb^2$.
\item The \emph{deformation space of convex $\Rbbb\Pbbb^2$ structures on $S$} is
\[\Cmc(S):=\left\{(f,\Sigma):\begin{array}{l}
\Sigma\text{ is a convex }\Rbbb\Pbbb^2\text{ surface}\\
f:S\to\Sigma\text{ is a diffeomorphism}\end{array}\right\}\Bigg/\sim,\]
where $(f,\Sigma)\sim(f',\Sigma')$ if $f'\circ f^{-1}:\Sigma\to\Sigma'$ is homotopic to a projective isomorphism from $\Sigma$ to $\Sigma'$. An equivalence class $[f,\Sigma]\in\Cmc(S)$ is a \emph{(marked) convex $\Rbbb\Pbbb^2$ structure} on $S$.
\end{enumerate}
\end{definition}

Let $\mu\in\Cmc(S)$, let $(f,\Sigma)$ be a representative of $\mu$, and let $(\phi_\alpha,\rho_\alpha)$ be any developing pair of $\Sigma$. The diffeomorphism $f:S\to\Sigma$ induces an isomorphism $f_*:\pi_1(S)\to\pi_1(\Sigma)$, and also lifts to a map $\widetilde{f}:\widetilde{S}\to\widetilde{\Sigma}$. Define the pair
\[(\phi,\rho):=(\widetilde{f}\circ\phi_\alpha,f_*\circ\rho_\alpha).\]
Note that $\rho:\pi_1(S)\to\PGL(3,\Rbbb)$ is injective, and that $\phi:\widetilde{S}\to\Rbbb\Pbbb^2$ is a $\rho$-equivariant diffeomorphism onto a properly convex domain. Furthermore, since $\rho(\pi_1(S))$ acts properly discontinuously on $\Omega$, it is a discrete subgroup of $\PGL(3,\Rbbb)$. We refer to $(\phi,\rho)$ as a \emph{developing pair} for the convex $\mathbb{RP}^2$ structure $\mu=[(f,\Sigma)]\in\mathcal{C}(S)$.

The following well-known theorem (see e.g.\ Goldman \cite{Go90} Section 2.2) states that any developing pair for any representative $(f,\Sigma)$ of $\mu\in\Cmc(S)$ determines $\mu$.

\begin{thm}\label{thm:dev}
Let $\rho:\pi_1(S)\to\PGL(3,\Rbbb)$ be an injective representation, and $\phi:\widetilde{S}\to\Rbbb\Pbbb^2$ be a $\rho$-equivariant diffeomorphism onto a properly convex domain $\Omega\subset\Rbbb\Pbbb^2$. Then there is a unique $\mu\in\Cmc(S)$ so that $(\phi,\rho)$ is the developing pair for $\mu$. Furthermore, $(\phi,\rho)$ and $(\phi',\rho')$ are developing pairs for $\mu\in\Cmc(S)$ if and only if there is some $g\in\PGL(3,\Rbbb)$ so that $\rho'=c_g\circ\rho$ and $\phi'$ is homotopic to $g\circ\phi$ via a $\rho'$-equivariant homotopy.
\end{thm}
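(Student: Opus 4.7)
The plan is to construct a convex $\Rbbb\Pbbb^2$ structure from $(\phi,\rho)$ explicitly, then verify both directions of the developing-pair characterization.

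\textbf{Existence.} Set $\Omega:=\phi(\widetilde S)$, which is properly convex by assumption. Injectivity of $\rho$ and $\rho$-equivariance of the diffeomorphism $\phi$ transport the free, properly discontinuous $\pi_1(S)$-action on $\widetilde S$ to a free, properly discontinuous $\rho(\pi_1(S))$-action on $\Omega$, so $\Sigma:=\Omega/\rho(\pi_1(S))$ is a smooth surface. Because $\Omega$ is properly convex and hence simply connected, $\widetilde\Sigma=\Omega$, and the inclusion $\iota:\Omega\hookrightarrow\Rbbb\Pbbb^2$ is a developing map for the induced $\Rbbb\Pbbb^2$ structure on $\Sigma$ with holonomy $\rho$. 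The map $\phi$ descends to a diffeomorphism $f:S\to\Sigma$, and $(\phi,\rho)=(\iota\circ\widetilde f,\rho\circ f_*)$ is by construction the developing pair of $(f,\Sigma)$ and hence of the class $\mu\in\Cmc(S)$.

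\textbf{Necessity of the equivalence.} Suppose $(\phi,\rho)$ and $(\phi',\rho')$ are developing pairs of representatives $(f,\Sigma)$ and $(f',\Sigma')$ of a common class $\mu$. Pick developing pairs $(\phi_\alpha,\rho_\alpha)$ of $\Sigma$ and $(\phi'_\beta,\rho'_\beta)$ of $\Sigma'$ so that $\phi=\phi_\alpha\circ\widetilde f$, $\rho=\rho_\alpha\circ f_*$, $\phi'=\phi'_\beta\circ\widetilde{f'}$, and $\rho'=\rho'_\beta\circ f'_*$. Let $h:\Sigma\to\Sigma'$ be a projective isomorphism homotopic to $f'\circ f^{-1}$. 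Local projective compatibility of $h$ with the charts, combined with analytic continuation, produces $g\in\PGL(3,\Rbbb)$ and a lift $\widetilde h:\widetilde\Sigma\to\widetilde\Sigma'$ with $\phi'_\beta\circ\widetilde h=g\circ\phi_\alpha$; comparing equivariance on both sides gives $\rho'_\beta\circ h_*=c_g\circ\rho_\alpha$, so $\rho'=\rho'_\beta\circ h_*\circ f_*=c_g\circ\rho$. Lifting a homotopy from $f'\circ f^{-1}$ to $h$ equivariantly and postcomposing with $\phi'_\beta$ produces the required $\rho'$-equivariant homotopy from $\phi'$ to $g\circ\phi$.

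\textbf{Sufficiency, uniqueness, and the main obstacle.} For sufficiency, assume $\rho'=c_g\circ\rho$ and $\phi'$ is $\rho'$-equivariantly homotopic to $g\circ\phi$, and apply the existence construction to both pairs to obtain classes $\mu$ and $\mu'$. The projective transformation $g$ intertwines the two actions and restricts to a projective isomorphism $\Omega\to g\Omega$ descending to a projective isomorphism of quotient surfaces; this shows that the existence construction applied to $(g\circ\phi,\rho')$ yields the class $\mu$, so I may reduce to $g=\id$ and $\rho=\rho'$, with $\phi$ and $\phi'$ equivariantly homotopic. The remaining task, and the main obstacle, is to use this equivariant homotopy to produce a projective isomorphism $\Sigma\to\Sigma'$ homotopic to $f'\circ f^{-1}$. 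The difficulty is genuine because for noncompact $S$ a single holonomy can preserve distinct properly convex domains (compare Theorem \ref{thm: hol}), so the equivariance plus the homotopy, not either alone, must force the two convex $\Rbbb\Pbbb^2$ structures to agree as marked classes. Once sufficiency is in hand, uniqueness in the existence part follows by applying necessity and sufficiency with $\phi'=\phi$ and $g=\id$.
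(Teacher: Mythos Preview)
The paper does not give its own proof of this theorem: it is quoted as well-known and attributed to Goldman's paper on closed surfaces. Your existence argument and your argument for the ``only if'' direction are both standard and correct.

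The sufficiency (``if'') direction, however, is not merely an obstacle you have left unfinished---as literally stated, it appears to be false when $S$ has punctures. Take $\rho$ with hyperbolic holonomy about a puncture; by Corollary~\ref{cor: hol} there are distinct admissible structures $\mu_+,\mu_-\in\Cmc(S)$ with the same holonomy $\rho$ and $\rho$-equivariant developing images $\Omega_-\subsetneq\Omega_+$ (bulge $\mp\infty$). Since $\phi_-$ lands in $\Omega_-\subset\Omega_+$, the map $\phi_+^{-1}\circ\phi_-:\widetilde S\to\widetilde S$ is $\pi_1(S)$-equivariant and descends to a self-map of $S$ inducing the identity on $\pi_1$; asphericity of $S$ gives a homotopy to $\id_S$, which lifts and, after composing with $\phi_+$, yields a $\rho$-equivariant homotopy from $\phi_-$ to $\phi_+$. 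Thus $g=\id$, $\rho'=\rho$, and $\phi_+\simeq\phi_-$ equivariantly, yet $\mu_+\neq\mu_-$.

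What actually works, and what the paper in fact uses, is the ``if'' direction under the additional hypothesis $\phi(\widetilde S)=\phi'(\widetilde S)$: see Proposition~\ref{prop:devimage}, where the equivariant homotopy is constructed inside the common image $\Omega$ and then descends to a homotopy from $f'\circ f^{-1}$ to the identity of the single quotient $\Omega/\rho(\pi_1(S))$. Your reduction to $g=\id$ and $\rho=\rho'$ is fine; to complete the argument you must also assume $\Omega=\Omega'$, and then the projective isomorphism you seek is simply $\id_\Sigma$. Without that hypothesis the conclusion need not hold.
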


The domain $\Omega$ in the above proposition will be referred to as the $\rho$-\emph{equivariant developing image} of $\mu$. 

Given a properly convex domain $\Omega\subset\Rbbb\Pbbb^2$, we can define the \emph{Hilbert metric} on $\Omega$ as follows.

\begin{definition}
Let $\Omega\subset\Rbbb\Pbbb^2$ be a properly convex domain and let $x,y\in\Omega$. Let $\ell$ be a projective line in $\Rbbb\Pbbb^2$ through $x$ and $y$, and let $a,b\in\ell$ so that $\{a,b\}=\partial\Omega\cap\ell$ and $a,x,y,b$ lie in $\ell$ in this order. Define
\[d_\Omega(x,y):=\frac{1}{2}\log[a,x,y,b],\]
where $[a,x,y,b]$ is the cross ratio of four points $a,x,y,b$ on the projective line $\ell$.
\end{definition}

One can verify that $d_\Omega$ defines a metric on $\Omega$, and $(\Omega,d_\Omega)$ is a complete, proper, path metric space. Furthermore, since the cross ratio is a projective invariant, $d_\Omega$ is invariant under the projective transformations that preserve $\Omega$. Thus, if $\Omega$ is the $\rho$-equivariant developing image of some $\mu\in\Cmc(S)$, $d_\Omega$ descends to a metric on $\Omega/\rho(\pi_1(S))$. Projective line segments are geodesics of $d_\Omega$.

Using this, we show that any holonomy representation of $\mu$, together with the $\rho$-equivariant developing image, determines $\mu$.

\begin{prop}\label{prop:devimage}
Let $\mu_0,\mu_1\in\Cmc(S)$ with developing pairs $(\phi_0,\rho_0)$ and $(\phi_1,\rho_1)$. If $\rho_0=\rho_1:\pi_1(S)\to\PGL(3,\Rbbb)$ and $\phi_0(\widetilde{S})=\phi_1(\widetilde{S})\subset\Rbbb\Pbbb^2$, then $\mu_0=\mu_1$.
\end{prop}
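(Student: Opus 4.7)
The plan is to use Theorem \ref{thm:dev} with $g=\id$, which reduces the problem to constructing a $\rho$-equivariant homotopy from $\phi_0$ to $\phi_1$ (where $\rho:=\rho_0=\rho_1$). Since $\phi_0(\widetilde{S})=\phi_1(\widetilde{S})=\Omega$ is properly convex, the natural candidate is the straight-line homotopy along projective segments inside $\Omega$, parametrized so that the parameter is invariant under projective transformations preserving $\Omega$.

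Concretely, for each $x\in\widetilde{S}$, proper convexity of $\Omega$ guarantees that the two points $\phi_0(x),\phi_1(x)\in\Omega$ are joined by a unique projective line segment contained in $\Omega$. I will define $H:\widetilde{S}\times[0,1]\to\Omega$ by letting $H(x,t)$ be the point of this segment at Hilbert distance $t\cdot d_\Omega(\phi_0(x),\phi_1(x))$ from $\phi_0(x)$, so that $H(\cdot,0)=\phi_0$ and $H(\cdot,1)=\phi_1$. Continuity of $H$ follows because $\phi_0,\phi_1$ are smooth and, on the interior of $\Omega\times\Omega$, the assignment of such a parametrized segment depends continuously on its endpoints.

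The key check is $\rho$-equivariance of $H_t$ for each $t$. For any $\gamma\in\pi_1(S)$, the element $\rho(\gamma)\in\PGL(3,\Rbbb)$ is a projective transformation preserving $\Omega$, so it sends projective line segments in $\Omega$ to projective line segments in $\Omega$ and is an isometry of $d_\Omega$. Using $\rho(\gamma)\phi_i(x)=\phi_i(\gamma x)$ for $i=0,1$, it follows that $\rho(\gamma)$ maps the segment from $\phi_0(x)$ to $\phi_1(x)$ onto the segment from $\phi_0(\gamma x)$ to $\phi_1(\gamma x)$, preserving the Hilbert arclength parametrization. Hence $\rho(\gamma)H(x,t)=H(\gamma x,t)$ for all $t$, which is exactly $\rho$-equivariance of the homotopy.

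Applying Theorem \ref{thm:dev} with $g=\id$ then gives $\mu_0=\mu_1$. I do not anticipate a serious obstacle: the only nontrivial ingredient is the observation that projective line segments and the Hilbert metric produce a natural $\rho$-equivariant interpolation, which is immediate from proper convexity of $\Omega$ and the fact that $\rho(\pi_1(S))$ acts by projective transformations preserving $\Omega$.
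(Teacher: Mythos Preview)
Your proposal is correct and is essentially identical to the paper's own proof: both reduce via Theorem \ref{thm:dev} to constructing a $\rho$-equivariant homotopy, and both use the straight-line homotopy along projective segments in $\Omega$ parametrized by the Hilbert metric, with equivariance following from the projective invariance of $d_\Omega$.
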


\begin{proof}
By Theorem \ref{thm:dev}, it is sufficient to show that $\phi_0$ is homotopic to $\phi_1$ via a $\rho$-equivariant homotopy. Set $\Omega:=\phi_0(\widetilde{S})=\phi_1(\widetilde{S})$, and define $F:[0,1]\times\Omega\to\Omega$ by declaring $F(t,p)$ to be the unique point in the projective line segment in $\Omega$ between $\phi_0(p)$ and $\phi_1(p)$, so that $d_\Omega(\phi_0(p),F(t,p))=td_\Omega(\phi_0(p),\phi_1(p))$. It is clear that $F$ is a homotopy between $\phi_0$ and $\phi_1$, and it is easy to verify from the projective invariance of $d_\Omega$ that $F(t,\gamma\cdot p)=\rho(\gamma)\cdot F(t,p)$ for all $(t,p)\in[0,1]\times\Omega$ and all $\gamma\in\pi_1(S)$.
\end{proof}

As a consequence of Theorem \ref{thm:dev}, we may define the \emph{holonomy map}
\[\mathrm{hol}:\Cmc(S)\to\Xmc_3(S):=\mathrm{Hom}(\pi_1(S),\PGL(3,\Rbbb))/\PGL(3,\Rbbb),\]
where $\PGL(3,\Rbbb)$ acts on $\mathrm{Hom}(\pi_1(S),\PGL(3,\Rbbb))$ by conjugation. We will also denote $\mathrm{hol}_\mu:=\mathrm{hol}(\mu)$ when convenient.

We now describe natural topologies on $\Cmc(S)$ and $\Xmc_3(S)$. Let us start with the topology on $\Xmc_3(S)$. Given a finite generating set $\Smc$ of $\pi_1(S)$, $\mathrm{Hom}(\pi_1(S),\PGL(3,\Rbbb))$ is realized as a subset of $\PGL(3,\Rbbb)^{|\Smc|}$ by evaluating every $\rho\in\mathrm{Hom}(\pi_1(S),\PGL(3,\Rbbb))$ on $\Smc$. The standard topology on $\PGL(3,\Rbbb)$ thus induces a subspace topology on $\mathrm{Hom}(\pi_1(S),\PGL(3,\Rbbb))\subset\PGL(3,\Rbbb)^{|\Smc|}$, which can be verified to be independent of the choice of $\Smc$. This in turn induces the quotient topology on $\Xmc_3(S)$.

Next, we define a topology in $\Cmc(S)$. Choose a Riemannian metric on $\Rbbb\Pbbb^2$. For any properly convex domain $\Omega\subset\Rbbb\Pbbb^2$, let $U_{\Omega,\epsilon}$ denote the set of properly convex domains in $\Rbbb\Pbbb^2$ whose Hausdorff distance from $\Omega$ is less than $\epsilon$. The set $\Pmc\Cmc:=\{\text{properly convex domains in }\Rbbb\Pbbb^2\}$ can then be equipped with the topology generated by the basis
\[\left\{U_{\Omega,\epsilon}:
\Omega\subset\Rbbb\Pbbb^2\text{ is a properly convex domain and }\epsilon>0.\right\}\]
This topology does not depend on the choice of Riemannian metric on $\Rbbb\Pbbb^2$.

Using this, we can topologize $\Cmc(S)$ in the following way. Recall that any chart $\widetilde{\psi}_\alpha:\widetilde{U}_\alpha\to\Rbbb\Pbbb^2$ of $\widetilde{\Sigma}$ determines a developing pair $(\phi_\alpha,\rho_\alpha)$ for $\Sigma$. Equip 
\[\widetilde{\Cmc}(S):=\left\{(f,\Sigma,\widetilde{\psi}_\alpha):\begin{array}{l}
\Sigma\text{ is a convex }\Rbbb\Pbbb^2\text{ surface}\\
f:S\to\Sigma\text{ is a diffeomorphism}\\
\widetilde{\psi}_\alpha:\widetilde{U}_\alpha\to\Rbbb\Pbbb^2\text{ is a chart of }\widetilde{\Sigma}\end{array}\right\}.\]
with the topology generated by
\[\left\{U_{(f,\Sigma,\widetilde{\psi}_\alpha),U,V}:\begin{array}{l}(f,\Sigma,\widetilde{\psi}_\alpha)\in\widetilde{\Cmc}(S),\,\, U\subset\Pmc\Cmc\text{ is an open set containing }\phi_\alpha(\widetilde\Sigma)\\
V\subset\mathrm{Hom}(\pi_1(S),\PGL(3,\Rbbb))\text{ is an open set containing }f_*\circ\rho_\alpha,
\end{array}\right\},\]
where
\[U_{(f,\Sigma,\widetilde{\psi}_\alpha),U,V}:=\{(f',\Sigma',\widetilde{\psi}_\beta')\in\widetilde{\Cmc}(S):\psi'_\beta(\widetilde{\Sigma}')\in U, \,\,f'_*\circ \rho'_\beta\in V\}.\]
Since $\Cmc(S)$ can be realized as a quotient of $\widetilde{\Cmc}(S)$, the topology on $\widetilde{\Cmc}(S)$ induces a quotient topology on $\Cmc(S)$. It is clear from the definition of this topology that the holonomy map $\mathrm{hol}:\Cmc(S)\to\Xmc_3(S)$ is continuous.

\subsection{The admissibility condition}
To build an augmentation of the deformation space of convex $\Rbbb\Pbbb^2$ structures, we need to consider a particular subset $\Cmc(S)^{\mathrm{adm}}\subset\Cmc(S)$ that satisfy an admissibility condition (see Definition \ref{def:admissible}). The admissibility condition is a condition on the ends of convex $\Rbbb\Pbbb^2$ surfaces, which arises naturally in the course of studying degenerations of convex $\Rbbb\Pbbb^2$ structures. 

In the case when $S$ is closed, we have the following theorem. The first statement is due to Choi-Goldman \cite{ChoiGo1} while the second is due to Kuiper \cite{Kuiper54}.

\begin{thm}\label{thm: determined by holonomy}
Suppose that $S$ is a closed oriented surface. Then
\begin{enumerate}
\item $\mathrm{hol}:\Cmc(S)\to\Xmc_3(S)$ is a homeomorphism onto a connected component of $\Xmc_3(S)$, usually known as the $\PGL(3,\Rbbb)$-Hitchin component.
\item for every $\gamma\in\pi_1(S)\setminus\{\id\}$ and for every $\mu\in\Cmc(S)$, the conjugacy class $\mathrm{hol}_\mu(\gamma)$ contains a diagonal representative with pairwise distinct eigenvalues of the same sign.
\end{enumerate}
\end{thm}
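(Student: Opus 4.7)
The plan is to establish (1) via the Ehresmann--Thurston principle together with a closedness argument using divisible convex sets, and to establish (2) by a direct Jordan-form analysis of elements preserving a properly convex cocompact domain.

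For part (1), the first step is to invoke the general deformation theory of $(\PGL(3,\Rbbb),\Rbbb\Pbbb^2)$-structures to conclude that $\mathrm{hol}:\Cmc(S)\to\Xmc_3(S)$ is a local homeomorphism onto its image, working on appropriate Hausdorff quotients of $\widetilde{\Cmc}(S)$ and $\mathrm{Hom}(\pi_1(S),\PGL(3,\Rbbb))$. In particular $\mathrm{hol}(\Cmc(S))$ is open in $\Xmc_3(S)$. Next I would show injectivity via Proposition \ref{prop:devimage}: given two structures $\mu_0,\mu_1$ with the same holonomy $\rho$ and equivariant developing images $\Omega_0,\Omega_1$, the fact that $S$ is closed forces $\rho(\pi_1(S))$ to act cocompactly on each $\Omega_i$. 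A standard argument (using the fact that a divisible properly convex domain is the interior of the convex hull of the attracting fixed points of its automorphisms acting on $\Rbbb\Pbbb^2$, or that any two $\rho$-invariant properly convex domains coincide when the action is cocompact) shows $\Omega_0=\Omega_1$, so Proposition \ref{prop:devimage} gives $\mu_0=\mu_1$.

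For closedness of the image, I would take a sequence $\mu_n\in\Cmc(S)$ with $\mathrm{hol}(\mu_n)\to\rho_\infty$ and show that $\rho_\infty$ is the holonomy of some $\mu_\infty\in\Cmc(S)$. Fix basepoints and normalize developing pairs $(\phi_n,\rho_n)$ with images $\Omega_n$; by compactness of the space of properly convex domains in the Hausdorff topology (once one rules out the degenerations to a line segment or to all of $\Rbbb\Pbbb^2$ using the free cocompact action and bounded diameter of a fundamental domain in the Hilbert metric), one extracts a limit $\Omega_\infty$. The $\rho_\infty$-action on $\Omega_\infty$ is free, properly discontinuous, and cocompact, yielding a convex $\Rbbb\Pbbb^2$ surface whose marking is inherited from the $\mu_n$. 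Since $\Cmc(S)$ is nonempty and connected (e.g.\ star-shaped about any hyperbolic structure), $\mathrm{hol}(\Cmc(S))$ lies in a single component of $\Xmc_3(S)$; the Klein-model inclusion $\Tmc(S)\hookrightarrow\Cmc(S)$ sends Teichm\"uller space to this component, identifying it as the Hitchin component.

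For part (2), fix $\gamma\in\pi_1(S)\setminus\{\id\}$, a developing pair $(\phi,\rho)$ with image $\Omega$, and set $A=\rho(\gamma)\in\PGL(3,\Rbbb)$. Since the action on $\Omega$ is free, $A$ has no fixed point in $\Omega$; by Brouwer on $\overline{\Omega}$, $A$ must fix some point of $\partial\Omega$. Analyzing the possible Jordan forms of a lift of $A$ to $\mathrm{GL}(3,\Rbbb)$ preserving the properly convex cone over $\Omega$, one rules out (a)~complex eigenvalues (they would force rotation on some invariant plane, inconsistent with preserving a properly convex set cocompactly), (b)~nontrivial Jordan blocks (they would produce a parabolic fixed point whose orbit would accumulate in $\Omega$, contradicting proper discontinuity and cocompactness), and (c)~repeated real eigenvalues (ruled out by the existence of both an attracting and repelling fixed point on $\partial\Omega$ coming from the cocompact action, via a ping-pong or north-south dynamics argument). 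What remains is a diagonalizable element with three distinct real eigenvalues, and preservation of the connected properly convex cone forces them to share a sign.

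The main obstacle is the closedness step in (1): guaranteeing that the Hausdorff limit $\Omega_\infty$ is still properly convex and not degenerate. This requires a uniform lower bound on the ``thickness'' of the $\Omega_n$, which one obtains from the cocompactness of the $\rho_n$-actions and a compactness argument for fundamental domains in the Hilbert metric, but the bookkeeping is delicate and is the real technical content behind Choi--Goldman's theorem.
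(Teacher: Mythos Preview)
The paper does not prove this theorem: it simply attributes (1) to Choi--Goldman \cite{ChoiGo1} and (2) to Kuiper \cite{Kuiper54} and moves on. Your outline follows the broad structure of those arguments, but there is a genuine error in part (2) and a smaller unjustified step in part (1).

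Your reason for excluding nontrivial Jordan blocks is wrong: orbits under a unipotent element accumulate on $\partial\Omega$, not inside $\Omega$, and this is entirely compatible with a free properly discontinuous action---indeed the paper itself constructs parabolic ends in Section~\ref{admissible-section}. What actually rules out parabolics is cocompactness: since $S$ is closed, $\Omega/\rho(\pi_1(S))$ is compact, so every nontrivial conjugacy class is represented by a closed Hilbert geodesic and hence has positive translation length, whereas a unipotent element has translation length zero. The quasi-hyperbolic case (a $2\times 2$ Jordan block together with a distinct third eigenvalue) is not covered by your (b) or (c) as written, since such elements can have positive translation length and do exhibit an attracting and a repelling fixed flag; ruling them out requires showing that the straight segment they force into $\partial\Omega$ is incompatible with a cocompact action (via Benz\'ecri compactness, or the direct argument in Kuiper).

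In part (1), the claim that $\Cmc(S)$ is ``star-shaped about any hyperbolic structure'' is not an established fact and is not how connectedness is proved; connectedness is the content of Goldman's cell parametrization in \cite{Go90}. The rest of your outline for (1)---Ehresmann--Thurston for openness, uniqueness of the divisible domain for injectivity, and the acknowledgment that closedness is the substantive Choi--Goldman step---is accurate.
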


Both statements in this theorem fail if $S$ has punctures. However, we still have the following results of Marquis \cite{Mar10}.

\begin{thm}[Marquis]\label{thm: holonomy1}
Let $\mu\in\Cmc(S)$.
\begin{enumerate}
\item If $\gamma\in\pi_1(S)\setminus\{\id\}$ is a non-peripheral element, then the conjugacy class $\mathrm{hol}_\mu(\gamma)$ contains a diagonal representative with pairwise distinct and positive eigenvalues.
\item If $\gamma\in\pi_1(S)$ is a peripheral element, then the conjugacy class $\mathrm{hol}_\mu(\gamma)$ has to contain
\[\left[\begin{array}{ccc}
1&1&0\\
0&1&1\\
0&0&1
\end{array}\right],\,\,\,\left[\begin{array}{ccc}
\lambda_1&0&0\\
0&\lambda_2&1\\
0&0&\lambda_2
\end{array}\right]\,\,\,\text{or}\,\,\,\left[\begin{array}{ccc}
\lambda_1&0&0\\
0&\lambda_2&0\\
0&0&\lambda_3
\end{array}\right]\]
for some pairwise distinct and positive $\lambda_1,\lambda_2,\lambda_3$ .
\end{enumerate}
\end{thm}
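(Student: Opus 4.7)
The plan is to deduce the allowed Jordan normal forms of $\mathrm{hol}_\mu(\gamma)$ from the classification of projective automorphisms preserving a properly convex open domain, combined with the topological constraints coming from the free, properly discontinuous action of a representative $\rho$ of $\mathrm{hol}_\mu$ on the developing image $\Omega$ with finite-type oriented quotient $\Sigma$.

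I would begin with three preliminary reductions valid for every $\gamma\in\pi_1(S)\setminus\{\id\}$. First, since $S$ is orientable and $\rho(\pi_1(S))$ acts on $\Omega$ by orientation-preserving projective transformations, $\rho$ lifts to $\SL(3,\Rbbb)$. Second, any $g\in\SL(3,\Rbbb)$ preserving a properly convex open domain has only real eigenvalues: by Brouwer's fixed point theorem applied to the action of $g$ on the compact convex set $\overline{\Omega}\subset\Rbbb\Pbbb^2$, $g$ has at least one real eigenvector; a complex conjugate pair of eigenvalues would force a rotation dynamics on an invariant projective line, incompatible with properness of $\Omega$. Third, freeness of the action of $\rho(\pi_1(S))$ on $\Omega$ combined with the previous two points forces the three eigenvalues to all be positive. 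Hence the conjugacy class of $\rho(\gamma)$ in $\SL(3,\Rbbb)$ is represented by a Jordan form with three positive real eigenvalues, and I would enumerate these: the three forms listed in part (2), together with the fully diagonal form with three distinct positive eigenvalues and the semisimple-with-repeated-eigenvalue form.

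For part (1), I would rule out all but the fully diagonal form for non-peripheral $\gamma$. The free homotopy class of $\gamma$ admits a unique closed $d_\Omega$-geodesic representative on $\Sigma$ of finite positive Hilbert length equal to $\tfrac12\log(\lambda_{\max}/\lambda_{\min})$; this immediately eliminates the purely unipotent form and forces $\lambda_{\max}\neq\lambda_{\min}$. The closed geodesic has a tubular neighborhood in $\Sigma$ whose lift to $\Omega$ is a $\langle\rho(\gamma)\rangle$-invariant open convex neighborhood of the axis with compact quotient; the quasi-hyperbolic forms (with or without a Jordan block) are incompatible with this, because the fixed point in $\partial\Omega$ corresponding to the repeated eigenvalue exhibits non-hyperbolic local dynamics, or (in the semisimple case) $\rho(\gamma)$ pointwise fixes a projective line whose intersection with $\overline\Omega$ obstructs a convex cocompact tubular neighborhood of the axis. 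Hence $\rho(\gamma)$ must have three pairwise distinct positive eigenvalues.

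For part (2), a peripheral $\gamma$ generates (a finite-index subgroup of) the fundamental group of an annular neighborhood of a puncture, and I need to exclude only the semisimple-with-repeated-eigenvalue form among the enumerated possibilities. Such a $\rho(\gamma)$ pointwise fixes a projective line in $\Rbbb\Pbbb^2$; I would show that this line meets $\overline\Omega$ in a non-trivial segment, which is incompatible with $\langle\rho(\gamma)\rangle$ being the fundamental group of a properly embedded peripheral annular end. Realizability of each of the three listed forms can then be verified by exhibiting explicit local convex $\Rbbb\Pbbb^2$ models at a puncture: the cuspidal model (unipotent), the quasi-hyperbolic funnel (Jordan-block form), and the principal ``geodesic boundary'' model (three distinct eigenvalues). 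The main obstacle in both parts will be this geometric analysis near $\partial\Omega$, namely converting qualitative statements about the local dynamics of $\rho(\gamma)$ at its fixed points into sharp topological obstructions for the quotient to be either a compact tubular neighborhood of a closed geodesic (part 1) or an annular peripheral end (part 2).
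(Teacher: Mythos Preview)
The paper does not give a proof of this statement; it is quoted as a result of Marquis \cite{Mar10} and used as input. So there is no argument in the paper to compare your proposal against, and I can only comment on the proposal itself.

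Your outline has the right overall shape, but there are two genuine gaps. The more serious one is in part~(1): you assume that a non-peripheral $\gamma$ has a closed Hilbert-geodesic representative with a compact tubular neighborhood, and then use this to exclude the parabolic and quasi-hyperbolic forms. This is circular. A quasi-hyperbolic element has positive translation length $\tfrac12|\log(\lambda_1/\lambda_2)|$, but the infimum need not be attained in $\Omega$: in the models that actually occur, the segment between the two fixed points lies in $\partial\Omega$, so there is no axis and hence no closed geodesic to invoke. The argument has to go the other way. One workable route is the contrapositive: if $\rho(\gamma)$ is parabolic or quasi-hyperbolic, construct an embedded $\langle\rho(\gamma)\rangle$-invariant neighborhood of its boundary fixed set in $\overline\Omega$ whose quotient by $\langle\rho(\gamma)\rangle$ injects into $\Sigma$ as an annular end, forcing $\gamma$ to be peripheral. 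Carrying this out requires either an explicit horoball/funnel construction or a Margulis-type lemma in Hilbert geometry, and is where the real content lies.

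The second gap is in excluding the semisimple form $\mathrm{diag}(\lambda_1,\lambda_2,\lambda_2)$ with $\lambda_1\neq\lambda_2$. Your proposed mechanism (``the fixed line meets $\overline\Omega$ in a segment, incompatible with a peripheral annular end'') is not the right one, and in any case this form must be excluded for \emph{all} $\gamma$, peripheral or not. The correct observation is that in the affine chart $\Rbbb\Pbbb^2\setminus L$ (with $L$ the pointwise-fixed line) such an element acts as a nontrivial homothety, and a properly convex $\Omega$ invariant under a homothety is forced to be an open triangle; one then uses that the projective automorphism group of a triangle is virtually abelian while $\pi_1(S)$, having negative Euler characteristic, is not. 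Finally, a minor correction: positivity of the eigenvalues follows from the fact that a lift to $\SL(3,\Rbbb)$ preserves a proper convex cone over $\Omega$ (a Perron--Frobenius type statement), not from freeness of the action.
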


The three group elements in $\PGL(3,\Rbbb)$ listed in (2) of Theorem \ref{thm: holonomy1} are known as the \emph{standard parabolic element}, the \emph{standard quasi-hyperbolic element} and the \emph{standard hyperbolic element} respectively. Any group element in $\PGL(3,\Rbbb)$ is \emph{parabolic}, \emph{quasi-hyperbolic} or \emph{hyperbolic} if it is conjugate to the standard parabolic, quasi-hyperbolic or hyperbolic element respectively.

Motivated by the previous theorem, we will now restrict ourselves to convex $\Rbbb\Pbbb^2$ structures on $S$ whose ends are either parabolic type, quasi-hyperbolic type, or bulge $\pm\infty$ type, as described below.

\subsubsection{Parabolic type} Let $g\in\PGL(3,\Rbbb)$ be the standard parabolic element. Note that $g$ has a unique fixed point $q=[1,0,0]^T\in\Rbbb\Pbbb^2$ and stabilizes a projective line $l=l(g)=[0,0,1]$ through $q$. Choose any point $p\in\Rbbb\Pbbb^2\setminus l$ and let $l'$ be the projective line segment with endpoints $p$ and $g\cdot p$ that does not intersect $l$. Then the closed curve
\[q\cup\left(\bigcup_{i=-\infty}^\infty g^i\cdot l'\right)\]
bounds a properly convex domain $\Omega_p\subset\Rbbb\Pbbb^2$ that is invariant under the $\langle g\rangle$-action, where $\langle g\rangle$ denotes the cyclic group generated by $g$ (see Figure \ref{parabolic-figure}). Let $\Sigma_{g,p}:=\langle g\rangle\backslash\Omega_p$, and observe that $\Sigma_{g,p}$ is a convex $\Rbbb\Pbbb^2$ surface. Using $\Sigma_{g,p}$, we describe the first type of end that we allow the convex $\Rbbb\Pbbb^2$ surfaces we consider to have.

\begin{definition}
A puncture on a convex $\Rbbb\Pbbb^2$ surface is of \emph{parabolic type} if there is some $p\in\Rbbb\Pbbb^2\setminus l(g)$ so that $\Sigma_{g,p}$ is projectively isomorphic to a neighborhood of the puncture.
\end{definition}

One can verify that if $\Sigma$ is a convex $\Rbbb\Pbbb^2$ surface with a puncture and $\gamma\in\pi_1(\Sigma)$ is a peripheral element corresponding to this puncture, then this puncture is of parabolic type if and only if some (equiv. any) holonomy representation of $\Sigma$ evaluated at $\gamma$ is parabolic.

\begin{figure}
\begin{center}
\includegraphics{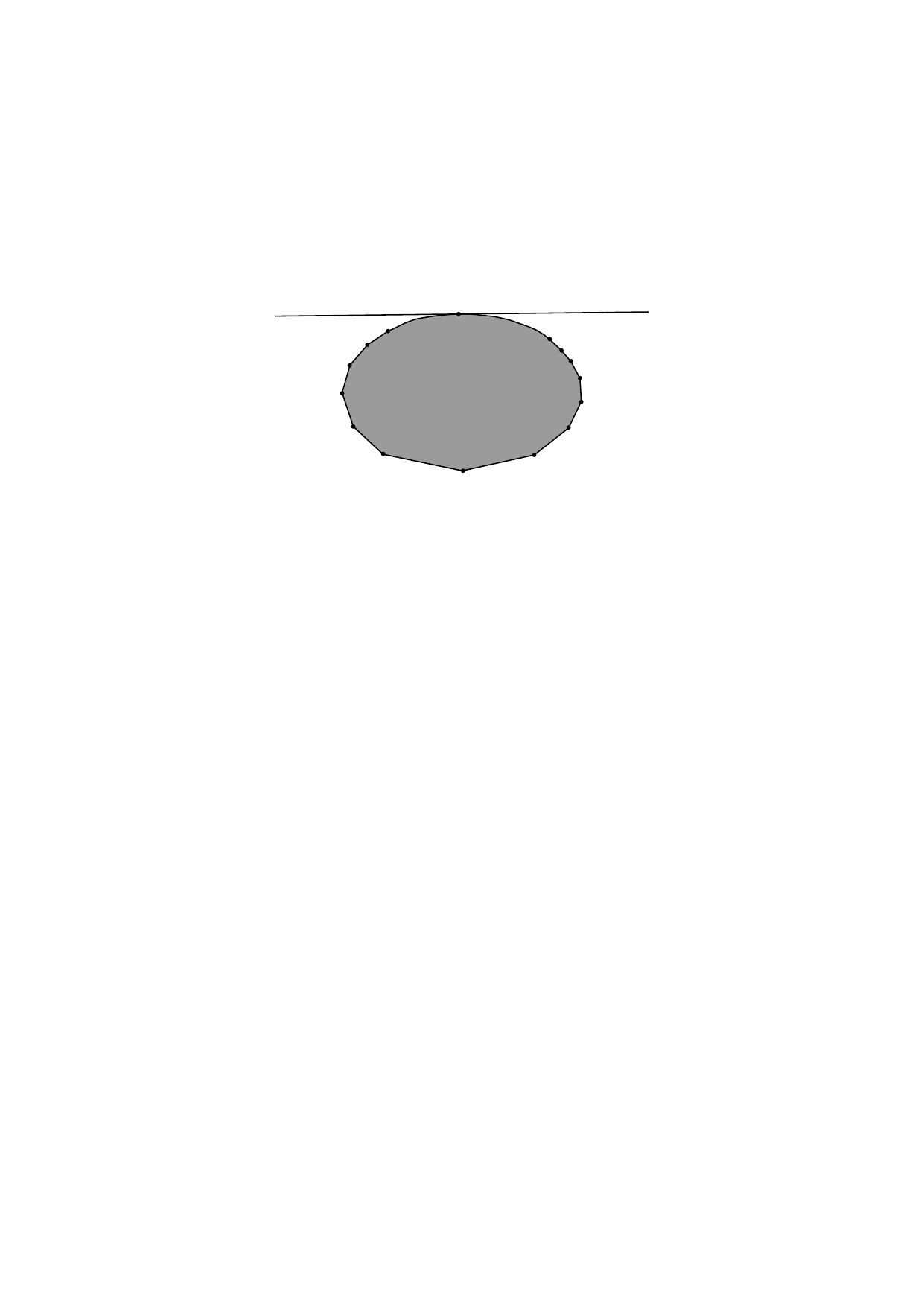}
\LARGE
\put (-125,50){$\Omega_p$}
\small
\put (-125, -2){$p$}
\put (-75, 9){$g\cdot p$}
\put (-20, 98){$l$}
\put (-125, 110){$q$}
\put (-98, 1){$l'$}
\end{center}
\caption{Parabolic}
\label{parabolic-figure}
\end{figure}

\subsubsection{Quasi-hyperbolic type}
Let $g\in\PGL(3,\Rbbb)$ be a standard quasi-hyperbolic element. Observe that $g$ stabilizes two projective lines $l_1=l_1(g)$ and $l_2=l_2(g)$, and has two fixed points $q_0,q_1\in\Rbbb\Pbbb^2$, where $q_1\in l_1\cap l_2$ and $q_0\in  l_2\setminus l_1$.

For any $p\in\Rbbb\Pbbb^2\setminus (l_1\cup l_2)$, let $l_3'$ be the projective line segment between $p$ and $g\cdot p$ that does not intersect $l_1$, and observe that there is a projective line segment $l_2'$ with endpoints $q_0$ and $q_1$ so that the closed curve
\[\left(\bigcup_{i=-\infty}^\infty g^i\cdot l_3'\right)\cup l_2'\]
bounds a properly convex domain $\Omega_{p}\subset\Rbbb\Pbbb^2$ that is invariant under the $\langle g\rangle$-action (see Figure \ref{quasi-hyp-figure}). This then defines a convex $\Rbbb\Pbbb^2$ surface $\Sigma_{g,p}:=\langle g\rangle\backslash\Omega_{p}$.

\begin{definition}
A puncture on a convex $\Rbbb\Pbbb^2$ surface is of \emph{quasi-hyperbolic type} if there is some standard quasi-hyperbolic element $g\in\PGL(3,\Rbbb)$ and some point $p\in\Rbbb\Pbbb^2\setminus (l_1(g)\cup l_2(g))$ so that $\Sigma_{g,p}$ is projectively isomorphic to a neighborhood of the puncture.
\end{definition}

One can verify that if $\Sigma$ is a convex $\Rbbb\Pbbb^2$ surface with a puncture and $\gamma\in\pi_1(\Sigma)$ is a peripheral element corresponding to this puncture, then this puncture is of quasi-hyperbolic type if and only if some (equiv. any) holonomy representation of $\Sigma$ evaluated at $\gamma$ is quasi-hyperbolic.

\begin{figure}
\begin{center}
\includegraphics{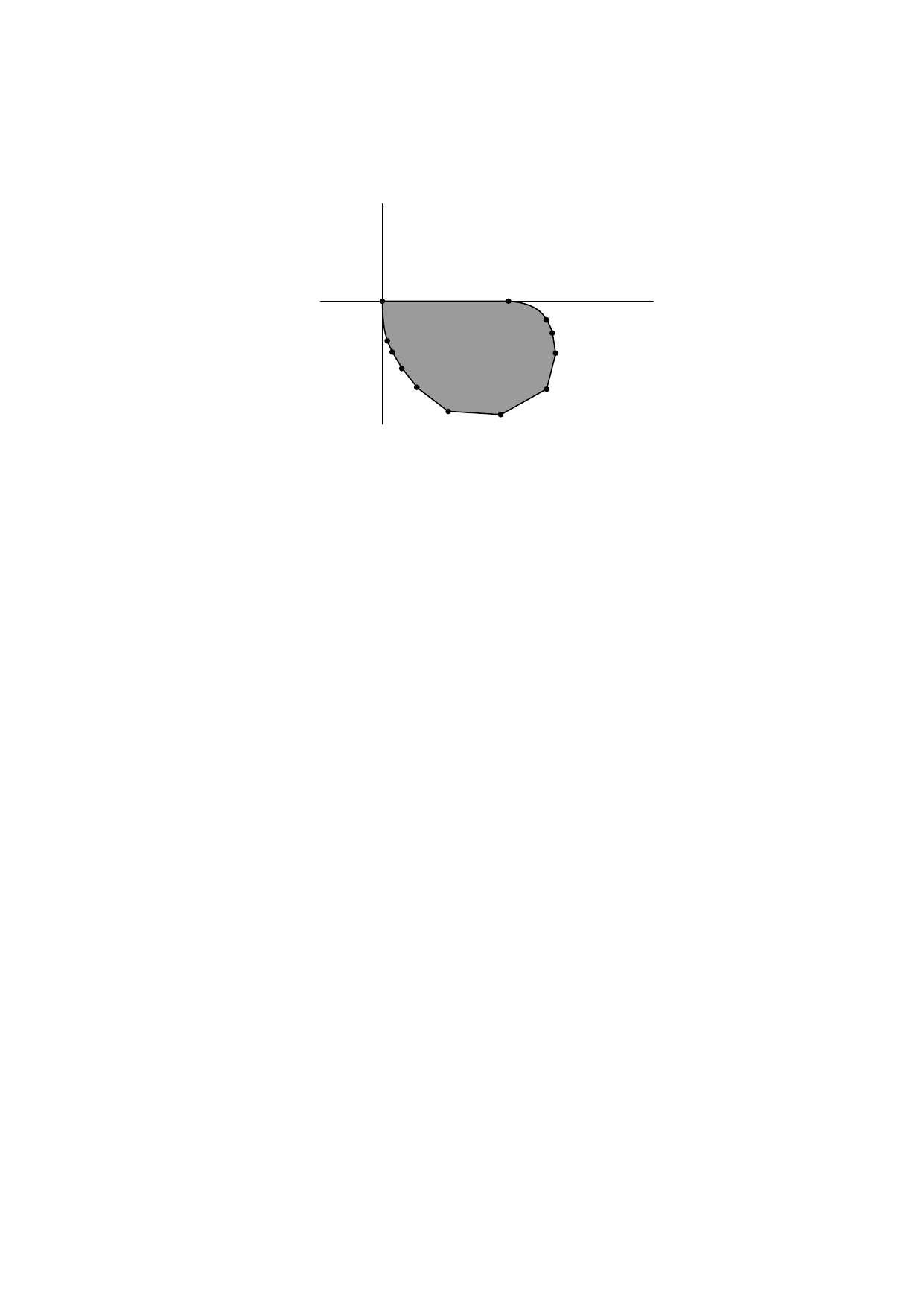}
\LARGE
\put (-125,50){$\Omega_p$}
\small
\put (-138, 4){$p$}
\put (-102, 2){$g\cdot p$}
\put (-20, 86){$l_2$}
\put (-185, 30){$l_1$}
\put (-176, 87){$q_1$}
\put (-95, 87){$q_0$}
\put (-123, 2){$l_3'$}
\end{center}
\caption{Quasi-Hyperbolic}
\label{quasi-hyp-figure}
\end{figure}

\subsubsection{Bulge $\pm\infty$ type}
Let $g\in\PGL(3,\Rbbb)$ be a standard hyperbolic element. Observe that $g$ has an attracting fixed point $q_1\in\Rbbb\Pbbb^2$, a repelling fixed point $q_3\in\Rbbb\Pbbb^2$, and a third fixed point $q_2\in\Rbbb\Pbbb^2$, which we refer to as the \emph{saddle fixed point}. Let $l_1=l_1(g)$, $l_2=l_2(g)$, $l_3=l_3(g)$ be the projective lines through $q_1$ and $q_2$, $q_2$ and $q_3$, $q_3$ and $q_1$ respectively.

Choose any point $p\in\Rbbb\Pbbb^2\setminus(l_1\cup l_2\cup l_3)$, and let $l_4'$ be the projective line segment with endpoints $p,g\cdot p$ that does not intersect $l_1$. Then $l_4'$ lies in an open triangle $\Delta_p$ which is a connected component of $\Rbbb\Pbbb^2\setminus(l_1\cup l_2\cup l_3)$. For $i=1,2,3$, let $l_i':=\partial\Delta_p\cap l_i$, and let $l_i''$ be the closure of $l_i\setminus l_i'$. Note that the closed curves
\[l_3'\cup\bigcup_{i=-\infty}^\infty g^i\cdot l_4'\,\,\,\,\text{and}\,\,\,\,l_1''\cup l_2''\cup\bigcup_{i=-\infty}^\infty g^i\cdot l_4'\]
both bound properly convex domains $\Omega_{p}'$ (see Figure \ref{bulge-minus-figure}) and $\Omega_{p}''$ (see Figure \ref{bulge-plus-figure}) respectively, which are invariant under the $\langle g\rangle$-action. Let $\Sigma_{g,p}':=\langle g\rangle\backslash\Omega_{p}'$ and $\Sigma_{g,p}'':=\langle g\rangle\backslash\Omega_{p}''$.

\begin{definition}
A puncture on a convex $\Rbbb\Pbbb^2$ surface $\Sigma$ is of \emph{bulge $-\infty$ type} (resp. \emph{bulge $+\infty$ type}) if there is some standard hyperbolic element $g\in\PGL(3,\Rbbb)$ and some $p\in\Rbbb\Pbbb^2\setminus (l_1(g)\cup l_2(g)\cup l_3(g))$ so that $\Sigma_{g,p}'$ (resp. $\Sigma_{g,p}''$) is projective isomorphic to a neighborhood of the puncture.
\end{definition}

\begin{figure}
\begin{center}
\includegraphics{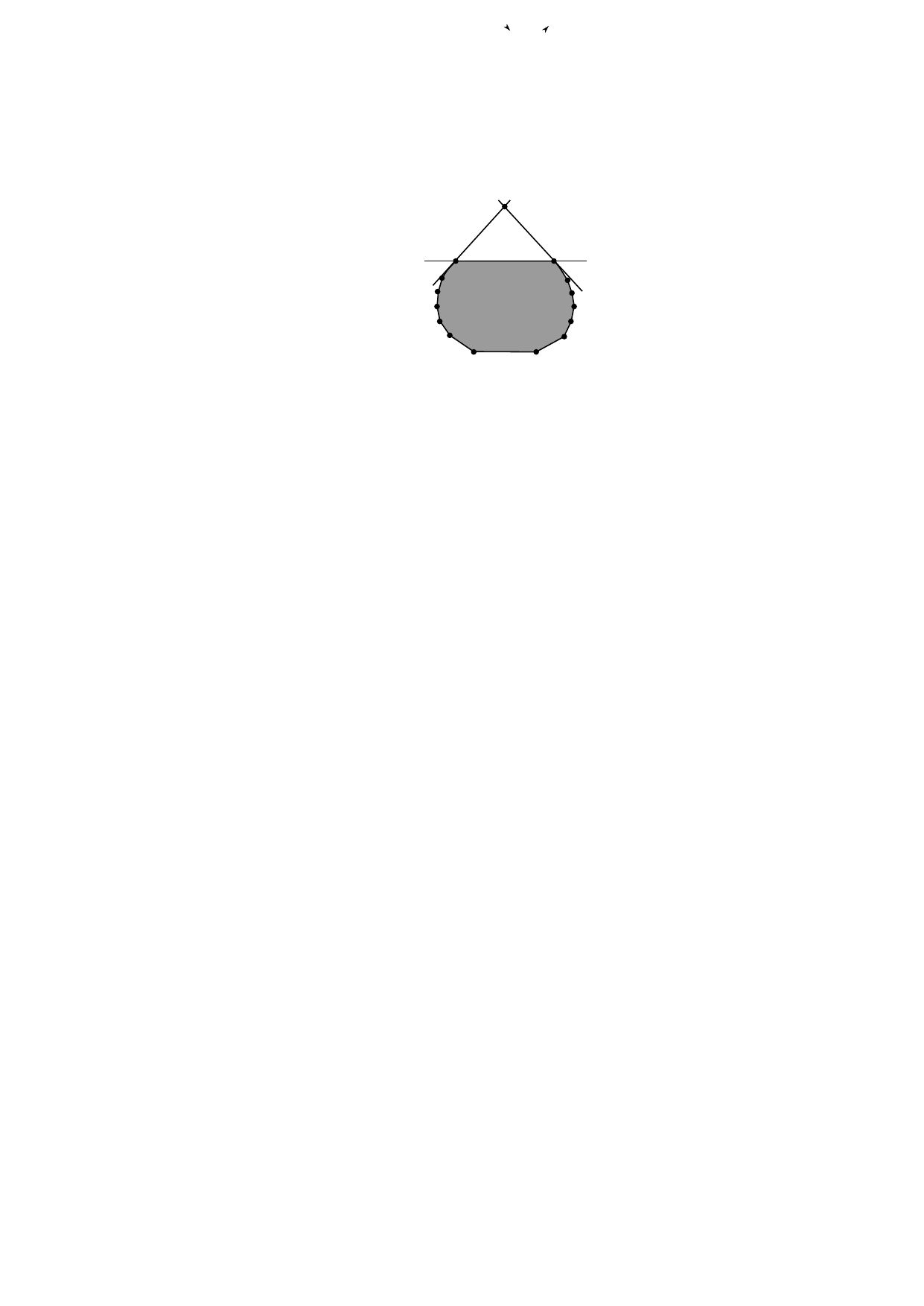}
\LARGE
\put (-63,30){$\Omega_p'$}
\small
\put (-80, -4){$p$}
\put (-39, -4){$g\cdot p$}
\put (-81, 82){$l_1$}
\put (-41, 82){$l_2$}
\put (-3, 60){$l_3$}
\put (-98, 66){$q_1$}
\put (-55, 96){$q_2$}
\put (-28, 67){$q_3$}
\put (-60, -5){$l_4'$}
\end{center}
\caption{Bulge $-\infty$}
\label{bulge-minus-figure}
\end{figure}

\begin{figure}
\begin{center}
\includegraphics{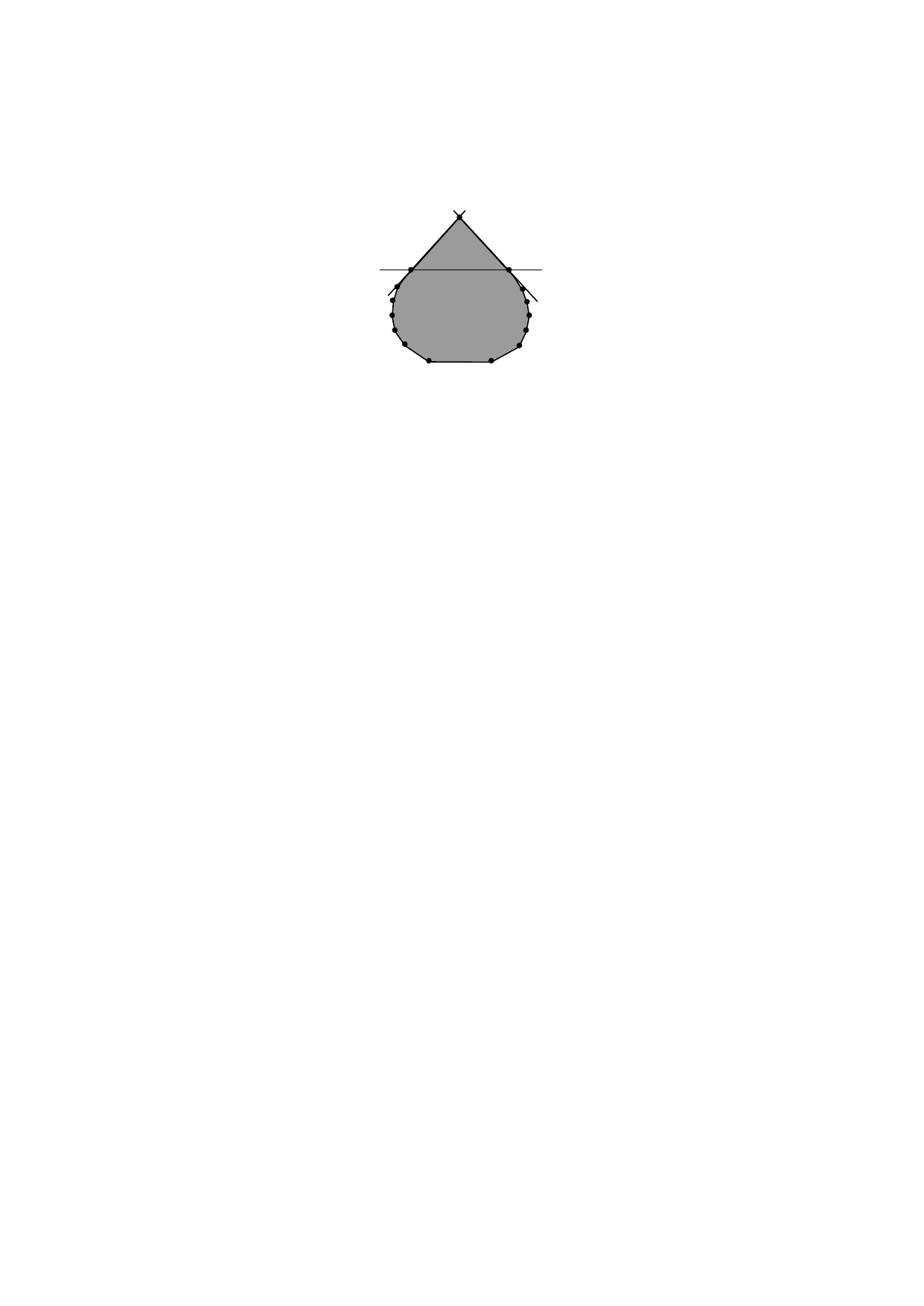}
\LARGE
\put (-63,30){$\Omega_p''$}
\small
\put (-78, -3){$p$}
\put (-37, -3){$g\cdot p$}
\put (-79, 83){$l_1$}
\put (-39, 83){$l_2$}
\put (-1, 61){$l_3$}
\put (-96, 67){$q_1$}
\put (-53, 97){$q_2$}
\put (-26, 69){$q_3$}
\put (-58, -4){$l_4'$}
\end{center}
\caption{Bulge $+\infty$}
\label{bulge-plus-figure}
\end{figure}

\begin{remark} If $\Sigma$ is a convex $\Rbbb\Pbbb^2$ surface with a puncture and $\gamma\in\pi_1(\Sigma)$ is a peripheral element corresponding to this puncture, then the holonomy representation evaluated at $\gamma$ does not determine the type of the puncture. This is unlike the case when the puncture is of parabolic type or quasi-hyperbolic type.
\end{remark}

Let $[f,\Sigma]\in\Cmc(S)$. For every puncture $p$ of $S$, the diffeomorphism $f$ sends a neighborhood of $p$ to a neighborhood of some puncture $q$ of $\Sigma$. We will abuse notation and denote $q=f(p)$.

\begin{definition}\label{def:admissible}\
\begin{enumerate}
\item A convex $\Rbbb\Pbbb^2$ surface $\Sigma$ is \emph{admissible} if every puncture of $\Sigma$ is either of parabolic type, quasi-hyperbolic type, bulge $+\infty$ type, or bulge $-\infty$ type. Similarly, $[f,\Sigma]\in\Cmc(S)$ is \emph{admissible} if $\Sigma$ is admissible.
\item Let $\Cmc(S)^{\mathrm{adm}}:=\{\mu\in\Cmc(S):\mu\text{ is admissible}\}$. For any $\mu=[f,\Sigma]\in\Cmc(S)^\mathrm{adm}$ and any puncture $p$ of $S$, the \emph{$\mu$-type of $p$} is the type of $f(p)$.
\end{enumerate}
\end{definition}

We equip $\Cmc(S)^\mathrm{adm}\subset\Cmc(S)$ with the subspace topology.

\subsection{The augmented deformation space.}\label{sec: aug}

Next, we describe the \emph{augmented deformation space of admissible convex $\Rbbb\Pbbb^2$ structures}, which was previously introduced and studied by the first author \cite{Loftin}.

\begin{remark}
In \cite{Loftin}, this augmented deformation space was constructed for closed $S$. However, the construction extends easily to general $S$ as long as we only consider admissible convex $\Rbbb\Pbbb^2$ structures.
\end{remark}

We begin by describing multi-curves $\Dmc$ in $S$.

\begin{definition} 
A \emph{multi-curve} is a collection of simple closed curves in $S$ that are pairwise non-intersecting, pairwise non-homotopic, non-contractible and non-peripheral. We allow the empty set as a multi-curve.
\end{definition}

\begin{remark}
A multi-curve in $S$ is a collection of non-peripheral closed curves corresponding to the vertices of a simplex in the curve complex equipped with its usual simplicial structure.
\end{remark}

Let $\Dmc$ be a multi-curve in $S$, and let $S_1,\dots,S_k$ be the connected components of $S\setminus\Dmc$.
Consider a closed curve $c\in\Dmc$, and choose an orientation on $c$. Let $S_i,S_j$ be the connected components of $S\setminus\Dmc$ that that lie on the left and right of $c$ respectively (it is possible that $S_i=S_j$). The orientation on $c$ determines two conjugacy classes of group elements $[\gamma_L]\in[\pi_1(S_i)]$ and $[\gamma_R]\in[\pi_1(S_j)]$ on the left and right of $c$ respectively. Let $p_L$ and $p_R$ be the punctures of $S_i$ and $S_j$ respectively that correspond to $c$.

\begin{definition}
A tuple $(\mu_1,\dots,\mu_k)\in\prod_{j=1}^k\Cmc(S_j)^{\mathrm{adm}}$ is \emph{compatible across $c$} if $\mathrm{hol}_{\mu_i}(\gamma_L)$ is conjugate to  $\mathrm{hol}_{\mu_j}(\gamma_R)$, and the $\mu_i$-type of $p_L$ is bulge $\pm\infty$ if and only if the $\mu_j$-type of $p_R$ is bulge $\mp\infty$.
\end{definition}

Denote
\[\Cmc(S,\Dmc)^\mathrm{adm}:=\left\{\mu\in\prod_{j=1}^k\Cmc(S_j)^{\mathrm{adm}}:\mu\text{ is compatible across }c \text{ for all }c\in\Dmc\right\}\]
and
\[\Cmc(S)^\mathrm{aug}:=\bigcup_{\Dmc\text{ a multi-curve}}\Cmc(S,\Dmc)^\mathrm{adm}.\]
We will refer to each $\Cmc(S,\Dmc)^\mathrm{adm}\subset\Cmc(S)^\mathrm{aug}$ as the $\Dmc$-\emph{stratum} of $\Cmc(S)^\mathrm{aug}$.

\begin{definition}
A \emph{regular} (marked) convex $\rp^2$ structure on $S$ is a point in $\Cmc(S)^{\mathrm{aug}}$.
\end{definition}

The terminology of a regular convex $\rp^2$ structure was introduced in \cite{Loftin}, and is so named because they correspond to holomorphic cubic differentials with regular singularities at the punctures, see Section \ref{sec: conv cubic}. Intuitively, one can think of $\Cmc(S)^\mathrm{aug}$ as an ``augmentation" of $\Cmc(S)^\mathrm{adm}$ where we include all possible degenerations of the convex $\Rbbb\Pbbb^2$ structure on $S$ that converge on the complement of a multi-curve (see Theorem 2.5.1 in \cite{Loftin}). This is an analog of the augmented Teichm\"uller space, denoted $\Tmc(S)^\mathrm{aug}$, for the deformation space of convex real projective structures. In fact, this construction of $\Cmc(S)^\mathrm{aug}$, when restricted only to regular structures $(\mu_1,\dots,\mu_k)$ where the developing image of each $\mu_i$ is the Klein model of hyperbolic plane, gives the usual construction of $\Tmc(S)^\mathrm{aug}$. From this, it is clear that $\Tmc(S)^\mathrm{aug}$ naturally embeds in $\Cmc(S)^\mathrm{aug}$. The case of $\Tmc(S)^\mathrm{aug}$ is much simpler though, since the type of each puncture of $\mu_i$ is parabolic.

\subsection{The topology on the augmented deformation space.}\label{sec: aug top}

We will now define a topology on $\Cmc(S)^\mathrm{aug}$. To do so, it is convenient to introduce the following terminology.

\begin{definition}
Let $\mu\in\Cmc(S,\Dmc)^\mathrm{adm}$ and choose orientations on the closed curves $c\in\Dmc$. Let $S'$ be the connected component of $S\setminus\Dmc$ that lies to the left of $c$, and let $p$ be the puncture of $S'$ corresponding to $c$. Then the $\mu$-\emph{type} of $c$ is the $\mu$-type of $p$.
\end{definition}

Observe that the $\mu$-type of the oriented simple closed curve $c$ is parabolic, quasi-hyperbolic or bulge $\pm\infty$ if and only if the $\mu$-type of $c^{-1}$ is parabolic, quasi-hyperbolic or bulge $\mp\infty$ respectively.

The key to defining the topology on $\Cmc(S)^\mathrm{aug}$ are the pulling maps that were introduced in \cite{Loftin}. As before, let $\Dmc$ be any multi-curve in $S$ and let $S_1,\dots,S_k$ be the connected components of $S\setminus\Dmc$. By making appropriate choice of base points, the inclusion $S_i\subset S$ identifies $\pi_1(S_i)$ as a subgroup of $\pi_1(S)$. Observe that there is a homeomorphism $g_i:S_i\to\widetilde{S}/\pi_1(S_i)$ which is unique up to homotopy, and the induced map on fundamental groups $(g_i)_*:\pi_1(S_i)\to\pi_1(\widetilde{S}/\pi_1(S_i))$ is an isomorphism. Let $\widetilde{g}_i:\widetilde{S}_i\to\widetilde{S}$ denote the lift of $g_i$. For any $\nu\in\Cmc(S)$, if $(\phi,\rho)$ is a developing pair for $\nu$, then $\phi\circ\widetilde{g}_i:\widetilde{S}_i\to\mathbb{RP}^2$ is $\rho\circ (g_i)_*$-equivariant. Theorem \ref{thm:dev} then implies that $(\phi\circ\widetilde{g}_i,\rho\circ (g_i)_*)$ is the developing pair for some $\mu_i\in\Cmc(S_i)$. Since $g_i$ is unique up to homotopy, $\mu_i$ does not depend on the choice of $g_i$. Hence, this allows us to define the $\Dmc$-\emph{pulling map}
\[\mathrm{Pull}_\Dmc:\Cmc(S)\to\prod_{i=1}^k\Cmc(S_i)\]
by $\mathrm{Pull}_\Dmc(\mu)=(\mu_1,\dots,\mu_k)$. It is straighforward to verify that $\mathrm{Pull}_\Dmc$ is continuous. Also, it is important to emphasize that each $\mu_i$ here has a representative $\rho_i\in\mathrm{hol}(\mu_i)$ so that the $\rho_i$-equivariant developing image of $\mu_i$ agree for all $i$. Note that if $\Dmc$ is non-empty, then every $\mu_i$ is not admissible even if $\mu$ is admissible.

Next, if $\Dmc'\subset\Dmc$ are multi-curves  in $S$, let $S_1,\dots,S_k$ be the connected components of $S\setminus\Dmc$ and $S_1',\dots,S_{k'}'$ be the connected components of $S\setminus\Dmc'$. Note that for all $i=1,\dots,k$, there is some $j=1,\dots,k'$ so that $S_i\subset S_j'$. Let $\Dmc(S_i')$ denote the curves in $\Dmc$ that lie in $S_i'$ but are non-peripheral in $S_i'$. This allows us to define the $(\Dmc',\Dmc)$-\emph{pulling map}
\[\mathrm{Pull}_{\Dmc',\Dmc}:\prod_{i=1}^{k'}\Cmc(S_i')\to\prod_{i=1}^k\Cmc(S_i)\]
by $\mathrm{Pull}_{\Dmc',\Dmc}(\mu_1,\dots,\mu_{k'}):=\Big(\textrm{Pull}_{\Dmc(S_1')}(\mu_1),\dots,\mathrm{Pull}_{\Dmc(S_{k'}')}(\mu_{k'})\Big)$. Clearly, $\mathrm{Pull}_{\Dmc',\Dmc}$ is continuous.

Using the pulling maps, we can now define a basis for the topology on $\Cmc(S)^\mathrm{aug}$. For any open $U\subset\prod_{i=1}^k\Cmc(S_i)$ that intersects $\Cmc(S,\Dmc)^\mathrm{adm}\subset\prod_{i=1}^k\Cmc(S_i)$, let
\[\Umc(U,\Dmc):=\bigcup_{\Dmc'\subset\Dmc}\Bigg(\mathrm{Pull}_{\Dmc',\Dmc}^{-1}(U)\cap\Cmc(S,\Dmc')^\mathrm{adm}\Bigg).\]
Note that $\Umc(U,\Dmc)\subset\Cmc(S)^\mathrm{aug}$, and define
\[\Amc(S):=\left\{
\Umc(U,\Dmc):\begin{array}{l}\Dmc\text{ is a multi-curve in }S,\\
S_1,\dots,S_k\text{ are the connected components of }S\setminus\Dmc,\\
U\subset\prod_{i=1}^k\Cmc(S_i)$ is an open set that intersects $\Cmc(S,\Dmc)^\mathrm{adm}
\end{array}\right\}.\]
If $\Umc(U,\Dmc)$ and $\Umc(U',\Dmc')$ are sets in $\Amc(S)$, let $\Dmc''$ denote the maximal multi-curve that lies in both $\Dmc$ and $\Dmc'$ (it is possible that $\Dmc''$ is empty), and let $S_1'',\dots,S_{k''}''$ be the connected components of $S\setminus\Dmc''$. Then 
\[U'':=\mathrm{Pull}_{\Dmc'',\Dmc}^{-1}(U)\cap \mathrm{Pull}_{\Dmc'',\Dmc'}^{-1}(U')\subset\prod_{i=1}^{k''}\Cmc(S_i'')\]
is open, and $\Umc(U'',\Dmc'')\subset\Umc(U,\Dmc)\cap\Umc(U',\Dmc')$. This implies that $\Amc(S)$ is a basis on $\Cmc(S)^{\rm aug}$. Equip $\Cmc(S)^{\rm aug}$ with the topology generated by $\Amc(S)$. 

The topology on $\Cmc(S)^\mathrm{aug}$ as defined is rather abstract. The philosophical purpose of this paper of this paper is to understand this topology in a concrete way. We begin by observing that this topology has several important features that we will record as the following preliminary remarks.

\begin{remark}\label{rem: top}\
\begin{enumerate}
\item For any $\mu\in\Cmc(S)^\mathrm{aug}$, let $\Dmc$ be the multi-curve so that $\mu\in\Cmc(S,\Dmc)^\mathrm{adm}$, and let $S_1,\dots,S_k$ be the connected components of $S\setminus\Dmc$. Then
\[\Cmc(S)^{\mathrm{aug},\Dmc}:=\Umc\left(\prod_{i=1}^k\Cmc(S_i),\Dmc\right)=\bigcup_{\Dmc'\subset\Dmc}\Cmc(S,\Dmc')^\mathrm{adm}\]
is an open set in $\Cmc(S)^\mathrm{aug}$ that contains $\mu$. In particular, if a sequence $\{\mu^j\}_{j=1}^\infty\subset\Cmc(S)^\mathrm{aug}$ converges to $\mu$, then by removing finitely many points from this sequence, we may assume that $\{\mu^j\}_{j=1}^\infty\subset \Cmc(S)^{\mathrm{aug},\Dmc}$. Hence, for any $j\in\mathbb{Z}^+$, there is some $\Dmc^j\subset\Dmc$ so that $\mu^j\in\Cmc(S,\Dmc^j)$, so $\mu$ and $\mathrm{Pull}_{\Dmc^j,\Dmc}(\mu^j)$ are of the form
\[\mu=(\mu_1,\dots,\mu_k)\,\,\text{ and }\,\,\mathrm{Pull}_{\Dmc^j,\Dmc}(\mu^j)=(\mu^j_1,\dots,\mu^j_k)\]
for some $\mu_i,\mu^j_i\in\Cmc(S_i)$. From the definition of the topology on $\Cmc(S)^\mathrm{aug}$, one observes that $\lim_{j\to\infty}\mu^j=\mu$ in $\Cmc(S)^\mathrm{aug}$ if and only if $\lim_{j\to\infty}\mu^j_i=\mu_i$ in $\Cmc(S_i)$ for all $i=1,\dots,k$.
\item Let $\Dmc$ be a multi-curve on $S$ and $S_1,\dots,S_k$ be the connected components of $S\setminus\Dmc$. The holonomy map $\mathrm{hol}:\Cmc(S_j)\to\Xmc_3(S_j)$ extends to the map
\begin{eqnarray*}
\mathrm{hol}:\prod_{j=1}^k\Cmc(S_j)&\to&\prod_{i=1}^k\Xmc_3(S_j)\\
(\mu_1,\dots,\mu_k)&\mapsto&(\mathrm{hol}(\mu_1),\dots,\mathrm{hol}(\mu_k)).
\end{eqnarray*}
Restricting this to $\Cmc(S,\Dmc)^\mathrm{adm}$ defines a continuous map 
\[\mathrm{hol}:\Cmc(S,\Dmc)^\mathrm{adm}\to\prod_{i=1}^k\Xmc_3(S_j).\]
\item This topology on $\Cmc(S)^\mathrm{aug}$ is first countable. This was verified by the first author \cite{Loftin}.
\item From the definition of the usual topology on the augmented Teichm\"uller space $\Tmc(S)^\mathrm{aug}$ (see \cite{Abi}), it is easy to see that the natural inclusion of $\Tmc(S)^\mathrm{aug}$ into $\Cmc(S)^\mathrm{aug}$ as described above is a homeomorphism onto its image.
\item Abikoff \cite{Abi} observed that $\Tmc(S)^{\mathrm{aug}}$ is not locally compact at any point in $\Tmc(S)^\mathrm{aug}\setminus\Tmc(S)$. The same argument proves that $\Cmc(S)^\mathrm{aug}$ is not locally compact at any point in $\Cmc(S)^\mathrm{aug}\setminus\Cmc(S)$. More precisely, let $\Dmc$ be any non-empty multicurve in $S$, let $S_1,\dots,S_k$ be the connected components of $S\setminus\Dmc$, and let $\mu=(\mu_1,\dots,\mu_k)\in\Cmc(S,\Dmc)^\mathrm{adm}$. By the definition of the topology on $\Cmc(S)^\mathrm{aug}$, for every open set $V\subset\Cmc(S)^\mathrm{aug}$ containing $\mu$, there is some nonempty open set $U\subset\prod_{i=1}^k\Cmc(S_i)$ so that $\Umc(U,\Dmc)$ is non-empty and lies in $V$. Observe that the intersection of any fiber of the map $\mathrm{Pull}_{\Dmc',\Dmc}$ with $\Cmc(S,\Dmc')^\mathrm{adm}$ is either empty or has non-compact closure (because of the action of Dehn twists). This implies that the closure of $\Umc(U,\Dmc)$ in $\Cmc(S)^{\mathrm{aug}}$ is not compact, so the same holds for the closure of $V$ in $\Cmc(S)^{\mathrm{aug}}$. Hence, $\Cmc(S)^\mathrm{aug}$ is not locally compact at $\mu$.
\item The mapping class group
\[\mathrm{MCG}(S):=\mathrm{Diffeo(S)^+}/\mathrm{Diffeo(S)^+_0}\] acts naturally on the set of multi-curves on $S$. For any multi-curve $\Dmc$ on $S$ and any $[g]\in\MCG(S)$, we may define $\Dmc':=[g]\cdot\Dmc$. Let $S_1,\dots,S_k$ be the connected components of $S\setminus\Dmc$, and let $S_1',\dots,S_k'$ be the connected components of $S\setminus\Dmc'$. One can verify that the map
\[[g]:\prod_{i=1}^k\Cmc(S_i)\to\prod_{i=1}^k\Cmc(S_i')\]
defined by $\big([f_1,\Sigma_1],\dots,[f_k,\Sigma_k]\big)\mapsto\big([f_1\circ g,\Sigma_1],\dots,[f_k\circ g,\Sigma_k]\big)$ is a homeomorphism, so it restricts to a homeomorphism $[g]:\Cmc(S,\Dmc)^\mathrm{adm}\to\Cmc(S,\Dmc')^\mathrm{adm}$. This in turn defines a homeomorphism $[g]:\Cmc(S)^\mathrm{aug}\to\Cmc(S)^\mathrm{aug}$.
Thus, $\MCG(S)$ acts on $\Cmc(S)^\mathrm{aug}$ by homeomorphisms. The first author \cite{Loftin} showed that the quotient $\overline{\Mmc(S)}:=\Cmc(S)^\mathrm{aug}/\mathrm{MCG}(S)$ is topologically an orbifold, albeit with a complicated singular locus.
\end{enumerate}
\end{remark}

\section{Describing $\mathrm{hol}(\Cmc(S))$} \label{sec: hol}
There is a well-known coordinate system on $\mathrm{hol}(\Cmc(S))$ that was originally due to Goldman \cite{Go90}, and later modified by the second author \cite{Zhang16} using the work of Fock-Goncharov \cite{FocGon} and Bonahon-Dreyer \cite{BonDre}. These coordinates play a key role in our description of the topology on $\Cmc(S)^\mathrm{aug}$, so we will devote this section to carefully constructing them.

\subsection{Projective invariants}
We can think of $\Rbbb\Pbbb^2$ as the set of projective classes of vectors in $\Rbbb^3$. Similarly, $(\Rbbb\Pbbb^2)^*$ can be thought of either as the set of projective classes of linear functionals in $(\Rbbb^3)^*$, the set of projective lines in $\Rbbb\Pbbb^2$, or the set of projectivized planes through the origin in $\Rbbb^3$. Throughout the rest of this paper, we will assume these identifications without further comment.

Given any $L_1,L_2\in(\Rbbb\Pbbb^2)^*$ and $p_1,p_2\in\Rbbb\Pbbb^2$ so that $p_i\notin L_j$ for all $i,j=1,2$, we can define the \emph{cross ratio}
\[C(L_1,p_1,p_2,L_2):=\frac{L_1(p_2)L_2(p_1)}{L_1(p_1)L_2(p_2)}.\]
Here, we choose a linear functional representative for each $L_i$ and a vector representative for each $p_j$ to evaluate $L_i(p_j)$. One can verify from the definition of the cross ratio that the choice of representatives is irrelevant. Furthermore, the cross ratio is a projective invariant, and satisfies the symmetries
\[C(L_1,p_1,p_2,L_2)=C(L_2,p_2,p_1,L_1)=\frac{1}{C(L_1,p_2,p_1,L_2)}. \]

\begin{remark}\label{rem:C+}
Note that if $L_1=L_2$ or $p_1=p_2$ (or both), then $C(L_1,p_1,p_2,L_2)=1$. On the other hand, if $L_1\neq L_2$, then $\Rbbb\Pbbb^2\setminus(L_1\cup L_2)$ has two connected components, and $C(L_1,p_1,p_2,L_2)$ is positive (resp. negative) if and only if $p_1$ and $p_2$ lie in the same (resp. different) connected component of $\Rbbb\Pbbb^2\setminus(L_1\cup L_2)$.
\end{remark}

Similarly, given any triple $L_1,L_2,L_3\in(\Rbbb\Pbbb^2)^*$ and $p_1,p_2,p_3\in\Rbbb\Pbbb^2$ so that $p_i\notin L_{i-1}\cup L_{i+1}$ for all $i=1,2,3$ (arithmetic in the subscripts is done modulo 3), we can define the triple ratio
\[T(L_1,L_2,L_3,p_1,p_2,p_3):=\frac{L_1(p_2)L_2(p_3)L_3(p_1)}{L_1(p_3)L_3(p_2)L_2(p_1)}.\]
As before, we choose linear functional representatives of the $L_i$ and vector representatives of each $p_j$ to evaluate $L_i(p_j)$, and one can verify the independence of the triple ratio from these choices. The triple ratio is also a projective invariant satisfying the symmetries
\[T(L_1,L_2,L_3,p_1,p_2,p_3)=T(L_2,L_3,L_1,p_2,p_3,p_1)=\frac{1}{T(L_2,L_1,L_3,p_2,p_1,p_3)}.\]

\begin{remark}\label{rem:T+}
Note that if $L_1$, $L_2$ and $L_3$ do not intersect at a common point, then $\Rbbb\Pbbb^2\setminus(L_1\cup L_2\cup L_3)$ has four connected components, each of which is a triangle. Further suppose that $p_i\in L_i$ for $i=1,2,3$, i.e. $(p_i,L_i)$ is a flag (see Section \ref{sec: limit maps}). In this situation, it is straightforward to check that $T(L_1,L_2,L_3,p_1,p_2,p_3)$ is positive if and only if one of these four connected components contains all of $p_1$, $p_2$, $p_3$ in its boundary.
\end{remark}

\subsection{Ideal triangulations and pants decompositions} \label{sec: triangulation}

Next, we will describe a particular ideal triangulation on $S$ that one can associate to any pants decomposition of $S$. We begin by precisely defining the notion of an ideal triangulation on the topological surface $S$.

Since $S$ has negative Euler characteristic, it is well-known that $\pi_1(S)$ is a hyperbolic group, and its Gromov boundary $\partial\pi_1(S)$ has a natural cyclic order induced by the orientation on $S$. More concretely, if we choose a convex cocompact hyperbolic metric on $S$, then the universal cover $\widetilde{S}$ of $S$ can be identified with the Poincar\'e disc $\Dbbb$ as oriented Riemannian metric spaces. For any $p\in\widetilde{S}=\Dbbb$, the orbit map $\pi_1(S)\to\Dbbb$ defined by $\gamma\mapsto\gamma\cdot p$ is a quasi-isometric embedding, so it extends to an embedding of $\partial\pi_1(S)$ into $\partial\Dbbb^2$. The orientation on $\Dbbb\simeq\widetilde{S}$ then induces a counter-clockwise cyclic ordering on $\partial\Dbbb^2$, which restricts to a cyclic ordering on $\partial\pi_1(S)$. One can then verify that this cyclic ordering on $\partial\pi_1(S)$ does not depend on any of the choices made.

\begin{definition}
A \emph{geodesic} on $\widetilde{S}$ is an (unordered) pair of distinct points $\{x,y\}\subset\partial\pi_1(S)$, so that $\{x,y\}$ is not the set of fixed points of some peripheral $\gamma\in\pi_1(S)$.
\end{definition}

Denote the space of geodesics on $\widetilde{S}$ by $\Gmc(\widetilde{S})$, and note that the natural $\pi_1(S)$ action on $\partial\pi_1(S)$ induces a $\pi_1(S)$ action on $\Gmc(\widetilde{S})$. Also, we say that two geodesics $\{x,y\}$ and $\{x',y'\}$ \emph{intersect transversely} if $x<x'<y<y'<x$ or $x<y'<y<x'<x$ in $\partial\pi_1(S)$.

\begin{definition}
An \emph{ideal triangulation} $\widetilde{\Tmc}\subset{\Gmc}(\widetilde S)$ on $\widetilde{S}$ is then a maximal, $\pi_1(S)$-invariant collection of geodesics that pairwise do not intersect transversely, with the property that every $\{x,y\}\in\widetilde{\Tmc}$ satisfies one of the following:
\begin{itemize}
\item $\{x,y\}$ is the set of fixed points of some $\gamma\in\pi_1(S)$
\item there are points $z,w\in\partial\pi_1(S)$ so that $\{z,x\},\{z,y\},\{w,x\},\{w,y\}\in\widetilde{\Tmc}$.
\end{itemize}
If the former holds, then $\{x,y\}$ is a \emph{closed edge} of $\widetilde{\Tmc}$. On the other hand, if the latter holds, then $\{x,y\}$ is an \emph{isolated edge} of $\widetilde{\Tmc}$. Denote the set of closed edges in $\widetilde{\Tmc}$ by $\widetilde{\Pmc}=\widetilde{\Pmc}_{\widetilde{\Tmc}}$, and denote the set of isolated edges in $\widetilde{\Tmc}$ by $\widetilde{\Qmc}=\widetilde{\Qmc}_{\widetilde{\Tmc}}$. Every $\{x,y\}\in\widetilde{\Tmc}$ is an \emph{edge}, and $x,y$ are the \emph{vertices} of the edge $\{x,y\}$. 
\end{definition}

\begin{remark}
For every $\gamma\in\pi_1(S)$, let $\gamma^-$, $\gamma^+\in\partial\pi_1(S)$ denote the attracting and repelling fixed points of $\gamma$ respectively. It is important to emphasize that by our definition of geodesics, if $\gamma\in\pi_1(S)$ is a peripheral group element, then $\{\gamma^-,\gamma^+\}$ is not a geodesic, and hence not an edge in $\widetilde\Tmc$. We use this convention as it will be convenient for our purposes later.
\end{remark}

\begin{definition}
An \emph{ideal triangle} of the ideal triangulation $\widetilde{\Tmc}$ is a triple $\{x,y,z\}\subset\partial\pi_1(S)$ so that $\{x,y\},\{x,z\},\{y,z\}\in\widetilde{\Tmc}$. We will refer to $x,y,z$ as the \emph{vertices} of the ideal triangle $\{x,y,z\}$. Denote the set of ideal triangles of $\widetilde{\Tmc}$ by $\widetilde{\Theta}_{\widetilde{\Tmc}}=\widetilde{\Theta}$, and let $\widetilde{\Vmc}=\widetilde{\Vmc}_{\widetilde{\Tmc}}\subset\partial\pi_1(S)$ denote the set of vertices of the ideal triangles in $\widetilde{\Theta}$.
\end{definition}

We can then define an \emph{ideal triangulation} of $S$ to be the quotient $\Tmc:=\widetilde{\Tmc}/\pi_1(S)$ of some ideal triangulation of $\widetilde{\Tmc}$ of $\widetilde{S}$, and let $[x,y]$ denote the element in $\Tmc$ with $\{x,y\}$ as a representative. If one chooses a convex cocompact hyperbolic metric $\Sigma$ on $S$, then every ideal triangulation on $S$ is realized geometrically as an ideal triangulation of the convex core of the hyperbolic surface $\Sigma$ in the classical sense. Denote $\Pmc:=\widetilde{\Pmc}/\pi_1(S)$, $\Qmc:=\widetilde{\Qmc}/\pi_1(S)$, $\Theta:=\widetilde{\Theta}/\pi_1(S)$ and $\Vmc:=\widetilde{\Vmc}/\pi_1(S)$. Let $[x,y,z]$ denote the element in $\Theta$ with representative $\{x,y,z\}\in\widetilde\Theta$, and let $[x]$ denote the element in $\Vmc$ with representative $x\in\widetilde{\Vmc}$. We will refer to the elements in $\Tmc$, $\Pmc$, $\Qmc$, $\Theta$, $\Vmc$ respectively as edges, closed edges, isolated edges, ideal triangles, and vertices of the ideal triangulation $\Tmc$ of $S$. 

Next, we specialize to particular ideal triangulations coming from pants decompositions of $S$, i.e.\ maximal multi-curves. For every pants decomposition $\Pmc$ of $S$, observe that each connected component of $S\setminus\Pmc$ is a pair of pants. Denote this collection of pairs of pants by $\Pbbb$. For each $P\in\Pbbb$, let $\alpha_P,\beta_P,\gamma_P\in\pi_1(S)$ be the group elements satisfying the following:
\begin{itemize}
\item $\alpha_P\beta_P\gamma_P=\id$.
\item $[\alpha_P]$, $[\beta_P]$, $[\gamma_P]$ correspond to the three boundary components of $P$, oriented so that $P$ lies to the left of the boundary component.
\end{itemize}
Then let
\begin{align*}
\Tmc_\Pmc=\Tmc:=&\{[\alpha_P^-,\beta_P^-]:P\in\Pbbb\}\cup\{[\beta_P^-,\gamma_P^-]:P\in\Pbbb\}\cup\{[\gamma_P^-,\alpha_P^-]:P\in\Pbbb\}\\
&\cup\{[\gamma^-,\gamma^+]:\gamma\in\pi_1(S)\text{ corresponds to some closed curve in }\Pmc\}.
\end{align*}
One may verify that $\Tmc$ is an ideal triangulation. The pants decomposition $\Pmc$ is naturally identified with
\[\{[\gamma^-,\gamma^+]:\gamma\in\pi_1(S)\text{ corresponds to some closed curve in }\Pmc\},\]
which is the set of closed edges in $\Tmc$, and
\[\Qmc:=\{[\alpha_P^-,\beta_P^-]:P\in\Pbbb\}\cup\{[\beta_P^-,\gamma_P^-]:P\in\Pbbb\}\cup\{[\gamma_P^-,\alpha_P^-]:P\in\Pbbb\}\] 
is the set of isolated edges in $\Tmc$. Also, the set of ideal triangles of $\Tmc$ is
\[\Theta:=\{[\alpha_P^-,\beta_P^-,\gamma_P^-]:P\in\Pbbb\}\cup\{[\beta_P^-,\beta_P\cdot \alpha_P^-,\gamma_P^-]:P\in\Pbbb\},\]
and the set of vertices of $\Tmc$ is
\[\Vmc:=\{[\alpha_P^-]:P\in\Pbbb\}\cup\{[\beta_P^-]:P\in\Pbbb\}\cup\{[\gamma_P^-]:P\in\Pbbb\}\]
Note that from the way we defined $\Tmc$, every vertex of every closed edge in $\widetilde{\Tmc}$ is a vertex of an ideal triangle of $\widetilde{\Tmc}$, see Figure \ref{ideal-triangulation-figure}. 
\begin{figure}
\begin{center}
\includegraphics{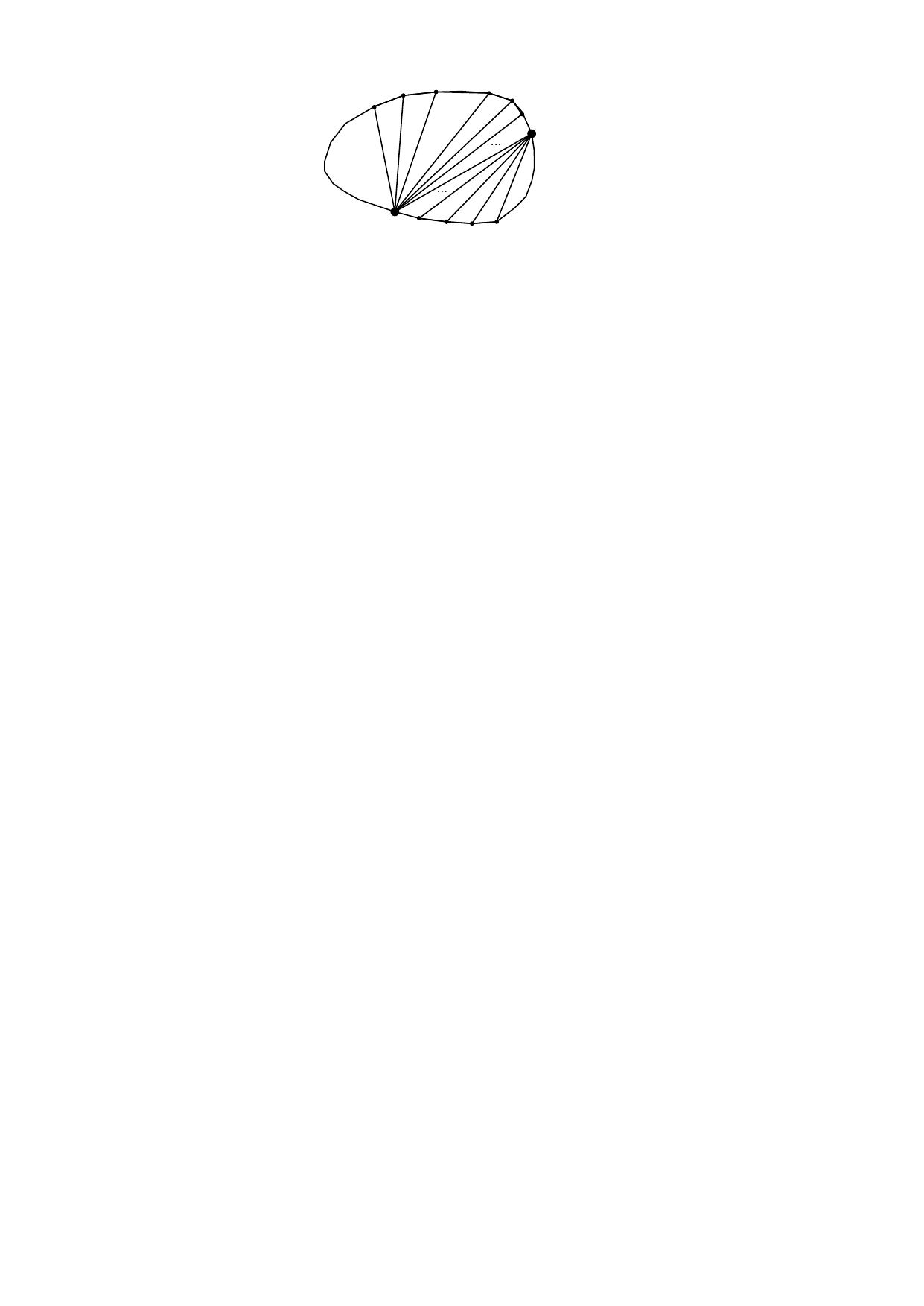}
\small
\put (-113, 82){$w_1$}
\put (-92, 89){$w_2$}
\put (-70, 91){$w_3$}
\put (-38, 90){$w_4$}
\put (-22, 86){$w_5$}
\put (-10, 73){$w_6$}
\put (-1, 61){$x$}
\put (-31, -2){$z_1$}
\put (-48, -3){$z_2$}
\put (-64, -2){$z_3$}
\put (-83, 0){$z_5$}
\put (-100, 3){$y$}
\end{center}
\caption{Ideal Triangulation: $[x,y]\in\Pmc,\,\,[x,z_i],[y,w_j]\in\Qmc$}
\label{ideal-triangulation-figure}
\end{figure}

\subsection{Flag maps} \label{sec: limit maps}
Let $\mathcal F$ be the space of flags in $\rp^2$, i.e. $\mathcal F = \{(v,\ell)\in \rp^2 \times (\rp^2)^* : v\in\ell\}$. Also, let $\mu\in\Cmc(S)$ and let $\rho:\pi_1(S)\to\PGL(3,\Rbbb)$ be a representative of the conjugacy class $\mathrm{hol}(\mu)$. Let $\Vmc$ denote the set of vertices of the ideal triangulation $\Tmc$ whose closed edges are a pants decomposition of $S$ as defined above, and let $\widetilde\Vmc\subset\partial\pi_1(S)$ denote the lift of $\Tmc$.

Observe that every point $x\in\widetilde{\Vmc}$ is a repelling fixed point $\gamma^-$ of some $\gamma\in\pi_1(S)\setminus\{\id\}$. This allows us to construct a \emph{flag map} $\xi_\rho:\widetilde{\Vmc}\to \mathcal F$ in the following way. By (1) of Theorem \ref{thm: holonomy1}, we see that one of the following holds for $\rho(\gamma)$. 
\begin{enumerate}
\item $\rho(\gamma)$ has exactly three fixed points in $\Rbbb\Pbbb^2$, one of which is attracting and another is repelling. The third fixed point that is neither attracting nor repelling is called the \emph{saddle fixed point}.
\item $\rho(\gamma)$ has exactly two fixed points in $\Rbbb\Pbbb^2$, one of which is repelling. We will refer to the fixed point that is not repelling as the \emph{quasi-attracting fixed point}. $\rho(\gamma)$ also stabilizes a unique line that contains the quasi-attracting fixed point, but not the repelling fixed point.
\item $\rho(\gamma)$ has exactly two fixed points in $\Rbbb\Pbbb^2$, one of which is attracting. We will refer to the fixed point that is not attracting as the \emph{quasi-repelling fixed point}. $\rho(\gamma)$ also stabilizes a unique line that contains the quasi-repelling fixed point, but not the attracting fixed point.
\item $\rho(\gamma)$ has a unique fixed point in $\Rbbb\Pbbb^2$, and stabilizes a unique line through that fixed point.
\end{enumerate}
(1) holds when $\rho(\gamma)$ is hyperbolic, (2) or (3) holds when $\rho(\gamma)$ is quasi-hyperbolic, and (4) holds when $\rho(\gamma)$ is parabolic. Theorem \ref{thm: holonomy1} implies that (2), (3) and (4) can happen only when $\gamma$ is a peripheral element.

Using this, define $\xi_\rho(x):=\left(\xi_\rho^{(1)}(x),\xi_\rho^{(2)}(x)\right)\in\mathcal F\subset \Rbbb\Pbbb^2\times(\Rbbb\Pbbb^2)^*$ as follows.
\begin{itemize}
\item[(I)] If (1) holds, define $\xi_\rho^{(1)}(x)$ to be the repelling fixed point of $\rho(\gamma)$, and $\xi_\rho^{(2)}(x)$ to be the projective line containing the repelling and saddle fixed points of $\rho(\gamma)$.  See Figure \ref{hyperbolic-flag-figure}. 
\item[(II)] If (2) holds, define $\xi_\rho^{(1)}(x)$ to be the repelling fixed point of $\rho(\gamma)$, and $\xi_\rho^{(2)}(x)$ to be the projective line containing both fixed points of $\rho(\gamma)$. See Figure  \ref{quasi-hyp-flag1-figure}. 
\item[(III)] If (3) holds, define $\xi_\rho^{(1)}(x)$ to be the quasi-repelling fixed point of $\rho(\gamma)$, and $\xi_\rho^{(2)}(x)$ to be projective line stabilized by $\rho(\gamma)$ that contains its quasi-repelling fixed point but not its attracting fixed point. See Figure \ref{quasi-hyp-flag2-figure}. 
\item[(IV)] If (4) holds, define $\xi_\rho^{(1)}(x)$ to be the unique fixed point of $\rho(\gamma)$, and define $\xi_\rho^{(2)}(x)$ to be the unique projective line stabilized by $\rho(\gamma)$. See Figure \ref{parabolic-flag-figure}. 
\end{itemize}

\begin{figure}
\begin{center}
\includegraphics{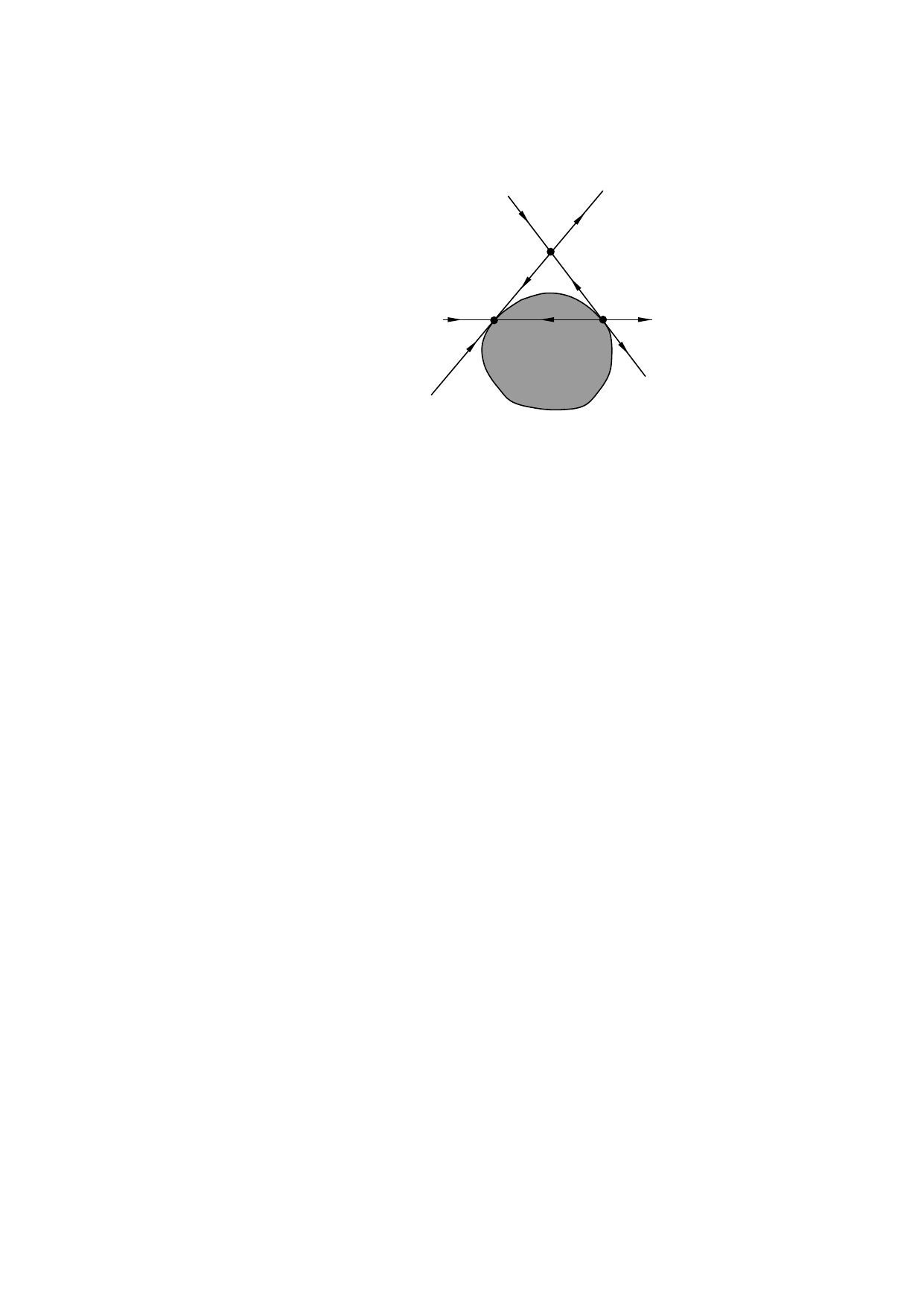}
\LARGE
\put (-70,30){$\Omega$}
\small
\put (-32, 64){$\xi_\rho^{(1)}(x)$}
\put (-4, 20){$\xi_\rho^{(2)}(x)$}
\end{center}
\caption{Flag for hyperbolic case}
\label{hyperbolic-flag-figure}
\end{figure}

\begin{figure}
\begin{center}
\includegraphics{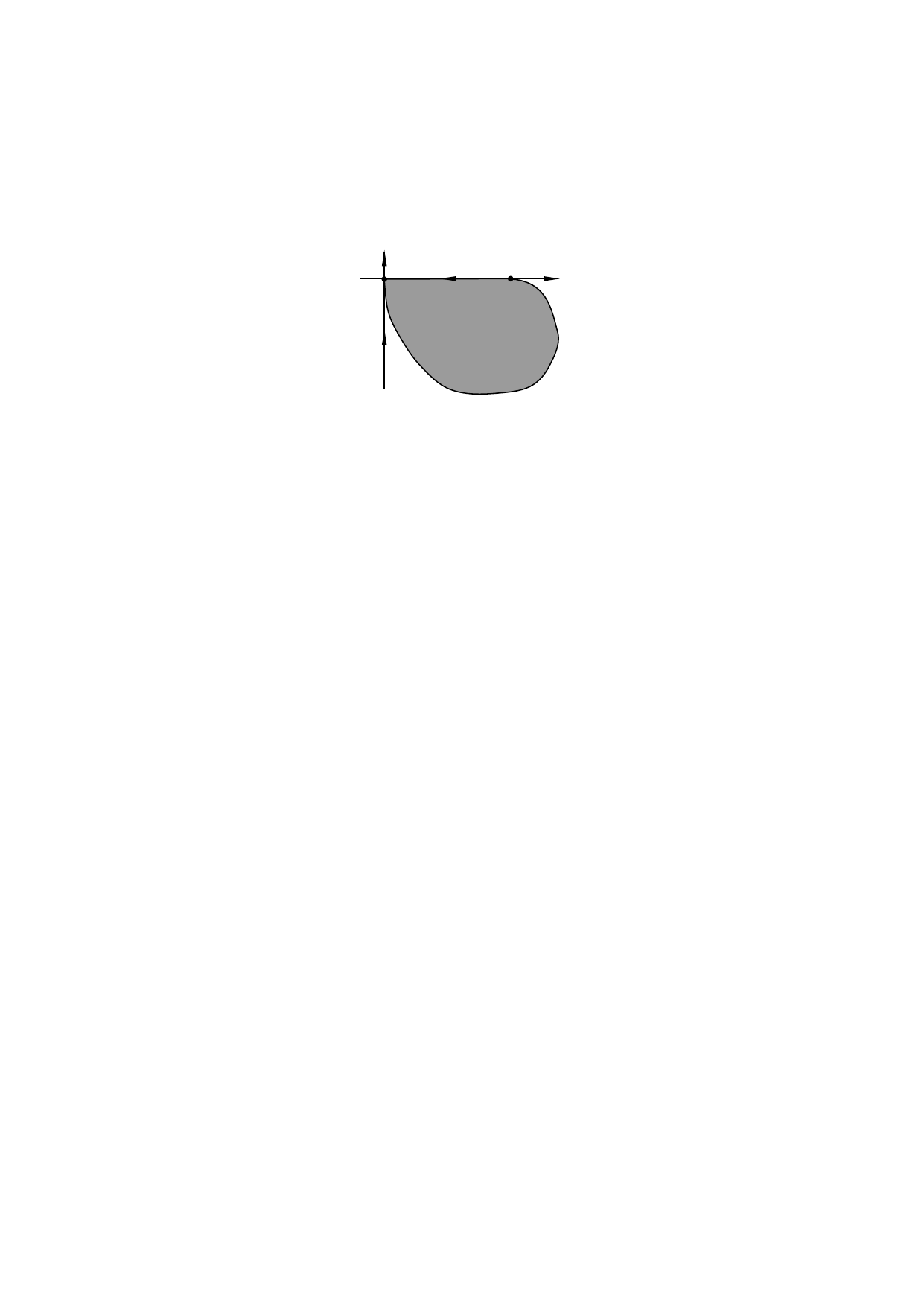}
\LARGE
\put (-55,35){$\Omega$}
\small
\put (-37, 83){$\xi_\rho^{(1)}(x)$}
\put (-158, 75){$\xi_\rho^{(2)}(x)$}
\end{center}
\caption{Flag for quasi-attracting case}
\label{quasi-hyp-flag1-figure}
\end{figure}

\begin{figure}
\begin{center}
\includegraphics{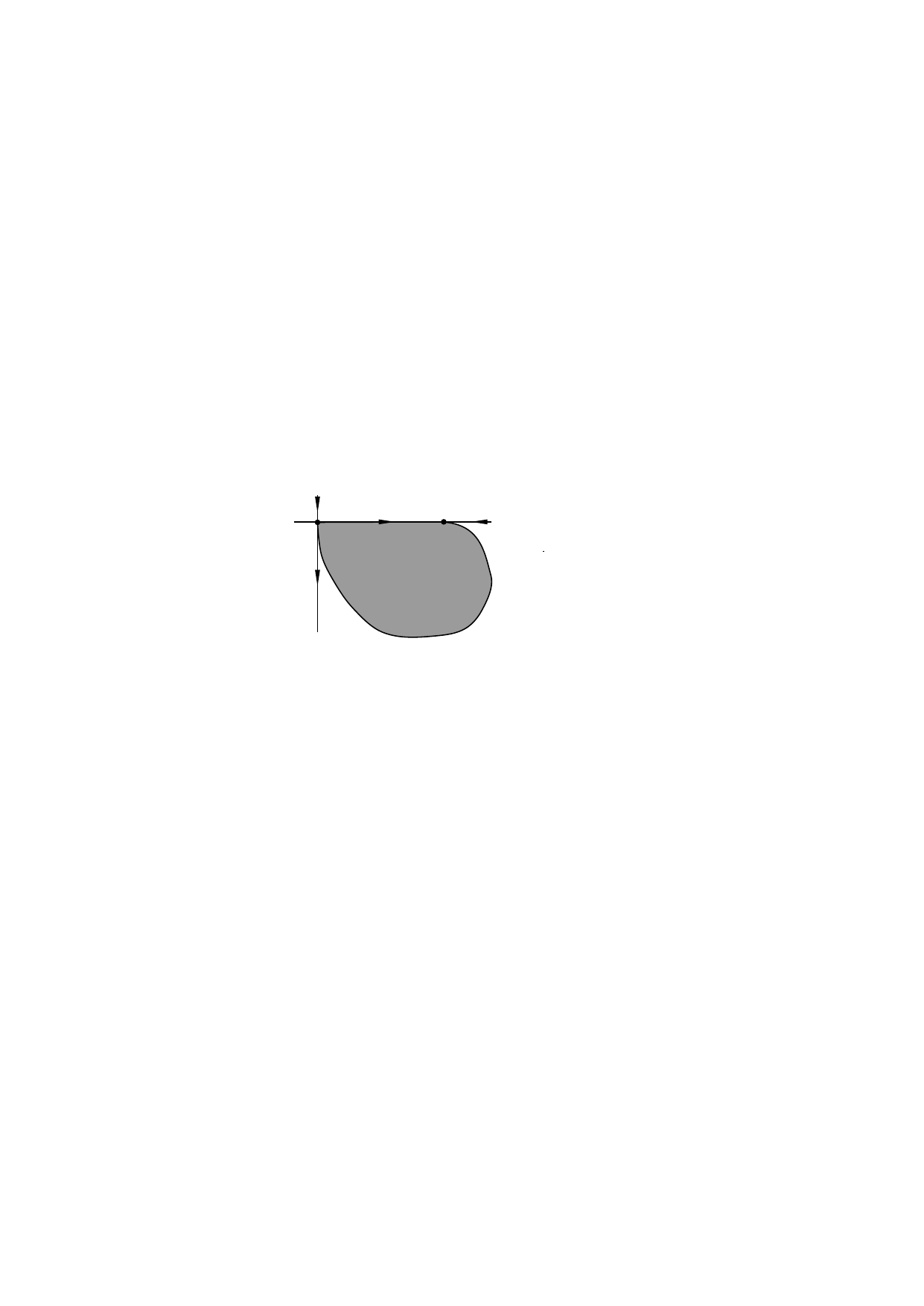}
\LARGE
\put (-55,35){$\Omega$}
\small
\put (-112, 82){$\xi_\rho^{(1)}(x)$}
\put (-142, 30){$\xi_\rho^{(2)}(x)$}
\end{center}
\caption{Flag for quasi-repelling case}
\label{quasi-hyp-flag2-figure}
\end{figure}

\begin{figure}
\begin{center}
\includegraphics{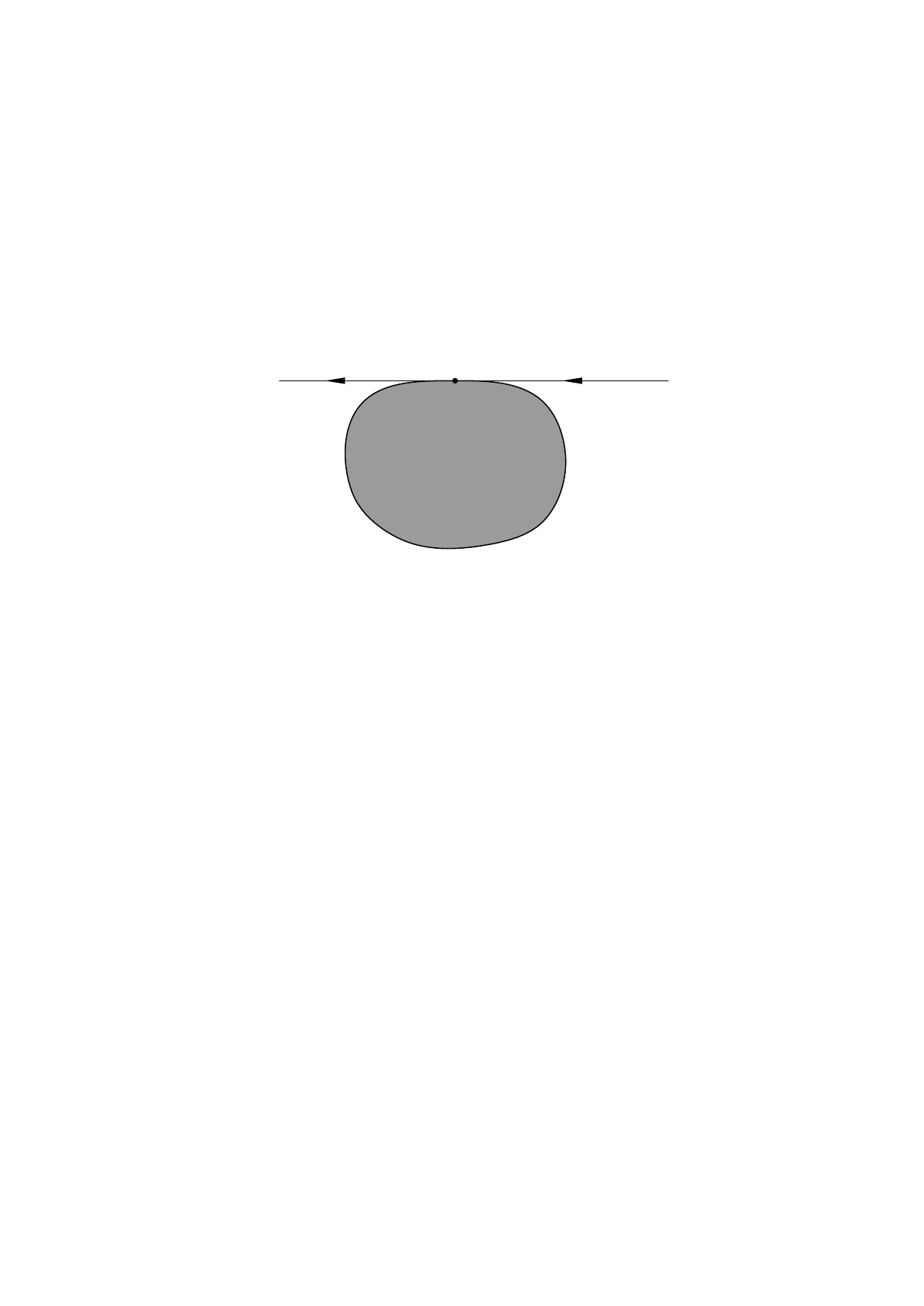}
\LARGE
\put (-90,60){$\Omega$}
\small
\put (-100, 117){$\xi_\rho^{(1)}(x)$}
\put (-205, 110){$\xi_\rho^{(2)}(x)$}
\end{center}
\caption{Flag for parabolic case}
\label{parabolic-flag-figure}
\end{figure}

\begin{remark}
Observe that the flag map can actually be defined on all fixed points of all non-identity elements in $\partial\pi_1(S)$. However, we will only consider the map restricted to $\widetilde{\Vmc}$.
\end{remark}

It is easy to verify that the flag map $\xi_\rho$ is $\rho$-equivariant. Also, for any $x\in\widetilde{\Vmc}$, $\xi^{(1)}_\rho(x)$ is either a repelling fixed point, a quasi-repelling fixed point, or the unique fixed point of $\rho(\gamma)$ for some $\gamma\in\pi_1(S)$. It follows that $\xi^{(1)}_\rho(\widetilde{\Vmc})$ lies in $\partial\Omega$, where $\Omega$ is the $\rho$-equivariant developing image of $\mu$. On the other hand, $\xi^{(2)}_\rho(\widetilde{\Vmc})$ satisfy the following proposition.

\begin{prop}\label{prop:transverse1}
Let $\Omega$ be the $\rho$-equivariant developing image of $\mu$. Then for all $x\in\widetilde{\Vmc}$, $\xi_\rho^{(2)}(x)$ does not intersect $\Omega$.
\end{prop}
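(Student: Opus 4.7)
The plan is to argue by duality. Let $\Omega^*\subset(\Rbbb\Pbbb^2)^*$ be the dual properly convex domain, whose points are the projective lines $m\subset\Rbbb\Pbbb^2$ with $m\cap\overline\Omega=\emptyset$. Since $\rho(\gamma)$ preserves $\overline\Omega$, the map $L\mapsto L\circ\rho(\gamma)$ preserves $\Omega^*$; on column vectors in $(\Rbbb\Pbbb^2)^*$ this is the action of the transpose $\rho(\gamma)^T\in\PGL(3,\Rbbb)$.

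Pick $\gamma\in\pi_1(S)\setminus\{\id\}$ with $x=\gamma^-$, and set $p:=\xi_\rho^{(1)}(x)\in\partial\Omega$ and $\ell:=\xi_\rho^{(2)}(x)$, viewed as a point of $(\Rbbb\Pbbb^2)^*$. The main step is the claim that in each of the four cases of Theorem \ref{thm: holonomy1} (hyperbolic, the two quasi-hyperbolic, parabolic), the point $\ell$ is exactly the attracting fixed point of $\rho(\gamma)^T$ on $(\Rbbb\Pbbb^2)^*$. I would verify this by direct inspection of the normal forms. For example, in the hyperbolic case with primal eigenvalues $\lambda_1>\lambda_2>\lambda_3$ and eigenbasis $e_1,e_2,e_3$, the matrix $\rho(\gamma)^T$ has the same eigenvalues with dual-basis eigenvectors $e_i^*$, so its attractor is $[e_1^*]$; viewed as a projective line in $\Rbbb\Pbbb^2$, $[e_1^*]=\{v_1=0\}$ is exactly the line through the saddle $[e_2]$ and the repelling $[e_3]=p$, namely $\ell$. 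The quasi-hyperbolic and parabolic cases follow the same recipe using the Jordan normal form, and in each case the definition of $\xi_\rho^{(2)}$ in Section \ref{sec: limit maps} is arranged to match the attracting fixed point of $\rho(\gamma)^T$.

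Since $\Omega$ is properly convex, $\Omega^*$ is non-empty. Any $L_0\in\Omega^*$ is strictly positive on $\overline\Omega$, and in particular on every fixed point of $\rho(\gamma)$ that lies in $\overline\Omega$. A short case-by-case inspection shows that this positivity is enough to force $L_0$ off the unstable hyperplane of $\rho(\gamma)^T$ corresponding to its attracting fixed point $\ell$; hence by the standard linear algebra of power iteration, $\rho(\gamma)^{nT}L_0\to\ell$ in $(\Rbbb\Pbbb^2)^*$ as $n\to\infty$, and so $\ell\in\overline{\Omega^*}$.

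Finally, since $p\in\ell\cap\overline\Omega$, the line $\ell$ is not disjoint from $\overline\Omega$ and so $\ell\notin\Omega^*$, whence $\ell\in\partial\Omega^*$. By the definition of $\Omega^*$, this is equivalent to $\ell$ being a supporting line of $\overline\Omega$ at $p$, i.e.\ $\ell\cap\Omega=\emptyset$, as required. The main technical content is the case-by-case identification of $\xi_\rho^{(2)}(\gamma^-)$ with the attracting fixed point of $\rho(\gamma)^T$, which the flag-map conventions of Section \ref{sec: limit maps} are designed to ensure; once that identification is in hand, the rest follows formally from standard properties of dual convex domains and the linear dynamics of $\rho(\gamma)^T$.
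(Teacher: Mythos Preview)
Your argument is correct, but it takes a genuinely different route from the paper's. The paper does not write out a proof: the proposition is simply preceded by the sentence ``the dynamics of the $\rho(\gamma)$ acting on $\Omega$ described above implies that $\Omega$ must lie on one side of $\xi_\rho^{(2)}(\gamma^+)$ in this affine chart.'' The intended direct argument is that if $\ell=\xi_\rho^{(2)}(x)$ met $\Omega$, then $\ell$ would cut the open convex set $\Omega$ into two non-empty $\rho(\gamma)$-invariant pieces (invariant because the eigenvalues are positive), yet under forward iteration both pieces must accumulate on the attracting fixed point $\rho(\gamma)^+\notin\ell$, a contradiction.

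Your duality proof reaches the same conclusion by a different mechanism. Rather than showing $\ell$ cannot cut $\Omega$, you show $\ell$ is a supporting line: it is the attracting fixed point of the dual action $\rho(\gamma)^T$ on $(\Rbbb\Pbbb^2)^*$, and any $L_0\in\Omega^*$ lies off the unstable locus of this action because that locus consists of covectors vanishing at the primal attracting (or quasi-attracting, or unique) fixed point, which always lies in $\overline\Omega$. Hence $\rho(\gamma)^{nT}L_0\to\ell$, so $\ell\in\overline{\Omega^*}\setminus\Omega^*=\partial\Omega^*$. The case checks you allude to are genuine but routine in each Jordan type. The payoff of your approach is that it makes explicit the identification of $\xi_\rho^{(2)}(\gamma^-)$ with the attracting fixed flag of the dual action and packages all four types uniformly; the paper's direct dynamics argument is shorter and avoids duality, but leaves the verification entirely to the reader.
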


\begin{proof}
Since $\Omega$ is properly convex, there is an affine chart $\mathbb{A}$ containing $\overline{\Omega}$. Fix $x\in\Vmc$, and let $\gamma\in\pi_1(S)\setminus\{\id\}$ be an element whose repelling fixed point is $x$. We will consider the four cases (I) -- (IV) separately. 

Suppose that (III) or (IV) holds. Then $\xi_\rho^{(1)}(x)$ is the unique fixed point of $\rho(\gamma)$ in $\xi_\rho^{(2)}(x)$. Since $\Omega$ is open, if $\xi_\rho^{(2)}(x)$ intersects $\Omega$, then this intersection contains a point that is not $\xi_\rho^{(1)}(x)$. The convexity of $\Omega$ then implies that $\xi_\rho^{(2)}(x)\cap\mathbb{A}$ lies in $\Omega$, which contradicts the fact that $\overline{\Omega}$ lies in $\mathbb{A}$. Hence, $\xi_\rho^{(2)}(x)$ does not intersect $\Omega$.

Suppose that (I) or (II) holds. In this case, $\rho(\gamma)$ has two fixed points in $\xi_\rho^{(2)}(x)$, one of which is $\xi_\rho^{(1)}(x)$. Suppose for contradiction that $\Omega$ intersects $\xi_\rho^{(2)}(x)$. Then there are some $p,q\in\partial\Omega$ that lie in the two different components of $\mathbb{A}\setminus\xi_\rho^{(2)}(x)$. Observe that $\lim_{n\to\infty}\rho(\gamma^{-n})\cdot p=\xi_\rho^{(1)}(x)$. Also, if $r_n$ denotes the ray in $\mathbb{A}$ that passes through $\rho(\gamma^{-n})\cdot p$ and has source $\xi_\rho^{(1)}(x)$, then $r_n$ lies in the component of $\mathbb{A}\setminus\xi_\rho^{(2)}(x)$ that contains $p$ for all $n\in\mathbb{Z}$, and $\lim_{n\to\infty}r_n$ is a ray in $\xi_\rho^{(2)}(x)$ with source $\xi_\rho^{(1)}(x)$. From this, it follows that for sufficiently large $n\in\mathbb{Z}^+$, the triangle in $\mathbb{A}$ with vertices $q$, $\rho(\gamma)^{-n}\cdot p$, and $\xi_\rho^{(1)}(x)$ contains a segment of $\partial\Omega$ with endpoints $\rho(\gamma)^{-n}\cdot p$ and $\xi_\rho^{(1)}(x)$. However, the convexity of $\Omega$ implies that this triangle also lies in $\Omega$, which is impossible.
\end{proof}

Next, we prove that the map $\xi_\rho$ is transverse in the following sense.

\begin{prop}\label{prop:transverse}
Suppose that $x,y\in\widetilde{\Vmc}$ are distinct. Then
\begin{enumerate}
\item $\xi^{(1)}_\rho(x)\neq\xi^{(1)}_\rho(y)$. 
\item $\xi^{(1)}_\rho(x)$ does not lie in $\xi^{(2)}_\rho(y)$. In particular, $\xi^{(2)}_\rho(x)\neq \xi^{(2)}_\rho(y)$.
\end{enumerate}
\end{prop}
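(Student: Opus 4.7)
The plan is to prove (1) first and bootstrap to (2), in both cases leveraging Proposition \ref{prop:op} applied to auxiliary vertices coming from non-peripheral elements. A key preliminary observation is that $\widetilde{\Vmc}$ contains both fixed points $\gamma^\pm$ of any $\gamma\in\pi_1(S)$ corresponding to a curve in the pants decomposition $\Mmc$ defining $\Tmc_\Mmc$, and hence by $\pi_1(S)$-invariance of $\widetilde{\Vmc}$, both fixed points of every conjugate of such a $\gamma$. Since $\pi_1(S)$ is a non-elementary hyperbolic group, these pairs $(\gamma^+,\gamma^-)$ form a dense subset of the set of ordered distinct pairs in $\partial\pi_1(S)$, which will let us select such a $\gamma$ with fixed points in prescribed open arcs.

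For (1), suppose distinct $x,y\in\widetilde{\Vmc}$ satisfy $\xi^{(1)}_\rho(x)=\xi^{(1)}_\rho(y)$. Using the observation above, pick a non-peripheral $\gamma\in\pi_1(S)$ with $\gamma^\pm\in\widetilde{\Vmc}$ and $\gamma^+$, $\gamma^-$ lying in the two distinct open arcs of $\partial\pi_1(S)\setminus\{x,y\}$, so that $x,\gamma^+,y,\gamma^-$ appear in this cyclic order. Applying Proposition \ref{prop:op} to the cyclically-ordered triples $(x,\gamma^+,y)$ and $(y,\gamma^-,x)$ and using the assumption $\xi^{(1)}_\rho(x)=\xi^{(1)}_\rho(y)$, both $\xi^{(1)}_\rho(\gamma^+)$ and $\xi^{(1)}_\rho(\gamma^-)$ are squeezed to equal this common point. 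But Theorem \ref{thm: holonomy1}(1) tells us $\rho(\gamma)$ is hyperbolic, and by construction of the flag map $\xi^{(1)}_\rho(\gamma^+)$ and $\xi^{(1)}_\rho(\gamma^-)$ are its distinct repelling and attracting fixed points on $\partial\Omega$, a contradiction.

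For (2), suppose $\xi^{(1)}_\rho(x)\in\xi^{(2)}_\rho(y)$ for distinct $x,y\in\widetilde{\Vmc}$. By part (1), $\xi^{(1)}_\rho(x)\neq\xi^{(1)}_\rho(y)$, so $\xi^{(2)}_\rho(y)$ meets $\partial\Omega$ in at least these two distinct points; combined with Proposition \ref{prop:transverse1}, convexity of $\Omega$ forces the closed line segment $L\subset\xi^{(2)}_\rho(y)$ joining them to lie entirely in $\partial\Omega$. Now pick, by the same density argument as before, a non-peripheral $\gamma\in\pi_1(S)$ with both fixed points $\gamma^\pm\in\widetilde{\Vmc}$ in the open arc of $\partial\pi_1(S)\setminus\{x,y\}$ corresponding via Proposition \ref{prop:op} to the arc of $\partial\Omega$ containing $L$. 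Then Proposition \ref{prop:op} combined with part (1) places $\xi^{(1)}_\rho(\gamma^+)$ and $\xi^{(1)}_\rho(\gamma^-)$ in the interior of $L$. Since the unique supporting line to the convex domain $\Omega$ at an interior point of the boundary segment $L$ is the line containing $L$, we must have $\xi^{(2)}_\rho(\gamma^+)=\xi^{(2)}_\rho(y)$. But by construction $\xi^{(2)}_\rho(\gamma^+)$ is the line through the repelling and saddle fixed points of $\rho(\gamma)$, whereas $\xi^{(2)}_\rho(y)$ also contains $\xi^{(1)}_\rho(\gamma^-)$, the attracting fixed point of $\rho(\gamma)$. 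Thus all three fixed points of the hyperbolic element $\rho(\gamma)$ are collinear, contradicting that they correspond to linearly independent eigenvectors in $\Rbbb^3$. The consequence $\xi^{(2)}_\rho(x)\neq\xi^{(2)}_\rho(y)$ is then immediate, since $\xi^{(1)}_\rho(x)\in\xi^{(2)}_\rho(x)$ but $\xi^{(1)}_\rho(x)\notin\xi^{(2)}_\rho(y)$.

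The main technical points are (i) selecting the auxiliary $\gamma$ so that its two fixed points land in the correct cyclic arc, which rests on density of fixed-point pairs of conjugates of pants-curve elements in $\partial\pi_1(S)$, and (ii) justifying that the supporting line to $\Omega$ at an interior point of a boundary line segment is uniquely determined. Both are standard, and the final collinearity contradiction in (2) is a direct consequence of the diagonalizability asserted in Theorem \ref{thm: holonomy1}(1).
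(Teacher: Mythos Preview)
Your argument for part~(2) is correct and in fact gives a nice alternative endgame to the paper's: instead of iterating a hyperbolic element along the boundary segment $L$ to produce an entire projective line in $\partial\Omega$, you observe that the supporting line to $\Omega$ at an interior point of $L$ is unique, hence $\xi^{(2)}_\rho(\gamma^+)$ must be the line through $L$, and then all three fixed points of the hyperbolic $\rho(\gamma)$ become collinear. Both routes work.

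Part~(1), however, has a genuine gap. With $\gamma^+$ and $\gamma^-$ placed in \emph{opposite} arcs of $\partial\pi_1(S)\setminus\{x,y\}$, applying Proposition~\ref{prop:op} to the triples $(x,\gamma^+,y)$ and $(y,\gamma^-,x)$ yields weak cyclic inequalities of the form $p\le \xi^{(1)}_\rho(\gamma^\pm)\le p$ on $\partial\Omega$, and such an inequality is vacuous when two of the three entries coincide: it does \emph{not} force $\xi^{(1)}_\rho(\gamma^\pm)=p$. Throwing in the triples $(\gamma^+,y,\gamma^-)$ and $(\gamma^-,x,\gamma^+)$ only yields that \emph{one} of $\xi^{(1)}_\rho(\gamma^+),\xi^{(1)}_\rho(\gamma^-)$ equals $p$ (or the two are equal); the other is unconstrained. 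For instance, $\xi^{(1)}_\rho(\gamma^-)=p$ and $\xi^{(1)}_\rho(\gamma^+)\neq p$ is perfectly compatible with weak order preservation, so no contradiction follows.

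The paper's proof avoids this by choosing \emph{two} auxiliary hyperbolic elements, one whose fixed-point pair lies entirely in each arc of $\partial\pi_1(S)\setminus\{x,y\}$. The point is that, beyond the weak cyclic order of Proposition~\ref{prop:op}, the Jordan-curve argument in its proof shows that the two arcs of $\widetilde{\Vmc}\setminus\{x,y\}$ are sent by $\xi^{(1)}_\rho$ into the two complementary arcs of $\partial\Omega$ that meet only at $p$. Playing the four image points (two in each arc) against each other then forces the attracting and repelling fixed points of one of the hyperbolic elements to coincide, which is the contradiction. Your density observation is fine and can supply such elements; the fix is simply to put both fixed points of your auxiliary $\gamma$ on the \emph{same} side of $\{x,y\}$ and use a second auxiliary element on the other side.
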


\begin{remark}
Proposition \ref{prop:transverse} is false if we replace $\widetilde{\Vmc}$ with $\partial\pi_1(S)$. This is the main reason why we excluded peripheral elements in our definition of geodesics.
\end{remark}

If $(\phi,\rho)$ is a developing pair for $\mu$, then the orientation on $S$ induces an orientation on $\widetilde{S}$, which induces an orientation on $\Omega$ via $\phi$. This orientation on $\Omega$ does not depend on the choice $(\phi,\rho)$, so it defines a counter-clockwise cyclic ordering on $\partial\Omega$. The following is a preliminary lemma to prove Proposition \ref{prop:transverse}.

\begin{lem}\label{lem:op}
If $(a,b,c)$ is a pairwise distinct triple of points in $\widetilde{\Vmc}$ so that $a<b<c<a$ in the cyclic ordering on $\widetilde{\Vmc}$, then $\xi^{(1)}_\rho(a)\leq \xi^{(1)}_\rho(b)\leq \xi^{(1)}_\rho(c)\leq \xi^{(1)}_\rho(a)$ in that cyclic order along $\partial\Omega$.
\end{lem}

\begin{remark}\label{rem: op}
(1) of Proposition \ref{prop:transverse} in fact implies that all the inequalities in Lemma \ref{lem:op} are strict.
\end{remark}

\begin{proof}[Proof of Lemma \ref{lem:op}]
Observe that if $\xi^{(1)}_\rho(a)=\xi^{(1)}_\rho(b)$ or $\xi^{(1)}_\rho(b)=\xi^{(1)}_\rho(c)$ or $\xi^{(1)}_\rho(c)=\xi^{(1)}_\rho(a)$, then the proposition holds trivially. Hence, we only need to consider the case when $\xi^{(1)}_\rho(a)$, $\xi^{(1)}_\rho(b)$, and $\xi^{(1)}_\rho(c)$ are pairwise distinct.

Choose a convex cocompact hyperbolic metric on $S$, then $\widetilde{S}\simeq\Dbbb$ as oriented Riemannian metric spaces and $\widetilde{\Vmc}$ is a subset of $\partial\Dbbb$. Let $\gamma_a$, $\gamma_b$ and $\gamma_c\in\pi_1(S)$ be group elements whose attracting fixed points are $a,b,c$ respectively. For any $p\in\widetilde{S}$ and any $\gamma\in\pi_1(S)$, let $l_{p,\gamma\cdot p}$ be the closed line segment between $p$ and $\gamma\cdot p$. Then define
\[L_{a,p}:=\bigcup_{i=1}^\infty l_{\gamma_a^{i-1}\cdot p,\gamma_a^i\cdot p},\,\,\,\,L_{b,p}:=\bigcup_{i=1}^\infty l_{\gamma_b^{i-1}\cdot p,\gamma_b^i\cdot p},\,\,\,\,L_{c,p}:=\bigcup_{i=1}^\infty l_{\gamma_c^{i-1}\cdot p,\gamma_c^i\cdot p}.\]
Observe that $L_{a,p}$, $L_{b,p}$ and $L_{c,p}$ are simple curves starting at $p$ and going towards $a,b,c$ respectively. By choosing $p$ appropriately, we can further ensure that $L_{a,p}$, $L_{b,p}$ and $L_{c,p}$ are pairwise non-intersecting. Since $a<b<c<a$ in $\widetilde{\Vmc}$, this ensures that if we take a small disc centered at $p$, then the boundary of this disc, when oriented counter-clockwise, intersects $L_{a,p}$, $L_{b,p}$ and $L_{c,p}$ in that order.

Suppose for contradiction that the proposition is false. Since $\xi^{(1)}_\rho(a)$, $\xi^{(1)}_\rho(b)$, and $\xi^{(1)}_\rho(c)$ are pairwise distinct, this implies that $\xi^{(1)}_\rho(a)<\xi^{(1)}_\rho(c)<\xi^{(1)}_\rho(b)<\xi^{(1)}_\rho(a)$ in this cyclic order around $\partial\Omega$. Let $\phi$ be the $\rho$-equivariant developing map for $\mu$, and recall that the orientation on $\Omega$ was chosen so that $\phi$ is orientation preserving. The $\rho$-equivariance of $\phi$ ensures that then $\phi(L_{a,p})$, $\phi(L_{b,p})$ and $\phi(L_{c,p})$ are three pairwise non-intersecting curves starting at $\phi(p)$ at going towards $\xi^{(1)}_\rho(a)$, $\xi^{(1)}_\rho(b)$ and $\xi^{(1)}_\rho(c)$ respectively. But this means that if we take a small disc centered at $\phi(p)$, then the boundary of this disc, when oriented counter-clockwise, intersects $\phi(L_{a,p})$, $\phi(L_{c,p})$ and $\phi(L_{b,p})$ in that order. This contradicts the assumption that $\phi$ is orientation preserving.
\end{proof}

\begin{proof}[Proof of Proposition \ref{prop:transverse}]
By the definition of $\widetilde{\Vmc}$, there are some $z,w\in\widetilde{\Vmc}$ so that $x<z<y<w<x$ in this order along $\partial\pi_1(S)$. Let $\gamma\in\pi_1(S)$ be a non-peripheral element, and let $\gamma_z,\gamma_w\in\pi_1(S)$ be elements so that $\gamma_z^-=z$ and $\gamma_w^-=w$. Then for sufficiently large $n$,
\[x<\gamma_z^{-n}\cdot \gamma^\pm<y<\gamma_w^{-n}\cdot \gamma^\pm<x.\]
Also, let $\Omega$ be the $\rho$-equivariant developing image of $\mu$. By Lemma \ref{lem:op}, we see that
\begin{equation}\label{eqn: chain}
\xi^{(1)}_\rho(x)\leq \rho(\gamma_z)^{-n}\cdot \xi^{(1)}_\rho(\gamma^\pm)\leq \xi^{(1)}_\rho(y)\leq\rho(\gamma_w)^{-n}\cdot  \xi^{(1)}_\rho(\gamma^\pm)\leq \xi^{(1)}_\rho(x).
\end{equation}

\emph {(1)} Suppose for contradiction that $\xi_\rho^{(1)}(x)=\xi_\rho^{(1)}(y)$. Then \eqref{eqn: chain} implies that either $\xi_\rho^{(1)}(x)=\rho(\gamma_w)^{-n}\cdot\rho(\gamma)^\pm=\xi_\rho^{(1)}(y)$ or $\xi_\rho^{(1)}(x)=\rho(\gamma_z)^{-n}\cdot\rho(\gamma)^\pm=\xi_\rho^{(1)}(y)$. In either case, we have that $\xi^{(1)}_\rho(\gamma^+)=\xi^{(1)}_\rho(\gamma^-)$, which is impossible.

\emph{(2)} Suppose for contradiction that $\xi_\rho^{(1)}(x)\in\xi_\rho^{(2)}(y)$ for some distinct $x,y\in\widetilde{\Vmc}$. By Proposition \ref{prop:transverse1} and the convexity of $\Omega$, one deduces that there is an open line segment $L$ in $\Rbbb\Pbbb^2$ with endpoints $\xi_\rho^{(1)}(x)$ and $\xi_\rho^{(1)}(y)$, so that $L\subset\partial\Omega$ and either $\rho(\gamma_z)^{-n}\cdot\rho(\gamma)^\pm\in L$ or $\rho(\gamma_w)^{-n}\cdot\rho(\gamma)^\pm\in L$. Assume without loss of generality that the former holds. Then observe that $\lim_{k\to\infty}\rho(\gamma_z^{-n}\cdot\gamma\cdot\gamma_z^n)^k\cdot L$ is the entire projective line in $\Rbbb\Pbbb^2$ containing $L$. Since $\partial\Omega$ contains $L$ and is invariant under $\rho(\gamma_z^{-n}\cdot\gamma\cdot\gamma_z^n)$, we deduce that $\partial\Omega$ is a projective line, but this contradicts the properness of $\Omega$.
\end{proof}

If we choose a different representative $\rho'=g\cdot\rho\cdot g^{-1}\in\mathrm{hol}(\mu)$, then $\xi_{\rho'}=g\cdot \xi_\rho$. Furthermore, if $\mathrm{hol}(\mu)=\mathrm{hol}(\mu')$, then the $\PGL(3,\Rbbb)$-orbit of flag maps associated to $\mu$ and $\mu'$ agree. Hence, the map $\rho\mapsto\xi_\rho$ associates to the conjugacy class $\mathrm{hol}(\mu)$ a $\PGL(3,\Rbbb)$-orbit of maps from $\widetilde{\Vmc}\to\Rbbb\Pbbb^2\times(\Rbbb\Pbbb^2)^*$, which we denote by $\xi_{\mathrm{hol}(\mu)}:=[\xi_\rho]$. The next proposition tells us that $\xi_{\mathrm{hol}(\mu)}$ varies continuously with $\mathrm{hol}(\mu)$.

\begin{prop}\label{prop:conv-flag}
Let $F:(-\epsilon,\epsilon)\to\Xmc_3(S)$ given by $F:t\mapsto[\rho_t]$ be a map whose image lies in $\mathrm{hol}(\Cmc(S))$. Then $F$ is continuous if and only if $t\mapsto\xi_{\rho_t}(x)$ is a continuous path in $\Fmc$ for every $x\in\widetilde{\Vmc}$.
\end{prop}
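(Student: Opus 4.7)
My plan is to prove the two directions separately; the reverse implication is more direct via reconstruction of the representation from flag data on a projective frame, while the forward implication requires a case analysis at Jordan-type transitions.

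For $(\Leftarrow)$: Assume $t\mapsto\xi_{\rho_t}(x)$ is continuous for every $x\in\widetilde{\Vmc}$. Since $\xi^{(1)}_{\rho_{t_0}}(\widetilde{\Vmc})$ lies in the boundary of a properly convex $\rho_{t_0}$-invariant domain in $\rp^2$, and the boundary of a properly convex domain meets any projective line in at most two points, any three distinct points in $\xi^{(1)}_{\rho_{t_0}}(\widetilde{\Vmc})$ are in general position. Combining this with the injectivity of $\xi^{(1)}_{\rho_{t_0}}$ on $\widetilde{\Vmc}$ guaranteed by Proposition \ref{prop:transverse}(1), I would choose four points $x_1,\dots,x_4\in\widetilde{\Vmc}$ whose images form a projective frame in $\rp^2$; by continuity this persists for $t$ near $t_0$. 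For any $\gamma\in\pi_1(S)$, the equivariance relation $\rho_t(\gamma)\cdot\xi^{(1)}_{\rho_t}(x_i)=\xi^{(1)}_{\rho_t}(\gamma\cdot x_i)$ together with the fact that a projective transformation of $\rp^2$ is uniquely and continuously determined by its action on a projective frame, shows that $t\mapsto\rho_t(\gamma)$ is continuous in $\PGL(3,\Rbbb)$. Hence $t\mapsto\rho_t$ is continuous in $\mathrm{Hom}(\pi_1(S),\PGL(3,\Rbbb))$, so $F=[\rho_t]$ is continuous in $\Xmc_3(S)$.

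For $(\Rightarrow)$: Assume $F$ is continuous. Every $\rho\in\mathrm{hol}(\Cmc(S))$ has trivial centralizer in $\PGL(3,\Rbbb)$ (the image is irreducible by the proper convexity of the developing image), so $\PGL(3,\Rbbb)$ acts freely by conjugation on the preimage of $\mathrm{hol}(\Cmc(S))$ in $\mathrm{Hom}$, making the quotient a locally trivial principal bundle there. Near each $t_0$ we can therefore find a continuous lift $\rho_t'=g_t\rho_t g_t^{-1}$ in $\mathrm{Hom}$ with $g_{t_0}=\id$ and $g_t$ continuous in $\PGL(3,\Rbbb)$. Since $\xi_{\rho_t}(x)=g_t^{-1}\cdot\xi_{\rho_t'}(x)$ with $g_t$ continuous, continuity of $\xi_{\rho_t}(x)$ is equivalent to continuity of $\xi_{\rho_t'}(x)$, so after replacing $\rho_t$ by $\rho_t'$ we may assume $t\mapsto\rho_t$ is continuous in $\mathrm{Hom}$.

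For each $x=\gamma^-\in\widetilde{\Vmc}$, continuity of $\xi_{\rho_t}(x)$ then reduces to continuity of the flag assignment as $\rho_t(\gamma)\in\PGL(3,\Rbbb)$ varies continuously. If $\gamma$ is non-peripheral, Theorem \ref{thm: holonomy1}(1) forces $\rho_t(\gamma)$ to be hyperbolic with three pairwise distinct positive eigenvalues throughout the path, and the eigenvalues, eigenvectors, and relevant invariant lines depend continuously on $\rho_t(\gamma)$ in this open set, yielding continuity of the flag. If $\gamma$ is peripheral, Theorem \ref{thm: holonomy1}(2) restricts $\rho_t(\gamma)$ to be hyperbolic, quasi-hyperbolic, or parabolic; within each type the flag varies continuously by the same argument, so the main obstacle is to check continuity at transitions between types. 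Each transition is handled by a direct computation: for example, across the hyperbolic-to-quasi-hyperbolic case (2) transition (where the smallest and middle eigenvalues coalesce into the Jordan block whose single eigenvector is the quasi-attracting fixed point), the repelling eigenvector extends continuously as the simple-eigenvalue eigenvector, while the line through the repelling and saddle eigenvectors converges to the invariant line through the two fixed points in the limit, matching the case (2) definition of $\xi^{(2)}$. Analogous verifications for the hyperbolic-to-case (3), and hyperbolic-to-parabolic and quasi-hyperbolic-to-parabolic transitions complete the argument.
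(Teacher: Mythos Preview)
Your overall strategy matches the paper's, but there is a genuine gap in your backward direction. You claim that ``the boundary of a properly convex domain meets any projective line in at most two points,'' and use this to conclude that any three distinct points in $\xi^{(1)}_{\rho_{t_0}}(\widetilde{\Vmc})$ are non-collinear. This is false: the boundary of a properly convex domain can contain entire line segments (indeed, the developing images appearing here typically do when $S$ has punctures of bulge $\pm\infty$ or quasi-hyperbolic type), so a projective line can meet $\partial\Omega$ in a whole segment. The non-collinearity of $\xi^{(1)}_\rho(x),\xi^{(1)}_\rho(y),\xi^{(1)}_\rho(z)$ for distinct $x,y,z\in\widetilde{\Vmc}$ is true, but it requires a dynamical argument: if they were collinear, one could find a non-peripheral $\eta\in\pi_1(S)$ whose attracting and repelling fixed points lie on the resulting segment in $\partial\Omega$, and then iterating $\rho(\eta)$ would force $\partial\Omega$ to contain a full projective line, contradicting properness. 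The paper carries out exactly this argument; you need it (or an equivalent) to justify your frame.

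For the forward direction, your case-by-case analysis of Jordan-type transitions is correct in principle but is exactly what the paper avoids. The paper observes uniformly that $\xi^{(1)}_\rho(x)=\ker(L-\lambda_3 I)$ and $\xi^{(2)}_\rho(x)=\ker\big((L-\lambda_3 I)(L-\lambda_2 I)\big)$ for a representative $L\in\SL(3,\Rbbb)$ of $\rho(\gamma)$, and then invokes a single lemma: if $L_i\to L$ and the relevant kernel dimensions are constant along the sequence and at the limit, then the kernels converge in the Grassmannian. This handles all hyperbolic/quasi-hyperbolic/parabolic transitions at once, without having to verify each degeneration separately. Your approach works but is more laborious; the paper's lemma gives the same conclusion uniformly. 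Also, your preliminary step of locally lifting $[\rho_t]$ to a continuous path in $\mathrm{Hom}$ is a point the paper glosses over; your irreducibility/trivial-centralizer argument is a reasonable way to supply it.
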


This proposition is a consequence of the following elementary fact.

\begin{lem}  \label{conv-flag}
Let $L_i\to L$ be a convergent sequence of endomorphisms of $\Rbbb^n$, and let $\lambda^j_i,\lambda^j$ be the (generalized) eigenvalues of $L_i$ and $L$ respectively. Assume $\lambda^j_i,\lambda^j$ are real for all $j=1,\dots,n$. If there is some $m\in\{1,\dots,n\}$ so that for all $i$,
\[\dim \ker \prod_{j=1}^m (L_i-\lambda_i^jI) = m= \dim \ker \prod_{j=1}^m (L-\lambda^jI),\]
where $I$ is the identity endomorphism on $\Rbbb^n$, then
\[\ker \prod_{j=1}^m (L_i-\lambda_i^jI) \to \ker \prod_{j=1}^m (L-\lambda^jI)\]
in the Grassmannian $\Gr_m(\Rbbb^n)$.
\end{lem}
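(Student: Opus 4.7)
The plan is to first show that the operators $P_i := \prod_{j=1}^m (L_i - \lambda_i^j I)$ converge to $P := \prod_{j=1}^m (L - \lambda^j I)$ in the operator norm, and then to deduce the convergence of their kernels from the constant-dimension hypothesis.

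For the convergence $P_i \to P$, I first observe that the coefficients of the characteristic polynomial of $L_i$ converge to those of $L$, and since all roots are assumed real, the multiset of eigenvalues of $L_i$ converges to that of $L$. Thus, after relabeling if necessary (and passing to a subsequence if there is any ambiguity, which is harmless by the reduction below), one may assume $\lambda_i^j \to \lambda^j$ as $i \to \infty$ for each $j = 1,\dots, n$, with the first $m$ being those actually appearing in the product defining $P_i$ and $P$. The assumption that the kernel dimensions remain equal to $m$ along the sequence is what guarantees that this relabeling can be done consistently with the partition into the first $m$ and remaining $n-m$ eigenvalues. Once the eigenvalues are indexed this way, $P_i$ is a polynomial expression in $(L_i, \lambda_i^1, \dots, \lambda_i^m)$ with continuous dependence, so $P_i \to P$.

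For the kernel convergence, I use compactness of the Grassmannian $\Gr_m(\Rbbb^n)$. Suppose $\ker P_{i_k}$ is a subsequence converging to some $V \in \Gr_m(\Rbbb^n)$. For each $v \in V$, pick $v_{i_k} \in \ker P_{i_k}$ with $v_{i_k} \to v$; then
\[ P v \,=\, \lim_{k\to\infty} P_{i_k} v_{i_k} \,=\, 0, \]
so $v \in \ker P$. Thus $V \subset \ker P$, and the dimension assumption $\dim V = m = \dim \ker P$ forces $V = \ker P$. Since every subsequential limit of $\ker P_i$ in the compact space $\Gr_m(\Rbbb^n)$ equals $\ker P$, the full sequence must converge to $\ker P$.

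The main technical delicacy is the labeling in the first step: one must verify that the indexing of eigenvalues prescribed in the hypothesis can be tracked so that $\lambda_i^j \to \lambda^j$. The reality of all eigenvalues is critical, since it rules out complex-conjugate pairs emerging or colliding along the sequence, and thereby allows the individual real eigenvalues to be followed continuously. After this, the proof reduces to the standard, purely topological fact about kernels of convergent operators with constant nullity.
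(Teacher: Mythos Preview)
Your proof is correct and follows essentially the same approach as the paper's. The paper chooses orthonormal bases $x_i^1,\dots,x_i^m$ of $\ker P_i$, passes to a subsequence so that each $x_i^j\to x^j$, observes that the $x^j$ lie in $\ker P$ (using $L_i\to L$ and $\lambda_i^j\to\lambda^j$), and invokes the dimension hypothesis to conclude; this is exactly your Grassmannian compactness argument in slightly more concrete dress. Your discussion of the eigenvalue labeling is more careful than the paper's (which simply asserts $\lambda_i^j\to\lambda^j$ without comment), but since in the intended application the real eigenvalues are ordered by size, this point is handled implicitly there.
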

\begin{proof}
Let $x_i^1,\dots, x_i^m$ be an orthonormal basis of $\ker \prod_{j=1}^m (L_i-\lambda_i^jI)$.  By taking subsequences, we may assume that $x_i^j$ converges to $x^j$. Since $L_i\to L$, we see that $\lambda^j_i\to\lambda^j$, so $x^1,\dots, x^m$ all lie in $\ker \prod_{j=1}^m (L-\lambda^jI)$. By the dimension hypothesis, $x^1,\dots, x^m$ is an orthonormal basis of $\ker \prod_{j=1}^m (L-\lambda^jI)$. Hence, up to taking subsequences, $\ker \prod_{j=1}^m (L_i-\lambda_i^jI)$ converges to $\ker \prod_{j=1}^m (L-\lambda^jI)$. Repeating this argument for all subsequences of $\{L_i\}_{i=1}^\infty$ proves the lemma.
\end{proof}

\begin{proof}[Proof of Proposition \ref{prop:conv-flag}]
For any $x\in\widetilde{\Vmc}$, let $\gamma\in\pi_1(S)$ be the unique primitive group element so that $x$ is the repelling fixed point of $\gamma$. For any $\mu\in\Cmc(S)$ and any representative $\rho\in\mathrm{hol}(\mu)$, let $L\in\SL(3,\Rbbb)$ be a representative of $\rho(\gamma)$. Then let $\lambda_1\ge \lambda_2\ge \lambda_3$ be the generalized eigenvalues of $L$. Observe that as defined, $\xi_\rho^{(1)}(x)=\ker(L-\lambda_3)$ and $\xi_\rho^{(2)}(x)=\ker\big((L-\lambda_3)(L-\lambda_2)\big)$. To prove the forward direction, we simply apply Lemma \ref{conv-flag}.

For the backward direction, pick any $\gamma\in\pi_1(S)$, any triple of pairwise distinct points $x,y,z\in\widetilde{\Vmc}$, and let $(x',y',z'):=\gamma\cdot (x,y,z)$. Also, let
\[(a,b,c,d):=(\xi_\rho^{(1)}(x),\xi_\rho^{(1)}(y),\xi_\rho^{(1)}(z),\xi_\rho^{(2)}(x)\cap\xi_\rho^{(2)}(y)),\]
and let
\[(a',b',c',d'):=(\xi_\rho^{(1)}(x'),\xi_\rho^{(1)}(y'),\xi_\rho^{(1)}(z'),\xi_\rho^{(2)}(x')\cap\xi_\rho^{(2)}(y')).\]
The $\rho$-equivariance of $\xi$ implies that $\rho(\gamma)\cdot (a,b,c,d)=(a',b',c',d')$.

We will now argue that the quadruple of points $a,b,c,d$ are in general position, i.e. no three of them lie in a line in $\Rbbb\Pbbb^2$. By (2) of Proposition \ref{prop:transverse}, we see that the triple $a,b,d$ do not lie in a line in $\Rbbb\Pbbb^2$. For the same reason, the same is true for the triples $a,c,d$ and $b,c,d$. On the other hand, suppose for contradition that $a,b,c$ lie in a line. By the same argument as the first part of the proof of Proposition \ref{prop:transverse}, there is some $\eta\in\pi_1(S)$ so that $x<\eta^-<\eta^+<y<z$. The convexity of $\Omega$ then implies that there is a projective (open) line segment $L$ with endpoints $a,b$ so that $L\subset\partial\Omega$ and  $\rho(\eta)^-,\rho(\eta)^+\in L$. Since $\partial\Omega$ is $\rho$-equivariant, this means that
\[\bigcup_{i=-\infty}^\infty \rho(\eta)^i\cdot L\subset\partial\Omega,\]
and is an entire projective line in $\Rbbb\Pbbb^2$. This violates the properness of $\Omega$, so $a,b,c$ cannot lie in a line. We have thus proven that $a,b,c,d$ are in general position, so $(a',b',c',d')=\rho(\gamma)\cdot (a,b,c,d)$ is also in general position.

If we normalize $a=[1:0:0]^T$, $b=[0:1:0]^T$, $c=[0:0:1]^T$ and $d=[1:1:1]^T$, then it is a straightforward exercise to explicitly write down a matrix representative for $\rho(\gamma)$ in terms of the coordinates of $a',b',c',d'$. From this, it is clear that $\rho(\gamma)$ varies continuously with $a',b',c',d'$.
\end{proof}

\subsection{The holonomy map.}
It will be important later that we understand the image of the map $\mathrm{hol}:\Cmc(S)^\mathrm{adm}\to\Xmc_3(S)$. To do so, we set up the following notation. For every $\mu\in\Cmc(S)^\mathrm{adm}$, let $\Amc_{\mathrm{hol}(\mu)}$ denote the set of punctures of $S$ whose $\mu$-type is bulge $\pm\infty$, and let $\Gamma_{\mathrm{hol}(\mu)}\subset\pi_1(S)$ be the set of peripheral group elements corresponding to $\Amc_{\mathrm{hol}(\mu)}$.

For any representative $\rho\in\mathrm{hol}(\mu)$, let $\Omega$ denote the $\rho$-equivariant developing image of $\mu$. Let $\gamma\in\Gamma_{\mathrm{hol}(\mu)}$ and let $\rho(\gamma)^+=\xi^{(1)}(\gamma^+)$, $\rho(\gamma)^0$, $\rho(\gamma)^-=\xi^{(1)}(\gamma^-)$ be the attracting, saddle and repelling points of $\rho(\gamma)$. Note that $\rho(\gamma)^+$ and $\rho(\gamma)^-$ necessarily lie on the boundary of $\Omega$. Also, by Proposition \ref{prop:transverse1} and (1) of Proposition \ref{prop:transverse}, we see that $\xi^{(1)}(\widetilde{\Vmc}\setminus\{\gamma^+,\gamma^-\})$ lies entirely in one of the two connected components of $\Rbbb\Pbbb^2\setminus(\xi^{(2)}_\rho(\gamma^+)\cup\xi^{(2)}_\rho(\gamma^-))$, call it $A$. The projective line through $\rho(\gamma)^+$ and $\rho(\gamma)^-$ cuts $A$ into two open triangles, and the fact that $\gamma$ is a peripheral group element ensures that $\xi^{(1)}(\widetilde{\Vmc}\setminus\{\gamma^+,\gamma^-\})$ lies entirely in one of these two open triangles, call it $\Delta'$. 

\begin{definition}
Let $\rho\in\mathrm{hol}(\mu)$ and $\gamma\in\Gamma_{\mathrm{hol}(\mu)}$. The \emph{principal triangle} of $\rho(\gamma)$ is the open triangle $\Delta=\Delta_\rho$ that is the connected component of $A\setminus\Span(\rho(\gamma)^+,\rho(\gamma)^-)$ that is not $\Delta'$.
\end{definition}

It is clear that $\rho(\gamma)$ has a unique principal triangle, which depends only on $\rho$. Let $\Gmc_\rho$ denote the set of principal triangles of the group elements in $\rho(\Gamma_{\mathrm{hol}(\mu)})$. Observe that there is a natural $\pi_1(S)$-action on $\Gmc_\rho$ induced by $\rho$. The next theorem tells us to what extent different points in $\Cmc(S,\Dmc)^{\mathrm{adm}}$ can have the same holonomy.

\begin{thm}\label{thm: hol}
Let $\Dmc$ be any multi-curve in $S$, let $S_1,\dots,S_k$ be the connected components of $S\setminus\Dmc$, and let $(\mu_1,\dots,\mu_k),(\mu_1',\dots,\mu_k')\in\Cmc(S,\Dmc)^\mathrm{adm}$ so that
\[\mathrm{hol}(\mu_1,\dots,\mu_k)=\mathrm{hol}(\mu_1',\dots,\mu_k').\]
For all $i=1,\dots,k$, let $\rho_i$ be representatives of $\mathrm{hol}(\mu_i)$, and let $\Omega_i, \Omega_i'\subset\Rbbb\Pbbb^2$ be the $\rho_i$-equivariant developing images of $\mu_i,\mu_i'\in\Cmc(S_i)^{\mathrm{adm}}$ respectively. Then the interior of the symmetric difference $\Omega_i\, \triangle\, \Omega_i'$ is the union of a $\pi_1(S_i)$-invariant subset of triangles in $\Gmc_{\rho_i}$.
\end{thm}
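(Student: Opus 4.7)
Fix an index $i$ and drop subscripts, so $\mu, \mu' \in \Cmc(S_i)^{\mathrm{adm}}$ have the same holonomy class, represented by $\rho$, with $\rho$-equivariant developing images $\Omega, \Omega'$. The main observation is that the flag map $\xi_\rho : \widetilde{\Vmc} \to \Fmc$ is built from $\rho$ alone, so it is common to both structures. Hence the $\pi_1(S_i)$-invariant closed set
\[\Lambda := \overline{\xi_\rho^{(1)}(\widetilde{\Vmc})} \subset \Rbbb\Pbbb^2\]
is contained in $\partial\Omega \cap \partial\Omega'$. Both $\overline\Omega$ and $\overline{\Omega'}$ are properly convex and overlap on $\Lambda$, so they fit in a common affine chart, in which I let $C$ denote the closed convex hull of $\Lambda$. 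By convexity of both domains, $C \subset \overline\Omega \cap \overline{\Omega'}$, so the interior of $\Omega\,\triangle\,\Omega'$ is contained in $(\Omega \cup \Omega') \setminus C$.

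Next I analyze the connected components of $\partial\Omega \setminus \Lambda$ (and similarly of $\partial\Omega' \setminus \Lambda$). By Proposition \ref{prop:op} the map $\xi_\rho^{(1)}$ preserves cyclic order, so each such component is an open arc corresponding to an open interval of $\partial\pi_1(S_i)$ meeting $\widetilde{\Vmc}$ only at its two endpoints. Such intervals can only be \emph{peripheral gaps} bounded by $\gamma^-, \gamma^+$ for a single peripheral $\gamma \in \pi_1(S_i)$, since the $\pi_1(S_i)$-orbit of the attracting fixed point of any non-peripheral element is dense in $\partial\pi_1(S_i)$ and would otherwise fill the gap. Theorem \ref{thm: holonomy1} together with admissibility leaves three possibilities for $\rho(\gamma)$: if $\rho(\gamma)$ is parabolic, then $\xi_\rho^{(1)}(\gamma^-) = \xi_\rho^{(1)}(\gamma^+)$ and no gap exists; if $\rho(\gamma)$ is quasi-hyperbolic, both endpoints lie on the single $\rho(\gamma)$-stable line $\xi_\rho^{(2)}(\gamma^\pm)$, which by Proposition \ref{prop:transverse1} is a common supporting line to both $\Omega$ and $\Omega'$, so the straight chord along this line is forced to be the gap in both boundaries; the substantive case is hyperbolic, handled below.

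In the hyperbolic case $\rho(\gamma)$ has three distinct fixed points $\rho(\gamma)^+$, $\rho(\gamma)^-$, $\rho(\gamma)^0$, and the principal triangle $\Delta_\rho \in \Gmc_\rho$ is the open triangle spanned by these, on the side of $\mathrm{Span}(\rho(\gamma)^+,\rho(\gamma)^-)$ opposite $\xi_\rho^{(1)}(\widetilde{\Vmc} \setminus \{\gamma^\pm\})$. Any $\rho(\gamma)$-invariant properly convex arc in $\partial\Omega$ joining $\rho(\gamma)^-$ to $\rho(\gamma)^+$ across the associated gap is confined to $\overline{\Delta_\rho}$, and by the admissibility dichotomy classifying bulge $\pm\infty$ cylinder ends given in Section \ref{admissible-section}, such an arc must be either the straight chord (bulge $-\infty$) or the broken arc through $\rho(\gamma)^0$ (bulge $+\infty$). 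Consequently the symmetric difference within this gap is either empty (if both $\mu,\mu'$ assign the same bulge sign at $\gamma$) or precisely the open principal triangle $\Delta_\rho$. Assembling contributions over all gaps and invoking the $\rho$-invariance of $\Omega$ and $\Omega'$ gives that $\mathrm{Int}(\Omega\,\triangle\,\Omega')$ is a $\pi_1(S_i)$-invariant union of triangles in $\Gmc_{\rho_i}$, which proves the theorem. The main obstacle is twofold: justifying that peripheral gaps are the only components of $\partial\Omega \setminus \Lambda$ (which rests on a density argument for non-peripheral attracting fixed points in $\partial\pi_1(S_i)$, together with the fact that isolated edges of $\widetilde{\Tmc}$ approach peripheral vertices from only one side), and the hyperbolic classification step, where one must rule out exotic intermediate $\rho(\gamma)$-invariant properly convex arcs inside $\overline{\Delta_\rho}$ different from the chord and the broken arc through the saddle.
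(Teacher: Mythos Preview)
Your overall strategy---identify the common set $\Lambda=\overline{\xi_\rho^{(1)}(\widetilde{\Vmc})}\subset\partial\Omega\cap\partial\Omega'$, then classify the complementary arcs end-by-end using admissibility---is sound and in fact matches the endgame of the paper's proof. The quasi-hyperbolic and hyperbolic cases are handled correctly once one knows the gap is peripheral, and the admissibility dichotomy does force the arc to be either the chord or the broken arc through the saddle, exactly as you say.

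The genuine gap is the step you yourself flag as the ``main obstacle'': showing that every component of $\partial\Omega\setminus\Lambda$ arises from a peripheral $\gamma$. Your proposed justification is that non-peripheral attracting fixed points are dense in $\partial\pi_1(S_i)$ and ``would otherwise fill the gap''. But density in $\partial\pi_1(S_i)$ does not transfer to density in $\partial\Omega$: the order-preserving map $\xi_\rho^{(1)}$ is only defined on the countable set $\widetilde{\Vmc}$, and nothing in the paper up to this point shows that it extends continuously to $\partial\pi_1(S_i)$. Without such continuity, one cannot rule out a ``jump'': a point $x\in\partial\pi_1(S_i)$ that is not a peripheral endpoint, yet at which sequences in $\widetilde{\Vmc}$ approaching $x$ from the two sides have images converging to two \emph{distinct} points of $\partial\Omega$, producing a non-peripheral gap. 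Your parenthetical about isolated edges approaching peripheral vertices from one side is a separate combinatorial observation and does not address this analytic issue.

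The paper establishes this step by an entirely different route that avoids any boundary-extension argument. It first proves (Lemma~\ref{limit-set-lemma}) that for the convex core $\widecheck{\Omega}$, the orbit of any interior point accumulates exactly on $\overline{\mathscr F}$; the argument analyzes limits of $\rho(\gamma_i)$ in $\Pbbb(\mathrm{End}(\Rbbb^3))$ and uses the Hilbert metric to rule out accumulation on straight segments of $\partial\widecheck{\Omega}$. Then (Proposition~\ref{rule-out-gaps}) it takes a large Hilbert ball whose quotient-complement is a union of end cylinders, and shows that any gap in $\partial\widecheck{\Omega}\setminus\overline{\mathscr F}$ lies in one such cylinder end, hence is preserved by a peripheral cyclic group. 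This is the substantive content of the theorem, and your sketch does not supply a substitute for it.
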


The proof of Theorem \ref{thm: hol} is in the Appendix. As an immediate consequence of Theorem \ref{thm: hol}, Proposition \ref{prop:devimage}, and the compatibility of $\mu$ across the closed curves in $\Dmc$, we have the following corollary. 

\begin{cor}\label{cor: hol}
Let $\mathrm{hol}:\Cmc(S,\Dmc)^\mathrm{adm}\to\prod_{i=1}^k\Xmc_3(S_j)$, and choose an orientation for each $c\in\Dmc$. For any $\mu\in\Cmc(S,\Dmc)^{\mathrm{adm}}$, let
\[\Dmc_{\mathrm{hol}(\mu)}:=\{c\in\Dmc:\mu\text{-type of c is bulge }\pm\infty\}.\]
Then $|\mathrm{hol}^{-1}(\mathrm{hol}(\mu))|=2^{|\Dmc_{\mathrm{hol}(\mu)}\cup\Amc_{\mathrm{hol}(\mu)}|}$. Furthermore, each element in $\mathrm{hol}^{-1}(\mathrm{hol}(\mu))$ corresponds to the choice of whether the $\mu$-type of $c$ is bulge $+\infty$ or bulge $-\infty$ for each $c \in\Dmc_{\mathrm{hol}(\mu)}$, and whether the $\mu$-type of $p$ is bulge $+\infty$ or bulge $-\infty$ for each $p \in\Amc_{\mathrm{hol}(\mu)}$.
\end{cor}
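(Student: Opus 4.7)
The plan is to enumerate the fiber of $\mathrm{hol}$ by combining Theorem~\ref{thm: hol} componentwise with the compatibility condition along each $c\in\Dmc$. First I would fix $\mu=(\mu_1,\dots,\mu_k)\in\Cmc(S,\Dmc)^{\mathrm{adm}}$, pick representatives $\rho_i\in\mathrm{hol}(\mu_i)$ with $\rho_i$-equivariant developing images $\Omega_i$, and use Proposition~\ref{prop:devimage} to reduce the enumeration of $\mu'\in\mathrm{hol}^{-1}(\mathrm{hol}(\mu))$ to the enumeration of tuples $(\Omega_1',\dots,\Omega_k')$ of $\rho_i$-equivariant properly convex developing images whose quotients assemble into an element of $\Cmc(S,\Dmc)^{\mathrm{adm}}$.

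Next I would apply Theorem~\ref{thm: hol} on each component to pin down the freedom in $\Omega_i'$. The theorem gives that the interior of $\Omega_i\triangle\Omega_i'$ is a $\pi_1(S_i)$-invariant union of principal triangles from $\Gmc_{\rho_i}$, and by the construction of $\Gmc_{\rho_i}$ the $\pi_1(S_i)$-orbits of principal triangles are parameterized by the bulge $\pm\infty$ punctures of $S_i$. Adjoining or deleting such an orbit toggles the bulge sign at precisely one puncture of $S_i$ while leaving every other end of $\Omega_i$ undisturbed, as one sees by comparing with the local models in Figures~\ref{bulge-plus-figure} and~\ref{bulge-minus-figure}.

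Finally I would aggregate the choices across components and impose compatibility. The bulge $\pm\infty$ punctures of the collection $\{S_i\}$ split into the original bulge $\pm\infty$ punctures of $S$ (contributing $|\Amc_{\mathrm{hol}(\mu)}|$ ends) and the pair of peripheral ends associated with each $c\in\Dmc_{\mathrm{hol}(\mu)}$ (contributing $2|\Dmc_{\mathrm{hol}(\mu)}|$ ends, one per side even when $c$ is non-separating). Compatibility of $\mu'$ across each $c\in\Dmc_{\mathrm{hol}(\mu)}$ is precisely one $\Zbbb/2$-relation demanding that the two sides of $c$ flip together, so the count becomes
\[2^{|\Amc_{\mathrm{hol}(\mu)}|+2|\Dmc_{\mathrm{hol}(\mu)}|-|\Dmc_{\mathrm{hol}(\mu)}|}=2^{|\Dmc_{\mathrm{hol}(\mu)}\cup\Amc_{\mathrm{hol}(\mu)}|},\]
and the asserted bijection sends each $\mu'$ to its tuple of bulge signs on $\Amc_{\mathrm{hol}(\mu)}\cup\Dmc_{\mathrm{hol}(\mu)}$.

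The step I expect to be the main obstacle is the surjectivity of this bijection: every prescribed sign pattern must actually be realized by some $\mu'\in\Cmc(S,\Dmc)^{\mathrm{adm}}$. This requires that for every $\pi_1(S_i)$-invariant $\Smc_i\subseteq\Gmc_{\rho_i}$, the symmetric-difference modification $\Omega_i'$ is still properly convex, still carries a properly discontinuous $\rho_i$-action, and realizes the prescribed bulge types at the modified punctures. I would invoke the ``both directions'' content of Theorem~\ref{thm: hol}; modulo that theorem (whose proof is deferred to the Appendix), the remainder of the corollary is just the $\Zbbb/2$-counting above.
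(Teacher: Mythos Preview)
Your argument is correct and matches the paper's approach exactly: the paper states the corollary as an ``immediate consequence of Theorem~\ref{thm: hol}, Proposition~\ref{prop:devimage}, and the compatibility of $\mu$ across the closed curves in $\Dmc$,'' and you have unpacked precisely those three ingredients and done the $\Zbbb/2$-bookkeeping carefully. One small point of phrasing: Theorem~\ref{thm: hol} as stated is only the uniqueness direction (it constrains $\Omega_i\triangle\Omega_i'$ given two admissible structures with the same holonomy), not a ``both directions'' statement; the surjectivity you need---that every sign pattern is realized by an admissible $\Omega_i'$---comes instead from the explicit bulge $\pm\infty$ local models of Section~\ref{admissible-section} together with Proposition~\ref{rule-out-gaps}, which shows the intervals $I_H$ for distinct peripheral $H$ are disjoint so the modifications at different ends do not interfere.
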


\subsection{Edge and triangle invariants}\label{sec:edgetriangle}
When $S$ is a closed surface, Goldman gave a parameterization for $\mathrm{hol}(\Cmc(S))$ that generalizes the Fenchel-Nielsen coordinates on $\Tmc(S)$ \cite{Go90}. Briefly, he did this by first parameterizing $\mathrm{hol}(\Cmc(P))$, where $P$ is the oriented thrice punctured sphere. Then, he  generalized this parameterization to all surfaces of negative Euler characteristic by parameterizing the space of ways to assemble convex $\Rbbb\Pbbb^2$ structures on pairs of pants together. This parameterization was later extended by Marquis to the setting of where $S$ is not closed \cite{Mar10}.

By modifying previous parameterizations of $\mathrm{hol}(\Cmc(S))$ given by Bonahon-Dreyer \cite{BonDre} and Fock-Goncharov \cite{FocGon}, the second author \cite{Zhang16} also gave a continuous (in fact real-analytic) coordinate system of $\mathrm{hol}(\Cmc(S))$ that is similar in flavor to the one given by Goldman. However, this coordinate system has the additional advantage that the parameters are naturally projective invariants, and thus have easier geometric interpretations. We will give a brief description of this coordinate system. To do so, one first needs to define the edge and triangle invariants.

Choose a pants decomposition $\Pmc$ of $S$ and let $\Tmc=\Tmc_\Pmc$ be the associated ideal triangulation described in Section \ref{sec: triangulation}. For any closed edge $c=[x,y]\in\Pmc$, choose a lift $\{x,y\}\in\widetilde{\Pmc}$ of $c$, and let $\widetilde{T}_x,\widetilde{T}_y$ be triangles in $\widetilde{\Theta}$ with $x,y$ as a vertex respectively. We refer to the orbit $\pi_1(S)\cdot \{\widetilde{T}_x,\widetilde{T}_y\}$ as a \emph{bridge} across $c$. See Figure \ref{bridge-figure}.

\begin{figure}
\begin{center}
\includegraphics[scale=1.1]{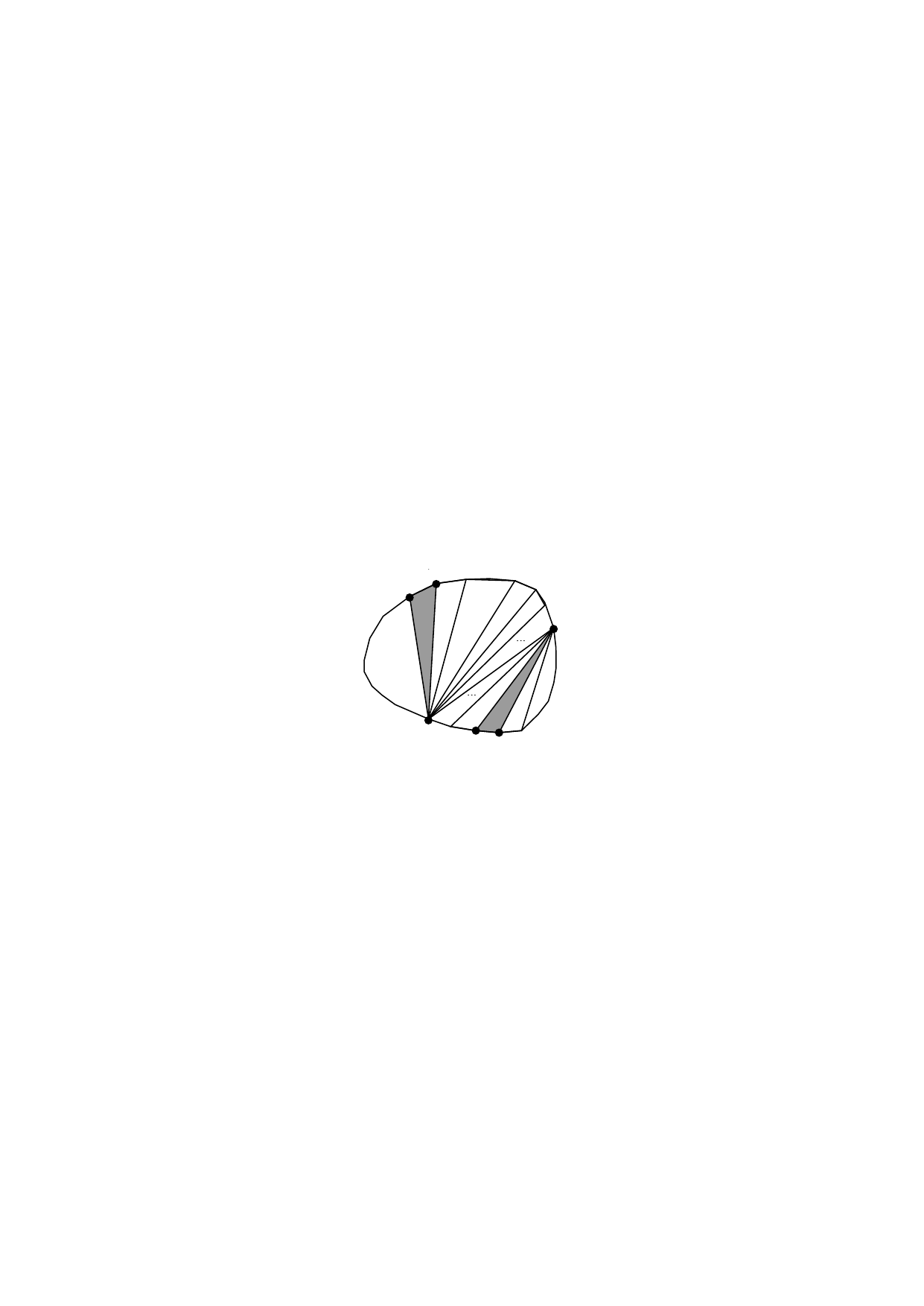}
\small
\put (-52, 11){$\widetilde{T}_x$}
\put (-103, 95){$\widetilde{T}_y$}
\put (-1, 78){$x$}
\put (-97, 6){$y$}
\put (-114, 107){$z_2$}
\put (-90, 116){$w_2$}
\put (-50, -2){$z_1$}
\put (-70, 0){$w_1$}
\end{center}
\caption{Bridge}
\label{bridge-figure}
\end{figure}

For each edge $[x,y]\in\Tmc$, choose a representative $\{x,y\}\in\widetilde{\Tmc}$ of $[x,y]$. If $\{x,y\}\in\widetilde{\Qmc}$, let $z_1,z_2\in\widetilde{\Vmc}$ so that $\{x,z_1\}, \{y,z_1\}, \{x,z_2\}, \{y,z_2\}\in\widetilde{\Tmc}$ and $x<z_2<y<z_1<x$. On the other hand, if $\{x,y\}\in\widetilde{\Pmc}$, let $\{\widetilde{T}_x,\widetilde{T}_y\}$ be an element in the bridge across $c$ so that $\widetilde{T}_{x}=\{x,z_1,w_1\}$ where $x<z_1<w_1<x$, and $\widetilde{T}_{y}=\{y,z_2,w_2\}$ where $y<z_2<w_2<y$. See Figures \ref{edge-inv-nonbridge-figure}
and \ref{edge-inv-bridge-figure}, noting that for $i=1,2$, $z_i=\xi^{(1)}(z_i)$ in these figures. Define
\[\begin{array}{rrcl}
&s_{x,y}:\mathrm{hol}(\Cmc(S))&\to&\Rbbb\\
 &[\rho]&\mapsto& C\left(\xi_\rho^{(2)}(x),\xi_\rho^{(1)}(z_2),\xi_\rho^{(1)}(z_1),\xi_\rho^{(1)}(x)+\xi_\rho^{(1)}(y)\right).
\end{array}\]
This is well-defined because the projective invariance of the cross ratio implies that all the choices we made are irrelevant (except for the choice of a bridge across each closed curve in $\Pmc$). Hence, for each edge $[x,y]\in\Tmc$, we have defined two invariants, $s_{x,y}$ and $s_{y,x}$. Observe using Remark \ref{rem:C+}, Lemma \ref{lem:op}, and Remark \ref{rem: op} that $s_{x,y}<0$, so one can define $\sigma_{x,y}:=\log (-s_{x,y})$. These are called the \emph{edge invariants} along $[x,y]$.

\begin{figure}
\begin{center}
\includegraphics[scale=1.2]{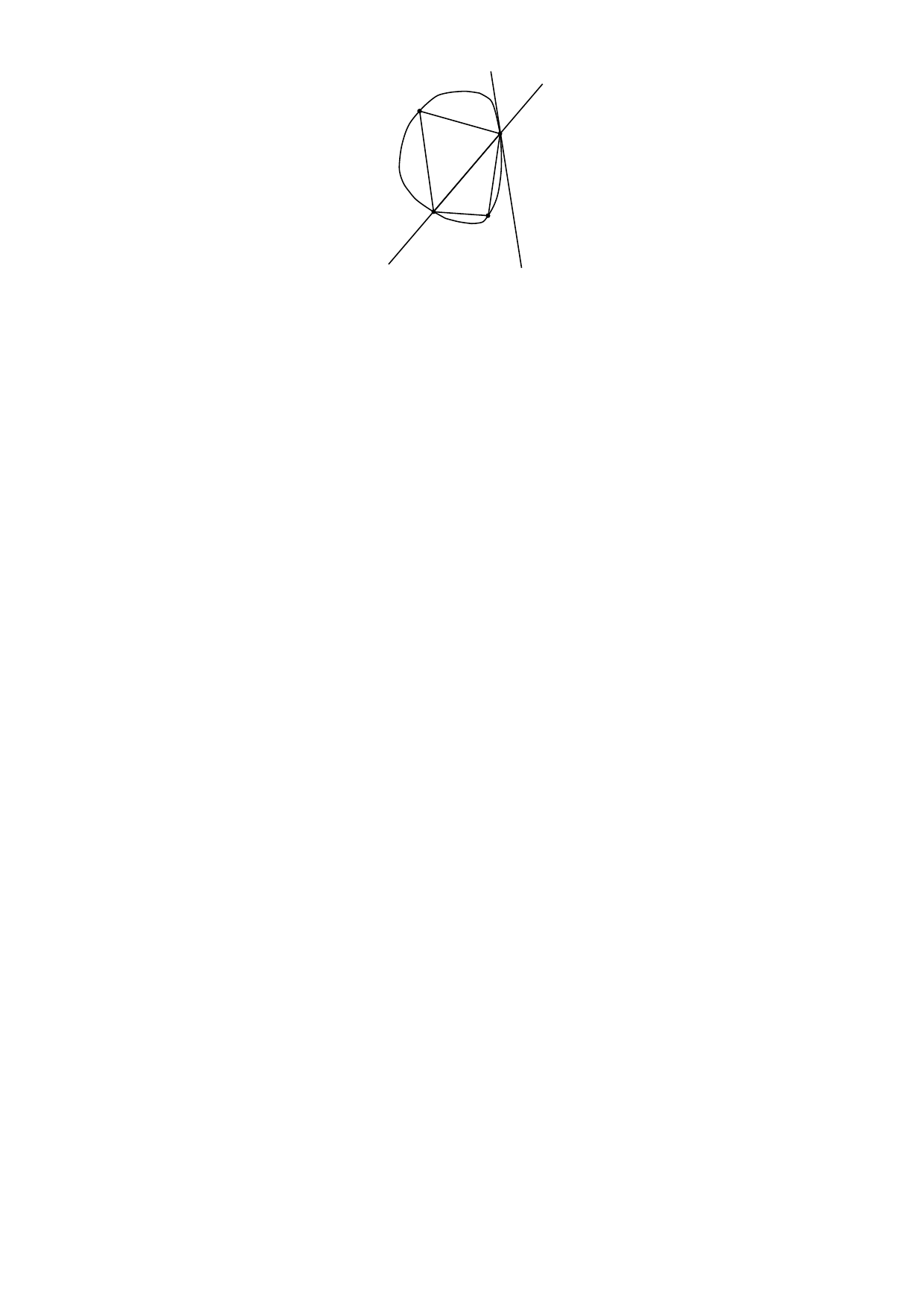}
\small
\put (-32, 78){$\xi^{(1)}_\rho(x)$}
\put (-116, 17){$\xi^{(1)}_\rho(y)$}
\put (-127, 103){$\xi^{(1)}_\rho(z_2)$}
\put (-53, 4){$\xi^{(1)}_\rho(z_1)$}
\put (-15, 122){$\xi^{(1)}_\rho(x)+\xi^{(1)}_\rho(y)$}
\put (-53, 130){$\xi^{(2)}_\rho(x)$}
\end{center}
\caption{Edge invariant for $\{x,y\}\in\mathcal Q$}
\label{edge-inv-nonbridge-figure}
\end{figure}

\begin{figure}
\begin{center}
\includegraphics[scale=1.1]{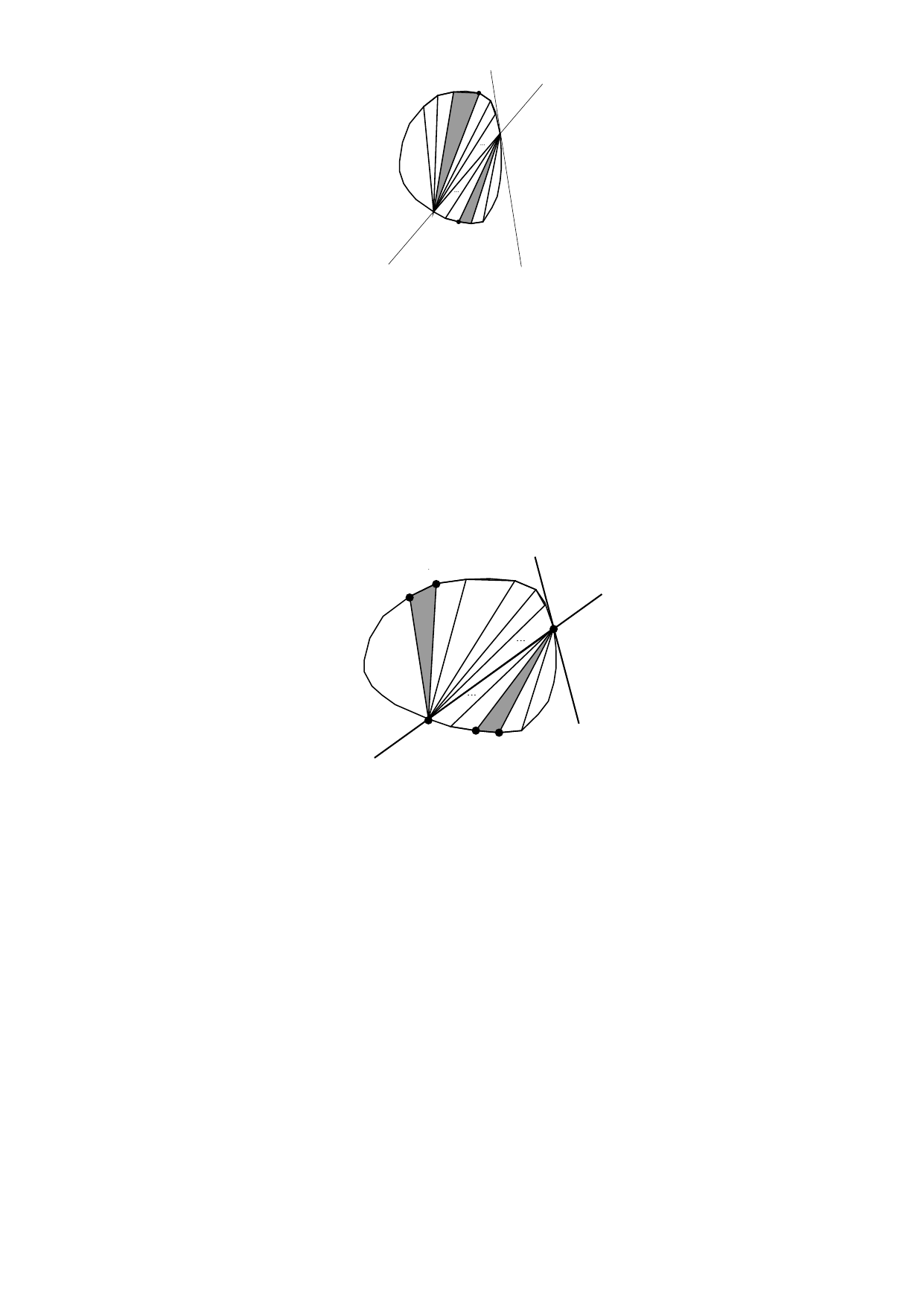}
\small
\put (-72, 9){$\widetilde{T}_x$}
\put (-123, 93){$\widetilde{T}_y$}
\put (-21, 76){$x$}
\put (-117, 4){$y$}
\put (-134, 105){$z_2$}
\put (-110, 114){$w_2$}
\put (-70, -4){$z_1$}
\put (-90, -2){$w_1$}
\end{center}
\caption{Edge invariant for $\{x,y\}\in\mathcal P$}
\label{edge-inv-bridge-figure}
\end{figure}

Similarly, for every ideal triangle $[x,y,z]\in\Theta$, choose a representative $\{x,y,z\}\in\widetilde{\Theta}$ so that $x<y<z<x$ in $\partial\pi_1(S)$. Define
\[\begin{array}{rrcl}
&t_{x,y,z}:\mathrm{hol}(\Cmc(S))&\to&\Rbbb\\
 &[\rho]&\mapsto& T\left(\xi_\rho^{(2)}(x),\xi_\rho^{(2)}(y), \xi_\rho^{(2)}(z),\xi_\rho^{(1)}(x),\xi_\rho^{(1)}(y),\xi_\rho^{(1)}(z)\right).
\end{array}\]
See Figure \ref{triangle-inv-figure}.
The projective invariance of the triple ratio again guarantees that the $t_{x,y,z}$ do not depend on any of the choices made. Furthermore, the symmetry of the triple ratio implies that $t_{x,y,z}=t_{y,z,x}=t_{z,x,y}$, so we only have one such function for each $[x,y,z]\in\Theta$. Again, observe using Remark \ref{rem:T+} and Proposition \ref{prop:transverse} that $t_{x,y,z}>0$, so one can define the \emph{triangle invariants} for $[x,y,z]$ to be $\tau_{x,y,z}:=\log (t_{x,y,z})$.

\begin{figure}
\begin{center}
\includegraphics{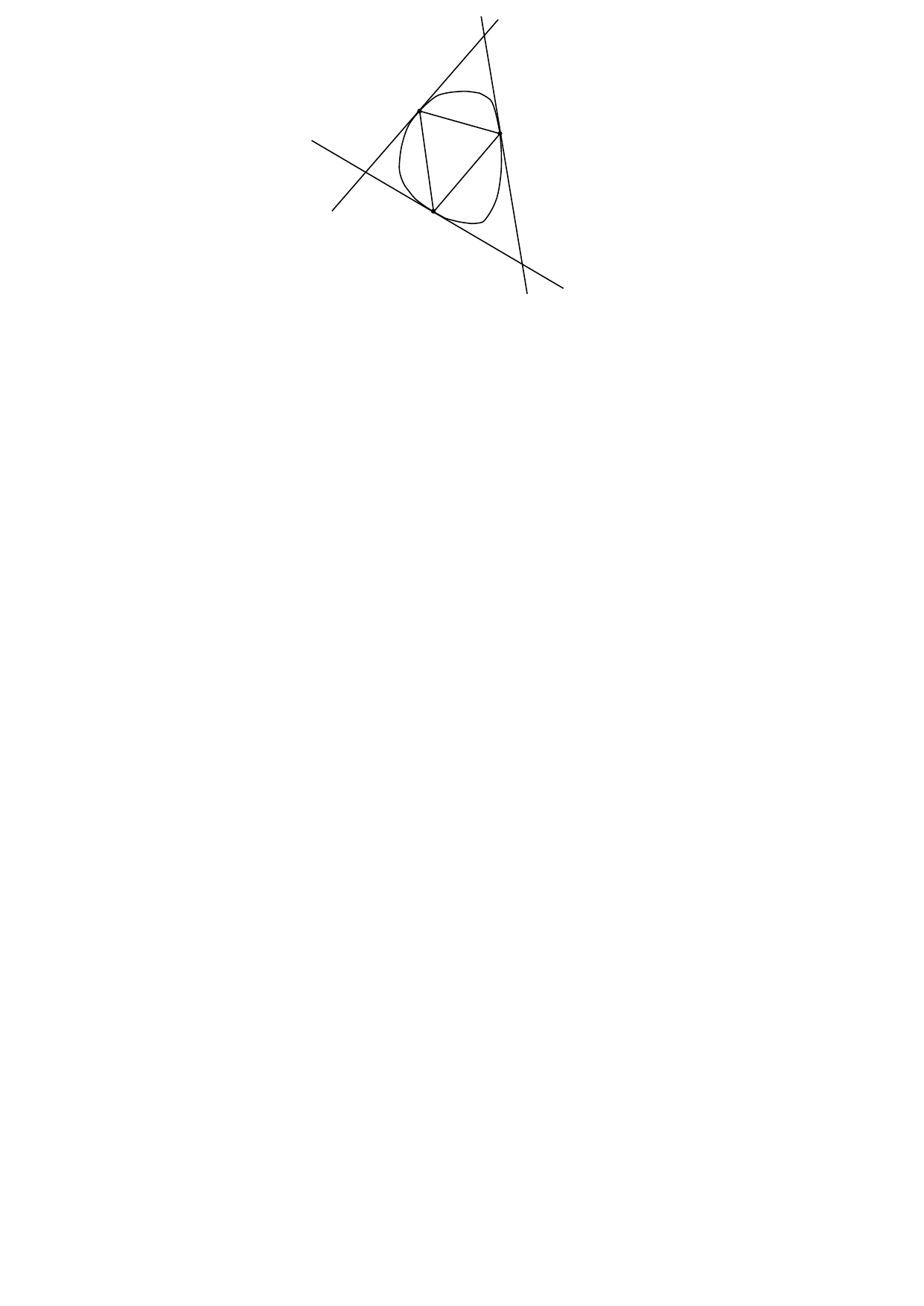}
\small
\put (-100, 46){$\xi_\rho^{(1)}(z)$}
\put (-165, 56){$\xi_\rho^{(2)}(y)$}
\put (-32, 103){$\xi_\rho^{(1)}(x)$}
\put (-75, 180){$\xi_\rho^{(2)}(x)$}
\put (-115, 120){$\xi_\rho^{(1)}(y)$}
\put (1, 7){$\xi_\rho^{(2)}(z)$}
\end{center}
\caption{Triangle invariant}
\label{triangle-inv-figure}
\end{figure}

\subsection{Coordinates on $\mathrm{hol}(\Cmc(P))$}

For now, we specialize to the case when $S=P$, the oriented thrice punctured sphere. Let $\alpha,\beta,\gamma\in\pi_1(P)$ be three group elements corresponding to oriented peripheral curves in $P$, so that $\gamma\beta\alpha=\id$, and $P$ lies to the left of each oriented peripheral curve. Then observe that
\[\widetilde{\Tmc}:=\big\{\eta\cdot \{\alpha^-,\beta^-\}:\eta\in\pi_1(S)\big\}\cup\big\{\eta\cdot \{\beta^-,\gamma^-\}:\eta\in\pi_1(S)\big\}\cup\big\{\eta\cdot \{\gamma^-,\alpha^-\}:\eta\in\pi_1(S)\big\},\]
is an ideal triangulation of $\widetilde{P}$. With this ideal triangulation, $\widetilde{\Pmc}=\emptyset$, $\widetilde{\Qmc}=\widetilde{\Tmc}$,
\[\widetilde{\Theta}:=\big\{\eta\cdot \{\alpha^-,\beta^-,\gamma^-\}:\eta\in\pi_1(S)\big\}\cup\big\{\eta\cdot \{\alpha^-,\alpha\cdot \beta^-,\gamma^-\}:\eta\in\pi_1(S)\big\},\]
and
\[\widetilde{\Vmc}:= \{\eta\cdot\alpha^-:\eta\in\pi_1(S)\}\cup\{\eta\cdot\beta^-:\eta\in\pi_1(S)\}\cup\{\eta\cdot\gamma^-:\eta\in\pi_1(S)\}.\]
In particular, $\Tmc$ consists of three edges, $\Theta$ consists of two triangles, and $\Vmc$ consists of three vertices. See Figure \ref{pants-figure}.

\begin{figure}
\begin{center}
\includegraphics[width=0.9\textwidth]{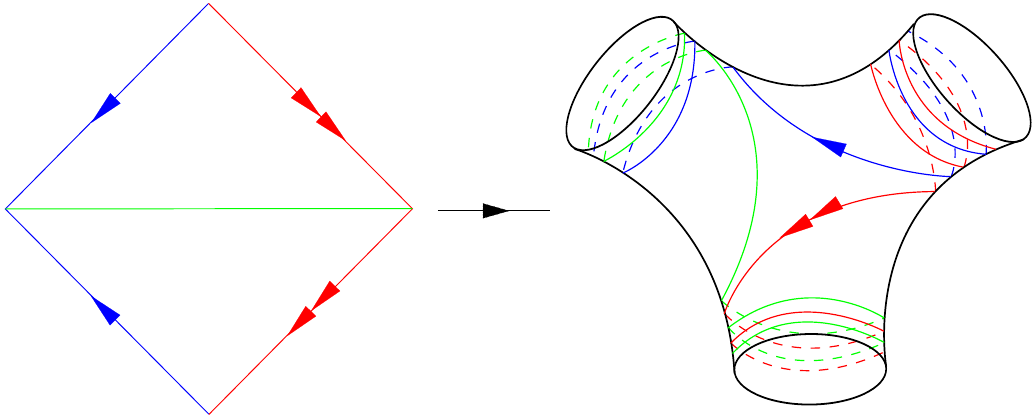}
\end{center}
\caption{Ideal triangulation of pair of pants}
\label{pants-figure}
\end{figure}

Bonahon-Dreyer \cite{BonDre} computed an expression for the eigenvalues of $\rho(\alpha)$, $\rho(\beta)$ and $\rho(\gamma)$ in terms of the edge and triangle invariants associated to $\Tmc$. Explicitly, if we denote the (generalized) eigenvalues of $\rho(\eta)$ by $\lambda_{1,\eta}(\rho)\geq\lambda_{2,\eta}(\rho)\geq\lambda_{3,\eta}(\rho)$ for any $\eta\in\pi_1(P)\setminus\{\id\}$, then for $i=1,2$, define $\ell_{i,[\eta]}:\mathrm{hol}(\Cmc(P))\to\Rbbb$ by
\[\ell_{i,[\eta]}(\rho):=\log\left(\frac{\lambda_{i,\eta}(\rho)}{\lambda_{i+1,\eta}(\rho)}\right).\]
Note that the $\ell_{i,[\eta]}$ depends only on the conjugacy class $[\eta]$, and not on the choice of $\eta\in\pi_1(P)$.
Then Bonahon-Dreyer showed that
\begin{eqnarray*}
\ell_{1,[\alpha]}&=&\sigma_{\alpha^-,\beta^-}+\sigma_{\alpha^-,\gamma^-}\\
\ell_{2,[\alpha]}&=&\sigma_{\beta^-,\alpha^-}+\sigma_{\gamma^-,\alpha^-}+\tau_{\alpha^-,\gamma^-,\beta^-}+\tau_{\alpha^-,\alpha\cdot\gamma^-,\beta^-}\\
\ell_{1,[\beta]}&=&\sigma_{\beta^-,\gamma^-}+\sigma_{\beta^-,\alpha^-}\\
\ell_{2,[\beta]}&=&\sigma_{\gamma^-,\beta^-}+\sigma_{\alpha^-,\beta^-}+\tau_{\alpha^-,\gamma^-,\beta^-}+\tau_{\alpha^-,\alpha\cdot\gamma^-,\beta^-}\\
\ell_{1,[\gamma]}&=&\sigma_{\gamma^-,\alpha^-}+\sigma_{\gamma^-\beta^-}\\
\ell_{2,[\gamma]}&=&\sigma_{\alpha^-,\gamma^-}+\sigma_{\beta^-\gamma^-}+\tau_{\alpha^-,\gamma^-,\beta^-}+\tau_{\alpha^-,\alpha\cdot\gamma^-,\beta^-}
\end{eqnarray*}

In particular, the expressions on the right have to be at least $0$. These six inequalities are known as the (weak) \emph{closed leaf inequalities}. Bonahon-Dreyer then showed that these are the only relations satisfied by these parameters.

\begin{thm} [Bonahon-Dreyer]\label{thm: Bonahon-Dreyer}
The map $\Phi:\mathrm{hol}(\Cmc(P))\to\Rbbb^8$ given by
\[\begin{array}{l}
\Phi:[\rho]\mapsto \left(\sigma_{\alpha^-,\beta^-}(\rho),\sigma_{\beta^-,\alpha^-}(\rho),\sigma_{\alpha^-,\gamma^-}(\rho),\sigma_{\gamma^-,\alpha^-}(\rho),\right.\\
\hspace{4.5cm}\left.\sigma_{\beta^-,\gamma^-}(\rho),\sigma_{\gamma^-,\beta^-}(\rho),\tau_{\alpha^-,\gamma^-,\beta^-}(\rho),\tau_{\alpha^-,\alpha\cdot\gamma^-,\beta^-}(\rho)\right)
\end{array}\]
is a homeomorphism onto the closed convex polytope in $\Rbbb^8$ cut out by the closed leaf inequalities.
\end{thm}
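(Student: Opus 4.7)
The plan is to establish four properties of $\Phi$ in sequence: continuity, containment of its image in the closed leaf polytope, injectivity, and surjectivity onto that polytope. Continuity of $\Phi^{-1}$ will be manifest from the explicit algebraic reconstruction carried out in the surjectivity step.

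Continuity of $\Phi$ is immediate from Proposition \ref{prop:conv-flag}: each coordinate is a logarithm of a cross ratio or triple ratio of values of the flag map $\xi_\rho$ at a fixed set of vertices of $\widetilde{\Vmc}$, and the underlying evaluations depend continuously on $\xi_\rho$. For the image containment, Theorem \ref{thm: holonomy1} forces each of $\mathrm{hol}_\mu(\alpha),\mathrm{hol}_\mu(\beta),\mathrm{hol}_\mu(\gamma)$ to have positive generalized eigenvalues $\lambda_{1,\eta}\ge\lambda_{2,\eta}\ge\lambda_{3,\eta}$, so $\ell_{1,[\eta]}([\rho]),\ell_{2,[\eta]}([\rho])\ge 0$ for each $\eta\in\{\alpha,\beta,\gamma\}$. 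The six Bonahon-Dreyer identities recorded just above the theorem statement convert these six non-negativities into exactly the six closed leaf inequalities.

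For injectivity, reconstruct $[\rho]$ from $\Phi([\rho])$ as follows. Using the $\PGL(3,\Rbbb)$-action, normalize the triple $\xi_\rho(\alpha^-),\xi_\rho(\beta^-),\xi_\rho(\gamma^-)$ to be a fixed standard triple of flags in general position with prescribed triple ratio $\exp(\tau_{\alpha^-,\gamma^-,\beta^-})$; such a triple is unique up to $\PGL(3,\Rbbb)$, since three flags in general position form an eight-parameter family and $\PGL(3,\Rbbb)$ is eight-dimensional with one remaining projective invariant (the triple ratio). Now proceed inductively along the two initial triangles and their $\pi_1(P)$-translates: given the flags at two endpoints $x,y$ of an edge of $\widetilde{\Tmc}$ and the flag at the third vertex $z_1$ of one adjacent triangle, the edge invariants $\sigma_{x,y},\sigma_{y,x}$ determine $\xi_\rho^{(1)}(z_2)$ at the third vertex $z_2$ of the opposite adjacent triangle (as the intersection of two explicit lines extracted from the cross-ratio formula, since for fixed $L_1,L_2,p_1$ the level set of $C(L_1,p_1,\cdot,L_2)$ is a line through $L_1\cap L_2$), and then the triangle invariant $\tau_{x,y,z_2}$ pins down the line $\xi_\rho^{(2)}(z_2)$ through $\xi_\rho^{(1)}(z_2)$. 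This determines $\xi_\rho$ on all of $\widetilde{\Vmc}$, and the representation $\rho$ is then recovered from equivariance: for each generator $\eta\in\{\alpha,\beta\}$, the transformation $\rho(\eta)\in\PGL(3,\Rbbb)$ is determined by its action on a generic quadruple of the reconstructed flags via $\rho(\eta)\cdot\xi_\rho(v)=\xi_\rho(\eta\cdot v)$.

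For surjectivity, reverse this construction: given any point in the polytope, build the initial triple of flags, then the fourth flag at $\alpha\cdot\gamma^-$, then $\rho(\alpha)$ and $\rho(\beta)$ as the unique projective transformations realizing the prescribed flag correspondences, and set $\rho(\gamma):=\rho(\beta)^{-1}\rho(\alpha)^{-1}$. The closed leaf inequalities precisely ensure, via the Bonahon-Dreyer formulas, that the generalized eigenvalues of $\rho(\alpha),\rho(\beta),\rho(\gamma)$ are positive with the ordering required for each to be hyperbolic, quasi-hyperbolic, or parabolic, matching the three cases allowed by Theorem \ref{thm: holonomy1}. The main obstacle is the final verification that $[\rho]\in\mathrm{hol}(\Cmc(P))$, i.e.\ that $\rho$ is the holonomy of an honest convex $\Rbbb\Pbbb^2$ structure on $P$: one must show that the $\rho$-orbit of the union of the two initial ideal triangles fills out a $\pi_1(P)$-invariant properly convex open domain $\Omega\subset\Rbbb\Pbbb^2$ on which $\rho(\pi_1(P))$ acts properly discontinuously with quotient diffeomorphic to $P$. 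This is a Fock-Goncharov-style nesting/positivity argument: the closed leaf inequalities are exactly the positivity conditions forcing successive $\rho$-translates of each ideal triangle to nest inside their predecessors, with the nested intersections collapsing onto the peripheral fixed points reconstructed as $\xi_\rho^{(1)}$ of the peripheral elements. Once $\Omega$ has been constructed, Theorem \ref{thm:dev} yields $[\rho]\in\mathrm{hol}(\Cmc(P))$, and continuity of $\Phi^{-1}$ follows because the entire reconstruction is algebraic and continuous in the eight coordinates.
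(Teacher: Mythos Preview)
The paper does not prove this theorem; it is quoted as a result of Bonahon--Dreyer \cite{BonDre}, with the remark following Corollary~\ref{cor:coordinate} explaining that Proposition~\ref{prop:conv-flag} allows one to run their original argument (stated only for hyperbolic peripheral holonomy) verbatim in the quasi-hyperbolic and parabolic boundary cases. So there is nothing to compare against; your outline is essentially the standard strategy underlying those references.

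Your sketch is structurally sound through injectivity. Two small points there: first, three generic flags form a \emph{nine}-parameter family, not eight, so the dimension count leaving one invariant (the triple ratio) is $9-8=1$; second, your description of crossing an edge to recover $\xi_\rho^{(1)}(z_2)$ from the pair $\sigma_{x,y},\sigma_{y,x}$ and then $\xi_\rho^{(2)}(z_2)$ from the triangle invariant is correct, but for the pair of pants you should be explicit that only two triangle invariants are given and that all further triangles are reached by equivariance under $\alpha,\beta$, so what you are really reconstructing is the flags at enough translates to pin down $\rho(\alpha)$ and $\rho(\beta)$.

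The genuine gap is the surjectivity step. You correctly identify the obstacle (building a properly convex $\rho$-invariant $\Omega$), but the sentence ``the closed leaf inequalities are exactly the positivity conditions forcing successive $\rho$-translates of each ideal triangle to nest inside their predecessors'' conflates two distinct mechanisms and papers over the real work. The non-overlapping of adjacent ideal triangles across an edge is governed by the sign conventions already built into $\sigma$ and $\tau$ (Remarks~\ref{rem:C+} and~\ref{rem:T+}); it holds for every point of $\Rbbb^8$, not just the polytope. The closed leaf inequalities enter only at the ends: they are equivalent to $\lambda_{1,\eta}\ge\lambda_{2,\eta}\ge\lambda_{3,\eta}>0$ for each peripheral $\eta$, and what they actually buy you is that the iterates $\rho(\eta)^n$ acting on the initial triangle pair accumulate onto the correct fixed flags so that the union closes up to a bounded convex region rather than spiraling off. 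Turning this into a proof that the union of all triangle translates is convex, open, properly convex, and carries a properly discontinuous action with quotient $P$ is the substance of the Fock--Goncharov/Bonahon--Dreyer positivity machinery in the strictly hyperbolic case; on the boundary of the polytope (where some peripheral holonomy becomes quasi-hyperbolic or parabolic) one needs in addition a limiting argument, which is precisely what the paper's remark about Proposition~\ref{prop:conv-flag} is pointing to. Your sketch names the argument but does not supply it, and this step is where essentially all of the difficulty lies.
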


Solving the six linear equations above then proves the following.
\begin{cor}\label{cor:coordinate}
The map $\Phi:\mathrm{hol}(\Cmc(P))\to(\Rbbb_{\geq 0})^6\times\Rbbb^2$ given by
\[\Phi:[\rho]\mapsto \left(\ell_{1,[\alpha]}(\rho),\ell_{2,[\alpha]}(\rho),\ell_{1,[\beta]}(\rho),\ell_{2,[\beta]}(\rho),\ell_{1,[\gamma]}(\rho),\ell_{2,[\gamma]}(\rho),\sigma_{\alpha^-,\beta^-}(\rho),\tau_{\alpha^-,\gamma^-,\beta^-}(\rho)\right)\]
is a homeomorphism.
\end{cor}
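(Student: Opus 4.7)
The strategy is to factor the map $\Phi$ of the corollary as $L \circ \Phi_{\mathrm{BD}}$, where $\Phi_{\mathrm{BD}} : \mathrm{hol}(\Cmc(P)) \to \Rbbb^8$ is the homeomorphism of Theorem \ref{thm: Bonahon-Dreyer} and $L : \Rbbb^8 \to \Rbbb^8$ is an explicit linear change of coordinates. Writing
\[(a,b,c,d,e,f,s,t) := (\sigma_{\alpha^-,\beta^-}, \sigma_{\beta^-,\alpha^-}, \sigma_{\alpha^-,\gamma^-}, \sigma_{\gamma^-,\alpha^-}, \sigma_{\beta^-,\gamma^-}, \sigma_{\gamma^-,\beta^-}, \tau_{\alpha^-,\gamma^-,\beta^-}, \tau_{\alpha^-,\alpha\cdot\gamma^-,\beta^-})\]
for the Bonahon-Dreyer coordinates, the six formulas for $\ell_{i,[\cdot]}$ displayed just before Theorem \ref{thm: Bonahon-Dreyer} suggest we define
\[L(a,b,c,d,e,f,s,t) := (a+c,\, b+d+s+t,\, b+e,\, a+f+s+t,\, d+f,\, c+e+s+t,\, a,\, s),\]
so that $\Phi = L \circ \Phi_{\mathrm{BD}}$. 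Since $\Phi_{\mathrm{BD}}$ is a homeomorphism onto the polytope $P \subset \Rbbb^8$ cut out by the closed leaf inequalities $\ell_{i,[\cdot]} \geq 0$, the task reduces to showing that $L$ restricts to a homeomorphism from $P$ onto $(\Rbbb_{\geq 0})^6 \times \Rbbb^2$.

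First I would verify that $L$ is a linear isomorphism by explicitly inverting the system. The coordinates $a$ and $s$ are read off directly. Summing the three ``$\ell_2$'' rows and subtracting the three ``$\ell_1$'' rows gives
\[3(s+t) = \ell_{2,[\alpha]} + \ell_{2,[\beta]} + \ell_{2,[\gamma]} - \ell_{1,[\alpha]} - \ell_{1,[\beta]} - \ell_{1,[\gamma]},\]
so $t$ is determined; back-substituting into the equations for $\ell_{1,[\alpha]}$, $\ell_{2,[\beta]}$, $\ell_{1,[\gamma]}$, $\ell_{2,[\alpha]}$, $\ell_{1,[\beta]}$ in turn then solves for $c, f, d, b, e$. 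The remaining equation for $\ell_{2,[\gamma]}$ is automatically satisfied (since only six of the eight output equations are independent), which serves as an internal consistency check.

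With $L$ known to be a linear bijection, the image of $P$ is immediate: the six defining inequalities of $P$ read in the new coordinates precisely as non-negativity of the first six output coordinates, while the remaining outputs $a$ and $s$ are entirely unconstrained. Hence $L(P) = (\Rbbb_{\geq 0})^6 \times \Rbbb^2$, and $L|_P$ is a homeomorphism as the restriction of a linear isomorphism. Composing with $\Phi_{\mathrm{BD}}$ gives $\Phi$, completing the proof. The only real obstacle is the mechanical linear algebra bookkeeping, which poses no conceptual difficulty once the $8 \times 8$ linear system is written down.
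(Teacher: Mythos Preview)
Your approach is correct and is precisely the paper's own proof, which consists of the single sentence ``Solving the six linear equations above then proves the following.'' You have merely spelled out the linear algebra that the paper leaves implicit: factor $\Phi$ as an invertible linear map $L$ composed with the Bonahon--Dreyer homeomorphism, and observe that in the new coordinates the closed-leaf polytope becomes exactly $(\Rbbb_{\geq 0})^6\times\Rbbb^2$.

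One small correction: your parenthetical remark that ``only six of the eight output equations are independent'' is false---all eight are independent (indeed $\det L = 3$), as they must be for $L$ to be a linear isomorphism. What you actually did in your inversion was use all eight outputs (the sum relation you wrote down involves $\ell_{2,[\gamma]}$), so no separate consistency check is needed; once you have exhibited $L^{-1}$ explicitly, the argument is complete.
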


\begin{remark}
Bonahon-Dreyer \cite{BonDre} and the second author \cite{Zhang16} were working in the more general setting of Hitchin representations, so they only stated their results for representations where the holonomy about each boundary component was required to be hyperbolic. However, in the case of convex $\Rbbb\Pbbb^2$ structures, Proposition \ref{prop:conv-flag} extends their arguments verbatim to the cases where the holonomy about the boundary component is quasi-hyperbolic or parabolic.
\end{remark}

In the coordinate system given in Corollary \ref{cor:coordinate}, the invariants $\sigma_{\alpha^-,\beta^-}$ and $\tau_{\alpha^-,\gamma^-,\beta^-}$ are called the \emph{internal parameters} of $P$, and the six invariants $\ell_{1,[\alpha]}$, $\ell_{2,[\alpha]}$, $\ell_{1,[\beta]}$, $\ell_{2,[\beta]}$, $\ell_{1,[\gamma]}$, $\ell_{2,[\gamma]}$ are called the \emph{length parameters}. We will simplify notation and denote $\sigma_{\alpha^-,\beta^-}$ and $\tau_{\alpha^-,\gamma^-,\beta^-}$ by $i_{1,P}$ and $i_{2,P}$ respectively.

\subsection{Coordinates on $\mathrm{hol}(\Cmc(S))$}\label{sec: coordhol}

Now, we will use the parameterization of $\mathrm{hol}(\Cmc(P))$ to parameterize $\mathrm{hol}(\Cmc(S))$. To do so, choose once and for all
\begin{itemize}
\item a pants decomposition $\Pmc$ on $S$,
\item a bridge across each closed curve in $\Pmc$,
\item an orientation for every closed curve in $\Pmc$
\item an orientation about each puncture of $S$.
\end{itemize}
For every $c\in\Pmc$, let $p_1,p_2$ be the punctures of $S\setminus c$ corresponding to $c$. If $c$ is non-separating, then $c$ (equipped with its chosen orientation) determines two conjugacy classes $[\gamma_1],[\gamma_2]\in[\pi_1(S\setminus c)]$, so that $[\gamma_i]$ corresponds to the puncture $p_i$. On the other hand, if $c$ is separating, let $S_1$ and $S_2$ be the two connected components of $S\setminus c$, so that $p_i$ is a puncture of $S_i$. Then $c$ determines a conjugacy class $[\gamma_i]\in[\pi_1(S_i)]$ corresponding to $p_i$.

Goldman \cite{Go90} proved that if $c$ is non-separating, then for any $\mu'\in\Cmc(S\setminus c)^\mathrm{adm}$, there is some $\mu\in\Cmc(S)^\mathrm{adm}$ so that $\mu|_{S\setminus c}=\mu'$ if and only if the $\mu'$-type of $p_i$ is bulge $-\infty$ for both $i=1,2$, and $\mathrm{hol}_{\mu'}(\gamma_1)=\mathrm{hol}_{\mu'}(\gamma_2)$. Similarly, if $c$ is separating, then for any $(\mu_1,\mu_2)\in\Cmc(S_1)^\mathrm{adm}\times\Cmc(S_2)^\mathrm{adm}$, there is some $\mu\in\Cmc(S)^\mathrm{adm}$ so that $\mu|_{S_i}=\mu_i$ for $i=1,2$ if and only if the $\mu_i$-type of $p_i$ is bulge $-\infty$ for both $i=1,2$, and $\mathrm{hol}_{\mu_1}(\gamma_1)=\mathrm{hol}_{\mu_2}(\gamma_2)$. Furthermore, regardless of whether $c$ is separating or not, Goldman \cite{Go90} also showed that the set of all such $\mu\in\Cmc(S)^\mathrm{adm}$ is parameterized by two parameters, called \emph{bulge} and \emph{twist parameters} $b_c,t_c:\Cmc(S)^\mathrm{adm}\to\Rbbb$. These are defined by
\[b_c(\mu):=\sigma_{\gamma_c^-,\gamma_c^+}(\mathrm{hol}_\mu)-\sigma_{\gamma_c^+,\gamma_c^-}(\mathrm{hol}_\mu)\,\,\,\text{ and }\,\,\,t_c(\mu):= \sigma_{\gamma_c^-,\gamma_c^+}(\mathrm{hol}_\mu)+\sigma_{\gamma_c^+,\gamma_c^-}(\mathrm{hol}_\mu) .\]
In particular, $b_c$ and $t_c$ depend only on $\mathrm{hol}_\mu$, so we also denote $b_c(\mathrm{hol}_\mu):=b_c(\mu)$ and $t_c(\mathrm{hol}_\mu):=t_c(\mu)$.
See Figure \ref{twist-bulge-figure}. 
\begin{figure}
\begin{center}
\includegraphics{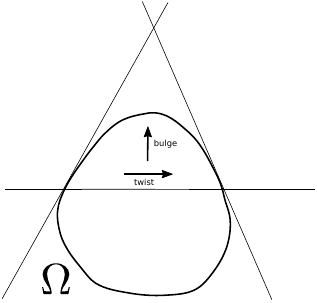}
\end{center}
\caption{Twist and bulge}
\label{twist-bulge-figure}
\end{figure}

\begin{remark}
Given a simple closed curve $c$ in $S$, Goldman defined an $\Rbbb^2$ action on $\Cmc(S)$ by \emph{bulge and shearing deformations} along $c$. The bulge and twist parameters $b_c$ and $t_c$ were designed to precisely capture these deformations; performing a bulging deformation changes the bulge parameter while keeping the twist parameter fixed, while performing a twist deformation changes the twist parameter while keeping the bulge parameter fixed.
\end{remark}

\begin{remark}
Goldman \cite{Go90} stated his results in the case when the $\mu$-type of all the punctures of $S$ are bulge $-\infty$, since he was mainly interested in the closed surface case. However, his arguments work in this more general setting as well.
\end{remark}

Combining this together with Corollary \ref{cor:coordinate} proves the following theorem.

\begin{thm}\label{thm: par}
Let $S$ be a connected, orientable surface with negative Euler characteristic, genus $g$ with $n$ punctures. Make the choices that we did at the start of this section, and let $\Pmc=\{c_1,\dots,c_{3g-3+n}\}$, let $\{d_1,\dots,d_n\}$ be the punctures of $S$, and let $\Pbbb=\{P_1,\dots,P_{2g-2+n}\}$. Then
\[\mathring{\Phi}:\mathrm{hol}(\Cmc(S))\to(\Rbbb^2)^{3g-3+n}\times(\Rbbb_+^2)^{3g-3+n}\times(\Rbbb_{\geq 0}^2)^n\times(\Rbbb^2)^{2g-2+n}\]
is a homeomorphism, where
\[\mathring{\Phi}:=\prod_{i=1}^{3g-3+n}(b_{c_i},t_{c_i})\prod_{i=1}^{3g-3+n}(\ell_{1,c_i},\ell_{2,c_i})\prod_{i=1}^{n}(\ell_{1,d_i},\ell_{2,d_i})\prod_{j=1}^{2g-2+n}(i_{1,P_j},i_{2,P_j}).\]
\end{thm}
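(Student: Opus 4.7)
The plan is to combine Corollary \ref{cor:coordinate} applied to each pair of pants in $\Pbbb$ with the bulge-and-twist gluing theorem of Goldman recalled in the paragraph immediately preceding the statement. Concretely, for each $P_j \in \Pbbb$, Corollary \ref{cor:coordinate} provides a homeomorphism $\mathrm{hol}(\Cmc(P_j)) \to (\Rbbb_{\geq 0})^6 \times \Rbbb^2$ recording the six length parameters of the three boundary components of $P_j$ together with the two internal parameters $(i_{1,P_j}, i_{2,P_j})$. Taking the product over $j$ yields a homeomorphism onto $((\Rbbb_{\geq 0})^6)^{2g-2+n} \times (\Rbbb^2)^{2g-2+n}$, and the restriction-to-pants map sends each $[\rho] \in \mathrm{hol}(\Cmc(S))$ to such a tuple.

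Next, I would use Goldman's gluing theorem to pass from the pants-level data to $\mathrm{hol}(\Cmc(S))$ itself. Each pants curve $c_i \in \Pmc$ separates (or meets on both sides) two boundary components of pants whose holonomies lie in a single conjugacy class of $\pi_1(S)$; the length parameters computed from the two sides therefore coincide, collapsing the $6(2g-2+n)$ pants-boundary length parameters to $2(3g-3+n)$ parameters indexed by pants curves and $2n$ parameters indexed by actual punctures of $S$. For each $c_i$, the two adjacent ends are forced to be of bulge $-\infty$ type (the condition recalled from Goldman), so by Theorem \ref{thm: holonomy1} the corresponding holonomy is hyperbolic and the length parameters lie in $\Rbbb_+$; for each puncture $d_i$ of $S$, the holonomy may be parabolic, quasi-hyperbolic, or hyperbolic (any admissible type), so the length parameters lie in $\Rbbb_{\geq 0}$. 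Having fixed compatible pants data, Goldman's theorem states that the set of $\mathrm{hol}(\mu)$ extending it is parameterized by $(b_{c_i}, t_{c_i}) \in \Rbbb^2$ for each $c_i \in \Pmc$. Assembling all the pieces produces the bijection $\mathring{\Phi}$ with the stated target space.

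For the topological statement, continuity of $\mathring{\Phi}$ reduces to continuity of each edge invariant, triangle invariant, length parameter, and bulge/twist parameter as a function of $[\rho]$. Each of these is a cross-ratio, triple-ratio, eigenvalue-ratio, or $\Rbbb$-linear combination thereof applied to the flag map $\xi_\rho$, and the flag map itself depends continuously on $[\rho]$ by Proposition \ref{prop:conv-flag}. Continuity of $\mathring{\Phi}^{-1}$ is then the composition of the continuous pants-level inverse from Corollary \ref{cor:coordinate} with continuity of Goldman's bulge-twist gluing construction.

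The main obstacle is bookkeeping. With the chosen orientations on the curves in $\Pmc$, one must verify that the length parameters computed on the two sides of each $c_i$ really match as $(\ell_{1,c_i}, \ell_{2,c_i})$ rather than being related by the swap $\ell_1 \leftrightarrow \ell_2$ induced by replacing $\gamma$ with $\gamma^{-1}$; this is a normalization check, but must be done once and for all. Similarly, one must check that the simultaneous gluing along all $c_i \in \Pmc$ is unambiguous, which follows because bulge and twist deformations supported on disjoint simple closed curves commute, so the order in which the pants are glued across different curves of $\Pmc$ does not affect the resulting conjugacy class in $\Xmc_3(S)$.
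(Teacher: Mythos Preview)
Your proposal is correct and follows essentially the same approach as the paper: the paper's proof is the single sentence ``Combining this together with Corollary~\ref{cor:coordinate} proves the following theorem,'' where ``this'' refers to Goldman's bulge-and-twist gluing theorem described immediately before. You have simply unpacked that sentence, including the bookkeeping about matching length parameters across pants curves and the continuity argument via Proposition~\ref{prop:conv-flag}, which the paper leaves implicit.
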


\begin{remark}
Again, the second author \cite{Zhang16} proved Theorem \ref{thm: par} for convex $\Rbbb\Pbbb^2$ structures where the holonomy about each boundary component was required to be hyperbolic. Proposition \ref{prop:conv-flag} extends his proof verbatim to the cases where the holonomy about the boundary component is quasi-hyperbolic or parabolic.
\end{remark}

The homeomorphism $\mathring{\Phi}$ is not ideal for our purposes because it does not behave well under Dehn twists about the curves in $\Pmc$. We will thus further modify $\mathring{\Phi}$ to get a new homeomorphism that has that property. For each $c\in\Pmc$, let
\[r_c:=\frac{\ell_{1,c}\cdot\sigma_{\gamma_c^-,\gamma_c^+}-\ell_{2,c}\cdot\sigma_{\gamma_c^+,\gamma_c^-}}{3}\]
be the \emph{reparameterized bulge parameters}. Observe that if we replace the parameters $b_{c_i}$ with $r_{c_i}$ for all $i$ in the homeomorphism $\mathring{\Phi}$, then this defines a new homeomorphism
\[\Phi:\mathrm{hol}(\Cmc(S))\to(\Rbbb^2)^{3g-3+n}\times(\Rbbb_+^2)^{3g-3+n}\times(\Rbbb_{\geq 0}^2)^n\times(\Rbbb^2)^{2g-2+n}\]
given by
\[\Phi:=\prod_{i=1}^{3g-3+n}(r_{c_i},t_{c_i})\prod_{i=1}^{3g-3+n}(\ell_{1,c_i},\ell_{2,c_i})\prod_{i=1}^{n}(\ell_{1,d_i},\ell_{2,d_i})\prod_{j=1}^{2g-2+n}(i_{1,P_j},i_{2,P_j}).\]
The next proposition describes how $\Phi$ behaves under Dehn twists about the curves in $\Pmc$.

\begin{prop}\label{prop:Dehntwistcoords}
Let $D_c$ be the Dehn twist along the (oriented) closed curve $c\in\Pmc$. Then all the coordinate functions of $\Phi$ agree at $\mathrm{hol}_\mu$ and $D_c\cdot \mathrm{hol}_\mu$, except for $t_c$ which satisfies $t_c(D_c\cdot\mathrm{hol}_\mu)=t_c(\mathrm{hol}_\mu)+\ell_{1,c}(\mathrm{hol}_\mu)+\ell_{2,c}(\mathrm{hol}_\mu)$.
\end{prop}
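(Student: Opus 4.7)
The plan is to analyze how each coordinate function of $\Phi$ transforms under $D_c$, exploiting the fact that $D_c$ can be realized by a homeomorphism supported in an annular collar $\mathcal N(c)$ of $c$. Accordingly, $D_{c*}$ admits a representative (up to inner automorphism) that is the identity on any subgroup of $\pi_1(S)$ generated by loops lying in $S\setminus\mathcal N(c)$. All cross ratios and triple ratios appearing in $\Phi$ are projective invariants and depend on the holonomy only through its conjugacy class, so one may work with this chosen representative throughout.

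For the length parameters, every curve in $\Pmc$ and every puncture of $S$ is disjoint from $\mathcal N(c)$, so the corresponding conjugacy classes in $\pi_1(S)$ are fixed by $D_{c*}$. Their holonomies therefore have the same eigenvalues as before, giving $\ell_{i,c_j}(D_c\cdot\mathrm{hol}_\mu)=\ell_{i,c_j}(\mathrm{hol}_\mu)$ for all $i,j$ and similarly for $\ell_{i,d_k}$, including for $c_j=c$. For the internal parameters $i_{1,P_j},i_{2,P_j}$, each pair of pants $P_j$ can be realized inside $S\setminus\mathcal N(c)$, so $\pi_1(P_j)$ is fixed pointwise by $D_{c*}$; hence the flag map $\xi_\rho$ agrees at all the vertices needed for the cross ratios and triple ratios defining these invariants, and $i_{1,P_j},i_{2,P_j}$ are unchanged. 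Similarly, for $c'\in\Pmc$ with $c'\neq c$, one chooses the bridge across $c'$ so that both of its triangles lie in $S\setminus\mathcal N(c)$; the flags at the relevant vertices are then fixed by $D_{c*}$, so $r_{c'}$ and $t_{c'}$ are unchanged.

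The substantive content is the effect on $r_c$ and $t_c$. The bridge $\{\widetilde T_x,\widetilde T_y\}$ straddles the chosen lift $\widetilde c$ of $c$, so $D_{c*}$ cannot be made trivial on both triangles simultaneously. With our lift, $\widetilde D_c$ fixes $\widetilde T_x$ pointwise and carries $\widetilde T_y$ to $\gamma_c^{\epsilon}\cdot\widetilde T_y$ for the appropriate sign $\epsilon$ determined by the twist direction. Hence the flags at $x,y$, and at the vertex $z_1$ of $\widetilde T_x$, are unchanged, while $\xi^{(1)}(z_2)$ is replaced by $\rho(\gamma_c)^{\epsilon}\xi^{(1)}(z_2)$. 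Conjugating so that $\rho(\gamma_c)=\mathrm{diag}(\lambda_1,\lambda_2,\lambda_3)$ with $\lambda_1>\lambda_2>\lambda_3>0$, one computes $\xi^{(1)}(x)=[e_3]$, $\xi^{(1)}(y)=[e_1]$, $\xi^{(2)}(x)=\ker e_1^*$, $\xi^{(2)}(y)=\ker e_3^*$, and $\xi^{(1)}(x)+\xi^{(1)}(y)=\ker e_2^*$. Substituting into the definitions of $s_{x,y}$ and $s_{y,x}$ and tracking the scaling of the coordinates of $\xi^{(1)}(z_2)$ by the diagonal entries of $\rho(\gamma_c)^{\epsilon}$ yields explicit linear shifts in $\sigma_{\gamma_c^-,\gamma_c^+}$ and $\sigma_{\gamma_c^+,\gamma_c^-}$, each a specific multiple of $\ell_{1,c}$ or $\ell_{2,c}$. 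The coefficients in the definition of $r_c$ are chosen precisely so that these two shifts cancel, leaving $r_c(D_c\cdot\mathrm{hol}_\mu)=r_c(\mathrm{hol}_\mu)$, while summing them recovers $t_c(D_c\cdot\mathrm{hol}_\mu)-t_c(\mathrm{hol}_\mu)=\ell_{1,c}+\ell_{2,c}$.

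The main obstacle is organizing the sign conventions---direction of the Dehn twist, orientation of $c$, and the roles of $z_1$ and $z_2$ (equivalently, which side of $\widetilde c$ is held fixed by the chosen lift $\widetilde D_c$)---so that the cancellation in $r_c$ and the sign of the shift of $t_c$ come out as stated. Once these conventions are fixed, the remainder is a direct projective-geometry calculation involving only the diagonal eigenvalues $\lambda_1,\lambda_2,\lambda_3$ of $\rho(\gamma_c)$.
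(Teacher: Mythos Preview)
Your approach is correct and matches the paper's proof: both argue that all coordinates other than $r_c$ and $t_c$ are unchanged by projective invariance (you spell out the support argument in more detail than the paper, which simply asserts this is clear), and both then normalize so that $\rho(\gamma_c)$ is diagonal and track how $\xi^{(1)}(z_2)$ is replaced by $\rho(\gamma_c)\cdot\xi^{(1)}(z_2)$. The only substantive difference is that the paper actually carries out the explicit cross-ratio computation showing $r_c$ is fixed and $t_c$ shifts by $\ell_{1,c}+\ell_{2,c}$, whereas you describe the computation and its outcome without writing it down; since the sign conventions you flag as the ``main obstacle'' are exactly what must be checked, you should include the two-line calculation rather than assert that the coefficients in $r_c$ were ``chosen precisely'' to make things cancel.
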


\begin{proof}
It is clear from the projective invariance of the coordinate functions that the only possible coordinate functions of $\Phi$ that might differ at $\mathrm{hol}_\mu$ and $D_c\cdot\mathrm{hol}_\mu$ are $t_c$ and $r_c$. Let $\pi_1(S)\cdot\{\widetilde{T}_{\gamma_c^-},\widetilde{T}_{\gamma_c^+}\}$ be the bridge across $c$ that we chose to define $\Phi$. For $i=1,2$, let $z_i,w_i\in\widetilde{\Vmc}$ be the points such that $\widetilde{T}_{\gamma_c^-}=\{\gamma_c^-,z_1,w_1\}$ and $\gamma_c^-<z_1<w_1<\gamma_c^-$, and $\widetilde{T}_{\gamma_c^+}=\{\gamma_c^+,z_2,w_2\}$ where $\gamma_c^+<z_2<w_2<\gamma_c^+$. Observe that by choosing a basepoint in $S$, $D_c$ induces a group homomorphism, $D_c:\pi_1(S)\to\pi_1(S)$, which sends the bridge $\pi_1(S)\cdot\{\widetilde{T}_{\gamma_c^-},\widetilde{T}_{\gamma_c^+}\}$ across $c$ to the bridge $\pi_1(S)\cdot\{\widetilde{T}_{\gamma_c^-},\gamma_c\cdot\widetilde{T}_{\gamma_c^+}\}$ across $c$. See Figure \ref{dehn-twist-figure}.
\begin{figure}
\begin{center}
\includegraphics{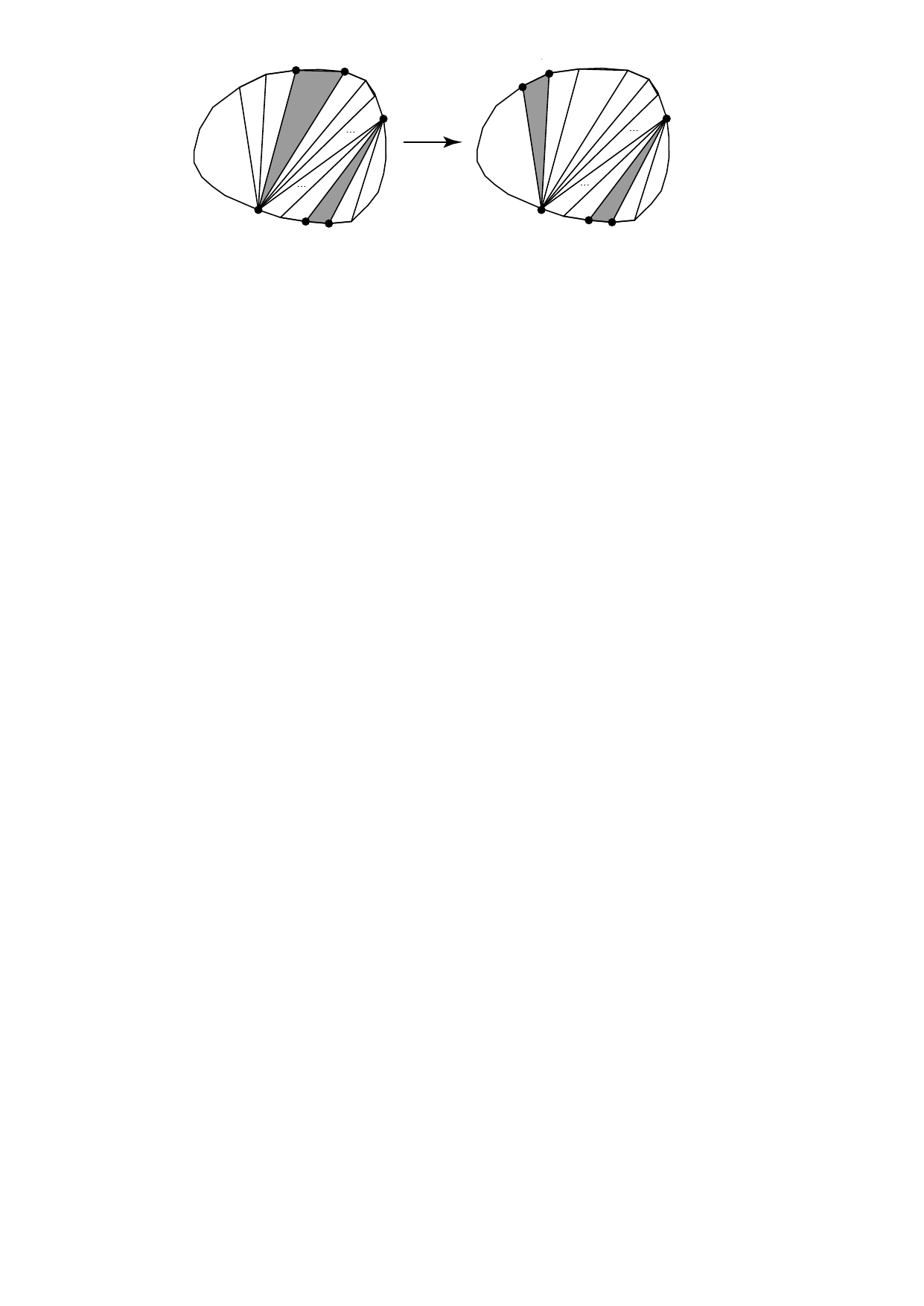}
\small
\put (-185, 74){$x$}
\put (-276, 7){$y$}
\put (-1, 74){$x$}
\put (-92, 7){$y$}
\put (-162, 62){$D_c$}
\end{center}
\caption{Dehn twist along $c=[x,y]$}
\label{dehn-twist-figure}
\end{figure}

Choose representatives $\rho^1\in\mathrm{hol}_\mu$ and $\rho^2\in D_c\cdot \mathrm{hol}_\mu$ so that
\[\xi_{\rho^j}^{(1)}(\gamma_c^+)=[0:0:1]^T,\,\,\, \xi_{\rho^j}^{(1)}(\gamma_c^-)=[1:0:0]^T,\,\,\, \xi_{\rho^j}^{(1)}(z_1)=[1:1:1]^T,\]
\[\xi_{\rho^j}^{(2)}(\gamma_c^+)=[1:0:0],\,\,\, \text{ and }\,\,\,\xi_{\rho^j}^{(2)}(\gamma_c^-)=[0:0:1]\]
for $j=1,2$. Then it follows that
\[\xi^{(1)}_{\rho^2}(z_2)=\rho(\gamma_c)\cdot\xi^{(1)}_{\rho^1}(z_2)=
\left[\begin{array}{ccc}
\exp\left(\ell_{1,c}(\mathrm{hol}_\mu)\right)&0&0\\
0&1&0\\
0&0&\exp\left(-\ell_{2,c}(\mathrm{hol}_\mu)\right)
\end{array}\right]
\cdot\xi^{(1)}_{\rho^1}(z_2).\]

Let $a,d\in\Rbbb$ so that $\xi^{(1)}_{\rho^1}(z_2)=[a:1:d]^T$. Observe that $a,d<0$, and
\[\xi^{(1)}_{\rho^2}(z_2)=[\exp\left(\ell_{1,c}(\mathrm{hol}_\mu)\right) a:1:\exp\left(-\ell_{2,c}(\mathrm{hol}_\mu)\right)d]^T.\]
One can then compute that
\begin{eqnarray*}
r_c(D_c\cdot\mathrm{hol}_\mu)&=&\frac{\ell_{1,c}(D_c\cdot\mathrm{hol}_\mu)\sigma_{\gamma_c^-,\gamma_c^+}(D_c\cdot\mathrm{hol}_\mu)-\ell_{2,c}(D_c\cdot\mathrm{hol}_\mu)\sigma_{\gamma_c^+,\gamma_c^-}(D_c\cdot\mathrm{hol}_\mu)}{3}\\
&=&\frac{\ell_{1,c}(\mathrm{hol}_\mu)(-\log(-d)+\ell_{2,c}(\mathrm{hol}_\mu))-\ell_{2,c}(\mathrm{hol}_\mu)(\log(-a)+\ell_{1,c}(\mathrm{hol}_\mu))}{3}\\
&=&\frac{-\ell_{1,c}(\mathrm{hol}_\mu)\log(-d)-\ell_{2,c}(\mathrm{hol}_\mu)\log(-a)}{3}\\
&=&\frac{\ell_{1,c}(\mathrm{hol}_\mu)\sigma_{\gamma_c^-,\gamma_c^+}(\mathrm{hol}_\mu)-\ell_{2,c}(\mathrm{hol}_\mu)\sigma_{\gamma_c^+,\gamma_c^-}(\mathrm{hol}_\mu)}{3}\\
&=&r_c(\mathrm{hol}_\mu).
\end{eqnarray*}
On the other hand,
\begin{eqnarray*}
t_c(D_c\cdot\mathrm{hol}_\mu)&=&\sigma_{\gamma_c^-,\gamma_c^+}(D_c\cdot\mathrm{hol}_\mu)+\sigma_{\gamma_c^+,\gamma_c^-}(D_c\cdot\mathrm{hol}_\mu)\\
&=&-\log(-d)+\ell_{2,c}(\mathrm{hol}_\mu)+\log(-a)+\ell_{1,c}(\mathrm{hol}_\mu)\\
&=&\sigma_{\gamma_c^-,\gamma_c^+}(\mathrm{hol}_\mu)+\sigma_{\gamma_c^+,\gamma_c^-}(\mathrm{hol}_\mu)+\ell_{1,c}(\mathrm{hol}_\mu)+\ell_{2,c}(\mathrm{hol}_\mu)\\
&=&t_c(\mathrm{hol}_\mu)+\ell_{1,c}(\mathrm{hol}_\mu)+\ell_{2,c}(\mathrm{hol}_\mu).
\end{eqnarray*}
\end{proof}

Furthermore, the reparametrized bulge parameter going to $+\infty$ or $-\infty$ has the following interpretation.

\begin{lem}\label{lem:bulge}
Let $z_1,z_2\in\widetilde{\Vmc}$ so that $\gamma_c^-<z_2<\gamma_c^+<z_1<\gamma_c^-$ be the two points used to define $b_c$ (they correspond to a bridge across $c$). For all $j\in\Zbbb^+$, let $\mu^j\in\Cmc(S)$ and choose $\rho^j\in\mathrm{hol}(\mu^j)$ so that for all $i,j$,
\[\xi_{\rho^j}(\gamma_c^+)=\xi_{\rho^i}(\gamma_c^+),\,\,\,\xi_{\rho^j}(\gamma_c^-)=\xi_{\rho^i}(\gamma_c^-),\,\,\,\text{ and }\,\,\,\xi_{\rho^j}^{(1)}(z_1)=\xi_{\rho^i}^{(1)}(z_1).\]
Also, let $\Omega^j$ be the $\rho^j$-equivariant developing image of $\mu^j$, and let $\Delta$ be the (open) triangle containing $\xi_{\rho^j}^{(1)}(z_2)$ whose vertices are $\xi_{\rho_j}^{(1)}(\gamma_c^+)$, $\xi_{\rho_j}^{(1)}(\gamma_c^-)$, and $\xi_{\rho_j}^{(2)}(\gamma_c^-)\cap \xi_{\rho_j}^{(2)}(\gamma_c^+)$. (Note that $\Delta$ does not depend on $j$.) If there is some $C>0$ so that
\[-C<t_c(\mathrm{hol}_{\mu^j})<C\,\,\text{ and }\,\,\frac{1}{C}<\ell_{1,c}(\mathrm{hol}_{\mu^j}),\,\,\ell_{2,c}(\mathrm{hol}_{\mu^j})<C\]
for all $j$, then
\begin{enumerate}
\item $\lim_{j\to\infty}r_c(\mu^j)=+\infty$ if and only if $\lim_{j\to\infty}\Omega^j\cap\Delta=\Delta$.
\item $\lim_{j\to\infty}r_c(\mu^j)=-\infty$ if and only if $\lim_{j\to\infty}\Omega^j\cap\Delta$ is empty.
\end{enumerate}
\end{lem}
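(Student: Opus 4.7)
The plan is first to use the normalization to extract explicit affine coordinates for $z_2^j := \xi^{(1)}_{\rho^j}(z_2)$ in terms of $r_c, t_c, \ell_{1,c}, \ell_{2,c}$, and then combine this with convexity in an affine chart containing $\overline{\Delta}$. As in the proof of Proposition \ref{prop:Dehntwistcoords}, I will choose representatives so that $\xi^{(1)}(\gamma_c^-) = [1:0:0]^T$, $\xi^{(1)}(\gamma_c^+) = [0:0:1]^T$, $\xi^{(1)}(z_1) = [1:1:1]^T$, $\xi^{(2)}(\gamma_c^-) = \{Z=0\}$ and $\xi^{(2)}(\gamma_c^+) = \{X=0\}$; the vertices of $\Delta$ are then $[1:0:0]^T$, $[0:0:1]^T$, $[0:1:0]^T$, and $z_2^j = [a^j:1:d^j]^T$ with $a^j, d^j < 0$. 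A direct cross-ratio computation gives $\sigma_{\gamma_c^-,\gamma_c^+} = -\log(-d^j)$ and $\sigma_{\gamma_c^+,\gamma_c^-} = \log(-a^j)$, whence
\[
\log(-a^j) = \frac{\ell_{1,c} t_c - 3r_c}{\ell_{1,c} + \ell_{2,c}}, \qquad \log(-d^j) = -\frac{\ell_{2,c} t_c + 3r_c}{\ell_{1,c} + \ell_{2,c}}.
\]
Under the stated bounds this shows: $r_c \to +\infty$ iff $a^j, d^j \to 0^-$ iff $z_2^j \to [0:1:0]^T$ (the vertex of $\Delta$ opposite the chord); and $r_c \to -\infty$ iff $a^j, d^j \to -\infty$ with $a^j/d^j = e^{t_c}$ bounded, iff $z_2^j$ converges to an interior point of the edge $L_0 \cap \overline{\Delta}$.

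Next I would work in an affine chart where $\overline{\Delta}$ is a bounded affine triangle. Using Proposition \ref{prop:transverse1} and the fact that $L_0 \cap \partial\Omega^j = \{\gamma_c^\pm\}$, I would show that $\partial\Omega^j \cap \overline{\Delta}$ is a single convex arc $\alpha^j$ from $\gamma_c^-$ to $\gamma_c^+$ through $z_2^j$, so that $\overline{\Omega^j \cap \Delta}$ is the closed convex region in $\overline{\Delta}$ bounded by $\alpha^j$ and the chord. For case (1), when $r_c \to +\infty$, convexity immediately gives that $\overline{\Omega^j \cap \Delta}$ contains the closed triangle with vertices $\gamma_c^-, \gamma_c^+, z_2^j$ (the convex hull of the chord and $z_2^j$), and this triangle Hausdorff-converges to $\overline{\Delta}$ as $z_2^j \to [0:1:0]^T$; combined with $\overline{\Omega^j \cap \Delta} \subseteq \overline{\Delta}$, this forces $\overline{\Omega^j \cap \Delta} \to \overline{\Delta}$. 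For case (2), when $r_c \to -\infty$, I would parametrize the chord by $u \in [0,1]$ (with $u=0$ at $\gamma_c^+$ and $u=1$ at $\gamma_c^-$) and write $\alpha^j$ as the graph of a function $h^j \colon [0,1] \to \Rbbb_{\geq 0}$ with $h^j(0) = h^j(1) = 0$; convexity of $\overline{\Omega^j \cap \Delta}$ is then equivalent to concavity of $h^j$. The coordinate $u_2^j$ of $z_2^j$ tends to $e^{t_c^\infty}/(1 + e^{t_c^\infty}) \in (0,1)$, bounded away from $\{0,1\}$ by boundedness of $t_c$, while $h^j(u_2^j) \to 0$; concavity then gives the elementary estimate $\max_u h^j(u) \leq h^j(u_2^j)/\min(u_2^j, 1-u_2^j) \to 0$, forcing $\alpha^j$ to converge Hausdorff to the chord and every compact subset of $\Delta$ to eventually leave $\Omega^j$.

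The converse directions in both cases are a subsequence argument: under the stated bounds, the possible subsequential limits of $z_2^j$ are an interior point of $\Delta$, the vertex $[0:1:0]^T$, or an interior point of the chord, and each option determines the behavior of $\Omega^j \cap \Delta$ via the analysis above; pairing this with the assumed limit behavior of $\Omega^j \cap \Delta$ then identifies whether $r_c \to +\infty$ or $r_c \to -\infty$. The main obstacle will be case (2): knowing only that $h^j(u_2^j) \to 0$ at a single point is not by itself enough to conclude $\max_u h^j(u) \to 0$, and it is precisely the hypothesis that $t_c$ is bounded (which keeps $u_2^j$ away from the endpoints $\{0,1\}$) that makes the concavity estimate give uniform control of the whole arc.
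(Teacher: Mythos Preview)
Your argument is correct and follows the same route as the paper's proof: normalize so that the flags at $\gamma_c^\pm$ and the point $\xi^{(1)}(z_1)$ are in standard position, compute $\sigma_{\gamma_c^-,\gamma_c^+}=-\log(-d^j)$ and $\sigma_{\gamma_c^+,\gamma_c^-}=\log(-a^j)$, and reduce the statement to the location of $z_2^j$ inside $\overline\Delta$ via convexity. The paper is much terser—it only writes out case (1), concluding the geometric equivalence with the single phrase ``by the convexity of $\Omega_j$'' and declaring (2) ``similar''—whereas your inscribed-triangle lower bound for (1) and concave-graph estimate for (2) spell that step out explicitly.
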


\begin{proof}
We will only prove (1); the proof of (2) is similar. By transforming everything by a projective transformation, we may assume that for all $j$,
\[\xi_{\rho^j}^{(1)}(\gamma_c^+)=[0:0:1]^T,\,\,\, \xi_{\rho^j}^{(1)}(\gamma_c^-)=[1:0:0]^T,\,\,\, \xi_{\rho^j}^{(1)}(z_1)=[1:1:1]^T,\]
\[\xi_{\rho^j}^{(2)}(\gamma_c^+)=[1:0:0],\,\,\, \text{ and }\,\,\,\xi_{\rho^j}^{(2)}(\gamma_c^-)=[0:0:1].\]
Let $\xi^{(1)}_{\rho^j}(z_2)=[a_j:1:d_j]^T$. Then $a_j,d_j<0$, and a straightforward computation shows that
\[r_c(\mathrm{hol}_{\mu_j})=\frac{-\ell_{1,c}(\mathrm{hol}_{\mu_j})\log(-d_j)-\ell_{2,c}(\mathrm{hol}_{\mu_j})\log(-a_j)}{3}\]
and
\[t_c(\mathrm{hol}_{\mu_j})=\log(-a_j)-\log(-d_j).\]
Hence, $e^{-C}<\frac{a_j}{d_j}<e^C$ for all $j$. Since $\frac{1}{C}<\ell_{1,c}(\mathrm{hol}_{\mu^j}),\,\,\ell_{2,c}(\mathrm{hol}_{\mu^j})<C$ for all $j$, $\lim_{j\to\infty}r_c(\mu_j)=+\infty$ if and only if $\lim_{j\to\infty}a_j=0=\lim_{j\to\infty}d_j$. This implies that $\lim_{j\to\infty}r_c(\mu_j)=+\infty$ if and only if
\[\lim_{j\to\infty}[a_j:1:d_j]^T=[0:1:0]^T,\]
which happens if and only if $\lim_{j\to\infty}\Omega^j\cap\Delta=\Delta$ by the convexity of $\Omega_j$.
\end{proof}

From our construction, it is clear that for any multi-curve $\Dmc\subset\Pmc$ and any connected component $S'$ of $S\setminus\Dmc$, the oriented pants decomposition $\Pmc$ on $S$ restricts to an oriented pants decomposition $\Pmc'$ on $S'$. Also, the ideal triangulation $\Tmc_{\Pmc'}$ on $S'$ is naturally a subset of the ideal triangulation $\Tmc_\Pmc$ on $S$, so the choice of bridge across each closed edge in $\Tmc_\Pmc$ induces a choice of bridge across each closed edge of $\Tmc_{\Pmc'}$. Thus, $\Pmc$ together with the choice of a bridge across each closed edge in $\Tmc_\Pmc$ determines a coordinate system on $\mathrm{hol}(\Cmc(S'))$. Furthermore,
\begin{enumerate}
\item The length parameters on $\mathrm{hol}(\Cmc(S'))$ are exactly the length parameters on $\mathrm{hol}(\Cmc(S))$ associated to the closed curves in $\Pmc'\subset\Pmc$ and the boundary curves of $S'$.
\item The internal parameters on $\mathrm{hol}(\Cmc(S'))$ are exactly the internal parameters on $\mathrm{hol}(\Cmc(S))$ associated to the pairs of pants in $\Pbbb_\Pmc$ that lie in $S'$.
\item The twist, bulge, and reparameterized bulge parameters on $\mathrm{hol}(\Cmc(S'))$ are exactly the twist, bulge, and reparameterized bulge parameters on $\mathrm{hol}(\Cmc(S))$ associated to the closed curves in $\Pmc'\subset\Pmc$.
\end{enumerate}
This immediately implies the following remark.

\begin{remark}\label{rem: coordinates}
For all $\Dmc\subset\Pmc$, the coordinate system on $\mathrm{hol}(\Cmc(S))$ determines a coordinate system on $\mathrm{hol}(\Cmc(S,\Dmc)^\mathrm{adm})\subset\prod_{i=1}^k\mathrm{hol}(\Cmc(S_i))$. More precisely, if $\{d_1,\dots,d_n\}$ are the punctures of $S$, $\Pmc\setminus\Dmc=\{c_1,\dots,c_{3g-3+n-k}\}$, $\Dmc=\{e_1,\dots,e_k\}$, and $\Pbbb=\{P_1,\dots,P_{2g-2+n}\}$, then this coordinate system is given explicitly by the homeomorphism
\[\Phi_\Dmc:\mathrm{hol}(\Cmc(S,\Dmc)^\mathrm{adm})\to(\Rbbb^2)^{3g-3+n-k}\times(\Rbbb_+^2)^{3g-3+n-k}\times(\Rbbb_{\geq 0}^2)^{k+n}\times(\Rbbb^2)^{2g-2+n},\]
where
\begin{eqnarray*}
\Phi_\Dmc&:=&\prod_{i=1}^{3g-3+n-k}(r_{c_i},t_{c_i})\prod_{i=1}^{3g-3+n-k}(\ell_{1,c_i},\ell_{2,c_i})\\
&&\hspace{2cm}\prod_{i=1}^{k}(\ell_{1,e_i},\ell_{2,e_i})\prod_{i=1}^{n}(\ell_{1,d_i},\ell_{2,d_i})\prod_{j=1}^{2g-2+n}(i_{1,P_j},i_{2,P_j}).
\end{eqnarray*}
\end{remark}

\section{Coordinate description of the topology of $\Cmc(S)^\mathrm{aug}$} \label{sec: coord}

As we observed in (4), (5) and (6) of Remark \ref{rem: top}, $\Cmc(S)^\mathrm{aug}$ is not locally compact and $\Cmc(S)^\mathrm{aug}/\MCG(S)$ is an orbifold with a complicated singular locus. As such, it is not easy to give local coordinates for either $\Cmc(S)^\mathrm{aug}$ or $\Cmc(S)^\mathrm{aug}/\MCG(S)$.

This however, tells us that taking the quotient of $\Cmc(S)^\mathrm{aug}$ by the trivial group is ``too big", while taking the quotient of $\Cmc(S)^\mathrm{aug}$ by all of $\MCG(S)$ is ``too small". The naive dream is then to find a subgroup $G\subset\MCG(S)$ so that the topology on $\Cmc(S)^\mathrm{aug}/G$ admits local coordinates about every point $p$. It turns out that this too is impossible. However, the next theorem tells us that if we allow $G$ to change depending on the stratum of $\Cmc(S)^\mathrm{aug}$ where $p$ lies, then there is an explicit description of the topology on a neighborhood of $p$ in $\Cmc(S)^\mathrm{aug}/G$.

More precisely, let $\Dmc$ be any multi-curve and let $G_\Dmc\subset\MCG(S)$ be the subgroup generated by Dehn twists about $\Dmc$. Recall that we previously defined
\[\Cmc(S)^{\mathrm{aug},\Dmc}:=\bigcup_{\Dmc'\subset\Dmc}\Cmc(S,\Dmc')^\mathrm{adm}.\]
For the purpose of simplifying notation, set $V_\Dmc:=\Cmc(S)^{\mathrm{aug},\Dmc}$. Observe that there is a natural $G_\Dmc$-action on $X_{\Dmc'}:=\Cmc(S,\Dmc')^\mathrm{adm}$ for all $\Dmc'\subset\Dmc$, so we can define $W_{\Dmc',\Dmc}:=X_{\Dmc'}/G_\Dmc$ and $U_\Dmc:=V_\Dmc/G_\Dmc$. Equip $W_{\Dmc',\Dmc}$ and $U_\Dmc$ with the quotient topology. By (1) of Remark \ref{rem: top}, $V_\Dmc\subset\Cmc(S)^\mathrm{aug}$ is an open set about any point in $X_\Dmc\subset\Cmc(S)^\mathrm{aug}$.

The goal of this section is to prove the following theorem.

\begin{thm}\label{thm: main}
Let $\Dmc$ be any multi-curve, let $\Pmc\supset\Dmc$ be an oriented pants decomposition on $S$ and choose orientations about the punctures of $S$. Let $\Tmc_\Pmc$ be the induced ideal triangulation as described in Section \ref{sec: triangulation}, and choose a bridge across every closed edge of $\Tmc_\Pmc$. Then there is an explicit homeomorphism
\[\Psi_\Dmc:U_\Dmc\to(\Rbbb^2)^{3g-3+n-k}\times(\Rbbb_+^2)^{3g-3+n-k}\times(\Rbbb^4)^k\times (\Rbbb^2)^n\times(\Rbbb^2)^{2g-2+n}.\]
In particular, $U_\Dmc$ is homeomorphic to a cell.
\end{thm}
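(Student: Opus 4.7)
The approach is to build $\Psi_\Dmc$ explicitly from the holonomy coordinates $\Phi_\Dmc$ of Remark \ref{rem: coordinates}, modified so that it extends continuously across strata $\Cmc(S,\Dmc')^{\mathrm{adm}}$ with $\Dmc' \subsetneq \Dmc$ and descends to the $G_\Dmc$-quotient. For each stratum, $\Phi_\Dmc$ already provides real-valued coordinates $(r_c, t_c, \ell_{1,c}, \ell_{2,c})$ for each $c \in \Pmc$, internal parameters $(i_{1,P}, i_{2,P})$ for each pair of pants, and length parameters $(\ell_{1,d}, \ell_{2,d})$ for each puncture. For curves $c \in \Pmc \setminus \Dmc$ and for pants internal parameters, I would reuse these coordinates directly. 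For a curve $c \in \Dmc$, the four coordinates in $\Rbbb^4$ should (a) parameterize the twist $t_c$ in a ``polar'' form so that the Dehn twist action $t_c \mapsto t_c + \ell_{1,c} + \ell_{2,c}$ of Proposition \ref{prop:Dehntwistcoords} becomes a full rotation (absorbed by the quotient), (b) record the length pair $(\ell_{1,c}, \ell_{2,c})$, and (c) encode the reparameterized bulge $r_c$ via a continuous compactification (such as $\tanh$) so that bulge $\pm\infty$ limits extend continuously. For punctures, the two coordinates should record the lengths together with the sign of the bulge (e.g.\ via signed lengths), so that the four puncture types (parabolic, quasi-hyperbolic, bulge $\pm\infty$) correspond to the origin, axes, and open quadrants of $\Rbbb^2$. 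Well-definedness on $U_\Dmc$ is then a direct consequence of Proposition \ref{prop:Dehntwistcoords}: any element of $G_\Dmc$ fixes every coordinate of $\Phi_{\Dmc'}$ except the twists $t_c$ for $c \in \Dmc$, and the polar parameterization was chosen precisely so that Dehn twist orbits collapse to single points.

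Continuity is the crucial step. On the open top stratum $X_\Dmc$, it reduces to continuity of $\Phi_\Dmc$, which follows from Proposition \ref{prop:conv-flag}. At a point $\mu \in X_{\Dmc'}$ with $\Dmc' \subsetneq \Dmc$, (1) of Remark \ref{rem: top} reduces convergence $\mu^j \to \mu$ in $V_\Dmc$ to convergence of the pullbacks to each connected component of $S \setminus \Dmc$. For a curve $c \in \Dmc'$ that is pinched in $\mu$ but not in $\mu^j$, the limiting type of $c$ dictates the behavior of its coordinates: parabolic or quasi-hyperbolic limits force one or both of $\ell_{1,c}(\mu^j), \ell_{2,c}(\mu^j)$ to tend to $0$, while a bulge $\pm\infty$ limit forces $r_c(\mu^j) \to \pm\infty$. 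The latter is characterized geometrically by Lemma \ref{lem:bulge}, which implies that the compactified bulge coordinate tends to the correct boundary value. Together with the polar twist parameterization (which collapses as lengths vanish), this yields continuity at every point.

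Bijectivity and continuity of the inverse are checked by direct construction. Injectivity on each stratum $X_{\Dmc'}/G_\Dmc$ combines injectivity of $\Phi_{\Dmc'}$ with Corollary \ref{cor: hol}, which identifies the fibers of $\mathrm{hol}$ with choices of bulge sign for pinched curves and for bulge-type punctures; the coordinate design faithfully records these sign choices. Different strata have disjoint images because of the tell-tale vanishing or boundary-limit patterns of the coordinates. Surjectivity: given a target point, one reconstructs a regular convex $\rp^2$ structure on each component of $S \setminus \Dmc$ via Theorem \ref{thm: par} together with the appropriate bulge sign choices from Corollary \ref{cor: hol}, and compatibility across curves in $\Dmc$ is automatic from the design. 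For continuity of the inverse one reverses the continuity argument: convergence in the target implies convergence of the holonomy data on each component through $\Phi$, plus the bulge sign data through the compactifications, which in turn gives convergence in $V_\Dmc/G_\Dmc$ via the pulling-map characterization of the topology in Section \ref{sec: aug top}.

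The principal obstacle will be uniform control at points of degenerate strata, particularly verifying that Lemma \ref{lem:bulge} applies uniformly as the background Fenchel-Nielsen parameters vary along a convergent sequence, so that the bulge compactification is continuous and not merely continuous on each fiber. A secondary subtlety is matching bulge sign data across a closed curve in $\Dmc$ when both adjacent components of $S \setminus \Dmc$ degenerate simultaneously, since the compatibility condition defining $\Cmc(S,\Dmc')^{\mathrm{adm}}$ must be seen to cut out exactly the image of $\Psi_\Dmc$; this requires careful accounting of how the coordinates on the two sides of such a curve are matched by the design chosen above.
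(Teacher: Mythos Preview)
Your approach is essentially the same as the paper's: the coordinate design you sketch (polar twist, $\tanh$-compactified bulge, signed lengths at punctures) matches the paper's functions $g_{1,c},\dots,g_{4,c}$ almost exactly, and your plan for bijectivity via Corollary~\ref{cor: hol} and forward continuity via the pullback characterization plus Lemma~\ref{lem:bulge} is what the paper does.

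There is, however, a real gap in your inverse continuity argument. You write that ``convergence of the holonomy data on each component through $\Phi$, plus the bulge sign data through the compactifications, \dots gives convergence in $V_\Dmc/G_\Dmc$''. But convergence in $\Cmc(S_i)$ requires convergence of the \emph{developing image} $\Omega_i^j\to\Omega_i$ in the Hausdorff topology, not just convergence of holonomy. Holonomy convergence alone does not obviously control $\Omega_i^j$: for a fixed holonomy there are many possible convex domains (Theorem~\ref{thm: hol} describes exactly how many), and nothing you have written rules out $\Omega_i^j$ wandering among these possibilities as $j$ varies. The paper closes this gap with a construction you do not mention: for each $\rho$ it builds auxiliary properly convex domains $\widecheck\Omega\subset\Omega\subset\widehat\Omega$ that depend \emph{only on $\rho$} (not on the particular convex structure with that holonomy) and vary continuously with $\rho$. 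Sandwiched between these, $\Omega_i^j$ is controlled up to a union of principal triangles by Theorem~\ref{thm: hol}, and then Lemma~\ref{lem:bulge} together with the bulge-sign data pins down which principal triangles are present in the limit. Your ``principal obstacle'' paragraph anticipates difficulty in the forward direction, but the genuine missing idea is this sandwich argument for the backward direction; without it (or an equivalent device), the proof that $\Psi_\Dmc^{-1}$ is continuous is incomplete.
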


\subsection{The homeomorphism $\Theta_{\Dmc',\Dmc}$}
 As a preliminary step to define $\Psi_\Dmc$, we first define a continuous parameterization $\Theta_{\Dmc',\Dmc}$ of $\mathrm{hol}(W_{\Dmc',\Dmc}):=\mathrm{hol}(X_{\Dmc'})/G_\Dmc$ for any $\Dmc'\subset\Dmc$. Per the hypothesis of Theorem \ref{thm: main}, choose an oriented pants decomposition $\Pmc\supset\Dmc$, an orientation on every boundary component of $S$, and a bridge across every closed edge in $\Tmc_\Pmc$. Remark \ref{rem: coordinates} tells us that for any $\Dmc'\subset\Dmc$, these choices determine a coordinate system on $\mathrm{hol}(X_{\Dmc'})$.

Let $\{S_1',\dots,S_k'\}$ be the connected components of $S\setminus \mathcal{D}'$. Observe that if $c\in\Dmc'$, then the Dehn twist $D_c\in G_\Dmc$ about $c$ acts as the identity on $\prod_{j=1}^k\Cmc(S_j')$. Also, if $c\in\Dmc\setminus\Dmc'$, then $c$ lies in the interior of $S_j'$ for some $j=1,\dots,k$. In that case, $D_c$ acts as the identity on $\Cmc(S_i')$ for all $i\neq j$, and its action on $\Cmc(S_j')$ induces an action on $\mathrm{hol}(\Cmc(S_j'))$, which we have described explicitly in terms of the coordinates on $\mathrm{hol}(\Cmc(S_j'))$ in Proposition \ref{prop:Dehntwistcoords}.

Proposition \ref{prop:Dehntwistcoords} implies that aside from the twist coordinates in $\{t_c:c\in\Dmc\setminus\Dmc'\}$, all the other coordinate functions on $\mathrm{hol}(X_{\Dmc'})$ descend to well-defined functions on $\mathrm{hol}(W_{\Dmc',\Dmc})$, which in turn give well-defined functions on $W_{\Dmc',\Dmc}$ by precomposing with the holonomy map $\mathrm{hol}:W_{\Dmc',\Dmc}\to\mathrm{hol}(W_{\Dmc',\Dmc})$. Although $t_c$ does not descend to a well-defined function on $\mathrm{hol}(W_{\Dmc',\Dmc})$ for all $c\in\Dmc\setminus\Dmc'$, we may replace $t_c$ with the map $\theta_c:\mathrm{hol}(X_{\Dmc'})\to\Sbbb:= \Rbbb/(2\pi\cdot\Zbbb)$ defined by
\begin{equation}\label{eqn:theta}
\theta_c:\mathrm{hol}(\mu)=(\mathrm{hol}(\mu_1),\dots,\mathrm{hol}(\mu_k))\mapsto \frac{2\pi t_c(\mathrm{hol}(\mu_j))}{\ell_{1,c}(\mathrm{hol}(\mu_j))+\ell_{2,c}(\mathrm{hol}(\mu_j))}.
\end{equation}
By Proposition \ref{prop:Dehntwistcoords}, this map descends to a well-defined map
$\theta_c:\mathrm{hol}(W_{\Dmc',\Dmc})\to \Sbbb$, so we may think of it as a map from $W_{\Dmc',\Dmc}$ to $\Sbbb$ by pre-composing with $\mathrm{hol}:W_{\Dmc',\Dmc}\to\mathrm{hol}(W_{\Dmc',\Dmc})$.

If $c\in \Dmc\setminus\Dmc'$, we may define the functions $g_{1,c},\dots,g_{4,c}:\mathrm{hol}(W_{\Dmc',\Dmc})\to\Rbbb$ by
\begin{eqnarray*}
g_{1,c}(\cdot)&:=&\ell_{1,c}(\cdot) \ell_{2,c}(\cdot) \cos\left(\frac{\pi}{2}f(r_c(\cdot))\right)\cos(\theta_c(\cdot)),\\
g_{2,c}(\cdot)&:=&\ell_{1,c}(\cdot) \ell_{2,c}(\cdot) \cos\left(\frac{\pi}{2}f(r_c(\cdot))\right)\sin(\theta_c(\cdot)),\\
g_{3,c}(\cdot)&:=&\ell_{1,c}(\cdot)\ell_{2,c}(\cdot) \sin\left(\frac{\pi}{2}f(r_c(\cdot))\right),\\
g_{4,c}(\cdot)&:=&\ell_{1,c}(\cdot)^2-\ell_{2,c}(\cdot)^2.
\end{eqnarray*}
In the above formulas, $f:\Rbbb\to\Rbbb$ is the smooth function given by $f(s)=\frac{e^s-1}{e^s+1}$. Again, for $i=1,\dots,4$ and $c\in \Dmc\setminus\Dmc'$, $g_{i,c}$ can also be viewed as functions on $W_{\Dmc',\Dmc}$ by pre-composing with $\mathrm{hol}:W_{\Dmc',\Dmc}\to\mathrm{hol}(W_{\Dmc',\Dmc})$.

On the other hand, if $c\in\Dmc'$ or $c$ is a puncture of $S$, define $g_{1,c},\dots,g_{4,c}:W_{\Dmc',\Dmc}\to\Rbbb$ by
\begin{eqnarray*}
g_{1,c}(\cdot)&:=&0\\
g_{2,c}(\cdot)&:=&0\\
g_{3,c}(\cdot)&:=&\left\{\begin{array}{ll}
0&\text{if }\mu\text{-type of } c\text{ is parabolic or quasi-hyperbolic}\\
\ell_{1,c}(\cdot)\ell_{2,c}(\cdot)&\text{if }\mu\text{-type of } c\text{ is bulge }+\infty\\
-\ell_{1,c}(\cdot)\ell_{2,c}(\cdot)&\text{if }\mu\text{-type of } c\text{ is bulge }-\infty
\end{array}\right.,\\
g_{4,c}(\cdot)&:=&\ell_{1,c}(\cdot)^2-\ell_{2,c}(\cdot)^2.
\end{eqnarray*}
Note that if $c\in\Dmc'$ or $c$ is a puncture of $S$, $g_{i,c}$ is a function on $\mathrm{hol}(W_{\Dmc',\Dmc})$ for $i=1,2,4$. This is not so for $g_{3,c}$, but its absolute value $|g_{3,c}(\cdot)|=\ell_{1,c}(\cdot)\ell_{2,c}(\cdot)$ is a function on $\mathrm{hol}(W_{\Dmc',\Dmc})$. Moreover, in the cases in which the reparametrized bulge parameter $r_c(\cdot) = \pm\infty$, we have simply extended the formulas above by identifying $f(\infty)=1$ and $f(-\infty)=-1$. 

\begin{notation}\label{not:standard}
Let $\Pmc\setminus\Dmc=:\{c_1,\dots,c_{3g-3+n-m}\}$, $\Dmc\setminus\Dmc'=:\{e_{m'+1},\dots,e_m\}$, $\Dmc'=:\{e_1,\dots,e_{m'}\}$, and $\{d_1,\dots,d_n\}$ be the punctures of $S$. Note that $m:=|\Dmc|$, $m':=|\Dmc'|\leq m$.
\end{notation}
With this notation, define
\[\Theta_{\Dmc',\Dmc}:\mathrm{hol}(W_{\Dmc',\Dmc})\to(\Rbbb^2)^{3g-3+n-m}\times(\Rbbb_+^2)^{3g-3+n-m}\times(\Rbbb^4)^m\times(\Rbbb^2)^n\times(\Rbbb^2)^{2g-2+n}\]
by
\begin{eqnarray*}
\Theta_{\Dmc',\Dmc}&\:=&\prod_{i=1}^{3g-3+n-m}(r_{c_i},t_{c_i})\prod_{i=1}^{3g-3+n-m}(\ell_{1,c_i},\ell_{2,c_i})\prod_{i=m'+1}^{m}(g_{1,e_i},g_{2,e_i},g_{3,e_i},g_{4,e_i})\\
&&\hspace{1cm}\prod_{i=1}^{m'}(g_{1,e_i},g_{2,e_i},|g_{3,e_i}|,g_{4,e_i})\prod_{i=1}^{n}(|g_{3,d_i}|,g_{4,d_i})\prod_{j=1}^{2g-2+n}(i_{1,P_j},i_{2,P_j}).
\end{eqnarray*}

\begin{lem}\label{lem:besthol}
Let $E_1:=(\Rbbb^2\setminus(0,0))\times \Rbbb^2\subset\Rbbb^4$ and $E_2:=(0,0)\times\Rbbb_{\geq 0}\times\Rbbb\subset\Rbbb^4$. The map $\Theta_{\Dmc',\Dmc}$ is a homeomorphism onto its image, which is
\[(\Rbbb^2)^{3g-3+n-m}\times(\Rbbb_+^2)^{3g-3+n-m}\times E_1^{m-m'}\times E_2^{m'}\times(\Rbbb_{\geq 0}\times\Rbbb)^n\times(\Rbbb^2)^{2g-2+n}.\]
\end{lem}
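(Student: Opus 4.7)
The plan is to realize $\Theta_{\Dmc',\Dmc}$ as an explicit change of coordinates applied to the parameterization from Remark \ref{rem: coordinates}, namely the homeomorphism
\[\Phi_{\Dmc'}: \mathrm{hol}(X_{\Dmc'}) \to (\Rbbb^2)^{3g-3+n-m'} \times (\Rbbb_+^2)^{3g-3+n-m'} \times (\Rbbb_{\geq 0}^2)^{m'+n} \times (\Rbbb^2)^{2g-2+n}.\]
First I would identify the quotient of this target by $G_\Dmc$. By Proposition \ref{prop:Dehntwistcoords}, $G_\Dmc$ acts trivially on every coordinate of $\Phi_{\Dmc'}$ except on each twist $t_c$ for $c \in \Dmc \setminus \Dmc'$, where the generator $D_c$ acts by $t_c \mapsto t_c + \ell_{1,c} + \ell_{2,c}$. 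Since $\ell_{1,c} + \ell_{2,c} > 0$ strictly on $X_{\Dmc'}$, the formula (\ref{eqn:theta}) defines a homeomorphism from the quotient of the $t_c$-factor to $\Sbbb$, so $\Phi_{\Dmc'}$ descends to a homeomorphism from $\mathrm{hol}(W_{\Dmc',\Dmc})$ onto
\[(\Rbbb^2)^{3g-3+n-m} \times (\Rbbb_+^2)^{3g-3+n-m} \times (\Rbbb \times \Sbbb \times \Rbbb_+^2)^{m-m'} \times (\Rbbb_{\geq 0}^2)^{m'+n} \times (\Rbbb^2)^{2g-2+n}.\]

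The remainder of the argument is a factor-by-factor analysis of the reparameterizations encoded by the $g$-functions. For $c \in \Dmc \setminus \Dmc'$, I would show that the map $(r_c, \theta_c, \ell_{1,c}, \ell_{2,c}) \mapsto (g_{1,c}, g_{2,c}, g_{3,c}, g_{4,c})$ is a homeomorphism $\Rbbb \times \Sbbb \times \Rbbb_+^2 \to E_1$ by writing down the inverse explicitly: from $g_{1,c}^2 + g_{2,c}^2 + g_{3,c}^2 = (\ell_{1,c}\ell_{2,c})^2$ together with $g_{4,c} = \ell_{1,c}^2 - \ell_{2,c}^2$ one recovers $(\ell_{1,c}, \ell_{2,c}) \in \Rbbb_+^2$ uniquely; then $r_c$ is recovered from $\sin(\frac{\pi}{2} f(r_c)) = g_{3,c}/(\ell_{1,c}\ell_{2,c})$ using that $f : \Rbbb \to (-1, 1)$ is a diffeomorphism, and $\theta_c$ is the argument of $(g_{1,c}, g_{2,c})$. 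The crucial observation is that $\cos(\frac{\pi}{2} f(r_c)) > 0$ for all $r_c \in \Rbbb$, so $(g_{1,c}, g_{2,c})$ never vanishes and the image lands in $E_1$; conversely every point of $E_1$ is hit by these formulas, and continuity of the inverse is immediate.

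For $c \in \Dmc'$ or $c$ a puncture of $S$, the analogous reparameterization reduces to $(\ell_1, \ell_2) \mapsto (\ell_1\ell_2,\, \ell_1^2 - \ell_2^2)$, which one inverts by $\ell_1^2 = \frac{1}{2}(\sqrt{v^2 + 4u^2} + v)$, $\ell_2^2 = \frac{1}{2}(\sqrt{v^2 + 4u^2} - v)$, giving a homeomorphism $\Rbbb_{\geq 0}^2 \to \Rbbb_{\geq 0} \times \Rbbb$; padding by the identically-zero $g_{1,c}, g_{2,c}$ coordinates lands the image in $E_2$. Taking the product of these factor-level homeomorphisms with the identity on the remaining coordinates shows $\Theta_{\Dmc',\Dmc}$ is a homeomorphism onto the claimed target. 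I expect the main technical point to be the $c \in \Dmc \setminus \Dmc'$ case: the choice of $f$ with range $(-1,1)$ is precisely what keeps $\cos(\frac{\pi}{2}f(r_c))$ bounded away from zero, which is what ensures the circular coordinate $\theta_c$ is faithfully recorded by the pair $(g_{1,c}, g_{2,c})$ and selects $E_1$ rather than $\Rbbb^4$ as the correct image.
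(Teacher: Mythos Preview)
Your proposal is correct and follows exactly the same approach as the paper's proof: both start from the homeomorphism $\Phi_{\Dmc'}$ of Remark \ref{rem: coordinates}, pass to the quotient by $G_\Dmc$ using Proposition \ref{prop:Dehntwistcoords} to obtain $\overline{\Phi}_{\Dmc',\Dmc}$ with target containing $(\Rbbb\times\Sbbb\times\Rbbb_+^2)^{m-m'}$, and then reduce to showing that the factor-level maps $F_1:\Rbbb\times\Sbbb\times\Rbbb_+^2\to E_1$ and $F_2:(\Rbbb_{\geq 0})^2\to E_2$ are homeomorphisms. The only difference is that you actually write down the explicit inverse formulas, whereas the paper simply asserts that ``this can be verified easily by writing down explicit continuous formulas for the inverse maps for both $F_1$ and $F_2$.''
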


\begin{proof}
Recall from Remark \ref{rem: coordinates} that the map
\[\Phi_{\Dmc'}:\mathrm{hol}(X_{\Dmc'})\to(\Rbbb^2)^{3g-3+n-m'}\times(\Rbbb_+^2)^{3g-3+n-m'}\times(\Rbbb_{\geq 0}^2)^{m'+n}\times(\Rbbb^2)^{2g-2+n}\]
given by
\begin{eqnarray*}
\Phi_{\Dmc'}&:=&\left(\prod_{i=1}^{3g-3+n-m}(r_{c_i},t_{c_i})\prod_{i=m'+1}^{m}(r_{e_i},t_{e_i})\right)\\
&&\hspace{1.5cm}\left(\prod_{i=1}^{3g-3+n-m}(\ell_{1,c_i},\ell_{2,c_i})\prod_{i=m'+1}^{m}(\ell_{1,e_i},\ell_{2,e_i})\right)\\
&&\hspace{3cm}\prod_{i=1}^{m'}(\ell_{1,e_i},\ell_{2,e_i})\prod_{i=1}^{n}(\ell_{1,d_i},\ell_{2,d_i})\prod_{j=1}^{2g-2+n}(i_{1,P_j},i_{2,P_j}).
\end{eqnarray*}
is a homeomorphism. From this and the definition of the $G_\Dmc$-action on $\mathrm{hol}(X_{\Dmc'})$ defined above, we see that
\begin{eqnarray*}
\overline{\Phi}_{\Dmc',\Dmc}:\mathrm{hol}(W_{\Dmc',\Dmc})&\to&(\Rbbb^2)^{3g-3+n-m}\times(\Rbbb_+^2)^{3g-3+n-m}\times(\Rbbb\times\Sbbb\times\Rbbb_+^2)^{m-m'}\times\\
&&\hspace{4cm}(\Rbbb_{\geq 0}^2)^{m'}\times(\Rbbb_{\geq 0}^2)^n\times(\Rbbb^2)^{2g-2+n}
\end{eqnarray*}
given by
\begin{eqnarray*}
\overline{\Phi}_{\Dmc',\Dmc}&:=&\prod_{i=1}^{3g-3+n-m}(r_{c_i},t_{c_i})\prod_{i=1}^{3g-3+n-m}(\ell_{1,c_i},\ell_{2,c_i})\prod_{i=m'+1}^{m}(r_{e_i},\theta_{e_i},\ell_{1,e_i},\ell_{2,e_i})\\
&&\hspace{3cm}\prod_{i=1}^{m'}(\ell_{1,e_i},\ell_{2,e_i})\prod_{i=1}^{n}(\ell_{1,d_i},\ell_{2,d_i})\prod_{j=1}^{2g-2+n}(i_{1,P_j},i_{2,P_j}).
\end{eqnarray*}
is also a homeomorphism.

From the definition of $(g_{1,e_i},\dots,g_{4,e_i})$, one sees that to finish the proof, it is sufficient to prove that the maps $F_1:\Rbbb\times\Sbbb\times\Rbbb^2_+\to E_1$ and $F_2:(\Rbbb_{\geq 0})^2\to E_2$ defined by
\begin{eqnarray*}
F_1(a_1,a_2,a_3,a_4)&:=&\left(a_3a_4\cos\left(\frac{\pi}{2}f(a_1)\right)\cos(a_2),\right.\\
&&a_3a_4\cos\left(\frac{\pi}{2}f(a_1)\right)\sin(a_2),\left.a_3a_4\sin\left(\frac{\pi}{2}f(a_1)\right),a_3^2-a_4^2\right),\\
F_2(a_3,a_4)&:=&(0,0,a_3a_4,a_3^2-a_4^2)
\end{eqnarray*}
are homeomorphisms. But this can be verified easily by writing down explicit continuous formulas for the inverse maps for both $F_1$ and $F_2$.
\end{proof}

\subsection{$\Psi_\Dmc$ is a bijection}
Next, we will explicitly describe the map $\Psi_\Dmc$ in Theorem \ref{thm: main} and show that it is a bijection. Define
\[\Psi_{\Dmc',\Dmc}:W_{\Dmc',\Dmc}\to(\Rbbb^2)^{3g-3+n-m}\times(\Rbbb_+^2)^{3g-3+n-m}\times(\Rbbb^4)^m\times (\Rbbb^2)^n\times(\Rbbb^2)^{2g-2+n},\]
by
\begin{eqnarray*}
\Psi_{\Dmc',\Dmc}&:=&\prod_{i=1}^{3g-3+n-m}(r_{c_i},t_{c_i})\prod_{i=1}^{3g-3+n-m}(\ell_{1,c_i},\ell_{2,c_i})\prod_{i=1}^m(g_{1,e_i},g_{2,e_i},g_{3,e_i},g_{4,e_i})\\
&&\hspace{5cm}\prod_{i=1}^n(g_{3,d_i},g_{4,d_i})\prod_{j=1}^{2g-2+n}(i_{1,P_j},i_{2,P_j}).
\end{eqnarray*}

\begin{lem}\label{lem:bijection on strata}
Let $E_1:=(\Rbbb^2\setminus(0,0))\times \Rbbb^2\subset\Rbbb^4$ and $E_3:=\Rbbb^4\setminus E_1=(0,0)\times\Rbbb^2\subset\Rbbb^4$. The map $\Psi_{\Dmc',\Dmc}$ is a bijection onto its image, which is
\[(\Rbbb^2)^{3g-3+n-m}\times(\Rbbb_+^2)^{3g-3+n-m}\times E_1^{m-m'}\times E_3^{m'}\times(\Rbbb^2)^n\times(\Rbbb^2)^{2g-2+n}.\]
\end{lem}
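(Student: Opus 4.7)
The plan is to factor $\Psi_{\Dmc',\Dmc}$ through $\Theta_{\Dmc',\Dmc}\circ\mathrm{hol}$ and then use Corollary \ref{cor: hol} to account for the bulge-type data not recorded by the holonomy. The key observation is that $\Psi_{\Dmc',\Dmc}$ differs from $\Theta_{\Dmc',\Dmc}\circ\mathrm{hol}$ only in replacing the unsigned moduli $|g_{3,e_i}|$ (for $e_i\in\Dmc'$) and $|g_{3,d_i}|$ (for punctures $d_i$) by their signed counterparts $g_{3,e_i}$ and $g_{3,d_i}$; every other coordinate agrees. In particular, if $\Pi$ denotes the map that sends each such $g_3$-component to its absolute value and leaves all other coordinates unchanged, then $\Pi\circ\Psi_{\Dmc',\Dmc}=\Theta_{\Dmc',\Dmc}\circ\mathrm{hol}$.

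For injectivity, suppose $\Psi_{\Dmc',\Dmc}([\mu_1])=\Psi_{\Dmc',\Dmc}([\mu_2])$. Applying $\Pi$ and Lemma \ref{lem:besthol} yields $\mathrm{hol}([\mu_1])=\mathrm{hol}([\mu_2])$. The equality of the signed entries $g_{3,e_i}$ for $e_i\in\Dmc'$ and $g_{3,d_i}$ for each puncture then forces the $\mu_1$-type and the $\mu_2$-type to coincide at every such location whose holonomy is hyperbolic; at non-hyperbolic locations the admissible type is already determined by the holonomy via Theorem \ref{thm: holonomy1}. Hence the full bulge-type assignments of $[\mu_1]$ and $[\mu_2]$ agree, and Corollary \ref{cor: hol} concludes that $[\mu_1]=[\mu_2]$.

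For surjectivity, let $p$ be a point of the stated target set. Applying $\Pi$ produces $\bar p$ whose $(g_3,g_4)$-entries lie in $\{(0,0)\}\times\Rbbb_{\geq 0}\times\Rbbb=E_2$ at each $e_i\in\Dmc'$ and in $\Rbbb_{\geq 0}\times\Rbbb$ at each puncture; combined with the unchanged factors, this places $\bar p$ in the image of $\Theta_{\Dmc',\Dmc}$ described by Lemma \ref{lem:besthol}. That lemma provides a unique $[\rho]\in\mathrm{hol}(W_{\Dmc',\Dmc})$ with $\Theta_{\Dmc',\Dmc}([\rho])=\bar p$. By Corollary \ref{cor: hol}, $\mathrm{hol}^{-1}([\rho])$ is parameterized by bulge-sign choices at each curve of $\Dmc'$ and each puncture whose $\rho$-holonomy is hyperbolic. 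Since $|g_3|=\ell_1\ell_2>0$ if and only if the corresponding holonomy is hyperbolic, the non-zero $g_3$-entries of $p$ appear precisely at the hyperbolic locations and, by the very definition of $g_3$, designate a unique such choice. This picks out a unique $[\mu]\in\mathrm{hol}^{-1}([\rho])$ with $\Psi_{\Dmc',\Dmc}([\mu])=p$.

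Finally, to verify that the image equals the stated set, the factors for closed curves $c_i\in\Pmc\setminus\Dmc$, curves $e_i\in\Dmc\setminus\Dmc'$, and internal parameters $i_{k,P_j}$ all contribute exactly as in Lemma \ref{lem:besthol}. For $e_i\in\Dmc'$ one has $(g_{1,e_i},g_{2,e_i})\equiv(0,0)$, while a case analysis on the four admissibility types shows $(g_{3,e_i},g_{4,e_i})$ exhausts $\Rbbb^2$: the origin comes from parabolic holonomy, the open half-axes $\{(0,g_4):g_4>0\}$ and $\{(0,g_4):g_4<0\}$ from the two quasi-hyperbolic types, and the open half-planes $\{g_3>0\}$ and $\{g_3<0\}$ from hyperbolic holonomies with bulge $+\infty$ and bulge $-\infty$ respectively. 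The same analysis at each puncture yields $\Rbbb^2$. The only delicate step, and the main input beyond Lemma \ref{lem:besthol}, is the careful matching between the sign of each $g_3$ and the bulge-type parametrization of $\mathrm{hol}^{-1}([\rho])$ supplied by Corollary \ref{cor: hol}.
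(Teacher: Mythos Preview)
Your proof is correct and follows essentially the same approach as the paper's: both use Lemma \ref{lem:besthol} to identify the holonomy-level image and then invoke Corollary \ref{cor: hol} to show that passing from $|g_{3,c}|$ to the signed $g_{3,c}$ exactly records the bulge $\pm\infty$ choice at each hyperbolic end in $\Dmc'$ and at each puncture. The paper's proof is much terser---it simply notes the cardinality of $\mathrm{hol}^{-1}(\mathrm{hol}[\mu])$ from Corollary \ref{cor: hol} and observes that the sign of $g_{3,c}$ distinguishes these elements---while you spell out injectivity, surjectivity, and the image computation separately; but the substance is the same.
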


\begin{proof}
For any $\mu\in X_{\Dmc'}$, let $\Amc_{\mathrm{hol}(\mu)}\subset\{d_1,\dots,d_n\}$ be the punctures of $S$ whose $\mu$-type is bulge $\pm\infty$, and let $\Dmc'_{\mathrm{hol}(\mu)}:=\{c\in\Dmc':\mu\text{-type of c is bulge }\pm\infty\}$. By Corollary \ref{cor: hol}, we see that $\mathrm{hol}^{-1}(\mathrm{hol}(\mu))\subset X_{\Dmc'}$ has $2^{|\Dmc'_{\mathrm{hol}(\mu)}\cup\Amc_{\mathrm{hol}(\mu)}|}$ elements, each of which corresponds to the choice of whether the $\mu$-type of $c$ is bulge $+\infty$ or bulge $-\infty$ for each $c\in\Dmc'_{\mathrm{hol}(\mu)}\cup\Amc_{\mathrm{hol}(\mu)}$. The same is true for $\mathrm{hol}^{-1}(\mathrm{hol}[\mu])\subset W_{\Dmc',\Dmc}$ as well, because the only element in $G_\Dmc$ that sends $\mathrm{hol}^{-1}(\mathrm{hol}(X_{\Dmc'}))$ to itself is the identity.

Note that by replacing the coordinate functions $|g_{3,c}|$ of $\Theta_{\Dmc',\Dmc}$ with $g_{3,c}$ for $c\in\{e_1,\dots,e_{m'}\}\cup\{d_1,\dots,d_n\}$ allows us to distinguish whether the $\mu$-type of $c$ is bulge $+\infty$ or bulge $-\infty$. The lemma follows immediately from this observation.
\end{proof}

As defined, the target of $\Psi_{\Dmc',\Dmc}$ does not depend on $m'$, but only on $m$. Since
\[U_\Dmc=\bigcup_{\Dmc'\subset\Dmc}W_{\Dmc',\Dmc},\]
is a disjoint union, we may define
\[\Psi_\Dmc:U_\Dmc\to(\Rbbb^2)^{3g-3+n-m}\times(\Rbbb_+^2)^{3g-3+n-m}\times(\Rbbb^4)^m\times (\Rbbb^2)^n\times(\Rbbb^2)^{2g-2+n}\]
by $\Psi_\Dmc[\mu]:=\Psi_{\Dmc',\Dmc}[\mu]$ if $[\mu]\in W_{\Dmc',\Dmc}$. As a consequence of Lemma \ref{lem:bijection on strata}, we have the following proposition.

\begin{prop}
The map $\Psi_\Dmc$ is a bijection.
\end{prop}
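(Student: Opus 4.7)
The plan is to reduce the proposition to Lemma \ref{lem:bijection on strata} by matching the disjoint union $U_\Dmc = \bigsqcup_{\Dmc'\subset\Dmc} W_{\Dmc',\Dmc}$ with a disjoint decomposition of the target of $\Psi_\Dmc$. Split $\Rbbb^4 = E_1 \sqcup E_3$ as in Lemma \ref{lem:bijection on strata}, with $E_1 = (\Rbbb^2 \setminus \{(0,0)\}) \times \Rbbb^2$ and $E_3 = \{(0,0)\} \times \Rbbb^2$. For each $\Dmc' \subset \Dmc$, let $T_{\Dmc'} \subset (\Rbbb^4)^m$ be the subset that places $E_3$ in the $\Rbbb^4$-factor corresponding to $c \in \Dmc$ if $c \in \Dmc'$ and $E_1$ otherwise; these $T_{\Dmc'}$ form a disjoint decomposition of $(\Rbbb^4)^m$.

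The first step I would carry out is to verify that $\Psi_{\Dmc',\Dmc}(W_{\Dmc',\Dmc})$ has its $(\Rbbb^4)^m$-slot lying in $T_{\Dmc'}$. For $c \in \Dmc'$ the identity $(g_{1,c}, g_{2,c}) = (0,0)$ holds by the very definition of the coordinates. For $c \in \Dmc \setminus \Dmc'$, the curve is unpinched in any $\mu \in W_{\Dmc',\Dmc}$, so $\ell_{1,c}, \ell_{2,c} > 0$ and $r_c \in \Rbbb$; since $f$ sends $\Rbbb$ into $(-1,1)$, we get $\cos\!\bigl(\tfrac{\pi}{2} f(r_c)\bigr) > 0$, which forces $(g_{1,c}, g_{2,c}) \neq (0,0)$. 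This ensures the images of $\Psi_{\Dmc_1',\Dmc}$ and $\Psi_{\Dmc_2',\Dmc}$ are disjoint whenever $\Dmc_1' \neq \Dmc_2'$ (the sets of vanishing $c$-coordinates distinguish the strata).

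Then I would invoke Lemma \ref{lem:bijection on strata} on each stratum: its stated image is precisely $(\Rbbb^2)^{3g-3+n-m} \times (\Rbbb_+^2)^{3g-3+n-m} \times T_{\Dmc'} \times (\Rbbb^2)^n \times (\Rbbb^2)^{2g-2+n}$ (up to reordering the $E_1$/$E_3$ positions to match the indexing of $\Dmc$), so $\Psi_{\Dmc',\Dmc}$ is a bijection from $W_{\Dmc',\Dmc}$ onto the $T_{\Dmc'}$-piece of the target of $\Psi_\Dmc$. Assembling over all $\Dmc' \subset \Dmc$ using the disjoint decomposition of $U_\Dmc$ and the disjoint decomposition $(\Rbbb^4)^m = \bigsqcup_{\Dmc'} T_{\Dmc'}$ then yields that $\Psi_\Dmc$ is a bijection onto the full target. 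There is no real obstacle: the only delicate point—that replacing $|g_{3,c}|$ by $g_{3,c}$ for $c \in \Dmc'$ and for punctures $d_i$ records the choice of bulge $\pm\infty$ type and thereby resolves the $2^{|\Dmc'_{\mathrm{hol}(\mu)} \cup \Amc_{\mathrm{hol}(\mu)}|}$-fold fiber of the holonomy map from Corollary \ref{cor: hol}—has already been handled inside Lemma \ref{lem:bijection on strata}, and the remaining content of the proposition is just the bookkeeping of how the stratum-wise bijections fit together.
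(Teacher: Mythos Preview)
Your proof is correct and takes essentially the same approach as the paper: both reduce to Lemma \ref{lem:bijection on strata} by noting that $\Rbbb^4 = E_1 \sqcup E_3$ is a disjoint union, so the stratum-wise images partition the target. The paper's proof is a single sentence, while you have spelled out the verification that $(g_{1,c},g_{2,c})=(0,0)$ exactly when $c\in\Dmc'$---this is the content the paper leaves implicit in its appeal to the lemma.
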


\begin{proof}
Simply note that $E_1\cup E_3=\Rbbb^4$ is a disjoint union, where $E_1$ and $E_3$ are as defined in the statement of Lemma \ref{lem:bijection on strata}.
\end{proof}

\subsection{$\Psi_\Dmc$ is a homeomorphism}
To finish the proof of Theorem \ref{thm: main}, we need to show that the bijection $\Psi_\Dmc$ is a homeomorphism. Recall that $\mathcal C(S)^{\rm aug}$ is first countable (see (3) of Remark \ref{rem: top}). Hence, it is sufficient to show that if $\mu=[\mu]\in X_\Dmc=W_{\Dmc,\Dmc}$ and $\{[\mu^j]\}_{j=1}^\infty$ is a sequence in $U_\Dmc$, then $\lim_{j\to\infty}[\mu^j]=[\mu]$ in $U_\Dmc$ if and only if $\lim_{j\to\infty}\Psi_\Dmc[\mu^j]=\Psi_\Dmc[\mu]$.

Let $S_1,\dots,S_k$ be the connected components of $S\setminus\Dmc$. Since $\Dmc$ is a finite set, by considering the subsequences of $\{[\mu^j]\}_{j=1}^\infty$ that lie in different strata separately, we may further assume that $\{[\mu^j]\}_{j=1}^\infty\subset W_{\Dmc',\Dmc}$ for some fixed $\Dmc'\subset\Dmc$. Also, note that the map $\mathrm{Pull}_{\Dmc',\Dmc}:X_{\Dmc'}\to X_\Dmc$ descends to $\mathrm{Pull}_{\Dmc',\Dmc}:W_{\Dmc',\Dmc}\to W_{\Dmc,\Dmc}$. Then (1) of Remark \ref{rem: top} implies that it is sufficient to prove the following theorem.

\begin{thm}\label{thm: special case}
Let $\Dmc$ be an oriented multi-curve on $S$, let $S_1,\dots,S_k$ be the connected components of $S\setminus\Dmc$, and let $\mu=(\mu_1,\dots,\mu_k)\in X_\Dmc$. Also, let $\Dmc'\subset\Dmc$, let $\{[\mu^j]\}_{j=1}^\infty\subset W_{\Dmc',\Dmc}$, and let $\mu^j_l\in\Cmc(S_l)$ so that $\mathrm{Pull}_{\Dmc',\Dmc}[\mu^j]=(\mu^j_1,\dots,\mu^j_k)$.
\begin{enumerate}
\item If $\lim_{j\to\infty}\mu^j_l=\mu_l$ for all $l=1,\dots,k$, then $\lim_{j\to\infty}\Psi_{\Dmc',\Dmc}[\mu^j]=\Psi_{\Dmc,\Dmc}(\mu)$.
\item If $\lim_{j\to\infty}\Psi_{\Dmc',\Dmc}[\mu^j]=\Psi_{\Dmc,\Dmc}(\mu)$, then $\lim_{j\to\infty}\mu^j_l=\mu_l$ for all $l=1,\dots,k$.
\end{enumerate}
\end{thm}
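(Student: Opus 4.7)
The plan is to prove both directions by checking convergence of every coordinate function of $\Psi_{\Dmc',\Dmc}$ against the corresponding coordinate of $\Psi_{\Dmc,\Dmc}(\mu)$. The coordinates naturally partition into three classes: (A) those attached to curves $c_i \in \Pmc \setminus \Dmc$ and to the pants $P_j$, which are interior to a fixed component of $S \setminus \Dmc$ and play the same role in both $\mu^j$ and $\mu$; (B) those attached to the punctures $d_i$ of $S$ and to curves in $\Dmc'$, which are peripheral in both $\mu^j$ and $\mu$; and (C) those attached to curves $c \in \Dmc \setminus \Dmc'$, which are interior to a piece of $\mu^j$ but pinched in the limit $\mu$. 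Only class (C) requires genuinely new analysis.

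For part (1), I would first dispatch classes (A) and (B): their coordinates are continuous functions of the holonomies $\mathrm{hol}(\mu^j_i)$ and the flag maps $\xi_{\rho^j_i}$, so convergence follows from continuity of holonomy on $\Cmc(S_i)$ (Remark~\ref{rem: top}(2)) together with Proposition~\ref{prop:conv-flag}. For class (B), the only subtle point is the sign of $g_{3,c}$ at a bulge-type end; this is handled by observing that Hausdorff convergence of $\rho^j$-equivariant developing images forces the principal triangle at each bulge $\pm\infty$ end to be eventually either contained in or disjoint from $\Omega^j$, so the sign of $g_{3,c}(\mu^j_i)$ stabilizes to that of $g_{3,c}(\mu_i)$. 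For class (C), I would split on the $\mu_i$-type of the end at $c$: if it is parabolic or quasi-hyperbolic, then $\ell_{1,c} \cdot \ell_{2,c} \to 0$ forces $g_{1,c}, g_{2,c}, g_{3,c} \to 0$ while $g_{4,c}$ converges by continuity of holonomy; if it is bulge $\pm\infty$, I would invoke Lemma~\ref{lem:bulge} after choosing representatives of $[\mu^j]$ in $X_{\Dmc'}$ with $\theta_c \in [0,2\pi)$ (so that $t_c$ stays bounded) and normalizing holonomy representatives so that the three flags appearing in the lemma's hypothesis agree across $j$; Hausdorff convergence on the principal triangle then forces $r_c(\mathrm{hol}(\mu^j)) \to \pm\infty$ with the correct sign.

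For part (2), the plan is to prove convergence in $\Cmc(S_i)$ by separately establishing convergence of holonomies and of developing images. From the given convergence of $\Psi_{\Dmc',\Dmc}[\mu^j]$, I would recover all length, twist, reparametrized-bulge, and internal-parameter coordinates of Theorem~\ref{thm: par} applied to $S_i$. For each curve $c \in \Dmc$ or puncture of $S$, the length pair is obtained from $\ell_{1,c}^2 - \ell_{2,c}^2 = g_{4,c}$ and $(\ell_{1,c} \ell_{2,c})^2 = g_{1,c}^2 + g_{2,c}^2 + g_{3,c}^2$; when this product is positive, the angle $\theta_c$ (modulo $2\pi$) and the $\pm\infty$ direction of $r_c$ are recovered from the direction of $(g_{1,c}, g_{2,c})$ and the sign of $g_{3,c}$ respectively. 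Theorem~\ref{thm: par} then yields $\mathrm{hol}(\mu^j_i) \to \mathrm{hol}(\mu_i)$ in $\Xmc_3(S_i)$. To upgrade this to $\Cmc(S_i)$-convergence, I would use that the sign of $g_{3,c}$ determines the bulge sign at each bulge-type end, so convergence of these signs, together with Theorem~\ref{thm: hol} and Corollary~\ref{cor: hol}, pins down $\mu^j_i$ within its holonomy fiber and yields the required Hausdorff convergence of developing images.

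The main obstacle I foresee is the bulge subcase of part (1), class (C). Applying Lemma~\ref{lem:bulge} requires choosing holonomy representatives uniformly in $j$ so that three flags agree, and verifying the boundedness hypotheses on $t_c$, $\ell_{1,c}$, $\ell_{2,c}$. The first point is handled precisely by passing to $W_{\Dmc',\Dmc} = X_{\Dmc'}/G_\Dmc$, which kills the translational ambiguity $t_c \mapsto t_c + \ell_{1,c} + \ell_{2,c}$ coming from the Dehn twist $D_c$ (Proposition~\ref{prop:Dehntwistcoords}); a representative with $\theta_c \in [0, 2\pi)$ then has $t_c$ bounded. The length bounds follow from convergence of the peripheral lengths to the positive values dictated by the bulge-type peripheral holonomy of $\mu_i$. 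A secondary difficulty is obtaining Hausdorff convergence of the developing images in part (2); this should follow from continuous dependence of the flag map on holonomy (Proposition~\ref{prop:conv-flag}) combined with the explicit flag-map description of admissible developing images underlying the Appendix.
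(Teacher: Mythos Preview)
Your plan for part (1) is correct and matches the paper's approach; your (A)/(B)/(C) case split is in fact a bit cleaner than the paper's exposition, which invokes Lemma~\ref{lem:besthol} and Lemma~\ref{lem:bulge} in a somewhat tangled order.

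Part (2), however, has a genuine gap. You propose to ``pin down $\mu^j_i$ within its holonomy fiber'' via Corollary~\ref{cor: hol} and the sign of $g_{3,c}$, but for each curve $c\in\Dmc\setminus\Dmc'$ the pulled structure $\mu^j_i\in\Cmc(S_i)$ is \emph{not admissible} at the corresponding puncture of $S_i$: its developing image coincides with that of the structure on the larger piece $S'_b\supset S_i$, and so near the axis of $\rho(\gamma_c)$ it extends partway into the principal triangle in a manner that is neither bulge $+\infty$ nor bulge $-\infty$. Thus Corollary~\ref{cor: hol} (which classifies only the admissible fiber) does not apply to $\mu^j_i$, and your appeal to ``the explicit flag-map description of admissible developing images'' cannot control what $\Omega^j_i$ does inside that triangle. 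The paper closes this gap in two steps. First, it sandwiches $\widecheck{\Omega}^j_i\subset\Omega^j_i\subset\widehat{\Omega}^j_i$, where the outer domains depend only on $\rho^j_i$ and vary continuously with it (Remark~\ref{rem:cont}); this localizes the remaining ambiguity to the principal triangles. Second, at each $c\in\Dmc\setminus\Dmc'$ it runs Lemma~\ref{lem:bulge} in the reverse direction from your part-(1) usage: from $g_{1,c},g_{2,c}\to 0$ and $g_{3,c}\to\pm\ell_{1,c}\ell_{2,c}\neq 0$ one reads off $r_c[\mu^j]\to\pm\infty$ with the correct sign, and then (choosing representatives with $0\le t_c<\ell_{1,c}+\ell_{2,c}$ so that $t_c$ is bounded) Lemma~\ref{lem:bulge} forces $\Omega^j_i\cap\Delta\to\Delta$ or $\emptyset$ as required.
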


In the above theorem, we again choose an oriented pants decomposition $\Pmc\supset\Dmc$, a bridge across every closed edge of $\Tmc_\Pmc$, and an orientation about every puncture of $S$ to define $\Psi_\Dmc$.

We first prove (1) of Theorem \ref{thm: special case}.
\begin{proof}[Proof if (1) of Theorem \ref{thm: special case}]
First, observe that if $\lim_{j\to\infty}\mu^j_l=\mu_l$ for all $l=1,\dots,k$, then $\lim_{j\to\infty}\mathrm{hol}(\mu^j_l)=\mathrm{hol}(\mu_l)$ for all $l=1,\dots,k$. So Lemma \ref{lem:besthol} implies that
\begin{enumerate}[(i)]
\item $\displaystyle\lim_{j\to\infty}(i_{1,P_i}[\mu^j],i_{2,P_i}[\mu^j])=(i_{1,P_i}(\mu),i_{2,P_i}(\mu))$ for all $i=1,\dots,2g-2+n$,
\item $\displaystyle\lim_{j\to\infty}(r_{c_i}[\mu^j],t_{c_i}[\mu^j])=(r_{c_i}(\mu),t_{c_i}(\mu))$ for all $i=1,\dots,3g-3+n-m$,
\item $\displaystyle\lim_{j\to\infty}(\ell_{1,c_i}[\mu^j],\ell_{2,c_i}[\mu^j])=(\ell_{1,c_i}(\mu),\ell_{2,c_i}(\mu))$ for all $i=1,\dots,3g-3+n-m$,
\item $\displaystyle\lim_{j\to\infty}(\ell_{1,e_i}[\mu^j],\ell_{2,e_i}[\mu^j])=(\ell_{1,e_i}(\mu),\ell_{2,e_i}(\mu))$ for all $i=1,\dots,m$,
\item $\displaystyle\lim_{j\to\infty}(\ell_{1,d_i}[\mu^j],\ell_{2,d_i}[\mu^j])=(\ell_{1,d_i}(\mu),\ell_{2,d_i}(\mu))$ for all $i=1,\dots,n$.
\end{enumerate}
Thus, to prove (1) of Theorem \ref{thm: special case}, it is sufficient to prove that
\begin{equation}\label{eqn:g3}\lim_{j\to\infty}(g_{1,e_i}[\mu^j],g_{2,e_i}[\mu^j],g_{3,e_i}[\mu^j],g_{4,e_i}[\mu^j])=(g_{1,e_i}(\mu),g_{2,e_i}(\mu),g_{3,e_i}(\mu),g_{4,e_i}(\mu))\end{equation}
for all $i=1,\dots,m$, and
\begin{equation}\label{eqn:g3'}\lim_{j\to\infty}(g_{1,e_i}[\mu^j],g_{2,e_i}[\mu^j],g_{3,e_i}[\mu^j],g_{4,e_i}[\mu^j])=(g_{1,e_i}(\mu),g_{2,e_i}(\mu),g_{3,e_i}(\mu),g_{4,e_i}(\mu))\end{equation}
for all $i=1,\dots,n$.
We will only give the proof of (\ref{eqn:g3}); a special case of the same argument also works for (\ref{eqn:g3'}).

Let $a\in\{1,\dots,k\}$ so that $S_a$ is the connected component of $S\setminus \Dmc$ that lies on the left of $e_i$. Also, let $p$ be the puncture of $S_a$ that corresponds to $e_i$. If the $\mu_a$-type of $p$ is quasi-hyperbolic or parabolic, then either $\ell_{1,e_i}(\mu)=0$ or $\ell_{2,e_i}(\mu)=0$. By (iv), one of $\ell_{1,e_i}[\mu^j]$ or $\ell_{2,e_i}[\mu^j]$ converges to $0$, while the other is bounded. This, together with the definition of $(g_{1,e_i},g_{2,e_i},g_{3,e_i},g_{4,e_i})$, imply that (\ref{eqn:g3}) holds.

On the other hand, if the $\mu_a$-type of $p$ is bulge $\pm\infty$, then $\ell_{1,e_i}(\mu),\ell_{2,e_i}(\mu)>0$, which implies that there is some $C>1$ so that
\[\frac{1}{C}<\ell_{1,e_i}[\mu^j],\ell_{2,e_i}[\mu^j]<C\]
for all $j$. If $i=1,\dots,m'$, this implies that the $\mu_a^j$-type of $p$ is bulge $\pm\infty$ as well. Since we assumed that $\lim_{j\to\infty}\mu^j_a=\mu_a$, the $\mu_a^j$-type of $p$ must agree with the $\mu_a$-type of $p$. From the definition of $(g_{1,e_i},g_{2,e_i},g_{3,e_i},g_{4,e_i})$, it follows that (\ref{eqn:g3}) holds. If $i=m'+1,\dots,m$, by Proposition \ref{prop:Dehntwistcoords}, we may choose representatives $\mu^j\in[\mu^j]$ so that 
\[-\frac{\ell_{1,e_i}(\mu^j)+\ell_{2,e_i}(\mu^j)}{2}\leq t_{e_i}(\mu^j)<\frac{\ell_{1,e_i}(\mu^j)+\ell_{2,e_i}(\mu^j)}{2},\] 
in which case
$$
-C< t_{e_i}(\mu^j)< C.
$$
Lemma \ref{lem:bulge} then implies that $\lim_{j\to\infty}r_{e_i}(\mu^j)=\pm\infty$ if the $\mu_a$-type of $p_L$ is bulge $\pm\infty$, so $r_{e_i}[\mu^j]=r_{e_i}(\mu^j)$ is positive (resp. negative) for sufficiently large $j$ if the $\mu_a$-type of $p$ is $+\infty$ (resp. $-\infty$). A straightforward calculation then proves that (\ref{eqn:g3}) holds.
\end{proof}

For any convex real projective structure $\mu\in\Cmc(S)$, let $\rho\in\mathrm{hol}(\mu)$, and let $\Omega$ be the $\rho$-equivariant developing  image of $\mu$. To prove (2) of Theorem \ref{thm: special case}, we need to define two properly convex domains $\widehat{\Omega}$ and $\widecheck{\Omega}$ so that
\begin{itemize}
\item $\rho(\pi_1(S))$ acts properly discontinuously on both $\widehat{\Omega}$ and $\widecheck{\Omega}$,
\item $\widehat{\Omega}$ and $\widecheck{\Omega}$ depend only on $\rho$, and
\item $\widecheck{\Omega}\subset\Omega\subset\widehat{\Omega}$.
\end{itemize}

First, we define $\widehat{\Omega}$. Proposition \ref{prop:transverse1} states that $\xi_{\rho}^{(2)}(x)$ and $\Omega$ do not intersect for any $x\in\widetilde{\Vmc}$. Also, Proposition \ref{prop:transverse} implies that $\xi_{\rho}^{(1)}(y)$ does not lie in $\xi_{\rho}^{(2)}(x)$ for all distinct $x,y\in\widetilde{\Vmc}$, which in particular implies that $\xi_{\rho}^{(2)}(x)\neq\xi_{\rho}^{(2)}(y)$. Since $\xi^{(1)}_{\rho}(\widetilde{\Vmc})\subset\partial\Omega$, this means that one of the two connected components of $\Rbbb\Pbbb^2\setminus(\xi_{\rho}^{(2)}(x)\cup\xi_{\rho}^{(2)}(y))$, denoted $H(x,y)$, contains $\xi_\rho^{(1)}(\widetilde{\Vmc}\setminus\{x,y\})$. With this, we can define $\widehat{\Omega}$ to be the interior of
\[\bigcap_{\mathrm{distinct }\,x,y\in\widetilde{\Vmc}}H(x,y).\]
It is clear that $\widehat{\Omega}$ is open, $\Omega\subset\widehat{\Omega}$, and $\rho(\pi_1(S))$ acts on $\widehat{\Omega}$. In particular, $\xi_\rho^{(1)}(\widetilde{\Vmc})\subset\partial\widehat{\Omega}$.

\begin{lem}\
\begin{enumerate}
\item The open set $\widehat{\Omega}\subset\Rbbb\Pbbb^2$ is properly convex.
\item The action of $\rho(\pi_1(S))$ on $\widehat{\Omega}$ is properly discontinuous.
\end{enumerate}
\end{lem}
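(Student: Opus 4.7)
\emph{Convexity.} I first fix any $x_0 \in \widetilde{\Vmc}$ and work in the affine chart $A_0 := \Rbbb\Pbbb^2 \setminus \xi_\rho^{(2)}(x_0)$. Proposition \ref{prop:transverse} guarantees that for every $y \in \widetilde{\Vmc} \setminus \{x_0\}$, the line $\xi_\rho^{(2)}(y)$ is distinct from $\xi_\rho^{(2)}(x_0)$, so its intersection with $A_0$ is an affine line that partitions $A_0$ into two open half-planes, one of which is $H(x_0, y)$ (the one containing $\Omega$). Consequently
\[\widehat{\Omega} \;\subset\; \bigcap_{y \neq x_0} H(x_0,y) \;\subset\; A_0\]
expresses $\widehat{\Omega}$ as a subset of an intersection of open affine half-planes in $A_0$ that all contain $\Omega$, making $\widehat{\Omega}$ an open convex subset of $A_0$.

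\emph{Proper convexity.} Next, I will bound $\widehat{\Omega}$ inside a properly convex triangle. Pick $x_1 < x_2 < x_3 < x_1$ in $\widetilde{\Vmc}$ such that the three lines $\ell_i := \xi_\rho^{(2)}(x_i)$ are pairwise distinct (automatic by Proposition \ref{prop:transverse}) and not concurrent. Such a triple exists: if every triple from $\{\xi_\rho^{(2)}(x)\}_{x\in\widetilde{\Vmc}}$ were concurrent, an elementary incidence argument would force all of these lines to pass through a single point $q$, and the $\rho$-equivariance of $\xi_\rho$ would then force $\rho(\pi_1(S))$ to fix $q$, contradicting the dynamics in Theorem \ref{thm: holonomy1}(1). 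The three non-concurrent lines cut $\Rbbb\Pbbb^2$ into four open triangles; using the cyclic ordering of $\xi_\rho^{(1)}(x_1), \xi_\rho^{(1)}(x_2), \xi_\rho^{(1)}(x_3)$ on $\partial\Omega$ (Proposition \ref{prop:op}) together with Proposition \ref{prop:transverse1} (each $\ell_i$ is disjoint from $\Omega$), I will identify the unique triangle $T$ containing $\Omega$ and verify that each $H(x_i, x_j)$ equals the union of $T$ with the triangle opposite to $T$ across the vertex $\ell_i \cap \ell_j$. Since the three opposite triangles are distinct,
\[T \;=\; H(x_1,x_2) \cap H(x_2,x_3) \cap H(x_1,x_3) \;\supset\; \widehat{\Omega}.\]
Because $\overline{T}$ is a closed triangle lying inside an affine chart, it contains no projective line; hence $\overline{\widehat{\Omega}} \subset \overline{T}$ contains no projective line either, and $\widehat{\Omega}$ is properly convex.

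\emph{Proper discontinuity and main obstacle.} Since $\rho$ is the holonomy of a convex $\Rbbb\Pbbb^2$ structure on $S$, $\rho(\pi_1(S))$ acts freely and properly discontinuously on $\Omega$ and is in particular a discrete subgroup of $\PGL(3,\Rbbb)$. The $\rho$-equivariance of $\xi_\rho$ shows that $\rho(\pi_1(S))$ permutes the family $\{H(x,y)\}$, hence preserves $\widehat{\Omega}$. Part (1) then furnishes a well-defined Hilbert metric $d_{\widehat{\Omega}}$ under which every element of $\PGL(3,\Rbbb)$ stabilizing $\widehat{\Omega}$ acts as an isometry of the proper metric space $(\widehat{\Omega}, d_{\widehat{\Omega}})$, so the full stabilizer of $\widehat{\Omega}$ in $\PGL(3,\Rbbb)$ acts properly on $\widehat{\Omega}$; its discrete subgroup $\rho(\pi_1(S))$ therefore acts properly discontinuously. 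The hard part is the triangle identification $T = H(x_1,x_2) \cap H(x_2,x_3) \cap H(x_1,x_3)$: verifying it requires careful case-by-case tracking of which connected component of each $\Rbbb\Pbbb^2 \setminus (\ell_i \cup \ell_j)$ contains $\Omega$, using the cyclic ordering of Proposition \ref{prop:op}; the subsidiary step of ruling out common concurrency of the family $\{\xi_\rho^{(2)}(x)\}$ ultimately relies on the non-triviality of the holonomy dynamics from Theorem \ref{thm: holonomy1}(1).
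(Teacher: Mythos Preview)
Your proper discontinuity argument (Hilbert metric on a properly convex domain, $\rho(\pi_1(S))$ discrete in $\PGL(3,\Rbbb)$) is exactly what the paper does, so that part is fine.

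There are, however, two genuine gaps in part (1).

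\textbf{Convexity.} You write that $\widehat{\Omega} \subset \bigcap_{y\neq x_0} H(x_0,y)$ ``expresses $\widehat{\Omega}$ as a subset of an intersection of open affine half-planes \dots\ making $\widehat{\Omega}$ an open convex subset of $A_0$.'' Being a subset of a convex set does not make a set convex. The issue is that $\widehat{\Omega}$ is the interior of the intersection of \emph{all} $H(x,y)$, and for $x,y\neq x_0$ the set $H(x,y)\cap A_0$ is a pair of opposite sectors, not a half-plane. The fix is to observe that in $A_0$ one has $H(x,y)\supset H(x_0,x)\cap H(x_0,y)$ (the sector containing $\Omega$ is exactly this intersection), so intersecting with $H(x,y)$ after already intersecting with all $H(x_0,z)$ removes nothing; hence $\bigcap_{\text{all pairs}} H(x,y)=\bigcap_{y\neq x_0}H(x_0,y)$, which is visibly convex. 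The paper handles this step with the same brevity (``it is clear from the definition''), but you should state the missing inclusion.

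\textbf{Non-concurrence.} Your route to finding three non-concurrent lines $\xi_\rho^{(2)}(x_i)$ is to argue that otherwise all such lines share a point $q$, hence $q$ is $\rho(\pi_1(S))$-fixed, ``contradicting the dynamics in Theorem~\ref{thm: holonomy1}(1).'' But Theorem~\ref{thm: holonomy1}(1) only says that each non-peripheral $\rho(\gamma)$ is hyperbolic; it does not rule out all of them sharing one of their three fixed points. Proving irreducibility of the holonomy is an extra argument you have not supplied. The paper avoids this entirely: it picks any ideal triangle $\{x,y,z\}\in\widetilde{\Theta}$ and uses the already-established positivity of the triangle invariant $t_{x,y,z}>0$ (Section~\ref{sec:edgetriangle}). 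A quick computation shows that three concurrent flags always give triple ratio $-1$, so $t_{x,y,z}>0$ forces the three lines to be non-concurrent, and one is immediately inside a properly convex triangle. This is both shorter and self-contained given the paper's earlier work.
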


\begin{proof}
(1) Let $x,y,z\in\widetilde{\Vmc}$ be a pairwise distinct triple of points. It is straightforward to check that since $t_{x,y,z}>0$, $\Rbbb\Pbbb^2\setminus(\xi_\rho^{(2)}(x)\cup\xi_\rho^{(2)}(y)\cup\xi_\rho^{(2)}(z))$ is a union of four properly convex (open) triangles. By definition, $\widehat{\Omega}$ has to lie in one of these four triangles, so $\overline{\widehat{\Omega}}$ does not contain an entire projective line in $\Rbbb\Pbbb^2$. With this, it is clear from the definition of $\widehat{\Omega}$ that $\widehat{\Omega}$ is properly convex.

(2) Since $\widehat{\Omega}$ is properly convex, we can define the Hilbert metric $d_{\widehat{\Omega}}$ on $\widehat{\Omega}$ (see proof of Proposition \ref{prop:devimage}), which is invariant under the $\rho(\pi_1(S))$ action on $\widehat{\Omega}$. Recall that $(\widehat{\Omega},d_{\widehat{\Omega}})$ is a proper path metric space. Hence, $\rho(\pi_1(S))$ acts properly discontinuously on $\widehat{\Omega}$ because $\rho(\pi_1(S))$ is a discrete subgroup of the isometry group of the Hilbert metric.
\end{proof}

Next, we define $\widecheck{\Omega}$ to be the interior of the convex hull of $\xi_{\rho}^{(1)}(\widetilde{\Vmc})$ in $\widehat{\Omega}$. Since $\Omega$ is properly convex and $\xi_{\rho}^{(1)}(\widetilde{\Vmc})\subset\overline{\Omega}$, we see that $\xi_{\rho}^{(1)}(\widetilde{\Vmc})$ is not contained in a projective line in $\Rbbb\Pbbb^2$. Thus, $\widecheck{\Omega}$ is non-empty. Furthermore, since $\xi_{\rho}^{(1)}(\widetilde{\Vmc})\subset\partial\Omega$, the convexity of $\Omega$ implies that $\widecheck{\Omega}\subset \Omega$. In particular, $\widecheck{\Omega}$ is a non-empty, properly convex subset of $\Rbbb\Pbbb^2$, on which $\rho(\pi_1(S))$ acts properly discontinuously. 

\begin{remark}\label{rem:cont}
By Proposition \ref{prop:conv-flag}, we see that $\widehat{\Omega}$ and $\widecheck{\Omega}$ depend only on $\rho$, and vary continuously with $\rho$.
\end{remark}

With this, we can prove (2) of Theorem \ref{thm: special case}
\begin{proof}[Proof of (2) of Theorem \ref{thm: special case}]
Since $\lim_{j\to\infty}\Psi_{\Dmc',\Dmc}[\mu^j]=\Psi_{\Dmc,\Dmc}(\mu)$, it is clear that $\lim_{j\to\infty}\Theta_{\Dmc',\Dmc}(\mathrm{hol}[\mu^j])=\Theta_{\Dmc,\Dmc}(\mathrm{hol}(\mu))$. It then follows from Remark \ref{rem: coordinates} that $\lim_{j\to\infty}\mathrm{hol}(\mu^j_i)=\mathrm{hol}(\mu_i)$ for all $i=1,\dots,k$. Then by Proposition \ref{prop:Dehntwistcoords}, we can find a representative $\rho^j_i\in\mathrm{hol}(\mu^j_i)$ and a representative $\rho_i\in\mathrm{hol}(\mu_i)$ so that $\lim_{j\to\infty}\rho^j_i=\rho_i$, and 
\[-\frac{\ell_{1,e}(\mu^j)+\ell_{2,e}(\mu^j)}{2}\leq t_{e}(\mu_i^j)<\frac{\ell_{1,e}(\mu^j)+\ell_{2,e}(\mu^j)}{2}\] 
for all $e\in\Dmc\setminus\Dmc'$. In particular, there is some $C>0$ so that $-C\leq t_e(\mu^j_i)\leq C$ for all $j,i$.

Since $\lim_{j\to\infty}\rho^j_i=\rho_i$, we see that $\lim_{j\to\infty}\xi_{\rho^j_i}=\xi_{\rho_i}$ uniformly. Also, by Remark \ref{rem:cont}, $\lim_{j\to\infty}\widehat{\Omega}^j_i=\widehat{\Omega}_i$ and $\lim_{j\to\infty}\widecheck{\Omega}^j_i=\widecheck{\Omega}_i$. Since $\widecheck{\Omega}^j_i\subset\Omega^j_i\subset\widehat{\Omega}^j_i$ and $\widecheck{\Omega}_i\subset\Omega_i\subset\widehat{\Omega}_i$, it follows from Theorem \ref{thm: hol} and Lemma \ref{lem:bulge} that $\lim_{j\to\infty}\Omega_i^j=\Omega_i$. (2) of Theorem \ref{thm: special case} follows.
\end{proof}

\section{Convex real projective structures via cubic differentials} \label{sec: conv cubic}

For a closed oriented surface $S$ of genus at least 2, Labourie \cite{Labourie07} and the first author \cite{Loftin2001} independently showed that a convex $\rp^2$ structure on $S$ is equivalent to a pair $(X,U)$, where $X$ is a complex structure on $S$ and $U$ is a holomorphic cubic differential on $X$. This correspondence was later extended to regular convex $\rp^2$ structures on the one hand and pairs $(X,U)$, where $X$ is a noded, connected Riemann surface and $U$ is a regular cubic differential over $X$ \cite{BenHul13,Nie15,Loftin2004,Loftin}. The notion of a regular $k$-differential is due to Bers \cite{Bers74}, while the geometric and analytic foundation of the relationship between cubic differentials and convex $\rp^2$ structures follows largely from deep work on hyperbolic affine spheres of Cheng-Yau \cite{ChengYau77,ChengYau86}.

To formally state this result, we recall some standard terminology from the theory of Riemann surfaces. Let $\bar X$ be a compact, noded Riemann surface. A neighborhood of each node of $\bar X$ is biholomorphic to a neighborhood of the origin in $\{(z,w)\in\Cbbb^2: zw=0\}$. We refer to $z$ and $w$ here as \emph{local coordinates} near the node. Let $P$ be a (possibly empty) finite collection of points in $\bar X$ that are not nodes, and let $X=\bar X \setminus P$. We refer to the points in $P$ as \emph{punctures} of $X$. Also let $\mathring{X}$ denote the complement of the nodes in $X$.
The \emph{normalization of $X$} is a smooth (possibly disconnected) Riemann surface equipped with a projection map to $X$ which is a biholomorphism restricted to the preimage of  $\mathring X$ and is two-to-one over each node.

\begin{definition}
A \emph{regular cubic differential} on $X$ is a meromorphic section $\sigma$ of the third tensor power of the holomorphic cotangent bundle over the normalization of $\bar X$ with the following properties:
\begin{itemize}
\item $\sigma$ is holomorphic on $\mathring{X}$,
\item $\sigma$ has poles of order at most $3$ at each node and puncture of $\bar{X}$.
\item the {\em residues} of $\sigma$ sum to 0 at each node, i.e. in terms $z,w$ local coordinates near the nodes, the third-order terms of the cubic differential are $R\,dz^3/z^3$ and $-R \, dw^3/w^3$, for a complex constant $R$.
\end{itemize}
\end{definition}

The set of regular cubic differentials on $X$ is naturally a finite dimensional complex vector space.

Via the unifomization theorem, the Teichm\"uller space $\Tmc(S)$ can be thought of as the deformation space of complex structures on $S$. From this point of view, the augmented Teichm\"uller space $\mathcal T(S)^{\rm aug}$ is a stratified space with strata $\mathcal T(S,\mathcal D)^{\rm aug}$ enumerated by the multi-curves $\mathcal D$ on $S$. Each stratum $\mathcal T(S,\mathcal D)^{\rm aug}$ is the deformation space of marked noded, compact Riemann surfaces with punctures $P$, with the property that the marking $f:S\setminus\Dmc\to\mathring{X}$ identifies
\begin{itemize}
\item neighborhoods of the punctures of $\mathring{X}$ with neighborhoods of the punctures of $S$,
\item neighborhoods of the nodes of $\mathring{X}$ to neighborhoods in $S$ of the curves in $\Dmc$.
\end{itemize}

Now for a fixed multi-curve $\mathcal D$, define
$$\mathcal T(S)^{\rm aug,\mathcal D} =  \bigcup_{\mathcal D'\subset \mathcal D} \mathcal T(S,\mathcal D')^{\rm aug},$$
with the subspace topology induced from that on $\mathcal T(S)^{\rm aug}$. Recall $G_{\mathcal D}$ is the subgroup of the mapping class group generated by Dehn twists around loops in $\mathcal D$, and define $\mathcal Q_{\mathcal D}:=\mathcal T^{\rm aug,\mathcal D}(S)/ G_{\mathcal D}$. Let $\Xmc_{\mathcal D}$ be the proper flat family of noded Riemann surfaces parametrized by $\mathcal Q_{\mathcal D}$, and let $\mathcal K^{\rm reg}_\Dmc$ be the complex vector bundle of regular cubic differentials over $\Qmc_\Dmc$. In other words, the fiber of $\Xmc_{\Dmc}$ above the point $X\in\Qmc_\Dmc$ is $X$ (see e.g.\ \cite{HubKoch14} for a full discussion), and the fiber of $\mathcal K^{\rm reg}_\Dmc$ above the point $X\in\Qmc_\Dmc$ is the vector space of regular cubic differentials on $X$.

With this notation, we can state the following theorem.
\begin{thm}\label{thm: cx-manifold}
$\mathcal Q_{\mathcal D}$ carries the natural structure of a complex manifold, and $\mathcal K^{\rm reg}_\Dmc$ is a holomorphic vector bundle over $\Qmc_\Dmc$. In particular, the total space of $\mathcal K^{\rm reg}_\Dmc$ has the structure of a complex manifold.
\end{thm}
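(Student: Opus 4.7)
The plan is to exhibit explicit complex-analytic charts on $\Qmc_\Dmc$ via the classical plumbing construction of Bers, Masur, and Wolpert, and then verify that $\Kmc^{\mathrm{reg}}_\Dmc$ has locally constant fiber dimension with holomorphic local frames assembled from the plumbing data.

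First, fix $[X_0]\in\Qmc_\Dmc$ lying in the stratum $\Tmc(S,\Dmc')^{\mathrm{aug}}/G_\Dmc$ for some $\Dmc'\subset\Dmc$, and let $N_0$ denote the normalization of $X_0$, viewed as a smooth (possibly disconnected) Riemann surface with $2|\Dmc'|+n$ punctures. Standard Bers theory furnishes a complex-analytic chart on the Teichm\"uller space $\Tmc(N_0)$ of complex dimension $3g-3+n-|\Dmc'|$. For each $c\in\Dmc'$, I would choose local holomorphic coordinates $z,w$ vanishing at the two branches of the $c$-node of $X_0$ and introduce a plumbing parameter $t_c\in\Delta$ via the family $zw=t_c$; this smooths the node for $t_c\neq 0$ and recovers $X_0$ at $t_c=0$. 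For each $c\in\Dmc\setminus\Dmc'$, the curve $c$ is a smooth simple closed loop on $N_0$, and I would introduce $t_c$ via the standard formula expressing the complex modulus of a collar annulus about $c$ in terms of the Fenchel--Nielsen data $(\ell_c,\theta_c)$, normalized so that $t_c$ is invariant under the Dehn twist $\theta_c\mapsto\theta_c+2\pi$ and so that $t_c\to 0$ precisely when $c$ is pinched. Concatenating these data gives a map
\[
\Psi:\Tmc(N_0)\times\prod_{c\in\Dmc}\Delta\longrightarrow\Qmc_\Dmc,
\]
which by the plumbing theorem (see Hubbard--Koch \cite{HubKoch14}) is a local homeomorphism onto an open neighborhood of $[X_0]$. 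To promote this family of charts to a complex-analytic atlas, I would verify that the transition functions between two such charts, constructed with different choices of local coordinates at the nodes and different Beltrami representatives on the smooth part, are holomorphic; this follows because the plumbing construction varies holomorphically in both the Bers parameter and the choice of local coordinates at the nodes, and the smooth-loop and nodal descriptions of $t_c$ match holomorphically across the interface $\{t_c=0\}$.

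For the vector bundle statement, I would realize $\Kmc^{\mathrm{reg}}_\Dmc$ as the push-forward along the universal family $\Xmc_\Dmc\to\Qmc_\Dmc$ of the subsheaf of $\omega_{\Xmc_\Dmc/\Qmc_\Dmc}^{\otimes 3}(3E)$ (with $E$ the union of the marked nodal and puncture sections) cut out by the residue-sum condition at each node. A Riemann--Roch computation on each noded fiber, after accounting for the $|\Dmc'|$ linear residue constraints, gives constant complex rank equal to $5g-5+3n$ at every point of $\Qmc_\Dmc$, independent of the stratum. Local holomorphic frames can be produced over each plumbing chart by starting with a basis of regular cubic differentials on $X_0$ and deforming each basis element holomorphically in the parameters $t_c$ via the standard plumbing gluing of Laurent tails, as in Masur's construction of holomorphic differentials on opened nodes; the residue condition is preserved because it varies holomorphically in $t_c$.

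The main obstacle I anticipate is reconciling, for $c\in\Dmc\setminus\Dmc'$, the smooth-loop description of $t_c$ (via collar modulus and Fenchel--Nielsen data) with the nodal description (via $zw=t_c$) that applies once $c$ has been pinched, and verifying that $\Psi$ extends biholomorphically across the stratum boundary $\{t_c=0\}$. This is precisely the technical heart of the plumbing theorem, so rather than reprove it I would invoke the detailed analysis already carried out by Bers, Masur, and Wolpert and reassembled in the moduli-theoretic framework of Hubbard--Koch \cite{HubKoch14}.
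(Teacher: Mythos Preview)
The paper does not prove this theorem; immediately after the statement it simply records that Hubbard--Koch \cite{HubKoch14} construct the complex structure on $\Qmc_\Dmc$ and refers to \cite{Loftin} and \cite{HubKoch14} for the holomorphic vector bundle statement. Your proposal is therefore more a sketch of the argument behind those references than an alternative route, and at that level it is on the right track: plumbing coordinates for the base, a direct-image/local-frame argument for the bundle, and a Riemann--Roch count to verify constant rank (your value $5g-5+3n$ is correct and matches the real dimension $16g-16+8n$ in Theorem~\ref{thm: main intro}).

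There is, however, one genuine slip. As written, the domain of your map $\Psi$ has complex dimension $(3g-3+n-|\Dmc'|)+|\Dmc|$, which exceeds $\dim_\Cbbb\Qmc_\Dmc=3g-3+n$ by $|\Dmc|-|\Dmc'|$. The problem is that for $c\in\Dmc\setminus\Dmc'$ you are counting $t_c$ as a free factor in $\prod_{c\in\Dmc}\Delta$ while it is already encoded in $\Tmc(N_0)$. The clean fix, and the one actually used in \cite{HubKoch14}, is to centre the chart at a point of the deepest stratum $\Dmc'=\Dmc$, so that every $c\in\Dmc$ is a genuine node of $X_0$ and the domain is $\Tmc(N_0)\times\Delta^{|\Dmc|}$ with $\dim_\Cbbb\Tmc(N_0)=3g-3+n-|\Dmc|$. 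Such charts already cover all of $\Qmc_\Dmc$, so shallower strata need no separate treatment. A related minor point: invoking Fenchel--Nielsen data $(\ell_c,\theta_c)$ for the smooth loops is misleading, since those are real-analytic coordinates and their relation to a holomorphic $t_c$ is itself delicate; once you work from the deepest stratum this issue disappears.
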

Hubbard-Koch \cite{HubKoch14} construct the complex structure on $\mathcal Q_{\mathcal D}$. See
\cite{Loftin} and \cite{HubKoch14} for a proof that $\mathcal K^{\rm reg}_\Dmc$ is a holomorphic vector bundle.

In the setting when $S$ is a closed surface, the first author \cite{Loftin2001} and Labourie  \cite{Labourie07} independently established the following theorem.
\begin{thm}[Labourie, Loftin]\label{LL-thm}
Let $S$ be a closed connected oriented surface of genus at least two.
Then there is a canonical bijective correspondence between $\Cmc(S)$ and the total space of the vector bundle over $\mathcal T(S)$ whose fibers over a point $X\in\Tmc(S)$ is the space of cubic differentials on $X$. In particular, this defines a canonical complex structure on $\Cmc(S)$.
\end{thm}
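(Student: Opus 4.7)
The plan is to construct the bijection through the theory of hyperbolic affine spheres. Starting from $\mu\in\Cmc(S)$ with $\rho$-equivariant developing image $\Omega\subset\rp^2$, and noting that $\PGL(3,\re)=\SL(3,\re)$ since $\dim=3$ is odd, form the properly convex open cone $C\subset\re^3\setminus\{0\}$ projecting onto $\Omega$, on which $\rho(\pi_1(S))\subset\SL(3,\re)$ acts freely and properly discontinuously. By the Cheng--Yau theorem on affine spheres, $C$ contains a unique properly embedded complete hyperbolic affine sphere $H$ of affine mean curvature $-1$, asymptotic to $\partial C$; uniqueness forces $H$ to be invariant under the group action, so $H/\rho(\pi_1(S))$ is diffeomorphic to $S$. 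The affine sphere carries a Blaschke metric $g$ (the second fundamental form relative to the affine normal) and a Pick cubic form $U$ (the totally symmetric $(0,3)$-tensor measuring the difference between the Levi--Civita connection of $g$ and the flat connection of $\re^3$). Both descend to $S$: the conformal class of $g$ specifies a complex structure $X\in\Tmc(S)$, and the structure equations for an affine sphere (in particular, the Codazzi equation satisfied by $U$) force $U$ to be a holomorphic cubic differential on $X$.

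For the inverse direction, given a complex structure $X$ on $S$ and a holomorphic cubic differential $U$ on $X$, solve Wang's equation on $X$ to produce a Blaschke metric. Writing the desired metric as $g=e^{2u}g_0$ for a background hyperbolic metric $g_0$ in the conformal class of $X$, the affine sphere equations reduce to a semilinear elliptic PDE for $u$ whose nonlinearity involves $e^{2u}$, $\|U\|_{g_0}^2\, e^{-4u}$, and the Gaussian curvature of $g_0$. A subsolution/supersolution argument combined with the maximum principle on the closed surface $S$ yields a unique smooth solution $u$. From the pair $(g,U)$ one reconstructs, on the universal cover $\widetilde S$, an equivariant immersion into $\re^3$ whose image is a hyperbolic affine sphere by integrating the Gauss--Codazzi--Ricci equations; this sphere is automatically asymptotic to the boundary of a unique properly convex cone in $\re^3$, and projectivizing recovers a convex $\rp^2$ structure on $S$ whose holonomy lies in $\SL(3,\re)=\PGL(3,\re)$.

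The two constructions are mutual inverses by the uniqueness clauses of the Cheng--Yau theorem and of Wang's equation. The main obstacle is the PDE analysis: one must handle the degeneration of $\|U\|_{g_0}^2\, e^{-4u}$ near the zeros of $U$, produce the correct asymptotic behavior of the affine sphere to recover a properly convex cone, and verify naturality with respect to the marking so that the correspondence descends to the claimed bijection $\Cmc(S)\simeq\mathcal{K}(S)$. Finally, $\mathcal{K}(S)$ is manifestly a holomorphic vector bundle over $\Tmc(S)$ (the special case of Theorem~\ref{thm: cx-manifold} with $\Dmc=\emptyset$), so the bijection transports a canonical complex structure onto $\Cmc(S)$, completing the statement.
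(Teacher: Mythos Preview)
The paper does not itself prove Theorem~\ref{LL-thm}; it is quoted as an established result of Labourie \cite{Labourie07} and the first author \cite{Loftin2001}, with the analytic underpinnings attributed to Cheng--Yau \cite{ChengYau77,ChengYau86}. Your sketch is an accurate summary of the approach in those original references: pass from a convex $\rp^2$ structure to the Cheng--Yau hyperbolic affine sphere in the cone over $\Omega$, read off the Blaschke metric (hence a point in $\Tmc(S)$) and the Pick form (a holomorphic cubic differential), and invert via Wang's equation using sub/supersolutions on the closed surface. So your proposal is consistent with, indeed essentially identical to, what the paper invokes.

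One minor correction: the ``obstacle'' you flag about the behavior of $\|U\|_{g_0}^2 e^{-4u}$ near zeros of $U$ is not an issue in the closed-surface case---$U$ is holomorphic, so this term simply vanishes at the zeros and the equation remains smooth there. The genuine analytic difficulties (poles of $U$, completeness and asymptotics of the affine sphere at ends) arise only in the noncompact/regular setting treated later in the paper via \cite{Loftin2004,BenHul13,Nie15,Loftin}.
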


The first author later extended this theorem to $\Cmc(S)^{\rm aug}$. More precisely, he proved the following (see Theorem 4.3.1 and Section 5.1 of \cite{Loftin}).

\begin{thm}\label{thm: Loftin}
There is a canonical continuous bijection $\Xi$ from the total space of $\mathcal K^{\rm reg}_{\Dmc}$ to the quotient space $\mathcal C(S)^{\rm aug,\mathcal D} / G_{\mathcal D}$. \end{thm}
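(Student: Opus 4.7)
The plan is to construct $\Xi$ stratum by stratum via the extended Labourie--Loftin correspondence, and then verify that the residue condition defining regular cubic differentials matches exactly the compatibility condition defining admissibility across $\Dmc'$ in $\Cmc(S,\Dmc')^{\mathrm{adm}}$.

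Fix a point $[(X,U)]$ in the total space of $\mathcal{K}^{\mathrm{reg}}_\Dmc$ lying above the stratum $\Tmc(S,\Dmc')^{\mathrm{aug}}/G_\Dmc$ for some $\Dmc'\subset\Dmc$. Let $X_1,\dots,X_k$ be the connected components of the complement of the nodes in the normalization of $X$, and let $U_i$ denote the restriction of $U$ to $X_i$. By the regularity hypothesis, each $U_i$ is holomorphic on $X_i$ with poles of order at most $3$ at every puncture (including those coming from the branches of nodes). I would invoke the Labourie--Loftin correspondence in its noncompact form developed in \cite{Loftin2004, BenHul13, Nie15}: solving Wang's equation on $X_i$ produces a unique complete hyperbolic affine sphere in $\Rbbb^3$ whose projective quotient is a convex $\rp^2$ surface, which the marking identifies with an admissible convex $\rp^2$ structure $\mu_i\in\Cmc(S_i)^{\mathrm{adm}}$. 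A local asymptotic analysis of the Blaschke conformal factor near each puncture translates the pole data of $U_i$ into the $\mu_i$-type of the puncture: a pole of order strictly less than $3$ gives parabolic type, a pole of order exactly $3$ with purely imaginary residue gives quasi-hyperbolic type, and a pole of order exactly $3$ with nonzero real part of the residue gives bulge $\pm\infty$ type according to the sign of that real part.

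The key algebraic check is that for each node $c\in\Dmc'$, the regularity requirement that the residues of $U$ at the two branches sum to zero matches exactly the compatibility of the two induced $\mu_i$-types across $c$: simultaneously parabolic when both residues vanish, simultaneously quasi-hyperbolic with matching holonomy when the residues are equal and purely imaginary, or opposite bulge types $\pm\infty$ when the residues have nonzero opposite real parts. Moreover, the residue data determines the conjugacy class of the holonomy around $c$, so $(\mu_1,\dots,\mu_k)$ lies in $\Cmc(S,\Dmc')^{\mathrm{adm}}$. The construction is $G_\Dmc$-equivariant, so it descends to the required map $\Xi$. Bijectivity on each stratum then follows from uniqueness in the Labourie--Loftin correspondence on each component, combined with Theorem \ref{thm: hol} and Corollary \ref{cor: hol}, which ensure that the $\pm\infty$ bulge choices (the source of non-uniqueness in $\mathrm{hol}^{-1}(\mathrm{hol}(\mu))$) are recovered from the signs of the real parts of the corresponding residues.

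The main obstacle is continuity, especially across strata. Given a sequence $[(X^j, U^j)]\to [(X,U)]$ in $\mathcal K^{\mathrm{reg}}_\Dmc$ where $(X,U)$ is noded along $\Dmc'\subset\Dmc$ and each $X^j$ lies in a possibly shallower stratum $\Dmc''_j\subset\Dmc'$, one must show that the associated $[\mu^j]$ converge to $[\mu]$ in the topology of $\Cmc(S)^{\mathrm{aug}}/G_\Dmc$ described in Section \ref{sec: aug top}. By Remark \ref{rem: top}(1), this reduces to showing that on each connected component $S_i$ of $S\setminus\Dmc'$ the restricted structures $\mu^j|_{S_i}$ converge to $\mu_i$ in $\Cmc(S_i)$. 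Analytically this rests on uniform control of solutions of Wang's equation in degenerating families, using the plumbing coordinates on $\Qmc_\Dmc$ from \cite{HubKoch14} together with the asymptotic estimates near punctures of \cite{Loftin2004,BenHul13,Nie15}. The delicate case is when a generic residue in $U^j$ degenerates to a purely imaginary or zero residue in $U$, corresponding to a bulge end degenerating to a quasi-hyperbolic or parabolic end; here one needs stability estimates for the Blaschke conformal factor that are uniform in the residue parameter as the pole structure changes.
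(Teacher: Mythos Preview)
The paper does not actually supply a proof of this theorem: it is quoted from \cite{Loftin} (Theorem 4.3.1 and Section 5.1 there), and the whole point of the present article is that Theorem \ref{thm: main} plus Invariance of Domain \emph{replaces} the hard direction (continuity of $\Xi^{-1}$) that had to be established directly in \cite{Loftin}. So there is no in-paper argument to compare against; what you have written is essentially a sketch of the strategy of \cite{Loftin} itself, namely: apply the extended Labourie--Loftin correspondence componentwise on the normalization, read off the end type from the residue of the cubic differential, and match the node condition $R_L+R_R=0$ with the compatibility condition across $\Dmc'$.

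Two comments on the sketch. First, your residue-to-end-type dictionary is stated a bit too glibly. The locus of residues giving quasi-hyperbolic holonomy is not the imaginary axis; it is a union of rays determined by when two of the real parts of the (shifted) cube roots of $R$ coincide, and the hyperbolic region (where the bulge $\pm\infty$ distinction applies) is the complement of those rays. You should check the precise statement in \cite{Loftin2004,Loftin} rather than assert the imaginary/real dichotomy. Second, you correctly identify continuity across strata as the crux, and you are right that it rests on uniform estimates for Wang's equation under plumbing degeneration; but this is exactly the substantial analytic content of \cite{Loftin}, not something one can wave through. Your outline is an accurate roadmap of that paper, but it is a roadmap and not a proof.
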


See (1) of Remark \ref{rem: top} for the definition of $\mathcal C(S)^{\rm aug,\mathcal D} $.

\begin{remark}
The first author worked in the setting when $S$ is a closed surface, but his arguments show that Theorem \ref{thm: Loftin} holds for finite type surfaces with negative Euler characteristic as well.
\end{remark}

Thus Theorem \ref{thm: main}, together with the above theorem shows that $\Xi$ is a continuous one-to-one correspondence between manifolds of the same dimension. Then Brouwer's Invariance of Domain Theorem shows $\Xi$ is a homeomorphism. Corollary \ref{cor:Loftin} follows immediately.

\appendix

\section{The Proof of Theorem \ref{thm: hol}}
In this appendix, we give a proof of Theorem \ref{thm: hol}. We define an \emph{interval} in $\partial\Omega$ to be a subset of $\partial \Omega$ homeomorphic to an interval in $\mathbb R$. Note intervals need not be straight line segments. The key step in this proof is summarized in the following proposition.

\begin{prop} \label{rule-out-gaps}
Let $\mu\in\Cmc(S)$, let $\rho\in\mathrm{hol}_\mu$, and let $\Omega$ be the $\rho$-equivariant developing image of $\mu$. Also, let
\begin{eqnarray*}
\mathscr{E}(\rho)=\mathscr{E}&:=&\{\text{saddle fixed points of hyperbolic elements in }\rho(\pi_1(S))\},\\
\mathscr{F}(\rho)=\mathscr{F}&:=&\{\text{fixed points of non-identity elements in }\rho(\pi_1(S))\}\setminus\Emc,\\
\Gamma(\rho)=\Gamma&:=&\{\gamma\in\pi_1(S):\gamma\text{ is peripheral and }\rho(\gamma)\text{ is hyperbolic or quasi-hyperbolic}\},\\
\mathscr{J}(\rho)=\mathscr{J}&:=&\{\langle\rho(\gamma)\rangle:\gamma\in\Gamma\}.
\end{eqnarray*}
For each $H=\langle\rho(\gamma)\rangle\in\mathscr{J}$, let $I_H\subset\partial\Omega$ be the unique interval that does not contain any points in $\mathscr F$, and whose endpoints are:
\begin{itemize}
\item the attracting and repelling fixed points of $\rho(\gamma)$ if $\rho(\gamma)$ is hyperbolic,
\item the two fixed points of $\rho(\gamma)$ if $\rho(\gamma)$ is quasi-hyperbolic.
\end{itemize}
Then
\[\partial \Omega \setminus\overline{\mathscr{F}}=\bigcup_{H\in\mathscr{J}}I_H.\]
\end{prop}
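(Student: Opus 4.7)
The plan is to verify the two inclusions separately, after first showing that each interval $I_H$ is well-defined. For the well-definedness, fix $H = \langle \rho(\gamma)\rangle \in \mathscr{J}$ with $\gamma$ peripheral. Since $\gamma$ is peripheral, it bounds an annular neighborhood $N_\gamma$ of an end of $S$; lifting $N_\gamma$ to $\widetilde S$ gives a region stabilized by $\langle\gamma\rangle$, whose developing image is an $H$-invariant region $U_\gamma \subset \Omega$. The fixed points of $\rho(\gamma)$ (two of them in either the hyperbolic or quasi-hyperbolic case) lie on $\partial\Omega$ and cut $\partial\Omega$ into two arcs. I would argue that the closure of $U_\gamma \cap \partial\Omega$ is exactly one of these two arcs — call it $I_H$ — and that the other arc contains all fixed points of $\rho(\eta)$ for $\eta \in \pi_1(S)$ not in $\langle\gamma\rangle$ (up to conjugation), because any such $\eta$ has translates of $\rho(\eta)$-orbits avoiding the cusp $N_\gamma$.

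For the inclusion $\bigcup_H I_H \subseteq \partial\Omega\setminus\overline{\mathscr{F}}$: I would show that no element of $\mathscr{F}$ can lie on the interior of $I_H$. Suppose for contradiction $q \in I_H \cap \mathscr{F}$ is an interior point of $I_H$. Then $q$ is a fixed point of some $\rho(\eta)$, and the dynamics of $\rho(\eta)$ force either $q$'s orbit under $H$ to accumulate in $I_H$ (making $I_H$ contain infinitely many $\mathscr{F}$-points densely), or a non-trivial relation between $\eta$ and $\gamma$ contradicting either proper discontinuity or the topology of the end. Combined with the dynamical characterization of fixed points via Proposition \ref{prop:transverse}, this rules out any $\mathscr{F}$-points in the interior of $I_H$. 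Closure-stability then gives $I_H \subseteq \partial\Omega \setminus \overline{\mathscr{F}}$ (open interior of $I_H$ is disjoint from $\overline{\mathscr{F}}$).

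For the inclusion $\partial\Omega \setminus \overline{\mathscr{F}} \subseteq \bigcup_H I_H$: Let $p\in\partial\Omega\setminus\overline{\mathscr{F}}$, and let $A$ be the connected component of $\partial\Omega\setminus\overline{\mathscr{F}}$ containing $p$. Then $A$ is an open arc in $\partial\Omega$ whose endpoints $a_-, a_+$ lie in $\overline{\mathscr{F}}$. I would first argue that the stabilizer $\mathrm{Stab}(A) \subseteq \rho(\pi_1(S))$ is non-trivial: otherwise, the $\rho(\pi_1(S))$-orbit of $p$ would be a discrete infinite set in $\partial\Omega$ whose accumulation points lie in $\overline{\mathscr{F}}$, and a pigeonhole argument on gaps (only finitely many can have size exceeding any given $\epsilon$, by a standard Margulis-type bound in Hilbert geometry) forces $A$ itself to be stabilized. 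Since any orientation-preserving element stabilizing $A$ must fix both endpoints $a_\pm$, the stabilizer is infinite cyclic, generated by some $\rho(\gamma_0)$ with $\gamma_0 \in \pi_1(S)$ primitive.

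Finally, I would show that $\gamma_0$ is peripheral and that $\rho(\gamma_0)$ is hyperbolic or quasi-hyperbolic, so that $H_0 := \langle\rho(\gamma_0)\rangle \in \mathscr{J}$ and $A = I_{H_0}$. The endpoints $a_\pm$ are distinct (since $A$ is an arc of positive length), ruling out the parabolic case for $\rho(\gamma_0)$, which has a unique fixed point. To see $\gamma_0$ is peripheral: the quotient $A/\langle\rho(\gamma_0)\rangle$ compactifies to a boundary circle of the end of $\Omega/\rho(\pi_1(S))$ corresponding to the gap $A$, and since $S$ has finite type, every end of $\Omega/\rho(\pi_1(S))$ corresponds to a puncture of $S$, which has peripheral fundamental group. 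This identifies $A$ with $I_{H_0}$ by the uniqueness established at the start. The main obstacle will be the non-triviality of $\mathrm{Stab}(A)$: ensuring the gap is not ``orbited away" into the limit set requires carefully applying finite-type hypotheses (via the fact that $S \setminus\Dmc$ has only finitely many ends and each is asymptotic to a peripheral cusp).
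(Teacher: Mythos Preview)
Your overall strategy --- verify both inclusions, and for the hard one show that each maximal gap $A\subset\partial\Omega\setminus\overline{\mathscr{F}}$ has nontrivial cyclic stabilizer generated by a peripheral element with hyperbolic or quasi-hyperbolic holonomy --- matches the paper's. However, the step you yourself flag as the main obstacle, the non-triviality of $\mathrm{Stab}(A)$, is exactly where all the content lies, and your proposed justification does not work. You assert that if $\mathrm{Stab}(A)$ were trivial, the $\rho(\pi_1(S))$-orbit of $p\in A$ would accumulate only in $\overline{\mathscr{F}}$, and then invoke a ``pigeonhole/Margulis-type bound.'' Both parts are problematic. The claim that accumulation points of such an orbit lie in $\overline{\mathscr{F}}$ is essentially what is to be proved: a priori nothing prevents the orbit from accumulating inside another gap, or inside $A$ itself. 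And there is no off-the-shelf Margulis lemma available here --- the action on $\Omega$ is not cocompact, gaps can shrink under the action, and a count of ``large gaps'' gives no contradiction.

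The paper takes a different route. It first passes to $\widecheck\Omega$, the interior of the convex hull of $\overline{\mathscr{F}}$, so that each gap becomes a straight segment $m\subset\partial\widecheck\Omega$. The key input is Lemma~\ref{limit-set-lemma}, proved by Kuiper's classical degeneration argument: given $\rho(\gamma_i)\cdot p\to q\in m$, pass to a limit $g_\infty\in\Pbbb(\mathrm{End}(\Rbbb^3))$ of rank $\le 2$, and use the flatness of $m$ together with the Hilbert metric to force $\widecheck\Omega$ to be a triangle, contradicting that $\pi_1(S)$ is not virtually abelian. This gives $\Lambda_p=\overline{\mathscr{F}}$ for $p\in\widecheck\Omega$. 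With that in hand, one takes a large Hilbert ball $B_r(p)$ whose projection to $\widecheck\Omega/\rho(\pi_1(S))$ has complement a finite union of annular ends; the union $D$ of all translates of $\overline{B_r(p)}$ is connected, and the limit-set lemma produces a neighborhood of $m$ disjoint from $D$. The component of $\widecheck\Omega\setminus\overline D$ adjacent to $m$ is then the developing image of one annular end, whose infinite cyclic fundamental group furnishes the stabilizer of $m$. Peripherality and the exclusion of the parabolic case then follow essentially as you wrote. In short, the missing ingredient in your plan is the limit-set identity $\Lambda_p=\overline{\mathscr{F}}$, and that requires the Kuiper argument rather than a counting argument.
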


The interval $I_H$ in Proposition \ref{rule-out-gaps} exists because every element in $H$ is peripheral. Assuming Proposition \ref{rule-out-gaps}, we will now prove Theorem \ref{thm: hol}. 

\begin{proof}[Proof of Theorem \ref{thm: hol}]Let $\gamma\in\pi_1(S)\setminus\{\id\}$. It is easy to observe from the dynamics of the $\langle\rho_i(\gamma)\rangle$ action on $\Rbbb\Pbbb^2$ that if $\rho_i(\gamma)$ has a fixed point $p$ that does not lie in $\partial\Omega_i\cap\partial\Omega_i'$, then $\rho_i(\gamma)$ is necessarily hyperbolic and $p$ is the saddle fixed point of $\rho_i(\gamma)$. Since $\partial\Omega_i\cap\partial\Omega_i'\subset\Rbbb\Pbbb^2$ is closed, it follows that $\overline{\mathscr F}\subset\partial\Omega_i\cap\partial\Omega_i'$. Proposition \ref{rule-out-gaps} then implies that
\[\partial \Omega_i\setminus\overline{\mathscr{F}(\rho_i)}=\bigcup_{H\in\mathscr{J}(\rho_i)}I_H\,\,\,\text{ and }\,\,\,\partial \Omega_i'\setminus\overline{\mathscr{F}(\rho_i)}=\bigcup_{H\in\mathscr{J}(\rho_i)}I_H',\]
where $I_H\subset\partial\Omega_i$ and $I_H'\subset\partial\Omega_i'$ are the unique open intervals that do not contain any points in $\mathscr{F}(\rho_i)$, and whose endpoints are
\begin{itemize}
\item the attracting and repelling fixed points of $\rho_i(\gamma)$ if $\rho_i(\gamma)$ is hyperbolic,
\item the two fixed points of $\rho_i(\gamma)$ if $\rho_i(\gamma)$ is quasi-hyperbolic.
\end{itemize}

If $\rho_i(\gamma)$ is quasi-hyperbolic, then observe from the dynamics of the $H:=\langle\rho_i(\gamma)\rangle$ action on $\Rbbb\Pbbb^2$ that $I_H=I_H'$ is the straight line segment between the two fixed points of $\rho_i(\gamma)$, or else the convexity of $\Omega$ is violated. (See for example \cite{Loftin2004}).

On the other hand, if $\rho_i(\gamma)$ is hyperbolic, and $H:=\langle\rho_i(\gamma)\rangle$, then the admissibility of $\mu_i$ implies that $I_H$ is either the (open) edge of the principal triangle of $\rho_i(\gamma)$ whose endpoints are $\{\rho_i(\gamma)^+,\rho_i(\gamma)^-\}$ (bulge $-\infty$), or the union of $\rho_i(\gamma)^0$ with the two edges of the principal triangle of $\rho_i(\gamma)$ that have $\rho_i(\gamma)^0$ as a common vertex (bulge $+\infty$). In either case, the endpoints of $I_H$ are $\{\rho_i(\gamma)^+,\rho_i(\gamma)^-\}$. The admissibility of $\mu_i'$ implies the same for $I_H'$. Thus, if $I_H\neq I_H'$, then $I_H\cup I_H'$ is the union of $\rho_i(\gamma)^0$ with the three edges of the principal triangle of $\rho_i(\gamma)$. See Figure \ref{admissible-hyperbolic-figure}.

\begin{figure}
\begin{center}
\includegraphics[scale=1.1]{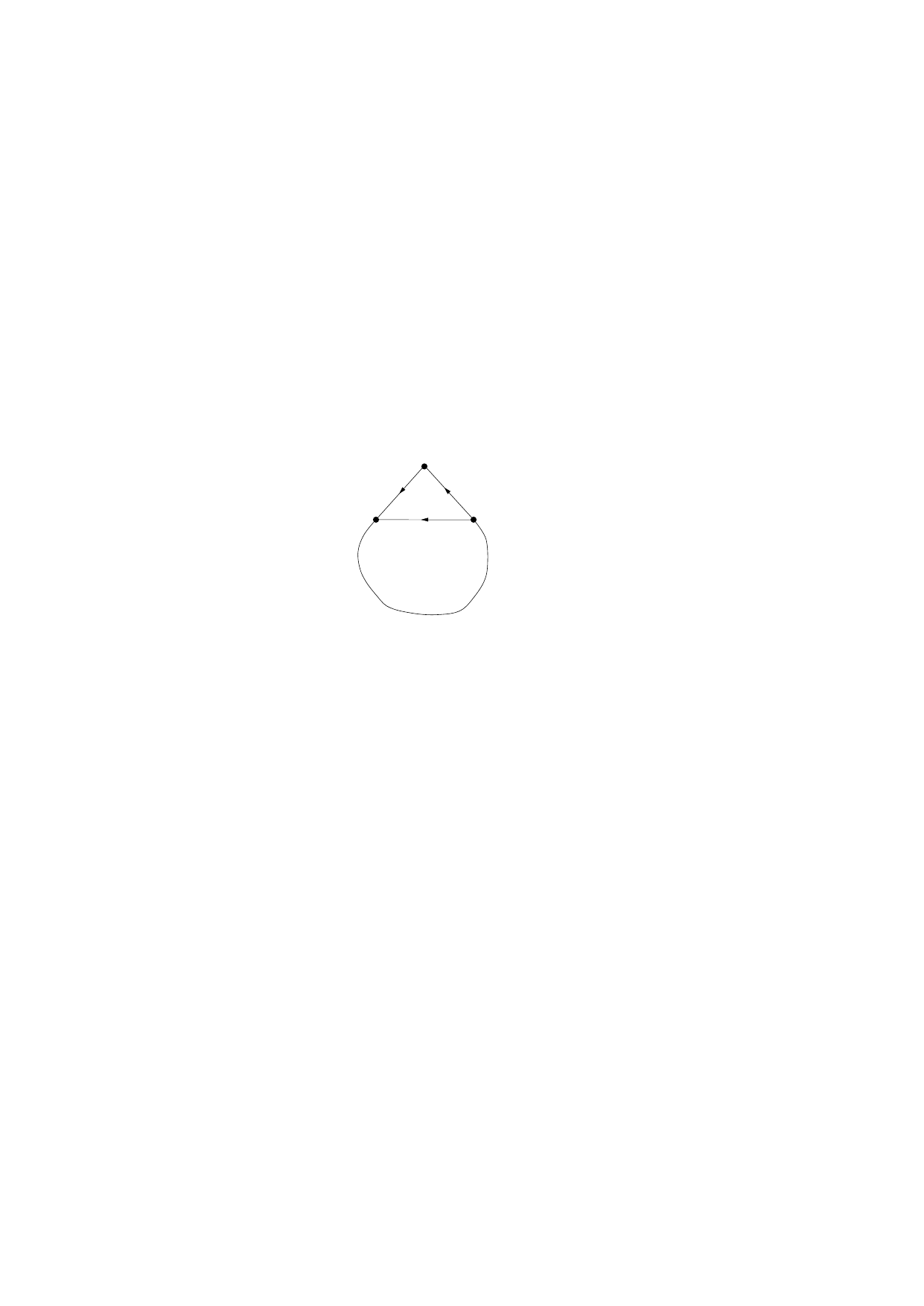}
\small
\put (-102, 71){$\rho(\gamma)^+$}
\put (-10, 71){$\rho(\gamma)^-$}
\put (-54, 113){$\rho(\gamma)^0$}
\end{center}
\caption{Principal triangle}
\label{admissible-hyperbolic-figure}
\end{figure}

It follows immediately from this that the interior of the symmetric difference $\Omega_i\, \triangle\, \Omega_i'$ is the union of a $\pi_1(S_i)$-invariant subset of triangles in $\Gmc_{\rho_i}$.
\end{proof}

It is thus sufficient to prove Proposition \ref{rule-out-gaps}. To do so, we need the notion of a limit set. First of all, recall that $\widecheck{\Omega}\subset\Omega$ is the interior of the convex hull in $\Omega$ of $\overline{\mathscr{F}}$, and that $\rho(\pi_1(S))$ acts on $\widecheck{\Omega}$ freely properly discontinuously.  

\begin{definition}
For $p\in \widecheck{\Omega}$, define
the \emph{limit set} $\Lambda_p=\Lambda_p(\rho)$ of the $\rho(\pi_1(S))$-action on $\widecheck{\Omega}$ to be the set of accumulation points in $\partial\widecheck{\Omega}$ of  $\rho(\pi_1(S))\cdot p$.
\end{definition}

Next, we want to prove that $\Lambda_p=\overline{\mathscr{F}}$ for all $p\in\widecheck{\Omega}$. 

\begin{lem} \label{limit-set-lemma}
For every $p\in\widecheck{\Omega}$, $\Lambda_p=\overline{\mathscr{F}}$. In particular, $\Lambda_{p_1}=\Lambda_{p_2}$ for all $p_1,p_2\in\widecheck{\Omega}$.
\end{lem}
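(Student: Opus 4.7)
I would prove the two inclusions $\overline{\mathscr{F}} \subseteq \Lambda_p$ and $\Lambda_p \subseteq \overline{\mathscr{F}}$ separately; both rely on the observation that $\overline{\mathscr{F}}$, and hence $\widecheck{\Omega}$, is $\rho(\pi_1(S))$-invariant, since conjugation in $\rho(\pi_1(S))$ permutes the fixed-point sets of its elements.

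For the forward inclusion $\overline{\mathscr{F}} \subseteq \Lambda_p$, pick $q \in \mathscr{F}$, so $q$ is a non-saddle fixed point of some $\rho(\gamma) \neq \id$. By Theorem \ref{thm: holonomy1}, $\rho(\gamma)$ is hyperbolic, quasi-hyperbolic, or parabolic, and after replacing $\gamma$ by $\gamma^{-1}$ if necessary I may assume $q$ is respectively the attracting, the quasi-attracting, or the unique parabolic fixed point. In each case a direct Jordan form analysis gives $\rho(\gamma)^n \cdot p \to q$ as $n \to \infty$, provided $p$ avoids the invariant codimension-one subspace $\xi^{(2)}_\rho(\gamma^-)$; but Proposition \ref{prop:transverse1} ensures that subspace is disjoint from $\Omega \supseteq \widecheck{\Omega}$, so the condition holds. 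Hence $q \in \Lambda_p$, and closedness of $\Lambda_p$ upgrades this to $\overline{\mathscr{F}} \subseteq \Lambda_p$.

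For the reverse inclusion $\Lambda_p \subseteq \overline{\mathscr{F}}$, let $q \in \Lambda_p$ with $\rho(\gamma_n) \cdot p \to q$ for distinct $\gamma_n \in \pi_1(S)$. I would fix an auxiliary convex cocompact hyperbolic structure on $S$, identifying $\widetilde{S}$ with the Poincar\'e disk $\Dbbb$ and fixing a basepoint $p_0 \in \Dbbb$; after extracting a subsequence, $\gamma_n \cdot p_0 \to x \in \partial \Dbbb = \partial \pi_1(S)$ by proper discontinuity of $\pi_1(S) \curvearrowright \Dbbb$. I would then split on $x$. If $x$ is a fixed point of some peripheral $\delta$, then $x \in \widetilde{\Vmc}$ and $\xi^{(1)}_\rho(x) \in \mathscr{F}$; factoring $\gamma_n$ up to a bounded factor as a power of $\delta$ lets me appeal to the cusp dynamics of $\rho(\delta)$ to conclude $q = \xi^{(1)}_\rho(x) \in \mathscr{F}$. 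Otherwise, approximate $x$ by attracting fixed points $\delta_m^+$ of non-peripheral (hence, by Theorem \ref{thm: holonomy1}, hyperbolic) elements $\delta_m$; such fixed points are dense in $\partial \pi_1(S)$ since $\pi_1(S)$ is a hyperbolic group. Each $\xi^{(1)}_\rho(\delta_m^+) \in \mathscr{F}$, and I would invoke a Cartan decomposition $\rho(\gamma_n) = k_n a_n k_n'$ in $\PGL(3,\Rbbb)$ (where discreteness forces the ratios of singular values to diverge) to identify $q$ with $\lim_m \xi^{(1)}_\rho(\delta_m^+) \in \overline{\mathscr{F}}$.

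The main obstacle is this second subcase: verifying that the projective limit $q \in \partial \widecheck{\Omega}$ is determined by the asymptotic direction $x \in \partial \pi_1(S)$ even when $x \notin \widetilde{\Vmc}$. The Cartan decomposition provides the leverage — because $\rho(\gamma_n)$ leaves every compact subset of $\PGL(3,\Rbbb)$, the images $\rho(\gamma_n) \cdot p$ collapse onto the top left-singular direction of $\rho(\gamma_n)$, which by $\rho$-equivariance aligns with the projective limit of $\xi^{(1)}_\rho(\delta_m^+)$ — with continuity supplied by Proposition \ref{prop:conv-flag}. The delicate bookkeeping lies in ruling out that $q$ lands on a boundary line-segment of $\widecheck{\Omega}$ stretching between two points of $\overline{\mathscr{F}}$ rather than on $\overline{\mathscr{F}}$ itself; the density of attracting fixed points and the squeeze provided by the projective ideal triangulation coming from $\Tmc_\Pmc$ together close this gap.
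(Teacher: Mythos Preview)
Your forward inclusion is fine and matches what the paper dispatches as ``clear''. The reverse inclusion, however, has a genuine gap --- precisely the one you flag in your last paragraph --- and the fix you sketch does not close it.

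Here is the problem. After passing to a subsequence, $\rho(\gamma_n)$ converges in $\Pbbb(\mathrm{End}(\Rbbb^3))$ to some $g_\infty$, the projectivization of a linear map $L_\infty$ of rank $1$ or $2$. Your Cartan argument (``the images $\rho(\gamma_n)\cdot p$ collapse onto the top left-singular direction'') tacitly assumes rank $1$, i.e.\ that the ratio of the top two singular values of $\rho(\gamma_n)$ diverges. Nothing you have said rules out rank $2$: if the two largest singular values stay comparable while the smallest collapses, then $\rho(\gamma_n)\cdot p$ limits to a point on the projective \emph{line} $\ell=\Pbbb(\im\,L_\infty)$, and different basepoints $p$ go to different points of $\ell$. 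In that regime $q$ is not determined by the point $x\in\partial\pi_1(S)$ alone, so approximating $x$ by attracting fixed points $\delta_m^+$ says nothing about $q$. Density of fixed points in $\partial\pi_1(S)$ does not yield density of their $\xi^{(1)}_\rho$-images in $\partial\widecheck\Omega$ --- indeed $\partial\widecheck\Omega\setminus\overline{\mathscr F}$ is typically nonempty, consisting of open straight segments --- so your ``squeeze'' does not squeeze. Proposition \ref{prop:conv-flag} is also of no help here: it gives continuity of $\xi_\rho$ in $\rho$, not in the boundary point.

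The paper's argument is entirely different and is built to handle exactly this rank-$2$ obstruction. It never tries to identify $q$ through $\partial\pi_1(S)$. Instead it supposes $q$ lies in a maximal open interval $m\subset\partial\widecheck\Omega\setminus\overline{\mathscr F}$, observes that $m$ is a straight segment (since $\widecheck\Omega$ is a convex hull), and uses the Hilbert metric on $\widecheck\Omega$: near a straight boundary segment no Hilbert ball of a fixed radius $\epsilon$ can fit, so the isometric images $\rho(\gamma_n)\cdot B_\epsilon(p)$ cannot collapse to a point. This forces $L_\infty$ to have rank $2$. A case analysis of the configuration of the kernel point $t=\Pbbb(\ker L_\infty)$ relative to the image line $\ell$ (whether $t\in\ell$ or not) then shows that $\widecheck\Omega$ would have to be an open triangle, whose automorphism group is virtually abelian --- contradicting the existence of an injection of $\pi_1(S)$ into it. This Kuiper-style contradiction is the heart of the lemma, and your outline omits it.
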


\begin{proof}
The proof of this lemma in large part follows Kuiper \cite{Kuiper54}, p.\ 208.  We recall the argument for the reader's convenience. 

It is clear that $\overline{\mathscr{F}}\subset\Lambda_p$. Let $m$ be any maximal open subinterval of $\partial\widecheck{\Omega}\setminus\overline{\mathscr{F}}$. By our definition of $\widecheck{\Omega}$, $m$ is a projective line segment. We will now prove that no points in $m$ can lie in $\Lambda_p$. This immediately implies that $\Lambda_p\subset\overline{\mathscr{F}}$.

Suppose for contradiction that there is some $q\in m$ so that $q\in\Lambda_p$. This means that there is a sequence $\{\gamma_i\}_{i=1}^\infty\subset\pi_1(S)$ so that $\rho(\gamma_i)\cdot p\to q$ as $i\to\infty$. The sequence $\{\rho(\gamma_i)\}_{i=1}^\infty$ lies in $\PGL(3,\Rbbb)\subset\Pbbb(\mathrm{End}(\Rbbb^3))$, and $\Pbbb(\mathrm{End}(\Rbbb^3))$ is compact. Thus (possibly passing to a subsequence), we may assume  this sequence $\rho(\gamma_i)$ has a limit $g_\infty\in\Pbbb(\mathrm{End}(\Rbbb^3))$, which is the projectivization of a linear endomorphism $L_\infty:\Rbbb^3\to\Rbbb^3$ of rank $1$ or $2$. In other words, $g_\infty$ is a projective map 
\[g_\infty:\Rbbb\Pbbb^2\setminus\Pbbb(\ker L_\infty)\to\Rbbb\Pbbb^2\] 
whose image is $\Pbbb(\im \,L_\infty)$. Moreover, $\rho(\gamma_i)\to g_\infty$ uniformly on compact subsets of $\rp^2 \setminus \Pbbb(\ker L_\infty)$.  

Consider a geodesic ball $B$ of radius $\epsilon>0$ centered at $p$ with respect to the Hilbert metric on $\widecheck{\Omega}$. Then
\[g_\infty(B) =\lim_{i\to\infty} \rho(\gamma_i)\cdot B,\] 
so the proper discontinuity of $\rho$ implies that $g_\infty(B) \subset \partial\widecheck{\Omega}$. Since $\rho(\gamma_i)$ is an isometry for all $i$, by the definition of the Hilbert metric and the fact that $q\in\partial\widecheck{\Omega}$ is in the interior of a line segment in the boundary, $g_\infty(B)$ is not a point. This implies that $L_\infty$ has rank $2$, and that $g_\infty(B)$ has to be an open subsegment of $m$. In particular, $t:=\Pbbb(\ker L_\infty)\in\partial\widecheck\Omega$ is a single point and $\ell:=\Pbbb(\im\, L_\infty)$ is the projective line such that $\overline{m}=\partial\widecheck\Omega\cap\ell$. 

Using this, we will now show that $\widecheck{\Omega}$ is an open triangle. This will be a contradiction because it is easy to see that the projective automorphism group of such a triangle is virtually Abelian. Thus, since $S$ has negative Euler characteristic,  there is no injective representation $\pi_1(S)\to {\rm Aut}(\widecheck{\Omega})$. 

First, observe that for any point $s\in\ell$, $g_\infty^{-1}(s)$ is a line through $t$ with $t$ removed. Conversely, every line through $t$ with $t$ removed is sent to a single point in $\ell$ by $g_\infty$. Furthermore, 
\[g_\infty\left(\overline{\widecheck\Omega}\right)=\lim_{i\to\infty} \rho(\gamma_i)\cdot \overline{\widecheck\Omega} \subset\overline{\widecheck\Omega}\cap\ell=\partial\widecheck\Omega\cap\ell=\overline m.\]
Since $g_\infty\left(\overline{\widecheck\Omega}\right)\supset g_\infty(\overline m)=\overline m$, it follows that $g_\infty\left(\overline{\widecheck\Omega}\right)=\overline{m}$.
As such, if we set 
\[X = \{ x\in \rp^2\setminus\{t\} : \overline{tx} \cap \overline{\widecheck{\Omega}}\neq \emptyset\},\] 
where $\overline{tx}$ is the projective line in $\rp^2$ passing through $t$ and $x$, then $g_\infty(X)=\overline{m}$. Since $\widecheck{\Omega}$ is properly convex, there are a pair of distinct lines $\ell_1$ and $\ell_2$ that intersect at $t$, such that $X\cup\{t\}$ is the closure of one of the two connected components of $\rp^2\setminus(\ell_1\cup\ell_2)$. Note then that $g_\infty(\ell_1)$, $g_\infty(\ell_2)$ are the endpoints of $\overline{m}$, and all the points in the interior of $X$ are mapped by $g_\infty$ to $m$.

Since $\overline{\widecheck\Omega}$ lies in $X\cup\{t\}$ and $t\in\partial\widecheck\Omega$, it follows that $a_1:=\partial\widecheck\Omega\cap\ell_1$ and $a_2:=\partial\widecheck\Omega\cap\ell_2$ are proper subsegments of $\ell_1$ and $\ell_2$ with $t$ as a common vertex (it is possible that $a_i=\{t\}$). Let $a_3:=\partial\widecheck\Omega\setminus(a_1\cup a_2)$. To show that $\widecheck\Omega$ is an open triangle in $\rp^2$, it now suffices to show that $a_3$ is a subsegment of a projective line.

If $y$ lies in the interior of $a_3$, then it lies in the interior of $X$, so $g_\infty(y) \in m$. Since $\rho(\gamma_i)$ leaves $\partial \widecheck{\Omega}$ invariant for all $i$, we see that $\rho(\gamma_i)\cdot y \in m$ for large $i$.  Thus, there is a neighborhood $I_y$ of $y$ in $\partial\widecheck{\Omega}$ such that $\rho(\gamma_i)\cdot I_y\subset m$ for sufficiently large $i$. This means that $I_y$ is a projective line segment for all $y$ in the interior of $a_3$, so $a_3$ is a projective line segment. 
\end{proof}

\begin{remark}
In the previous lemma, we can weaken the restriction that $p\in\widecheck{\Omega}$.  In fact, $\Lambda_p = \overline{ \mathcal F}$ for any $p\in\Omega$, by Proposition \ref{rule-out-gaps} and basic facts about the dynamics of hyperbolic elements on the principal triangle. 
\end{remark}

With this, we can now finish the proof of Proposition \ref{rule-out-gaps}.

\begin{proof}[Proof of Proposition \ref{rule-out-gaps}]
It is clear from the definitions that
\[\partial \Omega \setminus\overline{\mathscr F}\supset\bigcup_{H\in\mathscr J}I_H,\]
so it is sufficient to prove the other inclusion, i.e. every maximal open interval in $\partial\Omega\setminus\overline{\mathscr F}$ is of the form $I_H$ for some $H\in\mathscr J$. Let $\widecheck{\Omega}\subset \Omega$ be the convex hull in $\Omega$ of $\overline{\mathscr F}$. Since $\partial\Omega$ and $\partial\widecheck\Omega$ are both oriented topological circles and the cyclic ordering on $\overline{\mathscr F}$ induced by both $\partial\Omega$ and $\partial\widecheck\Omega$ agree, we see that there is a canonical bijection between connected components of $\partial\Omega\setminus\overline{\mathscr{F}}$ and $\partial\widecheck\Omega\setminus\overline{\mathscr{F}}$. It is thus sufficient to prove the proposition for $\widecheck\Omega$ in place of $\Omega$.

Let $\Sigma := \widecheck{\Omega}/\rho(\pi_1(S))$ and let $\pi:\widecheck{\Omega}\to\Sigma$ be the projection map. Choose any $p\in\widecheck{\Omega}$, and let $B_r(p)\subset\widecheck{\Omega}$ denote the open ball (with respect to the Hilbert metric) centered at $p$ with radius $r$. Consider $r$ large enough so that $\Sigma\setminus \pi(\overline{B_r(p)})$ is a disjoint collection of open cylinders, one for each end of $S$. Then observe that
\[D := \bigcup_{\gamma\in\pi_1(S)} \rho(\gamma) \cdot B_r(p),\]
is connected. Furthermore, our choice of $r$ ensures that each connected component of $\widecheck{\Omega}\setminus \overline{D}$ is the image of the developing map of a connected component of $\Sigma\setminus \overline{\pi(B_r(p)})$, which is a cylinder.

Let $m$ be a maximal open line segment in $\partial \widecheck{\Omega}\setminus\overline{\mathscr F}$. By Lemma \ref{limit-set-lemma}, every $q\in m$ is not a limit point of the $\rho(\pi_1(S))$ action on $\widecheck{\Omega}$. This implies that there is a connected component $N$ of $\widecheck{\Omega}\setminus\overline{D}$ that contains $m$ in its boundary. Furthermore, since $D$ is connected and both endpoints of $m$ lie in $\overline{D}$, we see that $N\cap\partial\widecheck{\Omega}=m$.  See Figure \ref{classify-ends-figure}.

\begin{figure}
\begin{center}
\includegraphics{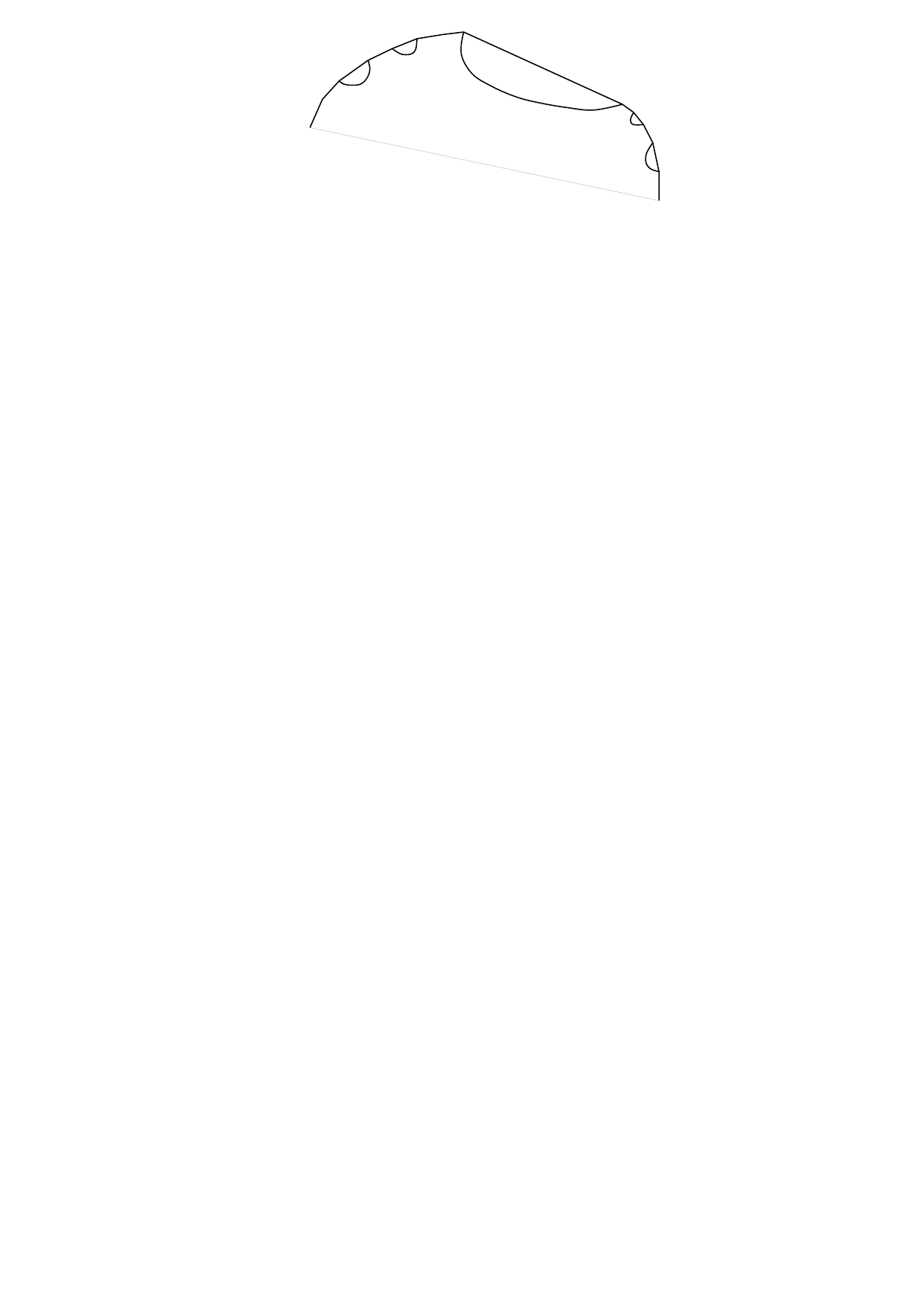}
\Large
\put (-80, 92){$m$}
\put (-120, 42){$D$}
\put (-100, 77){$N$}
\end{center}
\caption{A gap in the limit set}
\label{classify-ends-figure}
\end{figure}

Since $N$ is the image of the developing map of a convex $\Rbbb\Pbbb^2$ cylinder, there is an infinite cyclic subgroup $H=\langle \rho(\gamma)\rangle\subset\rho(\pi_1(S))$ that preserves $N$. The subgroup $H$ also preserves $\partial\widecheck{\Omega}$, so $H$ preserves $\overline{m}$, which implies that it fixes both endpoints of $m$ and preserves $m$. This means that $\rho(\gamma)$ is either hyperbolic or quasi-hyperbolic. The fact that $m$ does not contain any points in $\overline{\mathscr F}$ immediately implies that $\gamma\in\pi_1(S)$ is peripheral, and so $m=I_H$ for some $H\in\mathscr J$.
\end{proof}


\begin{thebibliography}{99}
\bibitem{Abi} W. Abikoff, {\em The Real Analytic Theory of Teichm\"uller Space}, Lect. Notes Math. 820, Springer, Berlin, 1980.

\bibitem{BenHul13} Y. Benoist, D. Hulin, {\em Cubic differentials and finite volume convex projective surfaces}, Geom. Topol. 17 (2013), no. 1, 595--620.

\bibitem{BenHul14} Y. Benoist, D. Hulin, {\em Cubic differentials and hyperbolic convex sets}, J. Diff. Geom. 98 (2014), no. 1, 1--19.

\bibitem{Bers74} L. Bers, {\em Spaces of degenerating Riemann surfaces}, in {\em Discontinuous groups and Riemann surfaces (Proc. Conf., Univ. Maryland, College Park, Md., 1973)}, Ann. of Math. Studies, 79, Princeton University Press, Princeton, NJ, 1974, 43--55.

\bibitem{BonDre} F. Bonahon, G. Dreyer, {\em Parameterizing Hitchin components}, Duke Math. J. 163 (2014), no. 15, 2935--2975.

\bibitem{ChengYau77} S.-Y. Cheng, S.-T. Yau, {\em On the regularity of the Monge-Amp\`ere equation $\det((\partial^2u/\partial x^i \partial x^j)) = F(x,u)$}, Comm. Pure Appl. Math. 30 (1977), 41--68.

\bibitem{ChengYau86} S.-Y. Cheng, S.-T. Yau, {\em Complete affine hyperspheres. part I. The completeness of affine metrics}, Comm. Pure Appl. Math 39 (1986), no. 6, 839--866.

\bibitem{Choi94II} S. Choi, {\em Convex decompositions of real projective surfaces. II: Admissible decompositions}, J. Diff. Geom. 40 (1994), no. 2, 239--283.

\bibitem{ChoiGo1} S. Choi, W. Goldman, {\em Convex Real Projective Structures on Closed Surfaces are Closed}, Proc. Amer. Math. Soc. 118 (1993), no. 2, 657--661.

\bibitem{DumWolf15} D. Dumas, M. Wolf, {\em Polynomial cubic differentials and convex polygons in the projective plane}, Geom. Funct. Anal. 25 (2015), no. 6, 1734--1798.

\bibitem{FocGon} V. Fock, A. Goncharov, {\em Moduli spaces of local systems and higher Teichm\"uller theory}, Publ. Math. Inst. Hautes \'Etudes Sci., 103 (2006), Issue 1, 1--211.

\bibitem{Go90} W. Goldman, {\em Convex Real Projective Structures on Compact Surfaces}, J. Diff. Geom. 31 (1990), 791--845.

\bibitem{Guichard05} O. Guichard, {\em Une dualit\'e pour les courbes hyperconvexes}, Geom. Dedicata 112 (2005), 141--164.

\bibitem{Hitchin92} N. Hitchin, {\em Lie groups and Teichm\"uller space}, Topology 31 (1992), no. 3, 449--473.

\bibitem{HubKoch14} J. Hubbard, S. Koch, {\em An Analytic Construction of the Deligne-Mumford Compactification of the Moduli Space of Curves}, J. Diff. Geom. 98 (2014), no. 2, 261-313.

\bibitem{Kuiper54} N. Kuiper, {\em On convex locally-projective spaces}, in {\em Convegno Internazionale di Geometria Differenziale, Italia, 1953}, Edizioni Cremonese, Roma, 1954,  200--213.

\bibitem{Labourie06} F. Labourie, {\em Anosov flows, surface groups and curves in projective space}, Invent. Math. 165 (2006), no. 1, 51--114.

\bibitem{Labourie07} F. Labourie, {\em Flat projective structures on surfaces and cubic holomorphic differentials}, Pure Appl. Math. Quarterly 3 (2007), no. 4, 1057--1099. Special issue in honor of Grisha Margulis, Part 1 of 2.

\bibitem{Loftin2001} J. Loftin {\em Affine spheres and convex $\rp^n$ manifolds}, Amer. J. Math., 123 (2001), no. 2, 255--275.

\bibitem{Loftin2004} J. Loftin, {\em The Compactification of the Moduli Space of Convex $\Rbbb\Pbbb^2$ Surfaces, I}, J. Differ. Geom., 68 (2004), no. 2, 223--276.

\bibitem{Loftin} J. Loftin, {\em Convex $\Rbbb\Pbbb^2$ Structures and Cubic Differentials Under Neck Separation}, to appear in J. Differ. Geom.

\bibitem{Mar10} L. Marquis, {\em Espace des modules marqu\'es des surfaces projectives convexes
              de volume fini}, Geom. \& Top. 14 (2010), no. 4, 2103--2149.

\bibitem{Mar12} L. Marquis, {\em Surface projective convexe de volume fini}, Ann. Inst. Fourier, 62 (2012), no. 1, 325--392.

\bibitem{Nie15} X. Nie, {\em Meromorphic cubic differentials and convex projective structures}, arXiv:1503.02608.

\bibitem{Wolp92} S. Wolpert, {\em Spectral Limits for Hyperbolic Surfaces II}, Inv. Math. 108 (1992), no.1, 91-129.

\bibitem{Wolp12}  S. Wolpert, {\em On Families of Holomorphic Differentials on Degenerating Annuli}, in {\em
Quasiconformal mappings, Riemann surfaces, and Teichm\"uller spaces}, Contemp. Math., 575, Amer. Math. Soc., Providence, RI, 2012, 363--370.

\bibitem{Zhang16}  T. Zhang, {\em Degeneration of Hitchin representations along internal sequences}, Geom. Funct. Anal., 25 (2014), Issue 5, 1588--1645.


\end{thebibliography}
\end{document}